\numberwithin{equation}{section}
\newtheorem{thm}{Theorem}[section]
\newtheorem{prop}[thm]{Proposition}
\newtheorem{lem}[thm]{Lemma}
\theoremstyle{definition}
\newtheorem{defn}[thm]{Definition}
\newtheorem{rem}[thm]{Remark}
\newcommand{\al}{\alpha}
\newcommand{\de}{\delta}
\newcommand{\e}{\varepsilon}
\newcommand{\la}{\lambda}
\renewcommand{\th}{\theta}
\newcommand{\om}{\omega}
\newcommand{\Om}{\Omega}
\newcommand{\ze}{\zeta}
\newcommand{\p}{\partial}
\newcommand{\I}{\infty}
\newcommand{\Sc}[1]{\mathcal{#1}}
\newcommand{\F}{\Sc{F}}
\newcommand{\FR}[1]{\mathfrak{#1}}
\newcommand{\Bo}[1]{\mathbb{#1}}
\newcommand{\R}{\Bo{R}}
\newcommand{\T}{\Bo{T}}
\newcommand{\lec}{\lesssim}
\newcommand{\gec}{\gtrsim}
\newcommand{\hhat}{\widehat}
\newcommand{\bbar}{\overline}
\newcommand{\ti}{\widetilde}
\newcommand{\shugo}[1]{\{ #1\}}
\newcommand{\Shugo}[2]{\big\{ \, #1 \, \big| \, #2 \, \big\}}
\newcommand{\LR}[1]{{\langle #1 \rangle }}
\newcommand{\BLR}[2]{{\left\langle #1,\,#2 \right\rangle }}
\newcommand{\chf}[1]{\chi_{#1}}
\newcommand{\med}{\mathrm{med}}
\newcommand{\bfmu}{{\boldsymbol \mu}}
\newcommand{\bfxi}{{\boldsymbol \xi}}
\newcommand{\norm}[2]{\big\| #1 \big\| _{#2}}
\newcommand{\tnorm}[2]{\| #1 \| _{#2}}
\newcommand{\eq}[2]{\begin{equation} \label{#1} \begin{split} #2 \end{split} \end{equation}}
\newcommand{\eqq}[1]{\begin{align*} #1 \end{align*}}
\newcommand{\eqs}[1]{\begin{gather*} #1 \end{gather*}}
\newcommand{\mat}[1]{\begin{smallmatrix} #1 \end{smallmatrix}}
\newcommand{\hx}{\hspace{10pt}}
\renewcommand{\Re}{\mathrm{Re}}
\renewcommand{\Im}{\mathrm{Im}}
\title[Unconditional uniqueness for nonlinear dispersive equations]{Unconditional uniqueness of solutions for nonlinear dispersive equations}
\author[N. Kishimoto]{Nobu Kishimoto}
\address{Research Institute for Mathematical Sciences, Kyoto University\\
%Kitashirakawa Oiwake-cho, Sakyo-ku, 
Kyoto 606-8502, Japan}
\email{nobu@kurims.kyoto-u.ac.jp}
\thanks{This article is to complement the announcement \cite{K-announcement} with detailed proofs.}%}
\subjclass[2010]{35Q55; 35A02}
\keywords{nonlinear dispersive equations; unconditional uniqueness; normal form reduction}
\begin{document}

\begin{abstract}
When a solution to the Cauchy problem for nonlinear dispersive equations is obtained by a fixed point argument using auxiliary function spaces, it is non-trivial to ensure uniqueness of solutions in a natural space such as the class of continuous curves in the data space. %45
This property is called \emph{unconditional uniqueness}, and proving it often requires some additional work. %14
In the last decade, unconditional uniqueness has been shown for some canonical nonlinear dispersive equations by an integration-by-parts technique, which can be regarded as a variant of the (Poincar\'e-Dulac) normal form reduction. %32

In this article, we aim to provide an abstract framework for establishing unconditional uniqueness as well as existence of certain weak solutions via infinite iteration of the normal form reduction. %30
In particular, in an abstract setting we find two sets of fundamental estimates, each of which can be used repeatedly to generate all multilinear estimates of arbitrarily high degrees required in this scheme. %33
Then, we confirm versatility of the framework by applying it to various equations, including the cubic nonlinear Schr\"odinger equation (NLS) in higher dimension, the cubic NLS with fractional Laplacians, the cubic derivative NLS, and the Zakharov system, for which new results on unconditional uniqueness are obtained under the periodic boundary condition. %51
\end{abstract}

\maketitle

\tableofcontents

%%%%%%%%%%%%%%%%%%%%%%%%%%%%%%%%%%%%%%%%%%%%
%%%%%%%%%%%%%%%%%%%%%%%%%%%%%%%%%%%%%%%%%%%%
%%%%%%%%%%%%%%%%%%%%%%%%%%%%%%%%%%%%%%%%%%%%

\section{Introduction}

%\subsection{General setting}

In the present article, we consider \emph{unconditional uniqueness} (UU) of solutions to the Cauchy problem for general nonlinear dispersive equations. 
Here, UU in a Banach space $B$ means uniqueness of solutions (in the sense of distribution) to the Cauchy problem, with initial data given in $B$, in the class of continuous $B$-valued functions $C([0,T];B)$.
Hereafter, we write $C_TB$ to denote $C([0,T];B)$.

To begin with, we note that UU is sometimes trivial, especially if the solution is obtained by an iteration argument in $C_TB$ itself.
As an example, consider the following nonlinear Schr\"odinger equation (NLS) with the cubic nonlinearity:
\[ i\p _tu+\Delta u=|u|^2u,\qquad (t,x)\in [0,T]\times \R ^d,\]
with initial data in Sobolev space $H^s(\mathbb{R}^d)$.
Then, UU in $H^s$ is trivial in the case $s>\frac{d}{2}$ for which $H^s$ is an algebra.
The concept of \emph{unconditional well-posedness} (\mbox{i.e.}, well-posedness with unconditional uniqueness) was introduced by T. Kato \cite{K95}, and he pointed out that UU becomes meaningful in the case that the solution is obtained by iteration but using an auxiliary function space in addition to $C_TH^s$.
In the above NLS example, one can still construct solutions for $s<\frac{d}{2}$ in a certain range by using the Strichartz estimates, but then uniqueness is obtained initially in the intersection of $C_TH^s$ with some mixed Lebesgue space $L^p_TL^q$ used as an auxiliary space.
In such a case, an additional argument is often required to establish UU.
As a notion of uniqueness which does not depend on how the solution is constructed, UU can be used to identify a solution obtained by some method (e.g., a compactness argument) with another solution constructed by a different method (e.g., an iteration argument).

We also remark that two kinds of criticality may be relevant in the problem of UU.
To see this, let us again consider Sobolev space $H^s$ as the data space $B$.
First, if the equation is invariant under the scaling transform, then the scale-invariant Sobolev regularity $s=s_{\textrm{scl}}$ is initially expected to be the lowest regularity that admits well-posedness of the Cauchy problem. (However, there are many cases where the Cauchy problem becomes ill-posed at some regularity higher than the scaling.)
Secondly, there may exist the regularity threshold $s=s_{\mathrm{embd}}$ below which the nonlinear part does not make sense in the distributional framework.
Therefore, we naturally focus on UU in $H^s$ for $s\ge \max \{ s_{\textrm{scl}},s_{\textrm{embd}}\}$.
As for the above NLS example, the equation is invariant under the scaling transform $u(t,x)\mapsto \la u(\la ^2t,\la x)$ ($\la >0$), which preserves the $\dot{H}^s(\R ^d)$ norm if $s=s_{\textrm{scl}}:=\frac{d}{2}-1$, whereas the embedding $H^s(\R^d)\hookrightarrow L^3(\R^d)$ holds if and only if $s\ge s_{\textrm{embd}}:=\frac{d}{6}$.
Therefore, it is natural to consider UU in $H^s$ only for $s\ge \max \shugo{\frac{d}{2}-1,\,\frac{d}{6}}$.

In the literature, the notion of UU is sometimes used for uniqueness in $L^\infty ((0,T);B)=:L^\infty _TB$.
In most of the applications, however, uniqueness in $C_TB$ implies that in $L^\infty _TB'$ for a slightly smaller space $B'$, as we see below in the NLS example:
Let $u\in L^\infty _TH^s(\mathbb{R}^d)$ be a solution to the cubic NLS for some $s\geq s_{\textrm{embd}}$.
By the equation and the Sobolev embedding, $u$ belongs to $W^{1,\infty}_TH^{s_0}$ for any $s_0$ satisfying $s_0\leq s-2$ and $s_0<-\frac{d}{2}$, and in particular (after modifying it on a set of measure zero) $u\in C_TH^{s_0}$.
By interpolation, we deduce that $u\in C_TH^{s'}$ for any $s'<s$.
Therefore, uniqueness of solutions in $L^\infty _TH^s$ follows once we have uniqueness in $C_TH^{s'}$ for some $s'<s$.
Based on this observation, in the present article we intend to consider uniqueness in the class $C_TB$ rather than $L^\infty _TB$.%
\footnote{We point out that uniqueness in $L^\infty_TH^{s}$ would not be obtained in this way at the lowest regularity $s=\max \{ s_{\textrm{scl}},\,s_{\textrm{embd}}\}$.
In fact, uniqueness in the larger class $L^\infty_TH^s$ at the critical regularity is known to be a delicate issue; see, e.g., \cite[Remark~1.2 (iii)]{ST08}.}

There are many results on UU in the non-periodic case (\mbox{i.e.}, the Cauchy problem on $\R ^d$).
For NLS (with general power-type nonlinearities), the first result of T. Kato \cite{K95} has been improved by Furioli and Terraneo \cite{FT03}, Rogers \cite{R07}, Win and Tsutsumi \cite{ST08}, Han and Fang~\cite{HF13}.
These results settled the UU problem for most of $s\ge \max \shugo{s_{\textrm{scl}},\,s_{\textrm{embd}}}$. 
For other equations, see, e.g., Zhou \cite{Z97} (the KdV equation), Win \cite{S08} (the cubic derivative NLS), and Masmoudi and Nakanishi \cite{MN09} (the Zakharov system). 

Compared to the non-periodic case, the study of UU in the periodic setting had been less developed.
However, in the last decade, several results have been obtained by successive applications of integration by parts (or differentiation by parts) in the time variable.%
\footnote{For results on UU in the periodic setting by a different approach, we mention the recent works of Chen, Holmer~\cite{CH19} and of Herr, Sohinger~\cite{HS19} on NLS based on the analysis of the Gross-Pitaevskii hierarchy.}
This technique has an underlying idea --- exploiting nonlinear smoothing effects due to the time oscillation of the non-resonant interactions --- in common with the Fourier restriction method, whereas it does not need any auxiliary space and thus is suitable for UU.
This method can also be regarded as a variant of the Poincar\'e-Dulac \emph{normal form reduction} (NFR); we refer to \cite{A88} for details of the Poincar\'e-Dulac NFR. 
For the KdV equation Babin et \mbox{al.} \cite{BIT11} obtained the result by applying NFR, which was followed by Kwon and Oh \cite{KO12} (the modified KdV equation), T.K. Kato and Tsugawa \cite{KTp} (the fifth order KdV-type equations), and the author \cite{K-BOp} (the Benjamin-Ono equation).

The result of Guo et \mbox{al.} \cite{GKO13} on one-dimensional periodic cubic NLS was a breakthrough in this direction.
It is worth noticing that they had to invoke NFR \emph{infinitely} many times to make all the nonlinear estimates closed in $C_TH^s$, in contrast to the previous works for the KdV-type equations in which, despite of the derivative losses in the nonlinearities, the results were obtained by applying such integration-by-parts procedure \emph{finitely} many times.
Such a difference comes from the difference of resonance structure between the NLS and the KdV type equations.
This technique of unlimitedly iterating NFR introduced in \cite{GKO13} has been motivating many studies on UU; \cite{CGKO17} and \cite{OWp} for instance, and adaptation of the technique to the non-periodic setting was achieved in \cite{P19} and \cite{KOY20},%
\footnote{%
To be precise, an adaptation of infinite NFR technique to the non-periodic setting had appeared first in the Ph.D.~thesis \cite{Yth} of the third author of~\cite{KOY20}, which was later refined and announced as \cite{KOY20}, while the study in \cite{P19,CHKP19a} had been started separately from these works.
A difference from the periodic case appears in justification of some formal calculations for rough functions.
In particular, they needed to justify the use of the product rule 
\[ \p _t\big[ \hat{v}(t,\xi _1)\hat{\bar{v}}(t,\xi _2)\hat{v}(t,\xi _3)\big] = (\p_t\hat{v})(t,\xi _1)\hat{\bar{v}}(t,\xi _2)\hat{v}(t,\xi _3)+\hat{v}(t,\xi _1)(\p _t\hat{\bar{v}})(t,\xi _2)\hat{v}(t,\xi _3)+\hat{v}(t,\xi _1)\hat{\bar{v}}(t,\xi _2)(\p_t\hat{v})(t,\xi _3) \]
inside the integral over $\xi _j$'s for a general function $v$ satisfying $\p _tv\in C_tL^1(\R)$. 
(Here, $\hat{v}$ denotes the spatial Fourier transform of $v$.)
In their situation, as explained in~\cite{Yth,KOY20}, the above identity holds in the classical sense because $\p _tv\in C_tL^1(\R )$ implies $\hat{v}(\cdot ,\xi )\in C^1_t$ for each $\xi \in \R$. 
We will consider in Appendix~\ref{appendix:R} how and in what sense the above calculation can be justified in the situations where $\p _tv\in C_tL^1$ does not hold in general.} 
which were followed by \cite{CHKP19a,CHKP19b}, and \cite{MYp}.

We notice that the previous studies mentioned above are restricted to a few specific equations such as the cubic NLS and the modified KdV equations,%
\footnote{A certain quadratic derivative NLS was studied in \cite{CGKO17} exploiting its special structure.
In \cite{MYp} the cubic derivative NLS on $\R$ was studied; as the authors mentioned, their result was built upon a former version of the present article concerning the same problem on $\T$.}
all in one dimension.
There are many potential difficulties in this machinery.
Some of them are as follows:
\begin{enumerate}
\item[(a)] Each application of NFR will produce higher and higher order nonlinear terms.
For instance, in the case of cubic NLS, nonlinear terms of order $2k+3$ will appear after the $k$-th application of NFR.
Then, one needs to establish multilinear estimates with higher and higher degrees of nonlinearities. 
\item[(b)] As the degree of nonlinearities increases, resonance structure becomes different and more and more complicated. 
Since NFR can be applied only to the \emph{non-resonant} part of nonlinear terms, one cannot neglect keeping track of varying resonance structure.
\item[(c)] The number of terms after the $k$-th NFR grows in a factorial order $(k!)^C$, which is faster than an exponential order $C^k$.
\item[(d)] One has to justify the limiting procedure of ``applying NFR indefinitely'', namely, find the \emph{limit equation} and show that any distributional solution of the original equation in $C_TH^s$ is also a solution of it.
\end{enumerate}
Guo et \mbox{al.} \cite{GKO13} could deal with the above difficulties for the simplest NLS, \mbox{i.e.}, in the one-dimensional cubic case, by explicitly writing down all the nonlinear terms and making delicate resonance/non-resonance decompositions of them.
Since their proof was highly dependent on simplicity of the equation, it is by no means easy to adapt their argument to more general settings, even to the \emph{two-dimensional} cubic NLS.

\medskip
In the present article, we aim to generalize the infinite NFR machinery so that it can be applied to a wide range of nonlinear dispersive equations.
Our main result, as stated below, gives two different criteria for the infinite NFR machinery to work.
Each of them consists of several simple multilinear estimates of the lowest degree, and we can show that these estimates are actually enough to yield all the required higher-degree multilinear estimates by an induction on the degree, and also enough to justify the limit equation.
Such an idea of reducing all the matters to several ``fundamental estimates'' has recently been demonstrated for some specific equations by Kwon et \mbox{al.}~\cite{KOY20}, while we realize it in an abstract framework.

%\subsection{Main results}
To state the main theorem, let us concentrate on the periodic case $x\in \T ^d:=(\R /2\pi \Bo{Z})^d$.
By the Fourier series expansion, we move to the frequency space and consider the following abstract equation:
\eq{Abs}{\p _t\om _n(t)=\sum _{n=n_1+\dots +n_p}e^{it\phi}m\om_{n_1}(t)\om_{n_2}(t)\cdots \om_{n_p}(t)+\Sc{R}[\om ]_n(t),\quad n\in \Bo{Z}^d,}
where $p\geq 2$ is the degree of (the principal part of) the nonlinearity, $\phi =\phi (n,n_1,\dots ,n_p)\in \R$ denotes the phase part, $m=m(n,n_1,\dots ,n_p)\in \Bo{C}$ is the multiplier part, and $\Sc{R}[\om ]$ is the remainder part.
For example, the KdV equation 
\[ \p _tu+\p _x^3u=\p _x(u^2),\qquad (t,x)\in [0,T]\times \T\]
is, by setting $\om _n(t):=\frac{1}{2\pi}\int _{0}^{2\pi}[U(-t)u(t)](x)e^{-inx}\,dx$ with $U(t)=e^{-t\p_x^3}$ being the propagator for the Airy equation, equivalent to 
\[ \p _t\om _n(t)=in\sum _{n=n_1+n_2}e^{it(n^3-n_1^3-n_2^3)}\om_{n_1}(t)\om_{n_2}(t),\qquad (t,n)\in [0,T]\times \Bo{Z}.\]
This is of the form \eqref{Abs} with $p=2$, $\phi =n^3-n_1^3-n_2^3$, $m=in$, and $\Sc{R}=0$.
In such a way, nonlinear dispersive equations can be represented as \eqref{Abs} if the nonlinearity is a polynomial in $u$, $\bar{u}$ and derivatives of them with constant coefficients.
The initial data $\om _n(0)$ is now given in weighted $\ell ^2$ spaces, $\ell ^2_s(\mathbb{Z}^d)$, instead of $H^s(\T^d)$: 
\[ \ell ^2_s(\mathbb{Z}^d):=\LR{\cdot }^{-s}\ell ^2(\Bo{Z}^d),\quad \tnorm{\om}{\ell ^2_s}:=\tnorm{\LR{\cdot }^s\om }{\ell ^2}\quad (s\in \R );\qquad \LR{\cdot} :=(1+|\cdot |^2)^{\frac{1}{2}}.\] 
UU for the original equation in $H^s$ is now replaced with that for \eqref{Abs} in $\ell ^2_s$.

We say $\om \in C_T\ell ^2_s$ is a solution to the Cauchy problem associated with \eqref{Abs} if the right-hand side of \eqref{Abs} is well-defined as a (temporal) distribution and it satisfies \eqref{Abs} in $\Sc{D}'((0,T))$ for each $n\in \Bo{Z}^d$, with its value at $t=0$ being the same as the given initial datum.

The following is our main theorem:
\begin{thm}\label{thm:abstract}
Let $s\in \R$ and $T>0$.
Assume that $\Sc{R}[\om ]\in C_T\ell ^2_s$ for any $\om \in C_T\ell ^2_s$ and it holds 
\[
(R)\qquad \left\{
\begin{aligned}
\norm{\Sc{R}[\om ]}{C_T\ell ^2_s}&\le C\big( \tnorm{\om}{C_T\ell ^2_s}\big) ,\\
\quad \norm{\Sc{R}[\om ]-\Sc{R}[\ti{\om}]}{C_T\ell ^2_s}&\le C\big( \tnorm{\om}{C_T\ell ^2_s},\,\tnorm{\ti{\om}}{C_T\ell ^2_s}\big) \tnorm{\om-\ti{\om}}{C_T\ell ^2_s}.
\end{aligned}
\right. 
\]
Assume further that for some Banach space%
\footnote{In the applications discussed in Sections \ref{section:cNLS}--\ref{section:Z}, we always take $X$ to be a suitable weighted $\ell ^p$ space.}
$X$ of functions on $\Bo{Z}^d$ with the property
\eq{absolutenorm}{|\om _n|\le |\ti{\om}_n|\quad (n\in \Bo{Z}^d)\qquad \Longrightarrow \qquad \tnorm{\om}{X}\le C\tnorm{\ti{\om}}{X},}
we have one of the following $[A]$, $[B]$:

$[A]$ There exists $\de \in (0,\frac{1}{2})$ such that 
\eqq{&(A1)\quad \norm{\sum _{n=n_1+\dots +n_p}\frac{|m|}{\LR{\phi}^{1/2}}\om ^{(1)}_{n_1}\cdots \om^{(p)}_{n_p}}{\ell ^2_s}\le C\prod _{j=1}^p\norm{\om ^{(j)}}{\ell ^2_s},\\[5pt]
&(A2)\quad \norm{\sum _{n=n_1+\dots +n_p}\frac{|m|}{\LR{\phi}^{1-\de}}\om ^{(1)}_{n_1}\cdots \om ^{(p)}_{n_p}}{X}\le C\min _{1\le j\le p}\Big[ \norm{\om ^{(j)}}{X}\prod _{\mat{l=1\\ l\neq j}}^p\norm{\om ^{(l)}}{\ell ^2_s}\Big] ,\\[5pt]
&(A3)\quad \norm{\sum _{n=n_1+\dots +n_p}|m|\om ^{(1)}_{n_1}\cdots \om ^{(p)}_{n_p}}{X}\le C\prod _{j=1}^p\norm{\om ^{(j)}}{\ell ^2_s}.
}

$[B]$ There exist $s_1,s_2\in \R$ satisfying $s_1<s<s_2$ such that
\eqq{&(B1)\quad \sup _{\mu \in \Bo{Z}}\norm{\sum _{\mat{n=n_1+\dots +n_p\\ \mu \le \phi <\mu +1}}|m|\om ^{(1)}_{n_1}\cdots \om ^{(p)}_{n_p}}{\ell ^2_{s_1}}\le C\prod _{j=1}^p\norm{\om ^{(j)}}{\ell ^2_{s_1}},\\[5pt]
&(B1)'\quad \norm{\sum _{n=n_1+\dots +n_p}|m|\om ^{(1)}_{n_1}\cdots \om ^{(p)}_{n_p}}{\ell ^2_{s_2}}\le C\prod _{j=1}^p\norm{\om ^{(j)}}{\ell ^2_{s_2}},\\[5pt]
&(B2)\quad \sup _{\mu \in \Bo{Z}}\norm{\sum _{\mat{n=n_1+\dots +n_p\\ \mu \le \phi <\mu +1}}|m|\om ^{(1)}_{n_1}\cdots \om ^{(p)}_{n_p}}{X}\le C\min _{1\le j\le p}\Big[ \norm{\om ^{(j)}}{X}\prod _{\mat{l=1\\ l\neq j}}^p\norm{\om ^{(l)}}{\ell ^2_{s_1}}\Big] ,\\[5pt]
&(B2)'\quad \norm{\sum _{n=n_1+\dots +n_p}|m|\om ^{(1)}_{n_1}\cdots \om ^{(p)}_{n_p}}{X}\le C\min _{1\le j\le p}\Big[ \norm{\om ^{(j)}}{X}\prod _{\mat{l=1\\ l\neq j}}^p\norm{\om ^{(l)}}{\ell ^2_{s_2}}\Big] ,\\[5pt]
&(B3)\quad \norm{\sum _{n=n_1+\dots +n_p}|m|\om ^{(1)}_{n_1}\cdots \om ^{(p)}_{n_p}}{X}\le C\prod _{j=1}^p\norm{\om ^{(j)}}{\ell ^2_s}.\qquad \textup{(same as (A3))}
}

Then, there is at most one solution%
\footnote{The estimate $(A3)$ (or $(B3)$) ensures that the right-hand side of \eqref{Abs} is well-defined as a bounded function in $t$ for each $n\in \mathbb{Z}^d$ whenever $\om \in C_T\ell ^2_s$.} 
for the Cauchy problem associated with \eqref{Abs} in $C_T\ell ^2_s$.
\end{thm}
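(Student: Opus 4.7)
The plan is to run the infinite Poincar\'e-Dulac normal form reduction (NFR) on the difference of two hypothetical solutions. Let $\om,\ti\om\in C_T\ell^2_s$ be solutions of \eqref{Abs} with the same initial data, and set $v:=\om-\ti\om$, so $v(0)=0$. By multilinearity of the principal part, $v$ satisfies
\[
v_n(t)=\sum_{k=1}^p\int_0^t\sum_{n=n_1+\cdots+n_p}e^{is\phi}m\,u^{(1)}_{n_1}\cdots v_{n_k}(s)\cdots u^{(p)}_{n_p}\,ds+\int_0^t\big(\Sc{R}[\om]-\Sc{R}[\ti\om]\big)_n\,ds,
\]
where each $u^{(j)}\in\{\om,\ti\om\}$ and the remainder is controlled by $(R)$. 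For each term I would split the frequency sum at $|\phi|\lesssim 1$: the resonant part is bounded directly using $(A3)$ (or $(B3)$), while on the non-resonant part I would integrate by parts in time,
\[
\int_0^t e^{is\phi}F(s)\,ds=\frac{e^{it\phi}F(t)-F(0)}{i\phi}-\int_0^t\frac{e^{is\phi}}{i\phi}\,\p_sF(s)\,ds,
\]
and substitute \eqref{Abs} into each $\p_s u^{(j)}_{n_j}$ appearing in $\p_sF$. This produces new multilinear terms of degree $2p-1$, and the procedure is iterated.

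After $K$ NFR passes, $v(t)$ is a finite sum of multilinear expressions indexed by rooted trees of depth $\leq K$: interior vertices correspond to NFR steps and carry factors $m/(i\phi)$ with phase and frequency constraints among their children, while leaves carry $\om$, $\ti\om$, or $v$. The terms come in three flavors: boundary contributions evaluated at $t$, low-phase residuals from the resonant cutoff at each level, and an as-yet-unreduced $(K+1)$-fold iterated tail. I would establish by induction on depth a uniform multilinear bound of the schematic form
\[
\Big\|\sum_{\text{depth-}K\text{ trees}}(\text{term})\Big\|_{C_T\ell^2_s}\leq (CT)^K\,\|v\|_{C_T\ell^2_s}\big(\|\om\|_{C_T\ell^2_s}+\|\ti\om\|_{C_T\ell^2_s}\big)^{N_K},
\]
where the factor $T^K/K!$ coming from $K$ iterated time integrations cancels the factorial tree count. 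The three base estimates in $[A]$ supply the inductive step: $(A1)$ controls the outermost boundary vertex (gaining $\LR{\phi}^{-1/2}$) directly in $\ell^2_s$; $(A2)$ controls each interior vertex (gaining $\LR{\phi}^{-(1-\de)}$) in the auxiliary space $X$, with the $X$-norm attached to one designated child; and $(A3)$ converts the $X$-output of a subtree whose leaves lie in $\ell^2_s$ into the corresponding $\ell^2_s$-type product, closing the chain. Under hypothesis $[B]$ the same induction runs using the level-set form of the estimates together with interpolation between $s_1<s<s_2$. The absolute-value property \eqref{absolutenorm} of $X$ is exactly what is needed to manipulate the $X$-norms factor-by-factor inside multilinear sums.

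Letting $K\to\infty$, the $(K+1)$-fold tail vanishes in $C_T\ell^2_s$ for $T$ small, and the surviving boundary and low-phase contributions assemble into an absolutely convergent limit expansion
\[
v_n(t)=\sum_{\text{trees}}\big(\text{convergent multilinear term in }v,\om,\ti\om\big)+\int_0^t\big(\Sc{R}[\om]-\Sc{R}[\ti\om]\big)_n\,ds.
\]
Summing the uniform bounds together with $(R)$ yields $\|v\|_{C_T\ell^2_s}\leq \eta(T)\,\|v\|_{C_T\ell^2_s}$ with $\eta(T)\to 0$ as $T\to 0$, forcing $v\equiv 0$ on an initial subinterval $[0,T_*]$; a standard bootstrap on successive subintervals (the norms $\|\om\|_{C_T\ell^2_s}$, $\|\ti\om\|_{C_T\ell^2_s}$ being uniformly finite) extends uniqueness to the full interval $[0,T]$.

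The hard part will be the combinatorial bookkeeping underlying the induction: at depth $K$ the product rule can act on any of $O(pK)$ previously produced legs at each NFR step, so the number of distinct trees grows like $(Cp)^K K!$, and the per-term smallness must beat this factorial. This is precisely why the strict surplus in the hypotheses ($\de<\tfrac{1}{2}$ in $(A2)$, $s_1<s<s_2$ in $[B]$) and the auxiliary space $X$ satisfying \eqref{absolutenorm} are both indispensable: the strict phase gain provides the geometric factor that defeats the factorial, while $X$ lets us deposit the full resonance weight at each NFR vertex onto a single designated factor rather than spreading it symmetrically across all $p$ legs, so every interior multilinear estimate can be re-closed without accumulated derivative losses as the depth grows.
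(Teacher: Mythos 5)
Your sketch captures the broad architecture — infinite NFR, tree indexing, the tripartite role of the fundamental estimates, limit equation, bootstrap — and it is a fine alternative to work directly with the difference $v=\om-\ti\om$ rather than first deriving the limit equation for each solution and then taking differences (which is what the paper does, using the Lipschitz bounds of Proposition~\ref{prop:abstract}). But there is a genuine error in the combinatorial mechanism you invoke to kill the factorial tree count.

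You claim that "the factor $T^K/K!$ coming from $K$ iterated time integrations cancels the factorial tree count." No such factor exists in this scheme. Integration by parts in time, applied at each NFR step, does not produce nested integrals $\int_0^t\int_0^{s_1}\cdots$; it returns a boundary term evaluated at $t$ and $0$ plus a \emph{single} new $\int_0^t$ of the higher-degree integrand. After $J$ passes you still have exactly one time integral, so there is no $T^J/J!$ to deploy, and a merely geometric gain $\eta^J$ from "strict phase surplus," as you put it, cannot beat $\#\mathfrak{T}(J)\sim(p-1)^J J!$. The paper's actual device is the \emph{nested} non-resonance cutoffs $\Phi^J_{N\!R}=\{|\phi^1|>16M,\ |\ti\phi^j|>16|\ti\phi^{j-1}|\ (2\le j\le J)\}$ (in Case $[A]$) and the analogous dyadic structure in Case $[B]$. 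Your threshold "$|\phi|\lesssim 1$" is not the right object; the split must be on the \emph{accumulated} phase $\ti\phi^j=\phi^1+\cdots+\phi^j$ compared against a geometrically growing multiple of $\ti\phi^{j-1}$ (and against a tunable $M$, not $1$, at the first step). This matters for two independent reasons. First, $(A1)$ and $(A2)$ give a gain only in the \emph{new} phase $\phi^j$, not $\ti\phi^j$; the nesting is exactly what lets one trade $\prod_j|\ti\phi^j|$ for $\prod_j\LR{\phi^j}^{\alpha}$ (with $\alpha=\tfrac12$ or $1-\de$), so that iterating the $p$-linear estimates closes. Second, and crucially for the factorial, the nesting forces $|\ti\phi^j|\ge 16^{j}M$, whence $\prod_{j\le J}|\ti\phi^j|\gtrsim 16^{J(J+1)/4}M^{J/2}\prod\LR{\phi^j}^{1/2}$, producing the super-exponential factor $2^{-J(J+1)}$ (or $4^{-\de J(J+1)}$) that dominates $J!$. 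Without this precise nested structure the induction you outline does not close. A secondary omission: you never address justifying the integration by parts, Fubini interchange, and termwise differentiation for a general $\om\in C_T\ell^2_s$; the paper does this carefully (it is where $(A3)$, the continuity of $\mathcal{N}^{(1)}[\om]_n$ in $t$, and the uniform-in-$t$ absolute convergence of the tree sums are used), and this cannot be waved away, since a rough solution need not be a limit of smooth solutions.
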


In the previous works concerning \emph{specific} equations, the following tasks were carried out to establish UU in their \emph{specific} contexts: (a) To find suitable ``fundamental estimates'', (b) to prove them, (c) to deduce required multilinear estimates of higher degrees from these fundamental estimates, and (d) to operate the infinite NFR machinery using these multilinear estimates.
When adapting this argument to a different equation, these works can be useful guidelines on one hand, but one still has to verify (a)--(d) in the \emph{new} context on the other hand.

The point of our result is that it completely automates the processes (a), (c), and (d) in an \emph{abstract} setting.%
\footnote{Precisely, part of (a) still remains under control; there is freedom on how to choose the remainder part $\Sc{R}[\om ]$ in the equation \eqref{Abs}, and when necessary one can adjust it so that the ``fundamental estimates'' may become easier (or possible) to verify.}
In particular, given our theorem, the goal for proving UU is simply to check these ``fundamental estimates'' to hold (i.e., the task (b)), and one may forget about the subsequent NFR procedure (i.e., (c) and (d)).
This reduction will surely make the method of NFR for establishing UU more accessible to a broad audience.

Our result, which proposes a criterion for UU in an abstract setting, is essentially different from the previous results, each of which proved UU directly in a specific context.
Of course, such an abstract framework will not be useful to problems for which the criterion is difficult to check (i.e., the imposed ``fundamental estimates'' are too strong).
However, our framework turns out to be surprisingly robust and have wide applicability, as can be seen from various applications to be given in later sections.

\begin{rem}\label{rem:condAB}
Here are some comments on the conditions $[A]$, $[B]$.

(i) The condition $[B]$ was originally discovered through refining the idea of \cite{GKO13} for one-dimensional cubic NLS, and typically it is effective for equations with nonlinearities in which derivative loss does not occur.
On the other hand, the condition $[A]$ seems new, and it keeps a certain negative power of the modulation factor $\phi$ so that it can be used for the nonlinearities with derivative losses.

(ii) By $(A1)$, we impose the condition that a \emph{half} power of $\phi$ should be sufficient to control the nonlinearity in $\ell ^2_s$.
This assumption may seem to be unreasonably restrictive; in fact, nonlinear terms after the first application of NFR have an entire power of $\phi$ in the denominator (see, e.g., $\Sc{N}^{(1)}_0[\om]$ in \eqref{eq:2nd} below), and what we have assumed seems fairly stronger than required for estimating these terms.
However, in order to control every terms arising through the infinite NFR procedure only by using the fundamental estimates, we need to deal with varying resonance structures in a unified manner.
We will see in the proof that the sets of fundamental estimates $[A]$, $[B]$ are in fact suitable for this purpose.

(iii) The estimate $(A1)$, which uses half of the modulation factor, has a remarkable similarity to the standard multilinear estimate in Fourier restriction spaces (Bourgain spaces):
\eq{est:xsb}{\norm{N[u_1,\dots ,u_p]}{X^{s,-\frac{1}{2}+}}\lec \prod _{l=1}^p\tnorm{u_l}{X^{s,\frac{1}{2}+}},}where $\tnorm{u}{X^{s,b}}:=\norm{U(-t)u(t)}{H^b_tH^s_x}$, $U(t)$ is the linear propagator, and $N[u,\dots ,u]$ denotes the nonlinearity (power type of order $p$). 
In fact, if we set $\om =\F [U(-t)u(t)]$ (where $\F$ and $\F ^{-1}$ denote the spatial Fourier transformation and its inverse, respectively) and define $\phi$, $m$ so that
\[ \sum _{n=n_1+\cdots +n_p}e^{it\phi}m\om _{n_1}\cdots \om _{n_p}~=~\F U(-t)N[u,\dots ,u],\]
a familiar argument using the elementary inequality
\[ \int _{\R}\frac{d\tau}{\LR{\tau -a}^{1+}\LR{\tau -b}^{1+}}\lec \frac{1}{\LR{a-b}^{1+}}\]
reduces the estimate \eqref{est:xsb} to the bounds on weights such as
\[ \sup _{n}\sum _{n=n_1+\cdots +n_p}\frac{|m|^2\LR{n}^{2s}}{\LR{\phi}^{1-}\LR{n_1}^{2s}\cdots \LR{n_p}^{2s}}<\I ,\]
while $(A1)$ is reduced by the Cauchy-Schwarz inequality to almost the same statement
\[ \sup _{n}\sum _{n=n_1+\cdots +n_p}\frac{|m|^2\LR{n}^{2s}}{\LR{\phi}\LR{n_1}^{2s}\cdots \LR{n_p}^{2s}}<\I .\]
In this respect, it is reasonable to expect that, in the case where \emph{conditional} well-posedness is shown by a fixed point argument in Bourgain spaces, one may show $(A1)$ by almost the same argument, and then $(A2)$ is essentially the only additional  condition for \emph{unconditional} uniqueness.
(Note that in many cases $(A3)$ follows simply from embedding estimates.)
This will be demonstrated in applications to the cubic derivative NLS and the Zakharov system in Sections~\ref{section:DNLS} and \ref{section:Z} below. 
\end{rem}

\begin{rem}\label{rem:thm2}
Let us make some additional remarks on Theorem~\ref{thm:abstract}.

(i) The remainder part $\Sc{R}[\om ]$ in \eqref{Abs} basically includes easily controlled terms or the specific part of the main term which is in itself easily estimated but causes trouble in establishing the multilinear estimates $(A1)$--$(A3)$ or $(B1)$--$(B3)$ if it remains in the main term.
It is sometimes important to detect such a problematic part in the main term and put it in $\Sc{R} [\om ]$ before carrying on NFR.
We see such an example in Section~\ref{section:FNLS}.

(ii) The normal form reduction is effective to extract nonlinear smoothing effect in non-resonant interactions; while it does not work if there exist resonant interactions with derivative losses.
In such a case, however, one may apply the framework to a certain equivalent equation in which resonant interactions are removed or become tamer.
For example, this is the case for the cubic derivative NLS equation with a suitable gauge transform applied; see Section~\ref{section:DNLS}.

(iii) It is straightforward to extend the result to the problem \eqref{Abs} posed on a rescaled lattice $\lambda _1^{-1}\mathbb{Z}\times \cdots \times \lambda _d^{-1}\mathbb{Z}$ for any $\lambda _1,\dots ,\lambda_d>0$, which corresponds to nonlinear dispersive equations posed on a rescaled torus $(\R /2\pi \lambda_1\mathbb{Z})\times \cdots \times (\mathbb{R}/2\pi \lambda _d\mathbb{Z})$.
One can also easily formulate analogous statements for systems of equations and in the case of multiple (principal) nonlinear terms (for which we need to assume the same one of the conditions $[A]$, $[B]$ for all terms).
We will see in Section~\ref{subsec:multiple} how the proof should be modified for these cases.
See Section~\ref{section:Z} for an application of the framework to a system.

(iv) As done in the aforementioned works, one can adapt the infinite NFR scheme to the non-periodic setting, which requires an additional care in justification of formal calculations.
Our result also extends to the non-periodic case; we will see in Appendix~\ref{appendix:R} the idea on how to make such justification in the non-periodic case.

(v) In the theorem, we are concerned with UU for \eqref{Abs} in the space $\ell ^2_s$, which corresponds to UU for nonlinear dispersive equations in Sobolev space $H^s$.
In fact, the same argument works if $\ell^2_s$ is replaced with any Banach space $Y$ of functions on $\mathbb{Z}^d$ satisfying the property \eqref{absolutenorm}.%
\footnote{See Remark~\ref{rem:Bgeneral} below on how the spaces $\ell ^2_{s_1}$ and $\ell^2_{s_2}$ should be changed in the assumption $[B]$.} 
In particular, our framework can also be used to prove UU for nonlinear dispersive equations in some different scales such as Fourier-Lebesgue spaces, ($L^2$-based) Besov and modulation spaces.
We note that the NFR method has already been used in these settings different from $H^s$; see, e.g., \cite{CGKO17,P19,OWp}.
\end{rem}

Let us briefly see how the infinite NFR machinery proceeds with the above fundamental estimates.
All of required (infinitely many) multilinear estimates are obtained inductively by using these fundamental estimates. 
The estimate $(A1)$ or $(B1)+(B1)'$ (together with $(R)$) is the main tool to obtain $\ell^2_s$-control for all the nonlinear terms in each NFR step, except for one term which is rougher than the others.
Then $(A2)$ or $(B2)+(B2)'$ (with $(R)$) enables us to show that this term vanishes in ``weaker'' $X$-norm in the limit equation.
It is essential in the proof of unconditional uniqueness to notice that one cannot rely on approximation by smooth solutions; one needs to justify every formal calculation for a solution in $C_T\ell^2_s$ directly without approximation (by solutions), because a general solution in $C_T\ell^2_s$ is not necessarily approximated by smooth solutions. 
However, this can be done by using $(A2)+(A3)$ or $(B2)+(B2)'+(B3)$ (with $(R)$).

\medskip
It was observed in \cite{GKO13} and subsequent works (e.g., \cite{P19,KOY20,OWp,CHKP19b}) that the infinite NFR scheme can be used to construct a certain kind of weak solutions for rough initial data by approximating with smooth solutions.
For this purpose, it is enough to establish various estimates only on smooth solutions.
In particular, one does not need estimates in ``weaker'' space (prepared to justify formal calculations for rough solutions).
In our abstract setting, assuming $(R)+(A1)$ or $(R)+(B1)+(B1)'$ is basically sufficient for such a use of NFR.
More precisely, we can show the following result:
\begin{thm}\label{thm:weak'}
Let $s\in \R$.
Assume that for any $s'\ge s$ and $T>0$, $\Sc{R}[\om]\in C_T\ell^2_{s'}$ if $\om \in C_T\ell^2_{s'}$ and%
\footnote{We note that the constant in the first estimate of $(R)'$ should not depend on the $C_T\ell ^2_{s'}$ norm, while it is allowed for the second estimate.
This requirement seems reasonable when $\mathcal{R}[\om]$ comes from power-type nonlinearities.} 
\[ (R)'\qquad \left\{
\begin{aligned}
\norm{\Sc{R}[\om ]}{C_T\ell ^2_{s'}}&\le C\big( s',\tnorm{\om}{C_T\ell ^2_s}\big) \tnorm{\om}{C_T\ell ^2_{s'}},\\
~\norm{\Sc{R}[\om ]-\Sc{R}[\ti{\om}]}{C_T\ell ^2_{s'}}&\le C\big( s',\tnorm{\om}{C_T\ell ^2_{s'}},\,\tnorm{\ti{\om}}{C_T\ell ^2_{s'}}\big) \tnorm{\om-\ti{\om}}{C_T\ell ^2_{s'}}.
\end{aligned}
\right. %}
\]
Moreover, assume one of the following $[A]'$, $[B]'$:%
\footnote{The estimates $(A1)$, $(B1)$, $(B1)'$ are the same as those in Theorem~\ref{thm:abstract}.}

$[A]'$ There exists $s_2>s$ such that
\eqq{
&(A1)\qquad \norm{\sum _{n=n_1+\dots +n_p}\frac{|m|}{\LR{\phi}^{1/2}}\om ^{(1)}_{n_1}\cdots \om^{(p)}_{n_p}}{\ell^2_s}\le C\prod _{j=1}^p\norm{\om ^{(j)}}{\ell^2_s},\\
&(A1)'\qquad \norm{\sum _{n=n_1+\dots +n_p}|m|\om ^{(1)}_{n_1}\cdots \om^{(p)}_{n_p}}{\ell^2_s}\le C\prod _{j=1}^p\norm{\om ^{(j)}}{\ell^2_{s_2}}.
}

$[B]'$ There exist $s_1<s$ and $s_2>s$ such that
\eqq{
&(B1)\qquad \sup _{\mu \in \Bo{Z}}\norm{\sum _{\mat{n=n_1+\dots +n_p\\ \mu \le \phi <\mu +1}}|m|\om ^{(1)}_{n_1}\cdots \om^{(p)}_{n_p}}{\ell^2_{s_1}}\le C\prod _{j=1}^p\norm{\om ^{(j)}}{\ell^2_{s_1}},\\
&(B1)'\qquad \norm{\sum _{n=n_1+\dots +n_p}|m|\om ^{(1)}_{n_1}\cdots \om^{(p)}_{n_p}}{\ell^2_{s_2}}\le C\prod _{j=1}^p\norm{\om ^{(j)}}{\ell^2_{s_2}}.
}

Then, for any $\om _0\in \ell ^2_{s}$ there exist $T>0$ depending on $\tnorm{\om_0}{\ell ^2_s}$ and a weak solution $\om \in C_T\ell ^2_s$ to \eqref{Abs} with $\om (0)=\om _0$.
We also have continuous dependence on initial data and persistence of regularity for this weak solution.
\end{thm}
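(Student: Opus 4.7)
I plan to construct $\om$ as a limit of smooth solutions to a regularized version of \eqref{Abs}, with the uniform bound in $\ell ^2_s$ provided by the infinite normal form reduction (NFR) scheme of Theorem~\ref{thm:abstract}. The regularization consists in frequency-truncating the nonlinearity in \eqref{Abs} by restricting the sum to $|n_1|,\dots ,|n_p|\le N$. For the truncated equation the nonlinearity is a bounded multilinear map on every $\ell ^2_{s'}$, so Picard iteration together with $(R)'$ produces local smooth solutions $\om ^{(N)}$ at any regularity; I arrange the initial data so that $\om ^{(N)}(0)\to \om _0$ in $\ell ^2_s$ with $\tnorm{\om ^{(N)}(0)}{\ell ^2_s}\le 2\tnorm{\om _0}{\ell ^2_s}$.

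\textbf{Uniform a priori bound via NFR.} The core step is to prove
\[ \tnorm{\om ^{(N)}}{C_T\ell ^2_s}\le C\big( \tnorm{\om _0}{\ell ^2_s}\big) \]
on a common interval $[0,T]$ with $T=T\big( \tnorm{\om _0}{\ell ^2_s}\big) >0$ independent of $N$. Since $\om ^{(N)}$ is smooth, every formal integration by parts in time and every rearrangement of sums in the infinite NFR scheme is automatically justified, so one does not need the $X$-norm estimates $(A2),(A3)$ or $(B2),(B2)',(B3)$ of Theorem~\ref{thm:abstract}. I would run the same induction on the degree of the nonlinearity as in that theorem, but only at the $\ell ^2_s$ level, using solely $(A1)$ (respectively $(B1)$ together with $(B1)'$) and $(R)'$, in order to generate $\ell ^2_s$ multilinear bounds for each of the infinitely many terms produced by iterated NFR. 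Summed over the NFR expansion, these close up to yield the displayed bound, with the rapid growth in the number of terms at each step absorbed by the smallness of $T$. The bound simultaneously extends each $\om ^{(N)}$ to the common interval $[0,T]$.

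\textbf{Passage to the limit.} Equipped with the uniform $\ell ^2_s$ bound, equicontinuity of $\om ^{(N)}$ in time in the weaker norm $\ell ^2_{s'}$ ($s'<s$) follows from estimating $\p _t\om ^{(N)}$ via the equation, and an Arzel\`a--Ascoli plus diagonal argument extracts a subsequence converging in $C_T\ell ^2_{s'}$ to some $\om \in C_T\ell ^2_s$ (the $\ell ^2_s$-regularity is inherited from the uniform bound). To identify $\om$ as a weak solution, I would pass to the limit in the NFR-reduced form of \eqref{Abs} rather than in \eqref{Abs} itself, because in the absence of $(A3)$ or $(B3)$ the right-hand side of \eqref{Abs} need not define a temporal distribution on a general element of $C_T\ell ^2_s$; each term in the NFR expansion, however, is continuous in $\om$ with respect to the mode of convergence at hand.

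\textbf{Continuous dependence, persistence, and main obstacle.} Continuous dependence follows from an analogous a priori estimate on differences of two smooth approximate solutions, using the polarized multilinear bounds and the Lipschitz part of $(R)'$. Persistence of regularity is obtained by redoing the a priori estimate at any regularity $\sigma >s$, with $\sigma$ playing the role of $s_2$ in $(A1)'$ or $(B1)'$: the resulting bound preserves the lifespan $T=T(\tnorm{\om _0}{\ell ^2_s})$ thanks to the linear (rather than superlinear) dependence of $(R)'$ on $\tnorm{\om}{C_T\ell ^2_{\sigma}}$. The principal difficulty is the uniform bound itself: one must check that the induction producing the higher-degree multilinear estimates in Theorem~\ref{thm:abstract} still goes through using only the ``strong'' halves $(A1)$ or $(B1)+(B1)'$ of the fundamental hypotheses, and that the constants thus generated at each NFR step are controlled well enough to be summable over the entire expansion after extraction of a small power of $T$.
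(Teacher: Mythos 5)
Your plan to derive a uniform $\ell^2_s$ a priori bound via infinite NFR using only $(A1)$ or $(B1)+(B1)'$ matches the paper's Lemma~\ref{lem:apriori}, and you correctly identify that for smooth approximations no $X$-norm estimates are needed. However, your regularization scheme — truncating the \emph{nonlinearity} to $|n_1|,\dots,|n_p|\le N$ — differs essentially from the paper's and creates a gap you do not close. The paper approximates by solutions of the \emph{original} equation (Lemma~\ref{lem:regular}, obtained by a fixed point argument in $\ell^2_{s_2}$ under $[B]'$, or by parabolic regularization plus NFR energy estimates under $[A]'$), precisely so that any two approximating solutions solve the same equation and the difference estimate \eqref{apriori-s} gives Cauchy-ness of $\{\om_N\}$ in $C_T\ell^2_s$ directly. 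In your setup, $\om^{(N)}$ and $\om^{(N')}$ solve different (truncated) equations, so the analogous difference estimate picks up an error term like $\mathcal{N}_N[\om^{(N')}]-\mathcal{N}_{N'}[\om^{(N')}]$, and controlling it in $\ell^2_s$ via $(A1)'$ or $(B1)'$ requires $\tnorm{\om^{(N')}}{C_T\ell^2_{s_2}}$, which diverges as $N'\to\infty$ for general $\om_0\in\ell^2_s$. You then fall back on Arzel\`a--Ascoli, but this only gives convergence in the weaker norm $\ell^2_{s'}$, $s'<s$, together with $\om\in L^\infty_T\ell^2_s$ by lower semicontinuity; your parenthetical "the $\ell^2_s$-regularity is inherited from the uniform bound" is not correct, since strong continuity $\om\in C_T\ell^2_s$ does not follow from a uniform bound plus weak-topology convergence. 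This is exactly the problem the paper's Cauchy argument is built to avoid, and the elaborate parabolic regularization in Case $[A]'$ (giving the explicit $\e^{1/2}$ rate in \eqref{apriori-p-s2}) exists precisely because otherwise no clean Cauchy estimate is available.

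A second, smaller gap: verifying that the limit $\om$ is a weak solution in the sense of Definition~\ref{defn:weak} requires showing convergence of $\mathcal{N}[m_k\om]$ for an \emph{arbitrary} sequence of Fourier cutoff operators $\{m_k\}$, not merely convergence along your particular NFR expansion. The paper handles this with a dedicated three-term estimate ($I(N)$, $II(N,k)$, $III(N,k)$), using $(A1)'$/$(B1)'$ and a uniform-in-$k$ convergence argument; your remark that "each term in the NFR expansion is continuous in $\om$" does not engage with the arbitrary-cutoff part of the definition and so does not discharge this obligation.
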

Definition of weak solutions to \eqref{Abs} and a precise statement of the above theorem will be given in Section~\ref{section:W} as Definition~\ref{defn:weak} and Theorem~\ref{thm:weak}.

In the proof of existence of weak solutions $\om \in C_T\ell ^2_s$, the fundamental estimates such as $[A]'$ and $[B]'$ (and multilinear estimates of various degrees obtained from the fundamental ones) are mainly used to verify that
\begin{quote}
(a) the limit equation holds in the sense of $C_T\ell ^2_s$ for \emph{regular} solutions in $C_T\ell^2_{s_2}$.
\end{quote}
This will yield an a priori Lipschitz bound in $C_T\ell ^2_s$ for regular solutions $\om _N$ with approximating initial data $\om _N(0)\in \ell ^2_{s_2}$, $\lim\limits _{N\to \I}\om _N(0)=\om (0)$ in $\ell^2_s$, by which we can take the limit $\om :=\lim\limits _{N\to \I}\om _N$ and obtain a weak solution $\om\in C_T\ell^2_s$.
Here, the length $T$ of the time interval (on which the limit equation is valid) is determined in terms of $\tnorm{\om_N}{C_T\ell ^2_s}$, so we need to show that
\begin{quote}
(b) the approximating solutions $\{ \om _N\} \subset C_T\ell ^2_{s_2}$ are bounded in $C_T\ell ^2_s$.
\end{quote}
As observed in the previous works, this can also be shown based on the limit equation (combined with a continuity argument). 
Now, we notice that the previous works treated such equations as the one-dimensional cubic NLS and the modified KdV, for which smooth solutions were known to exist globally in time, both in non-periodic and in periodic settings.
In general, however, we first need to show that
\begin{quote}
(c) approximating regular solutions $\{ \om _N\}$ exist on a uniform time interval $[0,T]$.
\end{quote}
This is not trivial at all.%
\footnote{In \cite{CHKP19b}, the one-dimensional cubic NLS was considered in $H^{s'}(\R )+H^{s''}(\T )$, i.e., with initial data given by sums of decaying and periodic functions. 
It is not clear in this setting whether smooth solutions exist globally in time, and thus the claim (c) would be non-trivial.
Since one has local existence of regular solutions on a time interval determined by the $\ell ^2_{s_2}$ norm of initial data, what is needed here is an a priori bound in $\ell^2_{s_2}$ on $[0,T]$, with $T$ depending only on the $\ell^2_s$ norm.
Note that the argument showing (a), (b) only yields an a priori estimate in $\ell^2_s$, and hence is not sufficient.
It seems that this point was not taken into consideration in \cite{CHKP19b}.
} 
Indeed, these solutions should have unbounded initial data; i.e., $\tnorm{\om _N(0)}{\ell ^2_{s_2}}\to \I$, so that a standard local well-posedness in $\ell ^2_{s_2}$ is not sufficient by itself.
Furthermore, (especially in Case $[A]'$) we do not even know local-in-time existence of regular solutions,%
\footnote{As mentioned in Remark~\ref{rem:condAB}~(iii), in Case $[A]'$ it is reasonable to expect that the relevant multilinear estimate in Bourgain spaces would follow from an argument similar to the proof of $(A1)$.
If this is the case, we can construct solutions in the usual distributional sense and show (conditional) local well-posedness in $\ell ^2_s$.
In Theorem~\ref{thm:weak}, however, we do not assume this situation to occur and construct weak solutions via NFR approach.} 
so at the very beginning we have to show that
\begin{quote}
(d) a solution $\om _N\in C_{T_N}\ell ^2_{s_2}$ exists on a time interval $[0,T_N]$ for each $\om _N(0)\in \ell ^2_{s_2}$.
\end{quote}
We do not assume any of (d), (c), (b) and (a) in the theorem; instead, we will see that all of them are consequences of the fundamental estimates assumed in the theorem.

\medskip
%\subsection{Plan of the article}
At the end of this section, we give the plan of this article.
In Section~\ref{section:proof}, we prove Theorem~\ref{thm:abstract}.
After that, we see the convenience and versatility of our framework through various applications:
In Sections~\ref{section:cNLS}, \ref{section:FNLS} we apply Theorem~\ref{thm:abstract} $[B]$ to the problems with no derivative losses; the higher-dimensional cubic NLS and the one-dimensional cubic fractional NLS, respectively.
Applications of Case $[A]$ are given in Sections~\ref{section:DNLS}, \ref{section:Z}, where we consider the one-dimensional models with derivative losses; the cubic derivative NLS and the Zakharov system. 
Finally, Theorem~\ref{thm:weak'} is restated as Theorem~\ref{thm:weak} and proved in Section~\ref{section:W}.
In Appendix~\ref{appendix:R} we discuss how to adapt Theorem~\ref{thm:abstract} to the non-periodic setting.

%%%%%%%%%%%%%%%%%%%%%%%%%%%%%%%%%%%%%%%%%%%%
%%%%%%%%%%%%%%%%%%%%%%%%%%%%%%%%%%%%%%%%%%%%
%%%%%%%%%%%%%%%%%%%%%%%%%%%%%%%%%%%%%%%%%%%%

%\bigskip
\section{Abstract theory}\label{section:proof}

In this section, we shall prove Theorem~\ref{thm:abstract}.

\subsection{Notation}
Following \cite{GKO13}, we use the notation of \emph{ordered tree}, which is useful to give precise definition of infinitely many nonlinear terms created in the NFR procedure. 

\begin{defn}[ordered tree]
Let $p\geq 2$ be a given integer.
For $J\in \Bo{N}$, define $\FR{T}(J)$ by the set of all rooted $p$-ary trees with $J$ nodes in which an ordering is specified for the $p$ children of each node and the $J$ nodes are also labeled in a manner consistent with the tree order.

More precisely, $\Sc{T}\in \FR{T}(J)$ is a partially ordered set (with a partial order $\preceq$) satisfying the following properties:
\begin{enumerate}
\item $\Sc{T}$ has the (unique) least element $r$ (i.e., $r\preceq a$ for all $a\in \Sc{T}$), which is called the \emph{root}.
\item For each element $a\in \Sc{T}\setminus \{ \text{root}\}$, there exists a unique element $b\in \Sc{T}$ such that $b\neq a$, $b\preceq a$, and that $b\preceq c\preceq a$ implies $c=a$ or $c=b$.
We say $b$ is the \emph{parent} of $a$ and $a$ is a \emph{child} of $b$.
\item An element of $\Sc{T}$ is called a \emph{node} if it has a child; otherwise, it is called a \emph{leaf}.
$\Sc{T}$ has exactly $J$ nodes, which are numbered from $1$ to $J$ so that $a_{j_1}\preceq a_{j_2}$ implies $j_1\le j_2$, denoting the $j$-th node by $a_j$.
\item Each node of $\Sc{T}$ has exactly $p$ children, which are numbered from $1$ to $p$.
\end{enumerate}
\end{defn}

We write $\Sc{T}_0$, $\Sc{T}_\I$ to denote the subset of $\Sc{T}$ consisting of all nodes and of all leaves, respectively.
We easily see the following properties:
\begin{itemize}
\item For $\Sc{T}\in \FR{T}(J)$, $\# \Sc{T}=pJ+1$, $\# \Sc{T}_0=J$, and $\# \Sc{T}_\I =(p-1)J+1$.
\item $\# \FR{T}(J)=\prod\limits _{j=0}^{J-1}\big\{ (p-1)j+1\big\} \le (p-1)^JJ!$.
\end{itemize}

\begin{defn}
\mbox{}
\begin{itemize}
\item Let $J\in \Bo{N}$ and $\Sc{T}\in \FR{T}(J)$.
We call a map $\mathbf{n}=\{ n_a\} _{a\in \Sc{T}}:\Sc{T}\to \Bo{Z}^d$ an \emph{index function} if for each $a\in \Sc{T}_0$, with its children being denoted by $a^1,a^2,\dots ,a^p$, it holds that 
\[ n_a=n_{a^1}+n_{a^2}+\cdots +n_{a^p}.\]
We write $\FR{N}(\Sc{T})$ to denote the set of all index functions on $\Sc{T}$, and for $n\in \mathbb{Z}^d$ we define $\mathfrak{N}_n(\mathcal{T}):=\{ \mathbf{n}\in \mathfrak{N}(\mathcal{T})\,|\,n_{\mathrm{root}}=n\}$.
\item Given $\Sc{T}\in \FR{T}(J)$ and $\mathbf{n}\in \FR{N}(\Sc{T})$, we write 
\[ \phi ^j:=\phi( n_{a_j}, n_{a_j^1},\dots ,n_{a_j^p}),\qquad m^j:=m( n_{a_j}, n_{a_j^1},\dots ,n_{a_j^p})\]
for $1\le j\le J$, where $a_j$ is the $j$-th node and $a_j^1,a_j^2,\dots ,a_j^p$ are its children.
\end{itemize}
\end{defn}

\subsection{Normal form reduction}

Assume the hypotheses in Theorem~\ref{thm:abstract}, and let $\om \in C_T\ell ^2_s$ be a solution of \eqref{Abs}.
By the above definition, \eqref{Abs} can be rewritten as
\eqq{\p _t\om _n(t)&=\sum _{\Sc{T}\in \mathfrak{T}(1)}\sum _{\mathbf{n}\in \mathfrak{N}_n(\Sc{T})}e^{it\phi ^1}m^1\prod _{a\in \Sc{T}_\I}\om _{n_a}(t)+\Sc{R}[\om (t)]_n\\
&=:\Sc{N}^{(1)}[\om (t)]_n+\Sc{R}[\om (t)]_n,\qquad n\in \Bo{Z}^d.
}
First of all, the estimate $(A3)$ (or $(B3)$) ensures that the series in $\Sc{N}^{(1)}[\om ]_n$ is absolutely convergent:
\eq{conv:N1}{\sup _{t\in [0,T]}\sum _{\mathbf{n}\in \mathfrak{N}_n(\Sc{T})}|m^1|\prod _{a\in \Sc{T}_\I}|\om _{n_a}(t)|~\lesssim_n~ \sup _{t\in [0,T]}\norm{\sum _{\mathbf{n}\in \mathfrak{N}_n(\Sc{T})}|m^1|\prod _{a\in \Sc{T}_\I}|\om _{n_a}(t)|}{X}\lesssim \| \om \|_{C_T\ell ^2_s}^p<\infty .}
Thus, $\Sc{N}^{(1)}[\om (t)]_n+\Sc{R}[\om (t)]_n$ is well-defined as a bounded function on $[0,T]$ for each $n\in \Bo{Z}^d$. 
Since $\om _n(\cdot)$ is a solution of the equation in $\mathcal{D}'((0,T))$ and continuous on $[0,T]$, it satisfies the equation in the integral form:
\eq{eq:1st}{\om _n(\cdot )\Big| _0^t=\int _0^t\Big( \Sc{N}^{(1)}[\om (\tau )]_n+\Sc{R}[\om (\tau )]_n\Big) d\tau ,\qquad t\in [0,T],~~n\in \Bo{Z}^d.}
We call \eqref{eq:1st} the \emph{equation of the first generation}.

Uniqueness of the solutions would follow if we could have an estimate for $\Sc{N}^{(1)}[\om ]$ which is closed in $\ell ^2_s$; however, the only estimate available is of the $X$ norm in terms of the $\ell ^2_s$ norm.
Thus, we decompose $\Sc{N}^{(1)}$ into slowly oscillating terms (which we call \emph{resonant} terms) and rapidly oscillating ones (\emph{non-resonant} terms), and then apply an integration by parts in $t$ to the rapidly oscillating part to get a large factor in the denominator.
Namely, we first divide the nonlinearity as
\[ \om _n\Big| _0^t=\int _0^t\,\Big( \Sc{N}^{(1)}_R[\om ]_n+\Sc{N}^{(1)}_{N\!R}[\om ]_n+\Sc{R}[\om ]_n\Big) ,\]
where 
\[ \Sc{N}^{(1)}_R[\om ]_n:=\sum _{\Sc{T}\in \mathfrak{T}(1)}\sum _{\mat{\mathbf{n}\in \mathfrak{N}_n(\Sc{T})\\ |\phi ^1|\text{\,:\,small}}}e^{it\phi ^1}m^1\prod _{a\in \Sc{T}_\I}\om _{n_a},\quad \Sc{N}^{(1)}_{N\!R}[\om ]_n:=\sum _{\Sc{T}\in \mathfrak{T}(1)}\sum _{\mat{\mathbf{n}\in \mathfrak{N}_n(\Sc{T})\\ |\phi ^1|\text{\,:\,large}}}e^{it\phi ^1}m^1\prod _{a\in \Sc{T}_\I}\om _{n_a},\]
and the precise meaning of `small' or `large' will be specified later.
Then, by an integration by parts, we \emph{formally} have 
\eq{eq:just}{
\int _0^t\Sc{N}^{(1)}_{N\!R}[\om ]_n&=\sum _{\Sc{T}\in \mathfrak{T}(1)}\sum _{\mat{\mathbf{n}\in \mathfrak{N}_n(\Sc{T})\\ |\phi ^1|\text{\,:\,large}}}\int _0^te^{i\tau \phi ^1}m^1\prod _{a\in \Sc{T}_\I}\om _{n_a}(\tau )\,d\tau \\
&=\sum _{\Sc{T}\in \mathfrak{T}(1)}\sum _{\mat{\mathbf{n}\in \mathfrak{N}_n(\Sc{T})\\ |\phi ^1|\text{\,:\,large}}}\!\!\bigg( \Big[ \frac{e^{i\tau \phi ^1}}{i\phi ^1}m^1\prod _{a\in \Sc{T}_\I}\om _{n_a}(\tau )\Big] _0^t\\[-5pt]
&\hspace{120pt} -\int _0^t\frac{e^{i\tau \phi ^1}}{i\phi ^1}m^1\sum _{a\in \Sc{T}_\I}\Big[ \prod _{\mat{b\in \Sc{T}_\I \\b\neq a}}\om _{n_b}(\tau )\Big] (\p _t\om _{n_a})(\tau )\,d\tau \bigg) \\[-10pt]
&=\Sc{N}^{(1)}_{0}[\om ]_n\Big| _0^t+\int _0^t\Sc{N}^{(1)}_{1}[\om ]_n,
}
where
\eqq{
&\Sc{N}^{(1)}_0[\om ]_n:=\sum _{\Sc{T}\in \mathfrak{T}(1)}\sum _{\mat{\mathbf{n}\in \mathfrak{N}_n(\Sc{T})\\ |\phi ^1|\text{\,:\,large}}}\frac{e^{it\phi ^1}}{i\phi ^1}m^1\prod _{a\in \Sc{T}_\I}\om _{n_a},\\
&\Sc{N}^{(1)}_1[\om ]_n:=-\sum _{\Sc{T}\in \mathfrak{T}(1)}\sum _{\mat{\mathbf{n}\in \mathfrak{N}_n(\Sc{T})\\ |\phi ^1|\text{\,:\,large}}}\frac{e^{it\phi ^1}}{i\phi ^1}m^1\sum _{a\in \Sc{T}_\I}\Big[ \prod _{\mat{b\in \Sc{T}_\I \\b\neq a}}\om _{n_b}\Big] \p _t\om _{n_a}.
}
Substituting the original equation \eqref{Abs}, we have $\mathcal{N}^{(1)}_1[\om ]=\mathcal{R}^{(1)}[\om ]+\mathcal{N}^{(2)}[\om ]$,
\begin{align}
\Sc{R}^{(1)}[\om ]_n&:=-\sum _{\Sc{T}\in \mathfrak{T}(1)}\sum _{\mat{\mathbf{n}\in \mathfrak{N}_n(\Sc{T})\\ |\phi ^1|\text{\,:\,large}}}\frac{e^{it\phi ^1}}{i\phi ^1}m^1\sum _{a\in \Sc{T}_\I}\Big[ \prod _{\mat{b\in \Sc{T}_\I \\b\neq a}}\om _{n_b}\Big] \Sc{R}[\om ]_{n_a},\notag \\
\begin{split}
\Sc{N}^{(2)}[\om ]_n&:=-\sum _{\Sc{T}\in \mathfrak{T}(1)}\sum _{\mat{\mathbf{n}\in \mathfrak{N}_n(\Sc{T})\\ |\phi ^1|\text{\,:\,large}}}\frac{e^{it\phi ^1}}{i\phi ^1}m^1\sum _{a\in \Sc{T}_\I} \Big[ \prod _{\mat{b\in \Sc{T}_\I \\b\neq a}}\om _{n_b}\Big] \\
&\hx\hx \times \Big[ \sum _{n_a=n_{a^1}+\dots +n_{a^p}}e^{it\phi (n_a,n_{a^1},\dots n_{a^p})}m(n_a,n_{a^1},\dots n_{a^p})\om_{n_{a^1}}\cdots \om_{n_{a^p}}\Big] 
\end{split}\label{exp2-1}\\
&\;=-\sum _{\Sc{T}\in \mathfrak{T}(2)}\sum _{\mat{\mathbf{n}\in \mathfrak{N}_n(\Sc{T})\\ |\phi ^1|\text{\,:\,large}}}\frac{e^{it(\phi ^1+\phi ^2)}}{i\phi ^1}m^1m^2\prod _{a\in \Sc{T}_\I}\om _{n_a}.\label{exp2-2}
\end{align}
We have thus obtained the \emph{equation of the second generation}:
\eq{eq:2nd}{\om _n\Big| _0^t=\Sc{N}^{(1)}_{0}[\om ]_n\Big| _0^t+\int _0^t\,\Big( \Sc{N}^{(1)}_R[\om ]_n+\Sc{R}[\om ]_n+\Sc{R}^{(1)}[\om ]_n+\Sc{N}^{(2)}[\om ]_n\Big) ,\quad t\in [0,T],~n\in \Bo{Z}^d.}
Observe that $\Sc{T}\in \mathfrak{T}(1)$ in the intermediate expression \eqref{exp2-1} of $\Sc{N}^{(2)}[\om ]_n$ gives exactly $p$ trees of $\mathfrak{T}(2)$ in the final expression \eqref{exp2-2} by developing one of $p$ leaves $a\in \Sc{T}_\I$ into the (second) node and its $p$ children.
NFR means the above reduction procedure including decomposition into resonant/non-resonant terms, application of an integration by parts to the non-resonant part, and substitution of the original equation. 
Note that the last term $\Sc{N}^{(2)}[\om ]_n$ is of order $2(p-1)+1$ in $\om$, which is higher than the others.

As mentioned before, formal calculations in \eqref{eq:just} must be justified for a general solution $\om \in C_T\ell ^2_s$.
The absolute convergence \eqref{conv:N1} and Fubini's theorem verify the first equality in \eqref{eq:just}.
Also, the estimate $(A3)$ (or $(B3)$) implies
\[ \norm{\sum _{\mathbf{n}\in \mathfrak{N}_n(\Sc{T})}e^{it'\phi ^1}m^1\Big( \prod _{a\in \Sc{T}_\I}\om _{n_a}(t')-\prod _{a\in \Sc{T}_\I}\om _{n_a}(t)\Big)}{X}\lesssim \| \om \|_{C_T\ell^2_s}^{p-1}\| \om (t')-\om (t)\|_{\ell ^2_s} ~\to ~0\quad (t'\to t). \]
This and \eqref{conv:N1}, together with the dominated convergence theorem, show that $\mathcal{N}^{(1)}[\om ]_n\in C([0,T])$, and then, by the integral equation, $\om _n(\cdot ) \in C^1([0,T])$ for each $n$.%
\footnote{For any $\om \in C_T\ell ^2_s$, it holds that $\| \chi _{|n|>L}\om \|_{C_T\ell^2_s}\to 0$ ($L\to \infty$).
From this property, we see that the series in $\mathcal{N}^{(1)}[\om ]_n$ converges absolutely and uniformly in $t$, from which continuity of $\mathcal{N}^{(1)}[\om ]_n$ follows.
(In fact, we can show a stronger claim that $\mathcal{N}^{(1)}[\om ]\in C_TX$ for $\om \in C_T\ell ^2_s$; see the argument in Appendix~\ref{appendix:R}, proof of Theorem~\ref{thm:abstract-R} in the case (ii).)
Similarly, convergence of the series in $\mathcal{N}^{(1)}_1[\om ]_n$ is shown to be uniform in $t$, which allows termwise differentiation of the series in $\mathcal{N}^{(1)}_0[\om ]_n$ and hence justifies \eqref{eq:just}.
We point out that this argument does not work if the space $\ell^2_s$ is replaced with an $\ell^\infty$-type space $Y$, for which $\| \chi _{|n|>L}\om \|_{Y}\to 0$ fails to hold.} 
This verifies the second equality in \eqref{eq:just}, i.e., integration by parts in $t$ and application of the product rule for each $n$ and $\mathbf{n}\in \mathfrak{N}_n(\mathcal{T})$.
Similarly to the first one, the third equality is verified once we have the absolute convergence:
\[ \sup_{t\in [0,T]}\sum _{\mat{\mathbf{n}\in \mathfrak{N}_n(\Sc{T})\\ |\phi ^1|\text{\,:\,large}}}\frac{|m^1|}{|\phi ^1|}|(\p _t\om _{n_a})(t)|\prod _{\mat{b\in \Sc{T}_\I \\b\neq a}}|\om _{n_b}(t)| ~<~\infty \qquad (\mathcal{T}\in \mathfrak{T}(1),~a\in \mathcal{T}_\infty ),\]
which in turn follows from 
\eqs{
\sup_{t\in [0,T]}\sum _{\mat{\mathbf{n}\in \mathfrak{N}_n(\Sc{T})\\ |\phi ^1|\text{\,:\,large}}}\frac{|m^1|}{|\phi ^1|}|\mathcal{R}[\om ]_{n_a}(t)|\prod _{\mat{b\in \Sc{T}_\I \\b\neq a}}|\om _{n_b}(t)| ~<~\infty \qquad (\mathcal{T}\in \mathfrak{T}(1),~a\in \mathcal{T}_\infty ),\\
\sup_{t\in [0,T]}\sum _{\mat{\mathbf{n}\in \mathfrak{N}_n(\Sc{T})\\ |\phi ^1|\text{\,:\,large}}}\frac{|m^1||m^2|}{|\phi ^1|}\prod _{a\in \Sc{T}_\I}|\om _{n_a}(t)| ~<~\infty \qquad (\mathcal{T}\in \mathfrak{T}(2)).
}
These two estimates are consequences of the estimates for $\mathcal{R}^{(1)}[\om]$ and $\mathcal{N}^{(2)}[\om]$ in Proposition~\ref{prop:abstract} below.
They also show that the new terms $\mathcal{N}^{(1)}_1[\om ]_n$, $\mathcal{R}^{(1)}[\om ]_n$, $\mathcal{N}^{(2)}[\om ]_n$ are all well-defined, and verify rearrangement of the series from \eqref{exp2-1} to \eqref{exp2-2}.

Recall that we already have a closed estimate $(R)$ in $\ell ^2_s$ for $\Sc{R}[\om ]$.
Furthermore, since the summation in $\Sc{N}^{(1)}_R[\om ]$ is restricted and there is a large denominator in $\Sc{N}^{(1)}_0[\om ]$ and $\Sc{R}^{(1)}[\om ]$, one can expect that these terms also have closed $\ell ^2_s$ estimates.
The problem is then how to control the higher-order term $\Sc{N}^{(2)}[\om ]$.
In general, this term requires more regularity and does not admit a closed $\ell ^2_s$ estimate for the same $s$, and one has to repeat NFR for this term.
(In some equations, however, the structure of resonance is good enough and one has a closed $\ell ^2_s$ estimate also for $\Sc{N}^{(2)}[\om ]$. This is the case, \mbox{e.g.}, for the KdV equation and $s>\frac{1}{2}$; see \cite{BIT11}.)

After the second NFR, we get the \emph{equation of the third generation} as
\[ \om _n\Big| _0^t=\sum _{j=1}^2\Sc{N}^{(j)}_{0}[\om ]_n\Big| _0^t+\int _0^t\,\Big( \sum _{j=1}^2\Sc{N}^{(j)}_R[\om ]_n+\sum _{j=0}^2\Sc{R}^{(j)}[\om ]_n+\Sc{N}^{(3)}[\om ]_n\Big) ,\quad t\in [0,T],~~n\in \mathbb{Z}^d,\]
where $\Sc{R}^{(0)}[\om ]:=\Sc{R}[\om ]$ and
\eqq{
\Sc{N}^{(2)}_R[\om ]_n&:=-\sum _{\Sc{T}\in \mathfrak{T}(2)}\sum _{\mat{\mathbf{n}\in \mathfrak{N}_n(\Sc{T})\\ |\phi ^1|\text{\,:\,large},\,|\phi ^1+\phi ^2|\text{\,:\,small}}}\frac{e^{it(\phi ^1+\phi ^2)}}{i\phi ^1}m^1m^2\prod _{a\in \Sc{T}_\I}\om _{n_a},\\
\Sc{N}^{(2)}_0[\om ]_n&:=-\sum _{\Sc{T}\in \mathfrak{T}(2)}\sum _{\mat{\mathbf{n}\in \mathfrak{N}_n(\Sc{T})\\ |\phi ^1|\text{\,:\,large},\,|\phi ^1+\phi ^2|\text{\,:\,large}}}\frac{e^{it(\phi ^1+\phi ^2)}}{i\phi ^1i(\phi ^1+\phi ^2)}m^1m^2\prod _{a\in \Sc{T}_\I}\om _{n_a},\\
\Sc{R}^{(2)}[\om ]_n&:=\sum _{\Sc{T}\in \mathfrak{T}(2)}\sum _{\mat{\mathbf{n}\in \mathfrak{N}_n(\Sc{T})\\ |\phi ^1|\text{\,:\,large},\,|\phi ^1+\phi ^2|\text{\,:\,large}}}\frac{e^{it(\phi ^1+\phi ^2)}}{i\phi ^1i(\phi ^1+\phi ^2)}m^1m^2\sum _{a\in \Sc{T}_\I}\Big[ \prod _{\mat{b\in \Sc{T}_\I \\b\neq a}}\om _{n_b}\Big] \Sc{R}[\om ]_{n_a},\\
\Sc{N}^{(3)}[\om ]_n&:=\sum _{\Sc{T}\in \mathfrak{T}(3)}\sum _{\mat{\mathbf{n}\in \mathfrak{N}_n(\Sc{T})\\ |\phi ^1|\text{\,:\,large},\,|\phi ^1+\phi ^2|\text{\,:\,large}}}\frac{e^{it(\phi ^1+\phi ^2+\phi ^3)}}{i\phi ^1i(\phi ^1+\phi ^2)}m^1m^2m^3\prod _{a\in \Sc{T}_\I}\om _{n_a}.
}
Similarly, after the $(J-1)$-th NFR, we get the \emph{equation of the $J$-th generation} as
\eq{eq:Jth}{\om _n\Big| _0^t=\sum _{j=1}^{J-1}\Sc{N}^{(j)}_{0}[\om ]_n\Big| _0^t+\int _0^t\,\Big( \sum _{j=1}^{J-1}\Sc{N}^{(j)}_R[\om ]_n+\sum _{j=0}^{J-1}\Sc{R}^{(j)}[\om ]_n+\Sc{N}^{(J)}[\om ]_n\Big) ,\quad t\in [0,T],~~n\in \mathbb{Z}^d,}
where
\eqq{
\Sc{N}^{(j)}_R[\om ]_n&:=(-1)^{j-1}\sum _{\Sc{T}\in \mathfrak{T}(j)}\sum _{\mat{\mathbf{n}\in \mathfrak{N}_n(\Sc{T})\\ (\phi ^k)_{k=1}^{j}\in \Phi _R^{j}}}\frac{e^{it\ti{\phi}^j}}{\prod\limits _{k=1}^{j-1}i\ti{\phi}^k}\Big[ \prod _{k=1}^jm^k\Big] \prod _{a\in \Sc{T}_\I}\om _{n_a},\\
\Sc{N}^{(j)}_0[\om ]_n&:=(-1)^{j-1}\sum _{\Sc{T}\in \mathfrak{T}(j)}\sum _{\mat{\mathbf{n}\in \mathfrak{N}_n(\Sc{T})\\ (\phi ^k)_{k=1}^{j}\in \Phi _{N\!R}^{j}}}\frac{e^{it\ti{\phi}^j}}{\prod\limits _{k=1}^{j}i\ti{\phi}^k}\Big[ \prod _{k=1}^jm^k\Big] \prod _{a\in \Sc{T}_\I}\om _{n_a},\\
\Sc{R}^{(j)}[\om ]_n&:=(-1)^j\sum _{\Sc{T}\in \mathfrak{T}(j)}\sum _{\mat{\mathbf{n}\in \mathfrak{N}_n(\Sc{T})\\ (\phi ^k)_{k=1}^{j}\in \Phi _{N\!R}^{j}}}\frac{e^{it\ti{\phi}^j}}{\prod\limits _{k=1}^{j}i\ti{\phi}^k}\Big[ \prod _{k=1}^jm^k\Big] \sum _{a\in \Sc{T}_\I}\Big[ \prod _{\mat{b\in \Sc{T}_\I \\b\neq a}}\om _{n_b}\Big] \Sc{R}[\om ]_{n_a},\\
\Sc{N}^{(J)}[\om ]_n&:=(-1)^{J-1}\sum _{\Sc{T}\in \mathfrak{T}(J)}\sum _{\mat{\mathbf{n}\in \mathfrak{N}_n(\Sc{T})\\ (\phi ^k)_{k=1}^{J}\in \Phi _R^{J}\cup \Phi _{N\!R}^{J}}}\frac{e^{it\ti{\phi}^J}}{\prod\limits _{k=1}^{J-1}i\ti{\phi}^k}\Big[ \prod _{k=1}^Jm^k\Big] \prod _{a\in \Sc{T}_\I}\om _{n_a},
}
and we have introduced the notation $\ti{\phi}^k:=\phi ^1+\phi ^2+\cdots +\phi ^k$,
\eqq{
\Phi _R^{j}&:=\Shugo{(\phi ^k)_{k=1}^j\in \R ^j}{\text{$|\ti{\phi}^k|$ : large for $1\le k\le j-1$, and $|\ti{\phi}^j|$ : small}},\\
\Phi _{N\!R}^{j}&:=\Shugo{(\phi ^k)_{k=1}^j\in \R ^j}{\text{$|\ti{\phi}^k|$ : large for $1\le k\le j$}}.
}
Observe that $\Sc{N}^{(J)}[\om ]_n$ has been obtained by replacing one of $\omega _{n_a}$ in $\Sc{N}^{(J-1)}_0[\om ]_n$ with $-\Sc{N}^{(1)}[\om ]_{n_a}$ and rearranging the series; each three $\Sc{T}\in \mathfrak{T}(J-1)$ in the expression of $\Sc{N}^{(J-1)}_0[\om ]_n$ gives exactly $\# \Sc{T}_\I$ trees of $\mathfrak{T}(J)$ in $\Sc{N}^{(J)}[\om ]_n$ by developing each of the leaves into the $J$-th node and its $p$ children.
The precise definition of $\Phi _R^{j}$ and $\Phi _{N\!R}^{j}$ will be given in the proof of Proposition~\ref{prop:abstract} below; it depends on the size of solutions and also on which of $[A]$ and $[B]$ we assume in the theorem. 
Finally, every formal calculation in deriving \eqref{eq:Jth} can be justified for a general solution $\om \in C_T\ell ^2_s$ of \eqref{Abs} in a similar manner to the case \eqref{eq:2nd}, based on the fact that $\om _n(\cdot )\in C^1([0,T])$ for each $n$, and that the series in $\Sc{N}^{(J)}_R[\om ]_n$, $\Sc{N}^{(J)}_0[\om ]_n$, $\Sc{R}^{(J)}[\om ]_n$, $\Sc{N}^{(J)}[\om ]_n$ are absolutely convergent (and bounded in $t$), which will also be shown in Proposition~\ref{prop:abstract} below.

\subsection{Proof of the main theorem}

Now, we are interested in the situation where the infimum of the regularity $s$ for which $\Sc{N}^{(j)}[\om ]$ has a closed $\ell ^2_s$ estimate is \emph{not} improved as generation $j$ proceeds.
For instance, in the case of the one-dimensional cubic Schr\"odinger equation treated in \cite{GKO13}, $\Sc{N}^{(j)}[\om ]$ always requires $s>\frac{1}{2}$ for closed $\ell ^2_s$ estimates, while all the other terms can be controlled for $s\ge 0$.
At first glance there seems no hope to obtain any a priori estimate on the solutions for lower regularities by the NFR method.

The idea in \cite{GKO13} to overcome this difficulty is that one can eliminate the bad term $\Sc{N}^{(J)}[\om ]$ by repeating NFR infinitely many times.
Specifically, $\Sc{N}^{(J)}[\om ]$ cannot be estimated in $\ell^2_s$ but can be controlled and shown to vanish in a ``weaker'' topology $X$.%
\footnote{This strategy was not explicitly written in the original work \cite{GKO13}, though the required multilinear estimates in a weaker norm were given there, which hinted such handling of the bad term. 
It was then made rigorous in subsequent works \cite{K-announcement,CGKO17,P19,KOY20}.}
To make it rigorous, we deduce the following nonlinear estimates from the fundamental $p$-linear estimates assumed in Theorem~\ref{thm:abstract}.

\begin{prop}\label{prop:abstract}
Let $s\in \R$ and assume the hypotheses in Theorem~\ref{thm:abstract}.
Define $\de :=\frac{s-s_1}{s_2-s_1}>0$ if we assume $[B]$ in Theorem~\ref{thm:abstract}.

Then, for any $J\in \Bo{N}$ and $\om \in \ell ^2_s$, the series in $\mathbf{n}$ in $\Sc{N}^{(J)}_R[\om ]_n$, $\Sc{N}^{(J)}_0[\om ]_n$, $\Sc{R}^{(J)}[\om ]_n$, $\Sc{N}^{(J)}[\om ]_n$ converge absolutely for each $n\in \Bo{Z}^d$.
Moreover, for any $M\ge 1$, (with $\Phi _R^{j}$ and $\Phi _{N\!R}^{j}$ suitably defined depending on $M$) we have
\eqq{
\norm{\Sc{N}^{(J)}_R[\om ]-\Sc{N}^{(J)}_R[\ti{\om}]}{\ell ^2_s}&\le CM\Big[ CM^{-\de}\big( \tnorm{\om}{\ell ^2_s}+\tnorm{\ti{\om}}{\ell ^2_s}\big) ^{p-1}\Big] ^J\norm{\om -\ti{\om}}{\ell ^2_s},\\
\norm{\Sc{N}^{(J)}_0[\om ]-\Sc{N}^{(J)}_0[\ti{\om}]}{\ell ^2_s}&\le C\Big[ CM^{-\de}\big( \tnorm{\om}{\ell ^2_s}+\tnorm{\ti{\om}}{\ell ^2_s}\big) ^{p-1}\Big] ^J\norm{\om -\ti{\om}}{\ell ^2_s},\\
\norm{\Sc{R}^{(J)}[\om ]-\Sc{R}^{(J)}[\ti{\om}]}{\ell ^2_s}&\le C\Big[ CM^{-\de}\big( \tnorm{\om}{\ell ^2_s}+\tnorm{\ti{\om}}{\ell ^2_s}\big) ^{p-1}\Big] ^JC'\big( \tnorm{\om}{\ell ^2_s},\tnorm{\ti{\om}}{\ell ^2_s}\big) \norm{\om -\ti{\om}}{\ell ^2_s},\\
\norm{\Sc{N}^{(J)}[\om ]-\Sc{N}^{(J)}[\ti{\om}]}{X}&\le C\Big[ CM^{-\de}\big( \tnorm{\om}{\ell ^2_s}+\tnorm{\ti{\om}}{\ell ^2_s}\big) ^{p-1}\Big] ^{J-1}\big( \tnorm{\om}{\ell ^2_s}+\tnorm{\ti{\om}}{\ell ^2_s}\big) ^{p-1}\norm{\om -\ti{\om}}{\ell ^2_s}
}
for any $\om ,\ti{\om}\in \ell ^2_s$, where $C,C'(\cdot ,\cdot )>0$ are independent of $J$ and $M$.
\end{prop}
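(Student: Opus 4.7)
My approach is to prove the four estimates by simultaneous induction on $J$, with a parameter $M\ge 1$ fixed so that $|\ti{\phi}^k|\ge M$ defines \emph{large} (non-resonant) and $|\ti{\phi}^k|<M$ defines \emph{small} (resonant) modulation; this pins down the sets $\Phi^j_R$ and $\Phi^j_{N\!R}$. Absolute convergence of the series in $\mathbf{n}$ then follows a posteriori from the claimed bounds applied with $|\om|$ in place of $\om$.

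The base case $J=1$ is essentially a repackaging of the fundamental hypotheses. The $\ell^2_s$-bounds on $\Sc{N}^{(1)}_0$ and $\Sc{N}^{(1)}_R$ follow from $(A1)$ in Case $[A]$, or from combining $(B1)$ in $\ell^2_{s_1}$ with $(B1)'$ in $\ell^2_{s_2}$ via the interpolation $\ell^2_s=[\ell^2_{s_1},\ell^2_{s_2}]_{\de}$ for $\de=(s-s_1)/(s_2-s_1)$ in Case $[B]$. The $X$-bound on $\Sc{N}^{(1)}$ comes from $(A3)$ or $(B3)$, and the $\Sc{R}^{(1)}$-estimate is built out of the $\Sc{N}^{(1)}_0$-estimate by replacing one input by $\Sc{R}[\om ]$ and invoking hypothesis $(R)$. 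The Lipschitz (difference) bounds follow from the homogeneous ones by routine $p$-term telescoping across the inputs.

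For the inductive step, the key structural observation is that the sums in $\Sc{N}^{(J)}_R,\Sc{N}^{(J)}_0,\Sc{R}^{(J)},\Sc{N}^{(J)}$ can be organized tree-by-tree so that each node of $\Sc{T}\in\FR{T}(J)$ corresponds to one application of a fundamental $p$-linear estimate. Concretely, for a single $\Sc{T}$ one isolates the root $a_1$, which separates $\Sc{T}$ into $p$ children sub-trees (of sizes $j_1,\dots ,j_p$ with $\sum _i j_i=J-1$); applying $(A1)$ or $(B1)+(B1)'$ at the root for the $\ell^2_s$-bounds --- or $(A2)$ or $(B2)+(B2)'$ at the root for the $X$-bound of $\Sc{N}^{(J)}$ --- reduces the estimate to a product of inductive-hypothesis bounds over the $p$ sub-trees. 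Because a single fundamental estimate subsumes the full sum over trees at each generation, the factorial $\# \FR{T}(J)\sim J!$ never materializes in the constant, and the bound accrues only one multiplicative $C$ per node plus a gain of $M^{-\de}$ per non-resonant node.

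The $M^{-\de}$ gain arises as follows. In Case $[A]$, splitting $|\ti{\phi}^k|^{-1}=|\ti{\phi}^k|^{-1/2}\cdot |\ti{\phi}^k|^{-1/2}$ lets $(A1)$ absorb the first factor while the second is bounded by $M^{-1/2}\le M^{-\de}$ (using $\de<\frac12$); for the $X$-bound $(A2)$ similarly absorbs $|\ti{\phi}^1|^{-(1-\de)}$ and $|\ti{\phi}^1|^{-\de}\le M^{-\de}$ remains. In Case $[B]$, a dyadic decomposition $|\ti{\phi}^k|\in [2^j,2^{j+1})$ with $2^j\ge M$ together with the interpolation $\ell^2_s=[\ell^2_{s_1},\ell^2_{s_2}]_{\de}$, using $(B1)$ shell-by-shell in $\ell^2_{s_1}$ and $(B1)'$ modulation-free in $\ell^2_{s_2}$, produces gain $M^{-\de}$ per non-resonant node; the resonant root of $\Sc{N}^{(J)}_R$ picks up $M^{1-\de}=M\cdot M^{-\de}$, which is precisely the extra prefactor $CM$ in the first bound. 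The main obstacle in executing this plan is the combinatorial accounting --- keeping the factorial tree count from leaking into the constant --- which is handled by the single root-level application of the fundamental estimate per generation described above, together with care in treating the partial-sum structure of $\ti{\phi}^k$ (which couples across sub-trees) under the recursive tree decomposition.
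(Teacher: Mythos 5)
Your outline shares the induction-on-$J$ architecture with the paper, and the reduction to tree-by-tree estimates via a root decomposition is a reasonable variant of the paper's leaf-peeling induction. However, there is a genuine gap at the very first step, namely your choice of the resonant/non-resonant decomposition: you take a \emph{flat} threshold $|\ti{\phi}^k|\geq M$ to define $\Phi^j_{N\!R}$, whereas the paper's proof hinges on a \emph{geometric-growth} condition (in Case $[A]$: $|\phi^1|>16M$ and $|\ti{\phi}^j|>16|\ti{\phi}^{j-1}|$ for $j\geq 2$; in Case $[B]$: $|\ti{\phi}^j|>2^jM$). This is not a cosmetic difference. The fundamental estimate $(A1)$ supplies a gain from $\LR{\phi}^{-1/2}$ --- the \emph{single-node} modulation $\phi^j=\phi(n_{a_j},n_{a_j^1},\dots,n_{a_j^p})$, which depends only on the $j$-th node and its children. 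The denominator one actually inherits from iterated integration-by-parts is $\prod_j|\ti{\phi}^j|$ with $\ti{\phi}^j=\phi^1+\cdots+\phi^j$, i.e., \emph{cumulative} modulations that couple all nodes up to $j$. Your claim that ``splitting $|\ti{\phi}^k|^{-1}=|\ti{\phi}^k|^{-1/2}\cdot|\ti{\phi}^k|^{-1/2}$ lets $(A1)$ absorb the first factor'' is incorrect: $(A1)$ absorbs $\LR{\phi^j}^{-1/2}$, not $\LR{\ti{\phi}^j}^{-1/2}$, and with a flat threshold there is no relation forcing $|\ti{\phi}^j|\gtrsim|\phi^j|$ --- the $\phi^j$ can cancel each other, making $|\ti{\phi}^j|$ much smaller than any individual $|\phi^j|$. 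The paper's geometric-growth constraint is precisely what forces $|\ti{\phi}^j|\sim|\phi^j|$ and hence the key pointwise bounds such as $\prod_{j\leq J}|\ti{\phi}^j|\geq 2^{J(J+1)}M^{J/2}\prod_{j\leq J}\LR{\phi^j}^{1/2}$. Without this conversion from cumulative to node-local modulation weights, none of the fundamental estimates can be iterated.

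This gap also infects the root-peeling step. You acknowledge that the partial sums $\ti{\phi}^k$ ``couple across sub-trees'' but propose to handle it by ``care''; in fact the coupling disappears only \emph{after} the cumulative weights $\prod|\ti{\phi}^j|^{-1}$ have been replaced by the node-local weights $\prod\LR{\phi^j}^{-1/2}$, since $\phi^j$ depends only on the $j$-th node and its children. In the paper's proof the constraint $(\phi^j)_j\in\Phi^J_{N\!R}$ is dropped entirely after the pointwise reduction, and the resulting unconstrained sum $\sum_{\mathbf n}\prod_j|m^j|\LR{\phi^j}^{-1/2}\prod_a|\om_{n_a}|$ factors cleanly across nodes, enabling the induction. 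Relatedly, your claim that ``the factorial $\#\FR{T}(J)\sim J!$ never materializes in the constant'' does not describe what happens in the paper: the paper \emph{does} pick up $\#\FR{T}(J)\lesssim(p-1)^JJ!$ after summing over trees, and relies on the super-exponential gain $2^{-(J-1)(J-2)}$ (resp.\ $4^{-\delta J(J+1)}$) coming specifically from the geometric growth of the $|\ti{\phi}^j|$'s to dominate it. Your flat-threshold sets do not produce any such super-exponential factor, so even if the rest were repaired, $\#\FR{T}(J)$ would blow up the constant. The same issues carry over to Case $[B]$, where you propose a dyadic localization of $\ti{\phi}^k$ and a geometric gain $M^{-\delta}$ per step; the paper instead localizes each \emph{single-node} phase $\phi^j$ to a unit interval $\mu^j\leq\phi^j<\mu^j+1$ (the form taken by $(B1)$), replaces $\ti{\phi}^j$ by $\ti{\mu}^j:=\sum_{k\leq j}\mu^k$ so that the denominator becomes a $\bfmu$-dependent weight $f(\bfmu)$ decoupled from the sums over $\mathbf n$, and then bounds $\|f\|_{\ell^p(\Bo{Z}^J)}$ via interpolation. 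Your dyadic localization of $\ti{\phi}^k$ does not directly mesh with $(B1)$, which constrains $\phi$, not $\ti{\phi}^k$.

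In short: the induction skeleton is sound, but the argument breaks at the conversion from $\prod|\ti{\phi}^j|^{-1}$ to a form on which the fundamental $p$-linear estimates can act. That conversion is exactly what the paper's growth-type definition of $\Phi^J_{N\!R},\Phi^J_R$ accomplishes, and it cannot be replaced by a flat threshold. You would need to adopt a growth-based definition of ``non-resonant'' and then redo the pointwise estimates; once that is in place, the peeling-off can be done at the root or at the last leaf, and the tree-count factorial is handled by the resulting Gaussian-type factor rather than by any clever amortization.
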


\begin{proof}[Proof of Theorem~\ref{thm:abstract} assuming Proposition~\ref{prop:abstract}]
Let $\om ,\ti{\om}\in C_T\ell ^2_s$ be two solutions of \eqref{Abs}.
It follows from Proposition~\ref{prop:abstract} that for any $\eta \in (0,1)$ there exist $M\ge 1$ and $T'\in (0,T]$ depending on $\eta$, $\delta$, and the $C_T\ell ^2_s$ norm of $\om$ and $\ti{\om}$, such that we have
\eq{est:Jth}{&\sup _{t\in [0,T']}\Big( \norm{\Sc{N}^{(J)}_{0}[\om ](t)-\Sc{N}^{(J)}_{0}[\ti{\om}](t)}{\ell ^2_s}+\norm{\int _0^t\Big[ \Sc{N}^{(J)}_R[\om ]-\Sc{N}^{(J)}_R[\ti{\om}]\Big]}{\ell ^2_s}\\
&\qquad\qquad +\norm{\int _0^t\Big[ \Sc{R}^{(J-1)}[\om ]-\Sc{R}^{(J-1)}[\ti{\om}]\Big]}{\ell ^2_s}+\norm{\int _0^t\Big[ \Sc{N}^{(J)}[\om ]-\Sc{N}^{(J)}[\ti{\om}]\Big]}{X}\Big) \\
&\quad \le C\eta ^J\norm{\om -\ti{\om}}{C_{T'}\ell ^2_s}}
for any $J\ge 1$.
Then, we can take the limit $J\to \I$ in the hierarchy, where the sums in $j$ for $\Sc{N}^{(j)}_0[\om ]$, $\Sc{N}^{(j)}_R[\om ]$, $\Sc{R}^{(j)}[\om ]$ all converge absolutely in $\ell ^2_s$ and the bad term $\int _0^t\Sc{N}^{(J)}[\om]$ vanishes in the $X$ norm.
As a result, we get the limit equation:
\eq{eq:limit}{\om _n\Big| _0^t=\sum _{j=1}^{\infty}\Sc{N}^{(j)}_{0}[\om ]_n\Big| _0^t+\int _0^t\,\Big( \sum _{j=1}^{\I}\Sc{N}^{(j)}_R[\om ]_n+\sum _{j=1}^{\I}\Sc{R}^{(j-1)}[\om ]_n\Big) ,\qquad t\in [0,T'],~~n\in \Bo{Z}^d.}
By \eqref{eq:limit} and \eqref{est:Jth}, we can easily show that if the two solutions share the same initial datum, then
\[ \norm{\om -\ti{\om}}{C_{T'}\ell ^2_s}\le C\norm{\om -\ti{\om}}{C_{T'}\ell ^2_s}\sum _{j=1}^\I \eta ^j,\]
which implies that $\om (t)\equiv \ti{\om}(t)$ for $t\in [0,T']$.
Repeating this argument, we have the coincidence on the whole interval where both of two solutions are defined.
This completes the proof of Theorem~\ref{thm:abstract}.
\end{proof}

\subsection{Proof of multilinear estimates}

All we have to do is to prove Proposition~\ref{prop:abstract}.

We want to establish the $[(p-1)J+1]$-linear estimates for all $J$.
One may expect that these estimates follow from $J$ times iteration of the $p$-linear estimates for terms in the equation of the first generation, such as
\[ \norm{\sum _{\mat{n=n_1+\dots +n_p\\ |\phi |\text{\,:\,small}}}|m|\om ^{(1)}_{n_1}\cdots \om^{(p)}_{n_p}}{\ell ^2_s}+\norm{\sum _{\mat{n=n_1+\dots +n_p\\ |\phi |\text{\,:\,large}}}\frac{|m|}{|\phi|}\om ^{(1)}_{n_1}\cdots \om^{(p)}_{n_p}}{\ell ^2_s}\le C\prod _{j=1}^p\norm{\om ^{(j)}}{\ell ^2_s}.\] 
However, such reduction seems impossible.
The reason is that the structure of resonance gets more complicated as the generation proceeds; namely, the phase function in the equation of the $J$-th generation is not $\phi ^J$ but $\ti{\phi}^J=\phi ^1+\phi ^2+\dots +\phi ^J$, in which all variables $n_a$ appearing before are involved.
Therefore, to get nonlinear estimates for every generation by an induction on $J$, we have to prepare fundamental $p$-linear estimates which are \emph{stronger} than just required for the first generation as above.
Actually, the sets of $p$-linear estimates assumed in Theorem~\ref{thm:abstract} are examples of such fundamental estimates, as we see below.

\begin{proof}[Proof of Proposition~\ref{prop:abstract}, Case {$[A]$}]
We begin with giving a precise definition of the resonant/non-resonant decomposition.
Let $M\ge 1$.
We define the sets $\Phi _R^{J}$, $\Phi _{N\!R}^{J}$ as
\eqq{
\Phi _R^{1}&:=\Shugo{\phi ^1}{|\phi ^1|\le 16M},\qquad \Phi _{N\!R}^{1}:=\Shugo{\phi ^1}{|\phi ^1|>16M},\\
\Phi _R^{J}&:=\Shugo{(\phi ^j)_{j=1}^J}{|\phi ^1|>16M,\,|\ti{\phi}^{j}|>16|\ti{\phi}^{j-1}|~(2\le j\le J-1),\, |\ti{\phi}^{J}|\le 16|\ti{\phi}^{J-1}|},\\
\Phi _{N\!R}^{J}&:=\Shugo{(\phi ^j)_{j=1}^J}{|\phi ^1|>16M,\,|\ti{\phi}^{j}|>16|\ti{\phi}^{j-1}|~(2\le j\le J)}\qquad (J\ge 2).
}
It is easily verified that
\eqq{|\ti{\phi}^{j}|>16|\ti{\phi}^{j-1}|\qquad &\Longrightarrow \qquad |\ti{\phi}^j|\sim |\phi ^j|,\\
|\ti{\phi}^{j}|\le 16|\ti{\phi}^{j-1}|\qquad &\Longrightarrow \qquad |\phi ^j|\lec |\ti{\phi}^{j-1}|,}
which implies that for $\phi ^1\in \Phi ^1_R$
\[ 1\gec M^{-1/2}\LR{\phi ^1}^{1/2},\]
for $(\phi ^j)_{j=1}^J\in \Phi _R^J$ ($J\ge 2$)
\eqq{\prod _{j=1}^{J-1}|\ti{\phi}^j|\gec \prod _{j=1}^{J-2}|\ti{\phi}^j|^{1/2}\cdot \prod _{j=1}^{J}\LR{\phi ^j}^{1/2}&\ge 16^{\sum _{j=1}^{J-2}j/2}M^{(J-2)/2}\prod _{j=1}^{J}\LR{\phi ^j}^{1/2}\\
&=2^{(J-1)(J-2)}M^{(J-2)/2}\prod _{j=1}^{J}\LR{\phi ^j}^{1/2},}
and for $(\phi ^j)_{j=1}^J\in \Phi _{N\!R}^J$
\eqq{\prod _{j=1}^{J}|\ti{\phi}^j|\sim &\prod _{j=1}^{J}|\ti{\phi}^j|^{1/2}\cdot \prod _{j=1}^{J}\LR{\phi ^j}^{1/2}\ge 16^{\sum _{j=1}^{J}j/2}M^{J/2}\prod _{j=1}^{J}\LR{\phi ^j}^{1/2}=2^{J(J+1)}M^{J/2}\prod _{j=1}^{J}\LR{\phi ^j}^{1/2},\\
\prod _{j=1}^{J}|\ti{\phi}^j|\sim &\prod _{j=1}^{J}|\ti{\phi}^j|^{\de}\cdot \prod _{j=1}^{J}\LR{\phi ^j}^{1-\de}\ge 16^{\sum _{j=1}^{J}\de j}M^{\de J}\prod _{j=1}^{J}\LR{\phi ^j}^{1-\de}=4^{\de J(J+1)}M^{\de J}\prod _{j=1}^{J}\LR{\phi ^j}^{1-\de}.
}
These inequalities then yield that
\eqq{
|\Sc{N}^{(J)}_R[\om ]_n|&\lec 2^{-(J-1)(J-2)}M^{1-J/2}\sum _{\Sc{T}\in \mathfrak{T}(J)}\sum _{\mathbf{n}\in \mathfrak{N}_n(\Sc{T})}\frac{\prod _{j=1}^J|m^j|}{\prod _{j=1}^{J}\LR{\phi ^j}^{1/2}}\prod _{a\in \Sc{T}_\I}|\om _{n_a}|,\\
|\Sc{N}^{(J)}_0[\om ]_n|&\lec 2^{-J(J+1)}M^{-J/2}\sum _{\Sc{T}\in \mathfrak{T}(J)}\sum _{\mathbf{n}\in \mathfrak{N}_n(\Sc{T})}\frac{\prod _{j=1}^J|m^j|}{\prod _{j=1}^{J}\LR{\phi ^j}^{1/2}}\prod _{a\in \Sc{T}_\I}|\om _{n_a}|,\\
|\Sc{R}^{(J)}[\om ]_n|&\lec 2^{-J(J+1)}M^{-J/2}\sum _{\Sc{T}\in \mathfrak{T}(J)}\sum _{a\in \Sc{T}_\I}\sum _{\mathbf{n}\in \mathfrak{N}_n(\Sc{T})}\frac{\prod _{j=1}^J|m^j|}{\prod _{j=1}^{J}\LR{\phi ^j}^{1/2}}|\Sc{R}[\om ]_{n_a}|\prod _{\mat{b\in \Sc{T}_\I \\b\neq a}}|\om _{n_b}|,\\
|\Sc{N}^{(J+1)}[\om ]_n|&\lec 4^{-\de J(J+1)}M^{-\de J}\sum _{\Sc{T}\in \mathfrak{T}(J)}\sum _{a\in \Sc{T}_\I}\sum _{\mathbf{n}\in \mathfrak{N}_n(\Sc{T})}\frac{\prod _{j=1}^J|m^j|}{\prod _{j=1}^{J}\LR{\phi ^j}^{1-\de}}|\Sc{N}^{(1)}[\om ]_{n_a}|\prod _{\mat{b\in \Sc{T}_\I \\b\neq a}}|\om _{n_b}|.
}

Now that there are only $\phi ^j$ and no $\ti{\phi}^j$ appearing in these expressions, we can iterate the $p$-linear estimates to obtain $[(p-1)J+1]$-linear estimates for every $J$.
To be more precise, we recall how these terms have been obtained from the terms in the equation of the preceding generation:
For each $\Sc{T}\in \mathfrak{T}(J)$ there exist a tree $\Sc{T}'\in \mathfrak{T}(J-1)$ and $a_*\in \Sc{T}'_\I$ such that $\Sc{T}$ is obtained from $\Sc{T}'$ by developing the leaf $a_*$ into the $J$-th node and its $p$ children (denoted by $a_*^1,\dots ,a_*^p$), which originates from the substitution of a $p$-linear form $\Sc{N}^{(1)}[\om ]_{n_{a_*}}$ for $\om _{n_{a_*}}$.
Noticing this process, we see that
\eqq{
&\sum _{\mathbf{n}\in \mathfrak{N}_n(\Sc{T})}\frac{\prod _{j=1}^J|m^j|}{\prod _{j=1}^{J}\LR{\phi ^j}^{\al}}\prod _{a\in \Sc{T}_\I}|\om ^{(a)}_{n_a}|\\
&\hx =\sum _{\mathbf{n}\in \mathfrak{N}_n(\Sc{T}')}\frac{\prod _{j=1}^{J-1}|m^j|}{\prod _{j=1}^{J-1}\LR{\phi ^j}^{\al}}\prod _{\mat{a\in \Sc{T}'_\I \\ a\neq a_*}}|\om ^{(a)}_{n_a}|\cdot \sum _{n_{a_*}=n_{a_*^1}+\dots +n_{a_*^p}}\frac{|m(n_{a_*},n_{a_*^1},\dots ,n_{a_*^p})|}{\LR{\phi (n_{a_*},n_{a_*^1},\dots ,n_{a_*^p})}^\al}\prod _{l=1}^p|\om ^{(a_*^l)}_{n_{a_*^l}}|.
}
Hence, by an induction on $J$ we can easily deduce the following statement from the assumptions $(A1)$, $(A2)$:
There exists $C>0$ such that for any $J\in \Bo{N}$, we have
\begin{align}
\norm{\sum _{\mathbf{n}\in \mathfrak{N}_n(\Sc{T})}\frac{\prod _{j=1}^J|m^j|}{\prod _{j=1}^{J}\LR{\phi ^j}^{1/2}}\prod _{a\in \Sc{T}_\I}|\om ^{(a)}_{n_a}|}{\ell ^2_s}&\le C^J\prod _{a\in \Sc{T}_\I}\tnorm{\om ^{(a)}}{\ell ^2_s}, \label{est:red1} \\
\norm{\sum _{\mathbf{n}\in \mathfrak{N}_n(\Sc{T})}\frac{\prod _{j=1}^J|m^j|}{\prod _{j=1}^{J}\LR{\phi ^j}^{1-\de}}\prod _{a\in \Sc{T}_\I}|\om ^{(a)}_{n_a}|}{X}&\le C^J\min _{a\in \Sc{T}_\I}\Big[ \tnorm{\om ^{(a)}}{X}\prod _{\mat{b\in \Sc{T}_\I \\ b\neq a}}\tnorm{\om ^{(b)}}{\ell ^2_s}\Big]  \label{est:red2}
\end{align}
for any $\Sc{T}\in \mathfrak{T}(J)$.
From \eqref{est:red1} we have
\eqq{
&\norm{\Sc{N}^{(J)}_R[\om ]-\Sc{N}^{(J)}_R[\ti{\om}]}{\ell ^2_s}\\
&\quad\lec \big( (p-1)J+1\big) 2^{-(J-1)(J-2)}\# \mathfrak{T}(J)M\Big[ CM^{-1/2}\big( \tnorm{\om}{\ell ^2_s}+\tnorm{\ti{\om}}{\ell ^2_s}\big) ^{p-1}\Big] ^J\norm{\om -\ti{\om}}{\ell ^2_s},\\
&\norm{\Sc{N}^{(J)}_0[\om ]-\Sc{N}^{(J)}_0[\ti{\om}]}{\ell ^2_s}\\
&\quad\lec \big( (p-1)J+1\big) 2^{-J(J+1)}\# \mathfrak{T}(J)\Big[ CM^{-1/2}\big( \tnorm{\om}{\ell ^2_s}+\tnorm{\ti{\om}}{\ell ^2_s}\big) ^{p-1}\Big] ^J\norm{\om -\ti{\om}}{\ell ^2_s},
}
while combining \eqref{est:red1} with the assumption $(R)$ implies
\eqq{\norm{\Sc{R}^{(J)}[\om ]-\Sc{R}^{(J)}[\ti{\om}]}{\ell ^2_s}&\lec \big( (p-1)J+1\big) ^22^{-J(J+1)}\# \mathfrak{T}(J)\\
&\quad \times \Big[ CM^{-1/2}\big( \tnorm{\om}{\ell ^2_s}+\tnorm{\ti{\om}}{\ell ^2_s}\big) ^{p-1}\Big] ^JC'\big( \tnorm{\om}{\ell ^2_s},\tnorm{\ti{\om}}{\ell ^2_s}\big) \norm{\om -\ti{\om}}{\ell ^2_s},
}
and from \eqref{est:red2} with $(A3)$ that
\eqq{\norm{\Sc{N}^{(J+1)}[\om ]-\Sc{N}^{(J+1)}[\ti{\om}]}{X}&\lec \big( pJ+1\big) \big( (p-1)J+1\big) 4^{-\de J(J+1)}\# \mathfrak{T}(J)\\
&\quad \times \Big[ CM^{-\de}\big( \tnorm{\om}{\ell ^2_s}+\tnorm{\ti{\om}}{\ell ^2_s}\big) ^{p-1}\Big] ^J\big( \tnorm{\om}{\ell ^2_s}+\tnorm{\ti{\om}}{\ell ^2_s}\big) ^{p-1}\norm{\om -\ti{\om}}{\ell ^2_s},
}
where $C,C'>0$ and the implicit constants are independent of $J$ and $M$.

Note that $\big( (p-1)J+1\big) ^22^{-(J-1)(J-2)}\# \mathfrak{T}(J)$, $\big( pJ+1\big) \big( (p-1)J+1\big) 4^{-\de J(J+1)}\# \mathfrak{T}(J)$ are bounded in $J$.
Therefore, we obtain the desired estimates.
\end{proof}

\begin{proof}[Proof of Proposition~\ref{prop:abstract}, Case {$[B]$}]
In this case, we set
\eqq{
\Phi _R^{J}&:=\Shugo{(\phi ^j)_{j=1}^J}{|\ti{\phi}^{j}|>2^jM~(1\le j\le J-1),\, |\ti{\phi}^{J}|\le 2^JM},\\
\Phi _{N\!R}^{J}&:=\Shugo{(\phi ^j)_{j=1}^J}{|\ti{\phi}^{j}|>2^jM~(1\le j\le J)}
}
for $M\ge 1$.
For given $\bfmu =(\mu ^j)_{j=1}^J\in \Bo{Z}^J$, we have
\eqq{(\phi ^j)_{j=1}^J\in \Phi _R^{J}\cap \big( \bfmu +[0,1)^J\big) \quad &\Longrightarrow \quad \prod _{j=1}^{J-1}|\ti{\phi}^j|\ge \prod _{j=1}^{J-1}\max \{ |\ti{\mu}^j|-j,\,2^jM\} ,\\
(\phi ^j)_{j=1}^J\in \Phi _{N\!R}^{J}\cap \big( \bfmu +[0,1)^J\big) \quad &\Longrightarrow \quad \prod _{j=1}^{J}|\ti{\phi}^j|\ge \prod _{j=1}^{J}\max \{ |\ti{\mu}^j|-j,\,2^jM\} ,
}
where $\ti{\mu}^j:=\mu ^1+\cdots +\mu ^j$.
Hence, we may consider estimating terms of the form
\[ T(f;(\om ^{(a)})_{a\in \mathcal{T}_\infty})~:=~\sum _{\bfmu \in \Bo{Z}^J}f(\bfmu )\!\!\sum _{\mat{\mathbf{n}\in \mathfrak{N}_n(\Sc{T})\\ (\phi ^j)_{j=1}^J\in \bfmu +[0,1)^J}}\prod _{j=1}^J|m^j|\prod _{a\in \Sc{T}_\I}\om^{(a)}_{n_a}\]
for $\mathcal{T}\in \mathfrak{T}(J)$ and an appropriate $f:\Bo{Z}^J\to \R$.
In fact, the function $f$ will be taken as one of 
\[ f_R(\bfmu )=\frac{\chi _{|\ti{\mu}^J|\le 2^JM+J}}{\prod\limits _{j=1}^{J-1}\max \{ |\ti{\mu}^j|-j,\,2^jM\} },\qquad f_{N\!R}(\bfmu)=\frac{1}{\prod\limits _{j=1}^{J}\max \{ |\ti{\mu}^j|-j,\,2^jM\} }.\]
Again, (for each fixed $\bfmu$) the series in $\mathbf{n}$ has such a form that an induction on $J$ can be applied.

Note that the above functions $f_R$, $f_{N\!R}$ do not belong to $L^1(\Bo{Z}^J)$, but are in $L^p(\Bo{Z}^J)$ for any $p>1$ and 
\[ \| f_R\| _{\ell ^p(\Bo{Z}^J)}\lec _p 2^JM\cdot \big[ 2^{-\frac{1}{2}J(J+1)}M^{-J}\big] ^{1-\frac{1}{p}},\qquad \| f_{N\!R}\| _{\ell ^p(\Bo{Z}^J)}\lec _p  \big[ 2^{-\frac{1}{2}J(J+1)}M^{-J}\big] ^{1-\frac{1}{p}}. \]
For the moment, we consider general functions $f:\Bo{Z}^J\to \R$.
Iterating $(B1)$ or $(B1)'$ $J$ times (as we did in Case $[A]$ above), we have the estimates
\begin{equation}\label{est:B-1}
\begin{split}
\sup _{\bfmu \in \mathbb{Z}^J}\norm{\sum _{\mat{\mathbf{n}\in \mathfrak{N}_n(\Sc{T})\\ (\phi ^j)_{j=1}^J\in \bfmu +[0,1)^J}}\prod _{j=1}^J|m^j|\prod _{a\in \Sc{T}_\I}|\om^{(a)}_{n_a}|}{\ell ^2_{s_1}}&\le C^J\prod _{a\in \mathcal{T}_\infty} \norm{\om^{(a)}}{\ell ^2_{s_1}},\\
\norm{\sum _{\mathbf{n}\in \mathfrak{N}_n(\Sc{T})}\prod _{j=1}^J|m^j|\prod _{a\in \Sc{T}_\I}|\om^{(a)}_{n_a}|}{\ell ^2_{s_2}}&\le C^J\prod _{a\in \mathcal{T}_\infty} \norm{\om^{(a)}}{\ell ^2_{s_2}},
\end{split}
\end{equation}
and hence,
\eqq{
\norm{T(f;(\om ^{(a)})_{a\in \mathcal{T}_\infty})}{\ell ^2_{s_1}}&\leq \| f\| _{L^1(\mathbb{Z}^J)}C^J\prod _{a\in \mathcal{T}_\infty} \norm{\om^{(a)}}{\ell ^2_{s_1}},\\
\norm{T(f;(\om ^{(a)})_{a\in \mathcal{T}_\infty})}{\ell ^2_{s_2}}&\leq \| f\| _{L^\infty (\mathbb{Z}^J)}C^J\prod _{a\in \mathcal{T}_\infty} \norm{\om^{(a)}}{\ell ^2_{s_2}}.}
By multilinear interpolation, we obtain
\eq{est:B-2}{
\norm{T(f;(\om ^{(a)})_{a\in \mathcal{T}_\infty})}{\ell ^2_s}&\leq \| f\| _{L^p(\mathbb{Z}^J)}C^J\prod _{a\in \mathcal{T}_\infty} \norm{\om^{(a)}}{\ell ^2_s},
}
where $p\in (1,\I )$ satisfies $s=\frac{1}{p}s_1+(1-\frac{1}{p})s_2$ and $1-\frac{1}{p}=\frac{s-s_1}{s_2-s_1}=\de$.
Now, taking $f=f_R$ and $f_{N\!R}$, we have
\eqq{&\norm{\Sc{N}^{(J)}_R[\om ]}{\ell ^2_s}\le C2^JM2^{-\frac{\de}{2}J(J+1)}\# \FR{T}(J)\big[ CM^{-\de}\norm{\om}{\ell ^2_{s}}^{p-1}\big] ^J\norm{\om}{\ell ^2_{s}},\\
&\norm{\Sc{N}^{(J)}_0[\om ]}{\ell ^2_s}\le C2^{-\frac{\de}{2}J(J+1)}\# \FR{T}(J)\big[ CM^{-\de}\norm{\om}{\ell ^2_{s}}^{p-1}\big] ^J\norm{\om}{\ell ^2_{s}},}
which implies the desired estimates for $\Sc{N}^{(J)}_R[\om ]$ and $\Sc{N}^{(J)}_0[\om ]$ when $\ti{\om}=0$.
The same argument applies to the difference estimates.
The estimate of $\Sc{R}^{(J)}[\om ]$ follows from that of  $\Sc{N}^{(J)}_0[\om ]$ and $(R)$.
Finally, $\Sc{N}^{(J+1)}[\om ]$ can be estimated in $X$ by using $(B2)$ and $(B2)'$ instead, together with $(B3)$.
\end{proof}

\begin{rem}\label{rem:Bgeneral}
Clearly, the above argument for $[B]$ is still valid if the spaces $\ell ^2_s$, $\ell ^2_{s_1}$, and $\ell ^2_{s_2}$ in the assumption $[B]$ are generalized to $Y$, $Y_1$, and $Y_2$, Banach spaces with the property \eqref{absolutenorm}, such that $Y$ is obtained by complex interpolation between $Y_1$ and $Y_2$.

If $Y$ is a Banach space satisfying \eqref{absolutenorm} and $Y_1$, $Y_2$ are defined by the norms $\| \LR{n}^{-\nu_1}\om \|_{Y}$ and $\| \LR{n}^{\nu _2}\om\|_Y$ for some $\nu_1,\nu_2>0$, we can deduce the estimate corresponding to \eqref{est:B-2} (with a larger constant $C$) from those corresponding to \eqref{est:B-1} by a direct calculation without using multilinear interpolation.
To see this, we first observe that the property \eqref{absolutenorm} of the $Y$ norm implies
\eqq{
\norm{|\om |^{1-\delta}|\ti{\om}|^{\delta}}{Y}&\leq \bigg\| |\om |^{1-\delta}|\ti{\om}|^{\delta}\chi \bigg[ \frac{|\om _n|}{\| \om \|_Y}\!\leq\!\frac{|\ti{\om}_n|}{\| \ti{\om}\|_Y}\bigg] \bigg\| _{Y}+\bigg\| |\om |^{1-\delta}|\ti{\om}|^{\delta}\chi \bigg[ \frac{|\om _n|}{\| \om \|_Y}\!>\!\frac{|\ti{\om}_n|}{\| \ti{\om}\|_Y}\bigg] \bigg\| _{Y}\\
&\leq C\bigg\| \bigg[ \frac{\| \om\|_Y}{\| \ti{\om}\|_Y}\bigg] ^{1-\delta}\ti{\om}\bigg\| _{Y}+C\bigg\| \bigg[ \frac{\| \ti{\om}\|_Y}{\| \om\|_Y}\bigg] ^{\delta}\om \bigg\|_{Y}=2C\| \om \|_{Y}^{1-\delta}\| \ti{\om}\|_{Y}^{\delta}
}
for any $\delta \in (0,1)$ and $\om ,\ti{\om}\in Y$.
On the other hand, by the H\"older inequality we have (with $\delta =\frac{\nu _1}{\nu _1+\nu _2}$) 
\eqq{
\big| T(f;(\om ^{(a)})_{a\in \mathcal{T}_\infty})\big| 
&\leq \Big[ \LR{n}^{-\nu_1}\sum _{\bfmu}|f(\bfmu )|^{\frac{1}{1-\delta}}\sum _{\mat{\mathbf{n}\in \mathfrak{N}_n(\mathcal{T})\\ (\phi ^j)\in \bfmu +[0,1)^J}}\prod _j|m^j|\prod_a\LR{n_a}^{\nu_1}|\om ^{(a)}_{n_a}|\Big] ^{1-\delta}\\
&\quad \times \Big[ \LR{n}^{\nu_2}\sum _{\mathbf{n}\in \mathfrak{N}_n(\mathcal{T})} \prod _j|m^j|\prod_a\LR{n_a}^{-\nu_2}|\om ^{(a)}_{n_a}|\Big] ^{\delta},
}
Therefore, assuming the estimates corresponding to \eqref{est:B-1}, we obtain
\eqq{
\norm{T(f;(\om ^{(a)})_{a\in \mathcal{T}_\infty})}{Y}&\lesssim \norm{\sum _{\bfmu}|f(\bfmu )|^{\frac{1}{1-\delta}}\sum _{\mat{\mathbf{n}\in \mathfrak{N}_n(\mathcal{T})\\ (\phi ^j)\in \bfmu +[0,1)^J}}\prod _j|m^j|\prod_a\LR{n_a}^{\nu_1}|\om ^{(a)}_{n_a}|}{Y_1}^{1-\delta}\\
&\quad \times \norm{\sum _{\mathbf{n}\in \mathfrak{N}_n(\mathcal{T})} \prod _j|m^j|\prod_a\LR{n_a}^{-\nu_2}|\om ^{(a)}_{n_a}|}{Y_2}^{\delta}\\
&\lesssim \norm{f}{L^{\frac{1}{1-\delta}}(\mathbb{Z}^J)}\Big[ C^J\prod _a\norm{\LR{n}^{\nu_1}\om ^{(a)}}{Y_1}\Big] ^{1-\delta}\Big[ C^J\prod _a\norm{\LR{n}^{-\nu _2}\om ^{(a)}}{Y_2}\Big] ^{\delta} \\
&=\| f\|_{L^p(\mathbb{Z}^J)}C^J\prod _a\norm{\om ^{(a)}}{Y},
}
as desired.
\end{rem}

\subsection{Comments on the case of a system and multiple nonlinear terms}
\label{subsec:multiple}

It is straightforward to adapt the proof of Theorem~\ref{thm:abstract} presented above to the case of a system and multiple principal nonlinear terms.

As an example, let us consider the following system of two equations:
\[
\left\{ \begin{aligned}
\p _t\om _n&=\sum _{i=1}^{I_1}\bigg[ \sum _{n=n_1+\dots +n_{p_{1,i}+q_{1,i}}}e^{it\phi_{1,i}}m_{1,i}\om _{n_1}\cdots \om _{n_{p_{1,i}}}\Om _{n_{p_{1,i}+1}}\cdots \Om _{n_{p_{1,i}+q_{1,i}}} \bigg] +\Sc{R}_1[\om ,\Om ]_n, \\
\p _t\Om _n&=\sum _{i=1}^{I_2}\bigg[ \sum _{n=n_1+\dots +n_{p_{2,i}+q_{2,i}}}e^{it\phi_{2,i}}m_{2,i}\om _{n_1}\cdots \om _{n_{p_{2,i}}}\Om _{n_{p_{2,i}+1}}\cdots \Om _{n_{p_{2,i}+q_{2,i}}} \bigg] +\Sc{R}_2[\om ,\Om ]_n,
\end{aligned}\right.
\]
where $I_1,I_2$ are the numbers of nonlinear terms in the equations;  non-negative integers $p_{*,i}$ and $q_{*,i}$ denote the degree of each term in $\om$ and $\Om$ satisfying $P_{*,i}:=p_{*,i}+q_{*,i}\geq 2$
; $\phi _{*,i}=\phi _{*,i}(n,n_1,\dots ,n_{p_{*,i}+q_{*,i}})\in \R$ and $m_{*,i}=m_{*,i}(n,n_1,\dots ,n_{p_{*,i}+q_{*,i}})\in \Bo{C}$ are the phase and multiplier parts of each term; $\Sc{R}_*[\om ,\Om ]$ is the remainder part of each equation.
We set $I:=\max \{ I_1,I_2\}$ and $P:=\max _{*,i}P_{*,i}$, which is the highest degree of (the principal part of) the nonlinearity.

Then, the fundamental multilinear estimates for unconditional uniqueness of solutions $(\om ,\Om )$ in $C_T(\ell ^2_s\times \ell ^2_r)$ are stated as follows:
\eqs{
\norm{\Sc{R}_1[\om ,\Om ]}{C_T\ell ^2_s}+\norm{\Sc{R}_2[\om ,\Om ]}{C_T\ell ^2_r}\le C\big( \tnorm{\om}{C_T\ell ^2_s},\tnorm{\Om}{C_T\ell ^2_r}\big),\\
\norm{\Sc{R}_1[\om ,\Om ]-\Sc{R}_1[\ti{\om} ,\ti{\Om} ]}{C_T\ell ^2_s}+\norm{\Sc{R}_2[\om ,\Om ]-\Sc{R}_2[\ti{\om} ,\ti{\Om} ]}{C_T\ell ^2_r}\hspace*{50pt}\\
\hspace*{50pt}\le C\big( \tnorm{\om ,\ti{\om}}{C_T\ell ^2_s},\tnorm{\Om ,\ti{\Om}}{C_T\ell ^2_r}\big) \big( \tnorm{\om -\ti{\om}}{C_T\ell ^2_s}+\tnorm{\Om -\ti{\Om}}{C_T\ell ^2_r}\big) 
}
instead of $(R)$, 
\eqq{
&\norm{\sum _{n=n_1+\dots +n_{P_{1,i}}}\frac{|m_{1,i}|}{\LR{\phi_{1,i}}^{1/2}}\prod _{j=1}^{p_{1,i}}\om ^{(j)}_{n_j}\prod _{k=1}^{q_{1,i}}\Om ^{(k)}_{n_{p_{1,i}+k}}}{\ell ^2_s}\lec \prod _{j=1}^{p_{1,i}}\tnorm{\om ^{(j)}}{\ell ^2_s}\prod _{k=1}^{q_{1,i}}\tnorm{\Om ^{(k)}}{\ell ^2_r}\qquad (1\le i\le I_1),\\
&\norm{\sum _{n=n_1+\dots +n_{P_{2,i}}}\frac{|m_{2,i}|}{\LR{\phi_{2,i}}^{1/2}}\prod _{j=1}^{p_{2,i}}\om ^{(j)}_{n_j}\prod _{k=1}^{q_{2,i}}\Om ^{(k)}_{n_{p_{2,i}+k}}}{\ell ^2_r}\lec \prod _{j=1}^{p_{2,i}}\tnorm{\om ^{(j)}}{\ell ^2_s}\prod _{k=1}^{q_{2,i}}\tnorm{\Om ^{(k)}}{\ell ^2_r}\qquad (1\le i\le I_2)
}
instead of $(A1)$,
\eqq{
&\norm{\sum _{n=n_1+\dots +n_{P_{1,i}}}\frac{|m_{1,i}|}{\LR{\phi_{1,i}}^{1-\de}}\prod _{j=1}^{p_{1,i}}\om ^{(j)}_{n_j}\prod _{k=1}^{q_{1,i}}\Om ^{(k)}_{n_{p_{1,i}+k}}}{X_1}\\
&\lec \min \bigg\{ \min _{1\le j\le p_{1,i}}\tnorm{\om ^{(j)}}{X_1}\prod _{\mat{l=1 \\ l\neq j}}^{p_{1,i}}\tnorm{\om ^{(l)}}{\ell ^2_s}\prod _{k=1}^{q_{1,i}}\tnorm{\Om ^{(k)}}{\ell ^2_r},~\min _{1\le k\le q_{1,i}}\tnorm{\Om ^{(k)}}{X_2}\prod _{\mat{l=1 \\ l\neq k}}^{q_{1,i}}\tnorm{\Om ^{(l)}}{\ell ^2_r}\prod _{j=1}^{p_{1,i}}\tnorm{\om ^{(j)}}{\ell ^2_s}\bigg\} \\[-5pt]
&\hspace{370pt} (1\le i\le I_1), \\[-5pt]
&\norm{\sum _{n=n_1+\dots +n_{P_{2,i}}}\frac{|m_{2,i}|}{\LR{\phi_{2,i}}^{1-\de}}\prod _{j=1}^{p_{2,i}}\om ^{(j)}_{n_j}\prod _{k=1}^{q_{2,i}}\Om ^{(k)}_{n_{p_{2,i}+k}}}{X_2}\\
&\lec \min \bigg\{ \min _{1\le j\le p_{2,i}}\tnorm{\om ^{(j)}}{X_1}\prod _{\mat{l=1 \\ l\neq j}}^{p_{2,i}}\tnorm{\om ^{(l)}}{\ell ^2_s}\prod _{k=1}^{q_{2,i}}\tnorm{\Om ^{(k)}}{\ell ^2_r},~\min _{1\le k\le q_{2,i}}\tnorm{\Om ^{(k)}}{X_2}\prod _{\mat{l=1 \\ l\neq k}}^{q_{2,i}}\tnorm{\Om ^{(l)}}{\ell ^2_r}\prod _{j=1}^{p_{2,i}}\tnorm{\om ^{(j)}}{\ell ^2_s}\bigg\} \\[-5pt]
&\hspace{370pt}(1\le i\le I_2)
}
(for suitable Banach spaces $X_1,X_2$ and some $\de $) instead of $(A2)$, and
\eqq{
&\norm{\sum _{n=n_1+\dots +n_{P_{1,i}}}|m_{1,i}|\prod _{j=1}^{p_{1,i}}\om ^{(j)}_{n_j}\prod _{k=1}^{q_{1,i}}\Om ^{(k)}_{n_{p_{1,i}+k}}}{X_1}\lec \prod _{j=1}^{p_{1,i}}\tnorm{\om ^{(j)}}{\ell ^2_s}\prod _{k=1}^{q_{1,i}}\tnorm{\Om ^{(k)}}{\ell ^2_r}\qquad (1\le i\le I_1),\\
&\norm{\sum _{n=n_1+\dots +n_{P_{2,i}}}|m_{2,i}|\prod _{j=1}^{p_{2,i}}\om ^{(j)}_{n_j}\prod _{k=1}^{q_{2,i}}\Om ^{(k)}_{n_{p_{2,i}+k}}}{X_2}\lec \prod _{j=1}^{p_{2,i}}\tnorm{\om ^{(j)}}{\ell ^2_s}\prod _{k=1}^{q_{2,i}}\tnorm{\Om ^{(k)}}{\ell ^2_r}\qquad (1\le i\le I_2)
}
instead of $(A3)$.
The estimates $(B1)$--$(B2)'$ are extended similarly; we omit here to write them down explicitly.

In what follows, we focus on the case $[A]$ as an example.
For the proof of uniqueness, we first modify the notation of ordered tree:
For a partially ordered set $\Sc{T}$, denote the subset of all maximal elements by $\Sc{T}_\I$ (i.e., $a\in \Sc{T}_\I$ means there is no $b\in \Sc{T}\setminus \{ a\}$ satisfying $a\preceq b$), and $\Sc{T}_0:=\Sc{T}\setminus \Sc{T}_\I$.
An element of $\Sc{T}_0$ or $\Sc{T}_\I$ will be called a node or a leaf.
Consider the triplet $(\Sc{T}, \iota ,\kappa )$ of a finite partially ordered set $\Sc{T}$, a map $\iota :\Sc{T}\to \{ 1,2\}$, and a map $\kappa :\Sc{T}_0\to \{ 1,2,\dots ,I\}$.
Then, $\mathfrak{T}_1(J)$ [resp.~$\mathfrak{T}_2(J)$] is defined as the set of all $(\Sc{T}, \iota ,\kappa )$ satisfying the following properties:
\begin{enumerate}
\item $\Sc{T}$ has the least element $r$ called the \emph{root}.
\item For each $a\in \Sc{T}\setminus \{ r\}$, there exists a unique element $b\in \Sc{T}$ called the \emph{parent} of $a$ such that $b\neq a$, $b\preceq a$, and that $b\preceq c\preceq a$ implies $c=a$ or $c=b$.
We say $a$ is a \emph{child} of $b$ if $b$ is the parent of $a$.
Note that $a\in \Sc{T}$ has a child if and only if $a\in \Sc{T}_0$.
\item 
$\# \Sc{T}_0=J$, and these nodes are numbered from $1$ to $J$ so that $a^{j_1}\preceq a^{j_2}$ implies $j_1\le j_2$, where we write the $j$-th node of $\Sc{T}$ as $a^j$.
\item For each $a\in \Sc{T}_0$, $\kappa (a) \in \{ 1,2,\dots ,I_{\iota (a)}\}$ and $a$ has exactly $P_{\iota (a),\kappa (a)}$ children, denoted by $a_1,\dots , a_{P_{\iota (a),\kappa (a)}}$.
\item The map $\iota$ satisfies that
\begin{itemize}
\item $\iota (r)=1$ [resp.~$\iota (r)=2$], and
\item for each $a\in \Sc{T}_0$, $\iota (a_j)=\begin{cases}1& (1\le j\le p_{\iota (a),\kappa (a)}), \\ 2 &(p_{\iota (a),\kappa (a)}+1\le j\le p_{\iota (a),\kappa (a)}+q_{\iota (a),\kappa (a)}).\end{cases}$
\end{itemize}
\end{enumerate}
We observe that an ordered tree $(\Sc{T},\iota ,\kappa )$ can create several ordered trees of the next generation by developing one of its leaves into the new node and its children.
By the property (iv), for each choice of a leaf $a\in \Sc{T}_0$ there are $I_{\iota (a)}$ choices for the value of $\kappa$ at the new node $a$, while the values of $\iota$ at the $P_{\iota (a),\kappa (a)}$ new children of $a$ are uniquely determined by the property (v).
Hence, for each $(\Sc{T},\iota ,\kappa )\in \mathfrak{T}_*(J)$ the number of generated ordered trees in $\mathfrak{T}_*(J+1)$ is equal to $\sum _{a\in \Sc{T}_0}I_{\iota (a)}$ and bounded by $ \big( (P-1)J+1\big) I$.
Conversely, any ordered tree is obtained from a specific ordered tree of the previous generation by this procedure.
We therefore see that
\[ \# \mathfrak{T}_*(J)\leq \prod _{j=0}^{J-1}\big[ \big( (P-1)j+1\big) I\big] \leq (P-1)^JI^JJ! \qquad (*\in \{ 1,2\} ).\]
The following notation is also naturally adapted to the present setting:
\begin{itemize}
\item Let $J\in \Bo{N}$, $*\in \{ 1,2\}$ and $(\Sc{T},\iota ,\kappa )\in \FR{T}_*(J)$.
We write $\FR{N}(\Sc{T},\iota ,\kappa )$ to denote the set of all maps $\mathbf{n}=\{ n_a\} _{a\in \Sc{T}}:\Sc{T}\to \Bo{Z}^d$ satisfying the following property:
For each $a\in \Sc{T}_0$, with its children being denoted by $a_1,a_2,\dots ,a_{P_{\iota (a),\kappa (a)}}$, it holds that 
\[ n_a=n_{a_1}+n_{a_2}+\cdots +n_{a_{P_{\iota (a),\kappa (a)}}}.\]
For each $n$, we define $\mathfrak{N}_n(\mathcal{T},\iota,\kappa):=\{ \mathbf{n}\in \mathfrak{N}(\mathcal{T},\iota,\kappa)\,|\,n_{\mathrm{root}}=n\}$.
\item Given $(\Sc{T},\iota ,\kappa )\in \FR{T}_*(J)$ and $\mathbf{n}\in \FR{N}(\Sc{T},\iota ,\kappa )$, we write 
\eqs{
\phi ^j:=\phi _{\iota (a^j),\kappa (a^j)}\big( n_{a^j}, n_{a^j_1},\dots ,n_{a^j_{P_{\iota (a^j),\kappa (a^j)}}}\big) ,\\
m^j:=m_{\iota (a^j),\kappa (a^j)}\big( n_{a^j}, n_{a^j_1},\dots ,n_{a^j_{P_{\iota (a^j),\kappa (a^j)}}}\big)
}
for $1\le j\le J$, where $a^j$ is the $j$-th node of $\Sc{T}$ and $a^j_1,a^j_2,\dots ,a^j_{P_{\iota (a^j),\kappa (a^j)}}$ are its children.
\end{itemize}
With these extended notions, the system can be written as 
\[
\left\{ \begin{aligned}
\p _t\om _n&=\sum _{(\Sc{T},\iota ,\kappa )\in\mathfrak{T}_1(1)}\bigg[ \sum _{\mathbf{n}\in \mathfrak{N}_n(\Sc{T},\iota ,\kappa )}e^{it\phi ^1}m ^1\prod _{\mat{a\in \Sc{T}_\I \\ \iota (a)=1}}\om _{n_a}\prod _{\mat{b\in \Sc{T}_\I \\ \iota (b)=2}}\Om_{n_b} \bigg] ~+\Sc{R}_1[\om ,\Om ]_n, \\
\p _t\Om _n&=\sum _{(\Sc{T},\iota ,\kappa )\in\mathfrak{T}_2(1)}\bigg[ \sum _{\mathbf{n}\in \mathfrak{N}_n(\Sc{T},\iota ,\kappa )}e^{it\phi ^1}m ^1\prod _{\mat{a\in \Sc{T}_\I \\ \iota (a)=1}}\om _{n_a}\prod _{\mat{b\in \Sc{T}_\I \\ \iota (b)=2}}\Om_{n_b} \bigg] ~+\Sc{R}_2[\om ,\Om ]_n.
\end{aligned}\right.
\]

We now proceed to the iteration of NFR with the same rule for resonant/non-resonant decomposition as before (i.e., we use the same sets $\Phi ^J_{R},\Phi ^J_{N\!R}\subset \R ^J$).
Then, after the $(J-1)$-th NFR, we get the system of the $J$-th generation as
\[
\left\{ 
\begin{aligned}
&\om _n\Big| _0^t=\sum _{j=1}^{J-1}\Sc{N}^{(j)}_{1,0}[\om ,\Om ]_n\Big| _0^t+\int _0^t\,\Big( \sum _{j=1}^{J-1}\Sc{N}^{(j)}_{1,R}[\om ,\Om ]_n+\sum _{j=0}^{J-1}\Sc{R}^{(j)}_1[\om ,\Om]_n+\Sc{N}^{(J)}_1[\om ,\Om ]_n\Big) ,\\
&\Om _n\Big| _0^t=\sum _{j=1}^{J-1}\Sc{N}^{(j)}_{2,0}[\om ,\Om ]_n\Big| _0^t+\int _0^t\,\Big( \sum _{j=1}^{J-1}\Sc{N}^{(j)}_{2,R}[\om ,\Om ]_n+\sum _{j=0}^{J-1}\Sc{R}^{(j)}_2[\om ,\Om]_n+\Sc{N}^{(J)}_2[\om ,\Om ]_n\Big) ,
\end{aligned}\right.
\]
where 
\eqq{
\Sc{N}^{(J)}_{*,R}[\om ,\Om ]_n:=(-1&)^{J-1}\sum _{(\Sc{T},\iota ,\kappa )\in \mathfrak{T}_*(J)}\sum _{\mat{\mathbf{n}\in \mathfrak{N}_n(\Sc{T},\iota ,\kappa )\\ (\phi ^j)_{j=1}^{J}\in \Phi _{R}^{J}}}\frac{e^{it\ti{\phi}^J}\prod\limits _{j=1}^Jm^j}{\prod\limits _{j=1}^{J-1}i\ti{\phi}^j}\prod _{\mat{a\in \Sc{T}_\I \\ \iota (a)=1}}\om _{n_a}\prod _{\mat{b\in \Sc{T}_\I \\ \iota (b)=2}}\Om_{n_b},\\
\Sc{R}^{(J)}_{*}[\om ,\Om ]_n:=(-1&)^J\sum _{(\Sc{T},\iota ,\kappa )\in \mathfrak{T}_*(J)}\sum _{\mat{\mathbf{n}\in \mathfrak{N}_n(\Sc{T},\iota ,\kappa )\\ (\phi ^j)_{j=1}^{J}\in \Phi _{N\!R}^{J}}}\frac{e^{it\ti{\phi}^J}\prod\limits _{j=1}^Jm^j}{\prod\limits _{j=1}^{J}i\ti{\phi}^j}\\
&\qquad \times \bigg\{ \sum _{\mat{a\in \Sc{T}_\I \\ \iota (a)=1}}\Sc{R}_1[\om ,\Om ]_{n_a}\prod _{\mat{a'\in \Sc{T}_\I \\ \iota (a')=1,\,a'\neq a}}\om _{n_{a'}}\prod _{\mat{b\in \Sc{T}_\I \\ \iota (b)=2}}\Om_{n_b}
\\
&\qquad\qquad\qquad +\sum _{\mat{b\in \Sc{T}_\I \\ \iota (b)=2}}\Sc{R}_2[\om, \Om ]_{n_b}\prod _{\mat{a\in \Sc{T}_\I \\ \iota (a)=1}}\om _{n_{a}}\prod _{\mat{b'\in \Sc{T}_\I \\ \iota (b')=2,\,b'\neq b}}\Om_{n_{b'}} \bigg\} ,
}
and so on (with the same notation $\ti{\phi}^j:=\phi ^1+\dots +\phi ^j$).
The required estimate of $\Sc{N}_{1,R}^{(J)}[\om ,\Om ]$, for instance, is reduced by the same argument as before to evaluating
\eq{term-s}{
\sum _{\mathbf{n}\in \mathfrak{N}_n(\Sc{T},\iota ,\kappa )}\frac{\prod _{j=1}^J|m^j|}{\prod _{j=1}^{J}\LR{\phi ^j}^{1/2}}\prod _{\mat{a\in \Sc{T}_\I \\ \iota (a)=1}}\om ^{(a)}_{n_a}\prod _{\mat{b\in \Sc{T}_\I \\ \iota (b)=2}}\Om ^{(b)}_{n_b}
}
in $\ell ^2_s$ for each $(\Sc{T},\iota ,\kappa )\in \mathfrak{T}_1(J)$.
Here, we recall that there are unique tree $(\Sc{T}',\iota',\kappa' )\in \mathfrak{T}_1(J-1)$, $a^*\in \Sc{T}'_\I$ and $\kappa (a^*)\in \{ 1,2,\dots ,I_{\iota' (a^*)}\}$ such that $(\Sc{T},\iota ,\kappa )$ is obtained from $(\Sc{T}',\iota',\kappa' )$ by changing the leaf $a^*$ into the $J$-th node, extending the map $\kappa'$ to $\kappa$ with the value at $a^*$, adding the $P_{\iota' (a^*),\kappa (a^*)}$ children of $a^*$, and extending the map $\iota'$ to $\iota$ with the values at the children of $a^*$ determined by the property (v).
In particular, we have
\eqq{
\eqref{term-s}&=\sum _{\mathbf{n}\in \mathfrak{N}_n(\Sc{T},\iota ,\kappa )}\frac{\prod _{j=1}^{J-1}|m^j|}{\prod _{j=1}^{J-1}\LR{\phi ^j}^{1/2}}\prod _{\mat{a\in \Sc{T}'_\I \\ \iota' (a)=1,\,a\neq a^*}}\om ^{(a)}_{n_a}\prod _{\mat{b\in \Sc{T}'_\I \\ \iota' (b)=2}}\Om ^{(b)}_{n_b}\\
&\qquad \times \sum _{n_{a^*}=n_{a^*_1}+\dots +n_{a^*_{P_{1,\kappa (a^*)}}}}\frac{\big| m_{1,\kappa (a^*)}\big|}{\LR{\phi _{1,\kappa (a^*)}}^{1/2}}\prod _{j=1}^{p_{1,\kappa (a^*)}}\om ^{(a^*_j)}_{n_{a^*_j}}\prod _{k=1}^{q_{1,\kappa (a^*)}}\Om ^{(a^*_{p_{1,\kappa (a^*)}+k})}_{n_{a^*_{p_{1,\kappa (a^*)}+k}}}
}
if $\iota'(a^*)=1$, for instance.
In view of this representation, we can verify by the fundamental multilinear estimates and an induction on $J$ that
\[ \tnorm{\eqref{term-s}}{\ell ^2_s}\le C^J\prod _{\mat{a\in \Sc{T}_\I \\ \iota (a)=1}}\tnorm{\om ^{(a)}}{\ell ^2_s}\prod _{\mat{b\in \Sc{T}_\I \\ \iota (b)=2}}\tnorm{\Om ^{(b)}}{\ell ^2_r}\]
for any $J\in \Bo{N}$ and $(\Sc{T},\iota ,\kappa )\in \mathfrak{T}_1(J)$.
As a consequence, we obtain
\[ \norm{\Sc{N}_{1,R}^{(J)}[\om ,\Om ]}{\ell ^2_s}\lec M\Big[ CM^{-1/2}\Big\{ (\tnorm{\om}{\ell ^2_s}+\tnorm{\Om}{\ell ^2_r})+(\tnorm{\om}{\ell ^2_s}+\tnorm{\Om}{\ell ^2_r})^{P-1}\Big\} \Big] ^{J}(\tnorm{\om}{\ell ^2_s}+\tnorm{\Om}{\ell ^2_r}),\]
and similarly for the estimates on other terms and the corresponding difference estimates.

Once the counterpart of Proposition~\ref{prop:abstract} is ready, the rest of the proof of unconditional uniqueness is analogous to the single equation case.

%%%%%%%%%%%%%%%%%%%%%%%%%%%%%%%%%%%%%%%%%%%%%%
% cNLS
%%%%%%%%%%%%%%%%%%%%%%%%%%%%%%%%%%%%%%%%%%%%%%

%\bigskip
\section{Application to the cubic NLS in higher dimension} 
\label{section:cNLS}

This and the subsequent three sections will be devoted to providing applications of the abstract framework given in Theorem~\ref{thm:abstract} to various problems.
 
As the first application, let us consider the cubic NLS on $\T^2$:
\eq{cNLS}{
\left\{ \begin{alignedat}{2}
~i\p _tu+\Delta u&=\pm |u|^2u,&\qquad &(t,x)\in [0,T]\times \T ^2,\\
u(0,x)&=u_0(x), & &x\in \mathbb{T}^2.
\end{alignedat}\right.
}

In the context of UU by the NFR approach, only the one-dimensional and cubic case has been studied before; see \cite{GKO13,P19,CHKP19a,KOY20,CHKP19b,OWp}.
For higher dimensions or higher degree of nonlinearities, complicated structure of resonance makes it substantially more involved to estimate the multilinear terms of arbitrarily high degrees arising in the infinite NFR machinery.
Nevertheless, our abstract framework yields UU in a wide range of regularity; see \cite{K-NLSp} for the full result.
Here, we focus on the two-dimensional cubic case to illustrate how easily our framework can be applied.

\begin{thm}\label{thm:cNLS}
Unconditional uniqueness of solutions to the Cauchy problem \eqref{cNLS} holds in $H^s(\T^2)$ for $s\ge 2/3$.
\end{thm}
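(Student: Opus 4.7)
The plan is to apply Theorem~\ref{thm:abstract} under the condition $[B]$, which is the natural choice since the cubic NLS has no derivative loss.

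\textbf{Reformulation.} First I pass to the interaction representation $\om _n(t):=e^{it|n|^2}\hhat{u}(t,n)$, which turns \eqref{cNLS} into an instance of \eqref{Abs} with degree $p=3$, multiplier $m\equiv \mp i$, and phase
\[
\phi (n,n_1,n_2,n_3)= |n|^2-|n_1|^2+|n_2|^2-|n_3|^2 = 2(n-n_1)\cdot (n-n_3).
\]
Before applying the theorem, I peel off from the raw trilinear sum the ``trivially resonant'' contributions $n_1=n$ (forcing $n_2=n_3$) and $n_3=n$ (forcing $n_2=n_1$); these combine into a term of the form $\mp i\big(2\| \om \| _{\ell ^2}^2-|\om _n|^2\big)\om _n$, which I place into the remainder $\Sc{R}[\om ]$. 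Since $H^{2/3}(\T ^2)\hookrightarrow L^2$, this remainder satisfies $(R)$ at once. What remains as the principal nonlinearity is the sum over $\{ n_1\neq n,\, n_3\neq n\}$, in which $\phi \neq 0$, so there is a genuine non-resonant structure for NFR to exploit.

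\textbf{Choice of parameters.} I would pick intermediate exponents $s_1<\tfrac{2}{3}\leq s<s_2$ with $s_2>1$, and take $X$ to be a weighted $\ell ^p(\Bo{Z}^2)$ with $p$ and weight tuned to the application; a natural first attempt is $X=\ell ^\infty _s$ (or $\ell^2_{s'}$ with $s'<s_1$). Because $s_2>d/2=1$, the space $\ell ^2_{s_2}$ is a convolution algebra, so the unrestricted estimates $(B1)'$, $(B2)'$ reduce essentially to the algebra property, and $(B3)$ follows from this together with the embedding $\ell^2_s\hookrightarrow X$. Thus the genuine analytic content lies in the modulation-sliced estimates $(B1)$ and $(B2)$ at the exponent $s_1$ just below $2/3$.

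\textbf{The key estimate.} The heart of the proof is the bound
\[
\sup _{\mu \in \Bo{Z}}\norm{\sum _{\mat{n=n_1-n_2+n_3\\ \mu \le \phi <\mu +1,\,n_1\neq n,\,n_3\neq n}} \om^{(1)}_{n_1}\bbar{\om^{(2)}_{n_2}}\om^{(3)}_{n_3}}{\ell ^2_{s_1}(\Bo{Z}^2)}\;\lec \;\prod _{j=1}^3\norm{\om ^{(j)}}{\ell ^2_{s_1}(\Bo{Z}^2)},
\]
and its analogue $(B2)$ with $\ell^2_{s_1}$ replaced by $X$. Substituting $a=n-n_1$, $b=n-n_3$ recasts the constraint as $2a\cdot b\in [\mu,\mu +1)$, i.e.\ the frequencies are forced onto a thin neighbourhood of a hyperboloid in $(\Bo{Z}^2)^2$. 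After a dyadic decomposition in the sizes of $n,n_1,n_2,n_3$, the estimate reduces by Cauchy--Schwarz to counting lattice points on such slabs, which is precisely the content of the Bourgain $L^4$ estimate on $\Bo{T}^2$ together with a divisor bound; this is the source of the threshold $s\ge 2/3$.

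\textbf{Main obstacle.} The principal difficulty is the sharpness of the counting at the endpoint: the slab $\{2a\cdot b=\mu +O(1)\}$ in $(\Bo{Z}^2)^2$ admits a number-of-divisor factor that forces a loss one must absorb into the weight $\LR{n}^{s_1}$ with $s_1<2/3$ arbitrary. Simultaneously the auxiliary space $X$ must be weak enough that the term $\Sc{N}^{(J)}[\om ]$ produced by $J$--fold NFR genuinely vanishes as $J\to \I$, yet strong enough that $(B2)$ still closes; balancing these two requirements against the lattice-point counting at $s=2/3$ is the delicate step of the argument. Once $(B1)$ and $(B2)$ are established with this balance, Theorem~\ref{thm:abstract}~$[B]$ applies directly and yields the claimed unconditional uniqueness.
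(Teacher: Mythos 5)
Your high-level plan is essentially the paper's: pass to the interaction picture, factor the phase as $\phi = 2(n-n_1)\cdot(n-n_3)$, take $X=\ell^2$, observe that $(B1)'$, $(B2)'$, $(B3)$ follow from Sobolev embedding once $s_2>1$, note that $(B1)$ follows from $(B2)$ for $0\le s_1<s$, and reduce the whole matter to a single modulation-restricted lattice-point counting bound. Two remarks are in order. Peeling the trivially resonant $n_1=n$, $n_3=n$ terms off into $\Sc{R}[\om]$ is valid but unnecessary: condition $[B]$ never divides by $\phi$, so $\phi=0$ causes no harm, and the paper simply keeps $\Sc{R}\equiv 0$.

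The substantive gap is your attribution of the threshold $s\ge 2/3$. You write that the divisor/lattice-counting bound at the endpoint "is the source of the threshold $s\ge 2/3$" and that "balancing these two requirements against the lattice-point counting at $s=2/3$ is the delicate step." This is not where the threshold comes from. The paper's Lemma~\ref{lem:trilinear-} establishes the modulation-restricted estimate with a gain $(N_{\med}N_{\min})^{\e}$ for \emph{arbitrary} $\e>0$, so the counting argument yields $(B1)$ and $(B2)$ for \emph{every} $s_1>0$ and imposes no constraint on $s$. What pins down $s\ge 2/3$ is the comparatively trivial estimate $(B3)$: with $X=\ell^2$ and $m$ constant, $(B3)$ is Plancherel-equivalent to $\|u_1u_2u_3\|_{L^2(\T^2)}\lec\prod_j\|u_j\|_{H^s(\T^2)}$, i.e.\ the Sobolev embedding $H^s(\T^2)\hookrightarrow L^6(\T^2)$, which holds if and only if $s\ge 2/3$. (Said differently: the regularity floor is set by the requirement that $\Sc{N}^{(1)}[\om]$ be meaningful in $X$ for $\om\in\ell^2_s$, not by the sharpness of the counting.) This matters because your framing would send you hunting for an endpoint counting argument that in fact is not needed; the counting lemma (Lemma~\ref{lem:numbertheory} plus the two-sided Cauchy--Schwarz and the almost-orthogonality decomposition in the case $N_2=N_{\max}\gg N_{\med}$ of Lemma~\ref{lem:trilinear-}) is entirely sub-critical and you do not sketch these ingredients.
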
 

Recently, Herr and Sohinger \cite{HS19} used a different method to prove UU for the cubic NLS on arbitrary rectangular torus in general dimensions.%
\footnote{The regularity assumption in Theorem~\ref{thm:cNLS} is more restrictive than that in \cite{HS19}, where in the two-dimensional case UU in $H^s$ for $s>7/12$ was shown for the torus with \emph{arbitrary} aspect ratio.
Note that we can refine the argument in the proof of Theorem~\ref{thm:cNLS} to improve the regularity range to $s>2/5$ in the case of \emph{rational} torus; see \cite{K-NLSp}.}
See \cite{CH19} for a related result on the quintic NLS in dimension three.

\subsection{Reduction to the fundamental trilinear estimate}

Let $u(t)\in C_TH^{2/3}(\T^2)$ be a (distributional) solution of \eqref{cNLS}.
By the Sobolev embedding, the cubic nonlinearity is well-defined as an $L^2$ function.
Set $\hat{v}_n(t):=\F [e^{-it\Delta}u(t)] =e^{it|n|^2}(\F u(t))_n$, then $\hat{v}(t)$ satisfies
\eq{eq_v1}{\p _t\hat{v}_n(t)=c\sum _{\mat{n_1,n_2,n_3\in \Bo{Z}^2\\ n=n_1+n_2+n_3}}e^{it\Phi}\hat{v}_{n_1}(t)\bbar{\hat{v}_{-n_2}(t)}\hat{v}_{n_3}(t),\quad n\in \Bo{Z}^2,}
where $c$ is a suitable constant and
\[ \Phi =\Phi (n,n_1,n_2,n_3):=\,|n|^2-|n_1|^2+|n_2|^2-|n_3|^2.\]
To apply Theorem~\ref{thm:abstract}, we consider the system for $\om _n(t):=\hhat{v}_n(t)$ and $\psi _n(t):=\bbar{\hhat{v}_{-n}(t)}$:
\eq{cNLS'}{
\left\{ ~
\begin{aligned}
\p _t\om_n&=c\sum _{\mat{n_1,n_2,n_3\in \Bo{Z}^2\\ n=n_1+n_2+n_3}}e^{it\Phi}\om _{n_1}\psi _{n_2}\om _{n_3},\\
\p _t\psi _n&=\bbar{c}\sum _{\mat{n_1,n_2,n_3\in \Bo{Z}^2\\ n=n_1+n_2+n_3}}e^{-it\Phi}\psi _{n_1}\om _{n_2}\psi _{n_3},
\end{aligned}
\right.
\qquad n\in \Bo{Z}^2.
}
It then suffices to prove unconditional uniqueness for \eqref{cNLS'} in $\ell ^2_{2/3}(\Bo{Z}^2)\times \ell ^2_{2/3}(\Bo{Z}^2)$.

We shall apply Theorem~\ref{thm:abstract} [B] with $\Sc{R}\equiv 0$ and $X=\ell ^2\times \ell ^2$.
Note that the equations for $\om _n$ and $\psi _n$ have the common phase function $\Phi$ up to signs. 
By this symmetry, it is sufficient to show the fundamental trilinear estimates $(B1)$--$(B3)$ only for one component of $(\om _n,\psi _n)$. 
If we choose $s_2>1$, then the estimates $(B1)'$, $(B2)'$, and $(B3)$ are easy consequences of the Sobolev embedding.
Since $(B1)$ follows from $(B2)$ as long as $0\le s_1<s=2/3$, the proof of Theorem~\ref{thm:cNLS} is reduced to only showing 
\eq{est:cNLS}{
\sup _{\mu \in \Bo{Z}}\Big\| \sum _{\mat{n_1,n_2,n_3\in \Bo{Z}^2\\ n=n_1-n_2+n_3,\, \Phi =\mu}}\om ^{(1)} _{n_1}\om ^{(2)}_{n_2}\om ^{(3)}_{n_3}\Big\| _{\ell ^2}\lec \min _{1\le j\le 3}\norm{\om ^{(j)}}{\ell ^2}\prod _{\mat{l=1 \\l\neq j}}^3\norm{\om ^{(l)}}{\ell ^2_{s_1}}
}
for some $0\le s_1<2/3$.
(We have replaced $n_2$ with $-n_2$ for convenience.)

\subsection{Proof of the trilinear estimate}

We shall verify the estimate \eqref{est:cNLS} for any $s_1>0$.
The proof is based on the following combinatorial tool.
\begin{lem}\label{lem:numbertheory}
For any $\eta >0$ there exists $C>0$ such that
\[ \# \Shugo{n\in \Bo{Z}^2}{|n-n_0|^2=\mu ,\,n\in B_R}\le CR^{\eta}\]
for any $n_0\in \Bo{Z}^2$, $\mu \ge 0$, and any ball $B_R\subset \R^2$ of radius $R>1$.
\end{lem}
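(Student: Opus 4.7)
The plan is to separate lattice points on a circle according to whether the radius is comparable to $R$ or much larger. After the translation $m := n - n_0 \in \Bo{Z}^2$, the set in question becomes $\{m \in \Bo{Z}^2 : |m|^2 = \mu\} \cap B'_R$, where $B'_R$ is a translated ball of radius $R$. I split on the size of $\mu$ relative to a suitable power of $R$.

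First, when $\mu \leq 16R^6$, I would invoke the classical divisor bound
$r_2(\mu) := \#\{m \in \Bo{Z}^2 : |m|^2 = \mu\} \leq C_{\eta'}\mu^{\eta'}$, a consequence of Jacobi's two-square formula together with $d(\mu) \ll_{\eta'} \mu^{\eta'}$. Since restricting to $B'_R$ only shrinks the count, this yields the bound $C_{\eta'}(16R^6)^{\eta'} \lec R^{6\eta'}$, using $R > 1$.

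Second, when $\mu > 16R^6$, I claim there are at most two lattice points in the intersection. Suppose to the contrary that $m_1, m_2, m_3$ are three distinct lattice points in $B'_R$ with $|m_i|^2 = \mu$. Since a line meets a circle in at most two points, they are non-collinear, hence form a non-degenerate triangle with pairwise distances $a, b, c \leq 2R$ and circumradius equal to $\sqrt{\mu}$. Because the vertices are distinct elements of $\Bo{Z}^2$, twice the area is a nonzero integer, so $\mathrm{Area} \geq 1/2$. The classical formula $\sqrt{\mu} = abc/(4\,\mathrm{Area})$ then forces $\sqrt{\mu} \leq (2R)^3/2 = 4R^3$, contradicting $\mu > 16R^6$.

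Combining the two cases and replacing $\eta'$ by $\eta/6$ produces the claimed bound $CR^\eta$. The only mildly subtle ingredient is the large-$\mu$ circumradius argument; the divisor estimate is standard, so I do not expect a real obstacle here. The geometric argument is robust enough that it could equally be phrased using an arc-length/spacing estimate on the circle of radius $\sqrt{\mu}$, but the triangle version is cleanest.
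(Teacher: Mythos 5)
Your argument is correct and follows essentially the same route as the paper: the split at $\mu \sim R^6$, the divisor bound $r_2(\mu) \ll_\eta \mu^\eta$ for small $\mu$, and the claim that at most two lattice points can lie on the arc for large $\mu$. The only difference is that you prove the large-$\mu$ geometric claim from scratch via the standard circumradius/Pick argument, whereas the paper just cites it (Lemma~4.4 of \cite{DPST07}, whose proof is the same triangle argument you give).
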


\begin{proof}
Although this bound is well-known, we give an outline of proof.
When $\mu ^{1/2}\lec R^3$, we recall the estimate on the number of integer points on a circle.
When $\mu ^{1/2}\gg R^3$, the estimate follows from the fact that there are at most two integer points on a (connected) arc of radius $R$ and length $r$ if $R\gg r^3$ (see, e.g., Lemma~4.4 in \cite{DPST07}).
\end{proof}

The estimate \eqref{est:cNLS} will be deduced from the following estimates for localized functions:
\begin{lem}\label{lem:trilinear-}
Let $\e >0$.
Then, there exists $C>0$ such that we have
\[ \norm{P_N\!\!\sum _{\mat{n_1,n_2,n_3\in \Bo{Z}^2\\ n=n_1-n_2+n_3,\, \Phi =\mu}}\prod _{j=1}^3P_{N_j}\om ^{(j)}_{n_l}}{\ell ^2}\le C (N_{\med}N_{\min})^{\e}\prod _{j=1}^3\norm{P_{N_j}\om ^{(j)}}{\ell ^2}\]
for any dyadic $N, N_j\ge 1$ ($1\le j\le 3$) and $\mu \in \Bo{Z}$, where $N_{\max},N_{\med},N_{\min}$ denote the maximum, median, minimum of $\shugo{N_1,N_2,N_3}$, respectively, and $(P_N\om )_n:=\chi _{N\le \LR{n}<2N}\om _n$.
\end{lem}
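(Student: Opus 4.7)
The proof reduces the desired $\ell^2_n$-estimate to a lattice-point counting problem for circles, controlled by Lemma~\ref{lem:numbertheory}.

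The starting point is the algebraic identity
\eqq{ \Phi = |n|^2-|n_1|^2+|n_2|^2-|n_3|^2 = 2(n-n_1)\cdot(n-n_3) = 2(n_2-n_1)\cdot(n_2-n_3), }
valid on the constraint surface $n=n_1-n_2+n_3$. The key structural feature it reveals is that whenever two of the four frequencies of the same sign in $\Phi$ are fixed---either both ``positive'' $(n,n_2)$ or both ``negative'' $(n_1,n_3)$---the condition $\Phi=\mu$, after completing the square, places the remaining free variable on a \emph{circle} in $\Bo{Z}^2$; by contrast, fixing one positive and one negative frequency only yields a line.

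Dualizing the $\ell^2_n$-norm by testing against $\psi\in\ell^2$ with $\tnorm{\psi}{\ell^2}=1$, the LHS becomes the supremum of a four-linear form. I would then apply Cauchy--Schwarz with the \emph{sign-aligned} pairing $(\bar\psi,\om^{(2)})$ against $(\om^{(1)},\om^{(3)})$:
\eqq{ |T|^2 \le \Big(\sum_{A^\mu}|\psi_n|^2|\om^{(2)}_{n_2}|^2\Big) \Big(\sum_{A^\mu}|\om^{(1)}_{n_1}|^2|\om^{(3)}_{n_3}|^2\Big), }
where $A^\mu$ is the full constraint set. This pairing is crucial: it keeps both ``positive-positive'' and ``negative-negative'' pairs together, making a circle constraint accessible in each factor. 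Concretely, for the first factor, for fixed $(n,n_2)$ the set of admissible $n_1$ lies on a circle, and Lemma~\ref{lem:numbertheory} (with ball radius $\min(N_1,N_3)$, using both $|n_1|\lec N_1$ and $n_3=n+n_2-n_1\in n+n_2+B_{N_3}$) gives $\#\{n_1\} \lec \min(N_1,N_3)^\e$; for the second factor, for fixed $(n_1,n_3)$ the admissible $n$ lies on a circle, and the analogous count gives $\#\{n\} \lec \min(N,N_2)^\e$. Combining,
\eqq{ \text{LHS} \lec [\min(N_1,N_3)\min(N,N_2)]^{\e/2}\prod_j\tnorm{P_{N_j}\om^{(j)}}{\ell^2}. }

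The final step is to verify, by case analysis on the ordering of $\{N_1,N_2,N_3\}$, that $\min(N_1,N_3)\min(N,N_2) \lesssim N_\med N_\min$. In most configurations this is immediate: e.g., if $N_2\in\{N_\med,N_\min\}$ then $\min(N,N_2)\leq N_\med$ and $\min(N_1,N_3)\leq N_\med$. The delicate regime is $N_2=N_\max$ together with $N\sim N_\max$ (the ``high-high-low-low'' case with no cancellation in $n=n_1-n_2+n_3$), where the naive bound $\min(N,N_2)\sim N_\max$ is wasteful. Here I would exploit the $n\leftrightarrow n_2$ symmetry of $\Phi$ (and of the constraint $n=n_1-n_2+n_3$) to swap the roles of the outer index $n$ and the summation variable $n_2$; this is the principal technical step, and verifying that the bounds combine correctly across the case analysis is the main obstacle of the proof.
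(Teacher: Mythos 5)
Your setup mirrors the paper's closely: you use the factorization $\Phi = 2(n_2-n_1)\cdot(n_2-n_3) = 2(n-n_1)\cdot(n-n_3)$ (equivalent to the paper's completed-square identities (A), (B)), dualize, apply Cauchy--Schwarz with the sign-aligned pairing so that both factors reduce to circle counts $A_\mu(n,n_2)$ and $B_\mu(n_1,n_3)$, and invoke Lemma~\ref{lem:numbertheory} with the ball radius given by the smaller localization. You also correctly isolate the problematic regime $N_2 = N_{\max} \gg N_{\med}$ (forcing $N\sim N_{\max}$), where your bound $\min(N_1,N_3)\min(N,N_2) \sim N_{\min}N_{\max}$ overshoots the target $N_{\med}N_{\min}$.

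However, your proposed remedy for this regime --- swapping the roles of $n$ and $n_2$ --- does not close the gap. The constraint $n=n_1-n_2+n_3$ and the phase $\Phi$ are symmetric under $n\leftrightarrow n_2$, so the two lattice-point counts $A_\mu(n,n_2) = \#\{(n_1,n_3)\}$ and $B_\mu(n_1,n_3) = \#\{(n,n_2)\}$ are unchanged by the swap, and so is the estimate; you would obtain exactly the same bound with $\psi$ and $\om^{(2)}$ relabeled. What is actually needed here, and what the paper does, is an \emph{almost-orthogonality} argument: before applying the counting lemma for $B_\mu$, decompose $n$ and $n_2$ into cubes of side length $\sim N_{\med}$. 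Since $n-n_2 = n_1-n_3$ has $|n_1-n_3|\lec N_{\med}$ in this regime, each cube for $n$ is compatible with only $O(1)$ cubes for $n_2$; hence one can square-sum over the cube decomposition with no loss, while within a single pair of cubes the circle $|2n_2-(n_1+n_3)|^2 = |n_1-n_3|^2+2\mu$ is intersected with a ball of radius $\sim N_{\med}$ rather than $\sim N_{\max}$, giving $B_\mu\lec N_{\med}^{2\e}$. This localization-to-cubes step is a genuine extra idea that your proposal is missing, not a bookkeeping matter, and the ``swap'' cannot substitute for it.
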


\begin{proof}
Fix $\mu \in \Bo{Z}$.
We write ``$(*)$'' to denote the condition
\[ n=n_1-n_2+n_3,\qquad \Phi =\mu ,\qquad N_j\le \LR{n_j}<2N_j\quad (1\le j\le 3).\]
Note that
\begin{align}
\Phi =\mu \quad \Longleftrightarrow \quad &\big| 2n_1-(n+n_2)\big| ^2=\big| 2n_3-(n+n_2)\big| ^2=| n-n_2| ^2-2\mu \label{A}\\
\Longleftrightarrow \quad &\big| 2n_2-(n_1+n_3)\big| ^2=| n_1-n_3| ^2+2\mu \label{B}
\end{align}
under the condition $n=n_1-n_2+n_3$.

Applying the Cauchy-Schwarz inequality several times, we have
\begin{align*}
\Big( \sum _{n}& \Big| \sum _{\mat{n_1,n_2,n_3\\ (*)}}\prod _{j=1}^3\om ^{(j)}_{n_j}\Big|^2\Big) ^{1/2}\\
&\le \Big( \sum _{n}\Big( \sum _{n_2}|P_{N_2}\om ^{(2)}_{n_2}|^2\Big) \Big( \sum _{n_2}\Big| \sum _{\mat{n_1,n_3\\ (*)}}\om ^{(1)}_{n_1}\om ^{(3)}_{n_3}\Big| ^2\Big) \Big) ^{1/2}\\
&\le \norm{P_{N_2}\om ^{(2)}}{\ell ^2}\Big( \sum _{n,n_2}A_{\mu}(n,n_2)\sum _{\mat{n_1,n_3\\ (*)}}|\om ^{(1)}_{n_1}\om ^{(3)}_{n_3}|^2\Big) ^{1/2},
\intertext{where $A_{\mu}(n,n_2):=\# \Shugo{(n_1,n_3)\in (\Bo{Z}^2)^2}{(*)}$, and}
&\le \norm{P_{N_2}\om ^{(2)}}{\ell ^2}\sup _{n,n_2}A_\mu (n,n_2)^{1/2}\Big( \sum _{n_1,n_3}|P_{N_1}\om ^{(1)}_{n_1}P_{N_3}\om ^{(3)}_{n_3}|^2B_\mu (n_1,n_3)\Big) ^{1/2},
\intertext{where $B_{\mu}(n_1,n_3):=\# \Shugo{(n,n_2)\in (\Bo{Z}^2)^2}{(*)}$, and}
&\le \sup _{n,n_2}A_\mu (n,n_2)^{1/2}\sup _{n_1,n_3}B_\mu (n_1,n_3)^{1/2}\prod _{j=1}^3\norm{P_{N_j}\om ^{(j)}}{\ell ^2}.
\end{align*}
It then suffices to show 
\eq{claim1}{\sup _{n,n_2}A_\mu (n,n_2)\cdot \sup _{n_1,n_3}B_\mu (n_1,n_3)\lec (N_{\med}N_{\min})^{2\e}.}

{\bf Case 1: $N_2\lec N_{\med}$}.
Lemma~\ref{lem:numbertheory} (with $\eta =2\e$) and \eqref{A} imply that
\[ A_\mu (n,n_2)\lec \begin{cases} N_{\min}^{2\e} &\text{if $N_2\ge N_{\med}$,}\\ N_{\med}^{2\e} &\text{if $N_2=N_{\min}$,}\end{cases}\]
uniformly in $n,n_2$, while using Lemma~\ref{lem:numbertheory} and \eqref{B} we have
\[ B_\mu (n_1,n_3)\lec \begin{cases} N_{\med}^{2\e} &\text{if $N_2\ge N_{\med}$,}\\ N_{\min}^{2\e} &\text{if $N_2=N_{\min}$,}\end{cases}\]
uniformly in $n_1,n_3$.
This implies \eqref{claim1}.

{\bf Case 2: $N_2=N_{\max}\gg N_{\med}$}.
This time we always have $A_\mu (n,n_2) \lec N_{\min}^{2\e}$ by Lemma~\ref{lem:numbertheory} and \eqref{A}.
However, \eqref{B} yields only $B_\mu (n_1,n_3) \lec N_{\max}^{2\e}$.
Here, we exploit the almost orthogonality and restrict $n,n_2$ onto cubes of side length $\sim N_{\med}$ at the beginning of the estimate.
Then, we can obtain the bound $N_{\med}^{2\e}$ for $B_\mu (n_1,n_3)$, which implies \eqref{claim1}.
\end{proof}

\begin{proof}[Proof of \eqref{est:cNLS} for $s_1>0$]
Applying Lemma~\ref{lem:trilinear-}, the left-hand side of \eqref{est:cNLS} is bounded by
\eqq{
&\sup _\mu \bigg[\sum _{N\ge 1}\bigg( \sum _{\mat{N_1,N_2,N_3\ge 1\\ N_{\max}\gec N}}\norm{P_N\!\!\sum _{\mat{n_1,n_2,n_3\in \Bo{Z}^2\\ n=n_1-n_2+n_3,\, \Phi =\mu}}\prod _{j=1}^3P_{N_j}\om ^{(j)}_{n_l}}{\ell ^2}\bigg) ^2\bigg] ^{1/2}\\
&\quad \lec \Big[ \sum _{N\ge 1}\Big( \sum _{\mat{N_1,N_2,N_3\ge 1\\ N_{\max}\gec N}}(N_{\med}N_{\min})^{\e}\prod _{j=1}^3\norm{P_{N_j}\om ^{(j)}}{\ell ^2}\Big) ^2\Big] ^{1/2}.
}
If $N_{\max}\sim N$, we take $\e =s_1/2>0$ and apply the Cauchy-Schwarz inequality in $N_{\min}$ and $N_{\med}$ to show \eqref{est:cNLS}.
If $N_{\max}\gg N$, then we have $N_{\max}\sim N_{\med}$ and thus $(N_{\med}N_{\min})^{\e}\lec N^{-\e }N_{\max}^{-\e}N_{\med}^{3\e}N_{\min}^\e$, which enables us to undo the dyadic decompositions and obtain \eqref{est:cNLS} for $s_1=4\e$.
\end{proof}

%%%%%%%%%%%%%%%%%%%%%%%%%%%%%%%%%%%%%%%%%%%%%%
% cFNLS
%%%%%%%%%%%%%%%%%%%%%%%%%%%%%%%%%%%%%%%%%%%%%%

\bigskip
\section{Application to the cubic fractional NLS} \label{section:FNLS}

In this section, we consider the cubic nonlinear Schr\"odinger equation with fractional Laplacian on $\T$:
\eq{FNLS}
{\left\{ \begin{alignedat}{2}
~i\p _tu+(-\p _x^2)^\al u&=\pm |u|^2u,&\qquad &(t,x)\in [0,T]\times \T ,\\
u(0,x)&=u_0(x), & &x\in \T .
\end{alignedat}
\right.
}

Well-posedness of the above Cauchy problem in Sobolev spaces $H^s(\T )$ was addressed by Cho \mbox{et al.}~\cite{CHKL15} (see also~\cite{DET16}).
They showed that if $\frac{1}{2}<\al <1$, then \eqref{FNLS} is locally well-posed in $H^s$ for $s\ge \frac{1-\al}{2}$ and ill-posed in Sobolev spaces of smaller indices in the sense that the data-to solution map fails to be locally uniformly continuous.
Note that the local solution given in \cite{CHKL15} was constructed via the iteration with the Fourier restriction spaces, and UU has been open.
(However, this is trivial if $s>\frac{1}{2}$.)
In order to discuss UU, the solution should belong to $L^3_{\mathrm{loc}}$ in space so that the cubic nonlinear term can make sense.
In view of the embedding $H^{1/6}\hookrightarrow L^3$, we are naturally led to restrict the regularity to $s\ge \frac{1}{6}$.

We shall prove the following almost optimal result:
\begin{thm}\label{thm:uu}
Let $\frac{1}{2}<\al <1$.
Assume $s\ge \frac{1}{6}$ and $s>\frac{1-\al}{2}$.
Then, for any $T>0$ there is at most one solution (in the sense of distribution) to \eqref{FNLS} in $C_TH^s(\T )$.
\end{thm}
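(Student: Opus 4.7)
The structure of the proof parallels Section~\ref{section:cNLS}. Let $u\in C_TH^s(\T)$ solve \eqref{FNLS}, and set $\om_n(t):=e^{it|n|^{2\al}}(\F u(t))_n$, which converts \eqref{FNLS} into the frequency-space equation
\[ \p_t\om_n~=~c\!\!\sum_{n=n_1-n_2+n_3}\!\!\!e^{it\Phi}\om_{n_1}\bar{\om}_{n_2}\om_{n_3},\qquad \Phi:=|n|^{2\al}-|n_1|^{2\al}+|n_2|^{2\al}-|n_3|^{2\al}. \]
Following Section~\ref{section:cNLS}, I introduce $\psi_n:=\bar{\om}_{-n}$ to rewrite this as a coupled system whose two equations share the phase $\Phi$ up to sign. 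By this symmetry, the fundamental estimates need only be verified for one component, and my plan is to apply Theorem~\ref{thm:abstract}~[B] with $X=\ell^2$.

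A new feature compared to the two-dimensional case is the presence of a genuinely resonant set. Under $n=n_1-n_2+n_3$, the conditions $n_1=n$ or $n_3=n$ respectively force $n_3=n_2$ or $n_1=n_2$, so that $\Phi\equiv 0$ on these diagonals. Inclusion-exclusion shows that the diagonal contribution equals the Wick-type term
\[ c\bigl(2\|\om\|_{\ell^2}^2-|\om_n|^2\bigr)\om_n, \]
which, following Remark~\ref{rem:thm2}(i), I absorb into $\Sc{R}[\om]_n$ before running the NFR machinery. The condition $(R)$ is then trivial: the scalar $\|\om\|_{\ell^2}^2$ pulls out of any $\ell^2_s$ norm, while $\||\om|^2\om\|_{\ell^2_s}\leq\|\om\|_{\ell^\infty}^2\|\om\|_{\ell^2_s}\leq\|\om\|_{\ell^2_s}^3$ for $s\geq 0$, with the analogous Lipschitz bounds.

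Choosing any $s_2>\tfrac{1}{2}$, the estimates $(B1)'$, $(B2)'$ and $(B3)$ follow at once from the algebra property of $\ell^2_{s_2}$ combined with the Sobolev embedding $\ell^2_s\hookrightarrow\ell^3$ (which is precisely where $s\geq\tfrac{1}{6}$ enters, to ensure the cubic nonlinearity and the Wick remainder are well-defined distributions). Since $(B1)$ is weaker than $(B2)$ for $s_1\geq 0$, the argument reduces to proving, for some $s_1\in[0,s)$, the non-resonant trilinear estimate
\[ \sup_{\mu\in\Bo{Z}}\bigg\|\sum_{\substack{n=n_1-n_2+n_3\\ \mu\leq\Phi<\mu+1\\ n_1\neq n,\ n_3\neq n}}\!\om^{(1)}_{n_1}\om^{(2)}_{n_2}\om^{(3)}_{n_3}\bigg\|_{\ell^2}\lec \min_{1\leq j\leq 3}\|\om^{(j)}\|_{\ell^2}\prod_{\substack{l=1\\ l\neq j}}^{3}\|\om^{(l)}\|_{\ell^2_{s_1}}. \]

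The bulk of the work is this trilinear estimate. After Littlewood-Paley decomposition at dyadic scales $N_j$ and Cauchy-Schwarz in the spirit of Lemma~\ref{lem:trilinear-}, matters reduce to controlling the number of solutions to $\mu\leq\Phi<\mu+1$ with prescribed dyadic sizes. The analytic input replacing Lemma~\ref{lem:numbertheory} is the phase lower bound
\[ |\Phi|\gec N_{\max}^{2\al-2}\,|n-n_1|\,|n-n_3|, \]
valid in the balanced non-resonant regime. This follows from two applications of the mean value theorem to $x\mapsto|x|^{2\al}$ together with the identity $n-n_1=n_3-n_2$. Inverting this bound, the $\mu$-level set contains at most $O(N_{\max}^{2-2\al}|n-n_1|^{-1})$ admissible $n_3$ for fixed $(n,n_1)$; summing dyadically gives a bound polynomial in $N_{\max}^{1-\al}$, which is absorbed provided $s_1>\tfrac{1-\al}{2}$, exactly the range allowed by the hypothesis $s>\tfrac{1-\al}{2}$. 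The principal obstacle, and the step requiring the most care, is the unbalanced regime $N_{\max}\gg N_{\med}$ together with the loci on which the Jacobian of $\Phi$ degenerates; there one must combine almost orthogonality in $(n,n_2)$ at scale $N_{\med}$, mirroring Case~2 of Lemma~\ref{lem:trilinear-}, with a finer case analysis of the sign configurations of the $n_j$'s.
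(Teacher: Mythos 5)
Your blueprint diverges from the paper's in two places that are not cosmetic, and one of them is a genuine error.

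First, the choice $X=\ell^2$ breaks the claimed range $s\geq\frac{1}{6}$. The estimate $(B3)$ with $X=\ell^2$ asks for $\|\widehat{u_1\bar u_2u_3}\|_{\ell^2}=\|u_1\bar u_2u_3\|_{L^2}\lec\prod\|u_j\|_{H^s}$, which by H\"older requires $H^s(\T)\hookrightarrow L^6(\T)$, i.e.\ $s\geq\frac{1}{3}$ in one dimension, not $s\geq\frac{1}{6}$. The paper avoids this by taking $X=\ell^\infty$: then $(B3)$ only needs $\|\widehat{u_1\bar u_2u_3}\|_{\ell^\infty}\leq\|u_1\bar u_2u_3\|_{L^1}\lec\prod\|u_j\|_{L^3}$, and $H^{1/6}\hookrightarrow L^3$ gives exactly the threshold $s\geq\frac{1}{6}$. (The chain you invoke, ``$\ell^2_s\hookrightarrow\ell^3$,'' goes the wrong way for this purpose; what is used is physical-space $L^3$, not $\ell^3$ of Fourier coefficients.) Since the same $X$ appears in $(B2)$, $(B2)'$ and in the definition of the ``bad'' term $\mathcal{N}^{(J)}$, this is not a localized slip.

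Second, and more importantly, the resonance decomposition is wrong. You remove only the exact diagonals $n_1=n$ and $n_3=n$, producing the familiar Wick remainder. The paper instead declares the whole \emph{soft} quasi-resonant set
\[
\Gamma_n^R=\{(n_1,n_2,n_3)\in\Gamma_n:\ |n_2-n_1|\le|n_2|^{1-\al}\ \text{or}\ |n_2-n_3|\le|n_2|^{1-\al}\}
\]
to be the remainder, and explicitly warns that ``this particular decomposition is important to prove an almost optimal result.'' The reason it is needed becomes visible in the counting lemma (Lemma~\ref{lem:number}): the ``minus'' estimate \eqref{est:number-}, which is used twice in the proof of $(B1)$/$(B2)$ when $N_2$ is the largest frequency, is only valid under the hypothesis $|k_*|\gec K^{1-\al}$, and that hypothesis is precisely the $\Gamma_n^N$ restriction. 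With only the exact diagonals removed, one faces configurations with $0<|n_2-n_1|\lesssim|n_2|^{1-\al}$ where the level sets of $\Phi$ are too large and the $K^{1-\al}$ count fails. Your heuristic lower bound $|\Phi|\gec N_{\max}^{2\al-2}|n-n_1||n-n_3|$ (a double difference/MVT estimate) only holds when the frequencies are comparable in size and on the correct sign configuration; it degenerates exactly in the regimes you flag as ``the principal obstacle,'' and those regimes are not merely technical --- without the soft cutoff into $\Gamma^R$ they would force a worse exponent than $\frac{1-\al}{2}$. The paper pays for the soft cutoff by a separate (nontrivial) estimate \eqref{est:R} showing that the $\Gamma^R$ piece satisfies $(R)$; your Wick-only remainder is much easier to estimate, but it does not leave a $\Gamma^N$ that the counting can handle at the optimal regularity.

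Finally, the core of the proposal --- the actual counting argument replacing Lemma~\ref{lem:numbertheory} --- is only sketched, and the hardest cases (unbalanced dyadic sizes, degenerating Jacobian, sign cancellation) are named but not resolved. Since those are exactly what Lemma~\ref{lem:number} and its case analysis address, the sketch as written does not constitute a proof and cannot be repaired simply by adding detail: the decomposition and the auxiliary space $X$ must both be changed as above.
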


\subsection{Reduction to the fundamental trilinear estimates}

First of all, we introduce an equivalent problem in the Fourier side.
Let $u(t)\in C_TH^s(\T )$ be a solution of \eqref{FNLS}, and define $\om (t)\in C_T\ell ^2_s$ as
\[ \om _n(t):=\big[ \F e^{-it(-\p _x^2)^\al}u(t,\cdot )\big] _n=\frac{1}{2\pi}e^{-it|n|^{2\al}}\int _{\T}u(t,x)e^{-inx}\,dx,\quad (t,n)\in [0,T]\times \Bo{Z}.\]
Then, the Cauchy problem \eqref{FNLS} is transformed to
\begin{gather}
\p _t\om _n=\mp i\sum _{\mat{n_1,n_2,n_3\in \Bo{Z}\\ n_1-n_2+n_3=n}}e^{-it\Phi}\om _{n_1}\bbar{\om _{n_2}}\om _{n_3},\qquad (t,n)\in [0,T]\times \Bo{Z} ,\label{eq:om}\\
\Phi =\Phi (n,n_1,n_2,n_3):=|n|^{2\al}-|n_1|^{2\al}+|n_2|^{2\al}-|n_3|^{2\al},\label{def:Phi}
\end{gather}
with the initial condition
\eq{IC:om}{\om _n(0)=\big[ \F u_0\big] _n,\qquad n\in \Bo{Z} .}
We divide the nonlinear part of \eqref{eq:om} into two parts,
\[ \Sc{N}^{(1)}[\om ]_n+\Sc{R} [\om ]_n:=\mp i\bigg( \sum _{(n_1,n_2,n_3)\in \Gamma _n^N}+\sum _{(n_1,n_2,n_3)\in \Gamma _n^R}\bigg) e^{-it\Phi}\om _{n_1}\bbar{\om _{n_2}}\om _{n_3},\]
where
\eqq{
\Gamma _n&:=\Shugo{(n_1,n_2,n_3)\in \Bo{Z}^3}{n_1-n_2+n_3=n},\\
\Gamma _n^N&:=\Shugo{(n_1,n_2,n_3)\in \Gamma _n}{|n_2-n_1|>|n_2|^{1-\al}~~\text{and}~~|n_2-n_3|>|n_2|^{1-\al}},\\
\Gamma _n^R&:=\Shugo{(n_1,n_2,n_3)\in \Gamma _n}{|n_2-n_1|\le |n_2|^{1-\al}~~\text{or}~~|n_2-n_3|\le |n_2|^{1-\al}}.}
This particular decomposition is important to prove an almost optimal result.

By Theorem~\ref{thm:abstract} [B] with $X=\ell ^\I$, Theorem~\ref{thm:uu} is obtained if the following trilinear estimates are established.
Similarly as in Section~\ref{section:cNLS}, we consider the system for $(\om _n,\bbar{\om_{n}})$ and (exploiting symmetry) prove these trilinear estimates only on the first component.
\begin{prop}\label{prop:trilinear0}
Let $\frac{1}{2}<\al <1$ and $s\ge \frac{1}{6}$ be such that $s>\frac{1-\al}{2}$.
Then, there exist $s_1<s$ and $s_2>s$ such that the following estimates hold.
\begin{align}
\sup _{\mu \in \Bo{Z}}\norm{\sum _{\mat{(n_1,n_2,n_3)\in \Gamma _n^N\\ |\Phi -\mu |<1}}\om ^{(1)}_{n_1}\om ^{(2)}_{n_2}\om ^{(3)}_{n_3}}{\ell ^2_{s_1}}&\lec \prod _{j=1}^3\norm{\om ^{(j)}}{\ell ^2_{s_1}},\label{est:B1}\\
\norm{\sum _{(n_1,n_2,n_3)\in \Gamma _n^N}\om ^{(1)}_{n_1}\om ^{(2)}_{n_2}\om ^{(3)}_{n_3}}{\ell ^2_{s_2}}&\lec \prod _{j=1}^3\norm{\om ^{(j)}}{\ell ^2_{s_2}},\label{est:B1'}\\
\sup _{\mu \in \Bo{Z}}\norm{\sum _{\mat{(n_1,n_2,n_3)\in \Gamma _n^N\\ |\Phi -\mu |<1}}\om ^{(1)}_{n_1}\om ^{(2)}_{n_2}\om ^{(3)}_{n_3}}{\ell ^\I}&\lec \min _{1\le j\le 3}\norm{\om ^{(j)}}{\ell ^\I}\prod _{\mat{l=1\\ l\neq j}}^3\norm{\om ^{(l)}}{\ell ^2_{s_1}},\label{est:B2}\\
\norm{\sum _{(n_1,n_2,n_3)\in \Gamma _n^N}\om ^{(1)}_{n_1}\om ^{(2)}_{n_2}\om ^{(3)}_{n_3}}{\ell ^\I}&\lec \min _{1\le j\le 3}\norm{\om ^{(j)}}{\ell ^\I}\prod _{\mat{l=1\\ l\neq j}}^3\norm{\om ^{(l)}}{\ell ^2_{s_2}},\label{est:B2'}\\
\norm{\sum _{(n_1,n_2,n_3)\in \Gamma _n^N}\om ^{(1)}_{n_1}\om ^{(2)}_{n_2}\om ^{(3)}_{n_3}}{\ell ^\I}&\lec \prod _{j=1}^3\norm{\om ^{(j)}}{\ell ^2_{s}},\label{est:B3}\\
\norm{\sum _{(n_1,n_2,n_3)\in \Gamma _n^R}\om ^{(1)}_{n_1}\om ^{(2)}_{n_2}\om ^{(3)}_{n_3}}{\ell ^2_s}&\lec \prod _{j=1}^3\norm{\om ^{(j)}}{\ell ^2_s}.\label{est:R}
\end{align}
\end{prop}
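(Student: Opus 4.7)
The six estimates split naturally into a resonant piece \eqref{est:R}, three ``high-regularity'' pieces \eqref{est:B1'}--\eqref{est:B3}, and the two ``critical'' level-set estimates \eqref{est:B1}, \eqref{est:B2}. Fixing $s_1\in(\max\{0,\tfrac{1-\al}{2}\},s)$ and $s_2>\tfrac12$ sufficiently close to $s$, the estimates \eqref{est:B1'} and \eqref{est:B2'} are immediate from $\ell^2_{s_2}\hookrightarrow \ell^1$, while \eqref{est:B3} follows from the embedding $\ell^2_{1/6}\hookrightarrow \ell^3$ and Young's convolution inequality. For the resonant estimate \eqref{est:R}, by symmetry it suffices to treat the subset $\{|n_2-n_1|\le |n_2|^{1-\al}\}$: setting $k:=n_2-n_1$, so $n_3=n+k$, the admissible $k$ lie in a short interval of length $\lec |n_2|^{1-\al}$, and Cauchy--Schwarz in $k$ produces a loss of $|n_2|^{(1-\al)/2}$, absorbed precisely by $s>(1-\al)/2$; the remaining bilinear sum in $(n_2,k)$ is controlled using $\ell^2_{1/6}\hookrightarrow \ell^3$, which closes at $s\ge 1/6$.

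The heart of the argument is therefore a lower bound on $|\Phi|$ in $\Gamma_n^N$. I parameterize $n_1=n-a$ and $n_3=n-b$, so that $n_2=n-a-b$, $n_2-n_1=-b$, and $n_2-n_3=-a$; hence $\Gamma_n^N$ corresponds to $|a|,|b|>|n_2|^{1-\al}$. The mixed second-difference identity for $f(x)=|x|^{2\al}$ reads
\[
\Phi \;=\; 2\al(2\al-1)\int_0^{a}\!\!\int_0^{b} |n-s-s'|^{2\al-2}\,ds\,ds',
\]
from which (after a case analysis according to whether the rectangle $[0,a]\times[0,b]$ stays away from or passes near the singularity of $f''$) I extract the pointwise bound $|\Phi|\gec |a|\,|b|\,N_{\max}^{2\al-2}$, with $N_{\max}:=\max(|n|,|n_1|,|n_2|,|n_3|)$. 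Combined with the non-resonance constraints this yields $|\Phi|\gec 1$ on $\Gamma_n^N$, and, more importantly, the derivative bound $|\partial_{n_1}\Phi|\gec |b|\,N_{\max}^{2\al-2}$ in $n_1$ with $n,n_2$ held fixed.

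To prove \eqref{est:B1} and \eqref{est:B2}, I dyadically localize $|n_j|\sim N_j$ and $|n|\sim N$, and use the derivative bound to establish the counting estimate
\[
\#\bigl\{n_1\in\Bo{Z} : (n_1,n_2,n_3)\in\Gamma_n^N,\ |\Phi-\mu|<1,\ |n_1|\sim N_1\bigr\} \;\lec\; \frac{N_{\max}^{2-2\al}}{|b|}+1
\]
for fixed $n,n_2$. A Cauchy--Schwarz-then-count argument (with the frozen pair of outer variables chosen in each dyadic regime so as to minimize the largest frequency carried through) then reduces \eqref{est:B1} and \eqref{est:B2} to a geometrically convergent dyadic sum, provided $s_1>\tfrac{1-\al}{2}$. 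The main obstacle I anticipate is the phase lower bound near the boundary $|a|\sim|n_2|^{1-\al}$, and in extremely unbalanced frequency regimes where the integrand is dominated by a neighborhood of the singularity of $f''$; a related delicate point is that the counting step must be uniform in $\mu$ and must feed into the Cauchy--Schwarz step so as to yield summable dyadic series \emph{simultaneously} with the constraint $s_1<s$, leaving essentially no room to spare at the critical threshold $s=\max\{1/6,(1-\al)/2\}$.
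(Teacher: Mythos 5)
Your treatment of \eqref{est:B1'}, \eqref{est:B2'} (via $\ell^2_{s_2}\hookrightarrow\ell^1$) and \eqref{est:B3} (via $\ell^2_{1/6}\hookrightarrow\ell^3$) matches the paper, and your second-difference identity $\Phi=\int_0^a\int_0^b f''(n-s-s')\,ds\,ds'$, together with the pointwise lower bound $|\Phi|\gec|a||b|N_{\max}^{2\al-2}$, is correct. However, there is a concrete gap in the derivative bound you rest the counting on. With $n,n_2$ held fixed one has $n_3=n+n_2-n_1$ and
\[
\partial_{n_1}\Phi=-f'(n_1)+f'(n_3),
\]
so, by the mean-value argument you invoke, $|\partial_{n_1}\Phi|\gec|n_1-n_3|\,N_{\max}^{2\al-2}=|a-b|\,N_{\max}^{2\al-2}$, \emph{not} $|b|\,N_{\max}^{2\al-2}$. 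The quantity $|a-b|=|n_1-n_3|$ is not bounded below on $\Gamma_n^N$ --- the non-resonance constraints only exclude small $|n_1-n_2|$ and $|n_3-n_2|$, and the point $n_1=n_3$ is perfectly admissible (take $n_1=n_3$, $n_2$ large). Thus your counting estimate, as stated, fails near $n_1=(n+n_2)/2$, precisely where the phase is stationary. (If you instead freeze $n,n_3$, the derivative is $-f'(n_1)+f'(n_2)$, which does satisfy $\gec|b|N_{\max}^{2\al-2}$ with $|b|=|n_1-n_2|>|n_2|^{1-\al}\geq1$; but then $b$ is constant and your phrasing ``for fixed $n,n_2$'' no longer applies, and the subsequent Cauchy--Schwarz pairing changes.)

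The paper handles exactly this degeneracy in Lemma~\ref{lem:number}: the ``$+$''-type count (\eqref{est:number+}, corresponding to freezing $n,n_2$ or $n_1,n_3$) is proved by carving out a neighborhood $\{|2x-k_*|\leq K^{1-\al}\}$ of the critical point, of measure $K^{1-\al}$, and proving a uniform derivative lower bound $\gec K^{\al-1}$ outside it --- so the total count is $\lec K^{1-\al}$ with $K=\min$ of the two relevant frequencies, without ever relying on a nondegenerate derivative everywhere. The ``$-$''-type count (\eqref{est:number-}) is the one where your nondegeneracy hypothesis genuinely holds, but it requires the hypothesis $|k_*|\gec K^{1-\al}$, which the non-resonance condition supplies. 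The paper's (B1)/(B2) proof then deploys these two counts through Cauchy--Schwarz, carefully choosing which pair of variables to pair off depending on whether $N_2\lec N_{(3)}$ or $N_2\gg N_{(3)}$. Your proposal neither identifies the degeneracy at $n_1=n_3$ nor carries through the dyadic bookkeeping that makes the sum geometrically convergent under the sole assumption $s_1>\frac{1-\al}{2}$; as you yourself anticipate, the details near the critical threshold are exactly where the argument must be completed. Your sketch for \eqref{est:R} is likewise in the right spirit (localize $|n_2-n_1|\lec|n_2|^{1-\al}$, Cauchy--Schwarz, Sobolev) but is missing the case split the paper performs according to whether $N_0\gg N_3$ (treated by Young's inequality) or $N_0\lec N_3$ (treated by Cauchy--Schwarz).
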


\subsection{Proof of the trilinear estimates}

The key lemma to prove Proposition~\ref{prop:trilinear0} is the following estimates on the number of frequencies.
\begin{lem}\label{lem:number}
Let $K\ge 1$ and $\mu _*,k_*\in \Bo{Z}$.
Then, we have
\eq{est:number+}{\# \Shugo{(k,l)\in \Bo{Z}^2}{k+l=k_*,\,|k|^{2\al}+|l|^{2\al}=\mu _*+O(1),\,|k|\le |l|,\,|k|\le K}\lec K^{1-\al}.}
If we further assume that $|k_*|\gec K^{1-\al}$, then we have
\eq{est:number-}{\# \Shugo{(k,l)\in \Bo{Z}^2}{k-l=k_*,\,|k|^{2\al}-|l|^{2\al}=\mu _*+O(1),\,|k|\le |l|,\,|k|\le K}\lec K^{1-\al}.}
Here, the implicit constants are independent of $K$, $\mu _*$, and $k_*$.
\end{lem}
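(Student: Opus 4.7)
The plan is to eliminate $l$ via the linear constraint, reducing each count to a one-dimensional problem: we must bound the number of integers $k$ with $|k|\le K$ such that a real-valued function
\eqq{
F(k) \;=\; |k|^{2\al} \pm |k_*-k|^{2\al}
}
lies in a window of length $O(1)$ around $\mu_*$. On any maximal monotone sub-interval where $|F'(k)| \ge c$, the preimage of a unit window is contained in an interval of length $\le 1/c$ and hence contains $O(1+1/c)$ integers; the whole proof therefore reduces to producing sharp lower bounds on $|F'|$ after a case split based on the signs of $k$ and $l$ (and, by the symmetry $(k,l,k_*)\mapsto(-k,-l,-k_*)$, we may assume $k_*\ge 0$).

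For \eqref{est:number+}, when $k$ and $l$ have opposite signs (necessarily $k<0,\ l>0$ for $k_*>0$), the two contributions to $F'$ add rather than cancel, giving $|F'|\gec (k'+k_*)^{2\al-1}\ge 1$ uniformly under $k=-k'$ with $k'\ge 1$, which forces an $O(1)$ count. The serious case is when $k$ and $l$ share the sign, say both $\ge 0$, so that $0\le k\le k_*/2$ and $F(k) = k^{2\al}+(k_*-k)^{2\al}$ has a minimum at $k = k_*/2$. I would split this further into the \emph{off-center} region $k\in [0,k_*/4]$, on which $|F'(k)| \gec k_*^{2\al-1}\ge 1$ and the count is trivially $O(1)$, and the \emph{near-center} region $k\in [k_*/4,k_*/2]$, on which I would Taylor-expand
\eqq{
F(k) \;=\; F(k_*/2) + c_\al\, k_*^{2\al-2}(k-k_*/2)^2 + O\bigl(k_*^{2\al-3}(k-k_*/2)^3\bigr),
}
with $c_\al>0$ precisely because $\al>1/2$. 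This confines $F^{-1}([\mu_*,\mu_*+1])$ to an interval of radius $\lec k_*^{1-\al}$ around $k_*/2$, which contains at most $O(1+k_*^{1-\al})$ integers. The constraint $k\le K$ ensures that the near-center region is accessed only when $k_* \lec K$, so this bound is $\lec K^{1-\al}$.

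For \eqref{est:number-} the opposite-sign subcase ($k\ge 0,\ l<0$, hence $0\le k\le k_*/2$) again gives $F'$ as a sum of two positives with $|F'(k)|\gec (k_*-k)^{2\al-1}\gec k_*^{2\al-1}\ge 1$, producing an $O(1)$ count. When $k$ and $l$ are both negative, writing $k=-k'$ with $1\le k'\le K$ turns the relevant function into $G(k')=(k')^{2\al}-(k'+k_*)^{2\al}$, and one application of the mean value theorem to $u\mapsto u^{2\al-1}$ (using $2\al-2<0$) gives
\eqq{
|G'(k')|\;\gec\; k_*\,(k'+k_*)^{2\al-2}.
}
For $k' \lec k_*$ this is $\gec k_*^{2\al-1}\ge 1$ and yields $O(1)$; on each dyadic scale $k'\sim 2^j\in[k_*,K]$ it yields a count $\lec 1+2^{j(2-2\al)}/k_*$, and summing the geometric series in $j$ bounds the total by $\lec \log(K/k_*)+K^{2-2\al}/k_*$, which under the standing hypothesis $|k_*|\gec K^{1-\al}$ collapses to $\lec K^{1-\al}$. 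The main obstacle is the degenerate critical point in the plus case; the essential quantitative ingredient is the second-derivative coefficient $k_*^{2\al-2}$, whose positivity hinges on $\al>1/2$ and whose size exactly matches the target exponent $K^{1-\al}$.
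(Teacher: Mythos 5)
Your proof is correct and follows the same essential strategy as the paper: eliminate $l$, reduce to a one-dimensional count, and produce lower bounds on the derivative of the phase $F(k)=|k|^{2\al}\pm|k_*-k|^{2\al}$ on monotone pieces, isolating as the serious case the region where the two terms of $F'$ can cancel (in the paper's language, $\pm x(x-k_*)<0$ with $|x|\sim|x-k_*|$). The execution differs in two places. For the $+$ case you Taylor-expand around the degenerate critical point $k_*/2$, quoting $F''(k_*/2)\sim k_*^{2\al-2}$; the paper instead applies the mean value theorem to $u\mapsto |u|^{2\al-2}u$ to get $|F'(x)|\sim|x|^{2\al-2}\,|2x-k_*|$ on the hard region, then splits at $|2x-k_*|\gtrless K^{1-\al}$. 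Both routes yield a preimage interval of length $\lec K^{1-\al}$ (your version needs $k_*\lec K$, which follows from $k\in[k_*/4,k_*/2]$ and $k\le K$); the second-derivative picture is arguably the more transparent explanation of where $K^{1-\al}$ comes from. For the $-$ case your dyadic decomposition is correct but unnecessary and costs you an extra $\log(K/k_*)$ which you must then absorb: since $k'\le K$ and $2\al-2<0$, the same MVT bound already gives $|G'(k')|\gec|k_*|\,K^{2\al-2}\gec K^{\al-1}$ \emph{uniformly}, so the preimage of a unit window is a single interval of length $\lec K^{1-\al}$, and the count is $\lec 1+K^{1-\al}$ with no summation. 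One small point you should make explicit: the statement $|F'|\gec k_*^{2\al-1}\ge 1$ in the off-center region requires $k_*\gec 1$; for $k_*=O(1)$ the interval $[0,k_*/4]$ holds only $O(1)$ integers, so nothing is lost, but this degenerate case should at least be acknowledged.
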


\begin{proof}
Define $f_\pm (x):=|x|^{2\al}\pm |x-k_*|^{2\al}$ for $x\in \R$ and 
\[ I_\pm :=\Shugo{x}{|f_{\pm}(x)|=\mu _*+O(1),\;|x|\le |x-k_*|,\;|x|\le K}.\]
It then suffices to show that $|I_+|\lec K^{1-\al}$, and $|I_-|\lec K^{1-\al}$ provided $|k_*|\gec K^{1-\al}$.

Since $\al >\frac{1}{2}$, $f\in C^1(\R )$ and 
\[ f'_\pm (x)=2\al \big( |x|^{2\al -2}x\pm |x-k_*|^{2\al -2}(x-k_*)\big) .\]
If $|x|\le \frac{1}{2}|x-k_*|$ or $\pm x(x-k_*)\ge 0$, then $|f'_\pm (x)|\gec |x|^{2\al -1}$.
This implies that
\[ \Big| I_\pm \cap \Shugo{x}{|x|\le 1~~\text{or}~~|x|\le \tfrac{1}{2}|x-k_*|~~\text{or}~~\pm x(x-k_*)\ge 0}\Big| \lec 1.\]
(Note that the set in the left-hand side consists of finite number of intervals.)

From now on, we assume
\[ x\in J_\pm :=\Shugo{x}{|x|>1,\;\tfrac{1}{2}|x-k_*|<|x|\le |x-k_*|,\;|x|\le K,\;\pm x(x-k_*)<0}.\]
$J_\pm$ also consists of finite number of intervals.
Since $x$ and $\mp (x-k_*)$ has the same sign on $J_\pm$, by the mean value theorem, we see that
\[ |f'_\pm (x)|\sim |x|^{2\al -2}|x\pm (x-k_*)|\ge K^{2\al -2}|x\pm (x-k_*)|\quad \text{on $J_\pm$}.\]
Hence, for the $+$ case we have
\[ |f'_+(x)|\gec K^{\al -1}\quad \text{on $J_+\cap \Shugo{x}{|2x-k_*|\ge K^{1-\al}}$}\]
and 
\[ \Big| \Shugo{x}{|2x-k_*|\le K^{1-\al}}\Big| =K^{1-\al},\]
while for the $-$ case 
\[ |f'_-(x)|\gec K^{\al -1}\quad \text{on $J_-$}\]
under the assumption that $|k_*|\gec K^{1-\al}$.
Therefore, in both cases, we deduce that $|I_\pm \cap J_\pm |\lec K^{1-\al}$, which concludes the proof.
\end{proof}

\begin{proof}[Proof of Proposition~\ref{prop:trilinear0}]
It is easy to see that the estimates \eqref{est:B1'} and \eqref{est:B2'} hold for any $s_2>\frac{1}{2}$ by the embedding $\ell ^2_{s_2}\hookrightarrow \ell ^1$.
The estimate \eqref{est:B3} is also easily verified whenever $s\ge \frac{1}{6}$ by the Sobolev embedding $H^{s}(\T )\hookrightarrow L^3(\T )$.
(For these estimates, restriction onto frequencies in $\Gamma_n^N$ is not necessary.)

Before proving the remaining trilinear estimates, we introduce some notation.
The operator $P_N$ for a dyadic number $N\ge 1$ is the same as in Section~\ref{section:cNLS}; i.e., $[P_N\om ]_n:=\chi _{N\le \LR{n}<2N}\om _n$. 
Given quadruplets of dyadic numbers $\{ N_0,\dots ,N_3\}$, we write $N_{(j)}$ ($0\le j\le 3$) to denote the $j$-th largest one among them. 

Let us prove \eqref{est:B1}.
By an easy argument with the Cauchy-Schwarz inequality and duality, it suffices to show that
\eqq{I&:=\Big( \frac{N_0}{N_1N_2N_3}\Big) ^{s_1}\bigg| \sum _{n\in \Bo{Z}}\sum _{\mat{(n_1,n_2,n_3)\in \Gamma _n^N\\ \Phi =\mu +O(1)}}P_{N_1}\om ^{(1)}_{n_1}P_{N_2}\om ^{(2)}_{n_2}P_{N_3}\om ^{(3)}_{n_3}P_{N_0}\om ^{(0)}_n\bigg| \\
&\lec \chi _{N_{(1)}\sim N_{(2)}}N_{(3)}^{-\de}\prod _{j=0}^3\norm{P_{N_j}\om ^{(j)}}{\ell ^2}}
for some $\de >0$.
Note that there is no contribution if $N_{(1)}\gg N_{(2)}$.
If $N_2\lec N_{(3)}$, which implies $N_0\lec \max \shugo{N_1,N_3}$, the Cauchy-Schwarz inequality followed by two applications of \eqref{est:number+} implies that 
\eqq{I&\lec \Big( \frac{1}{N_2\min \{ N_1,N_3\}}\Big) ^{s_1}\bigg[ \sum _{n,n_2\in \Bo{Z}}\big| P_{N_0}\om ^{(0)}_nP_{N_2}\om ^{(2)}_{n_2}\big| ^2\sum _{\mat{n_1,n_3\in \Bo{Z}^2\\ (n_j)_{j=1}^3\in \Gamma _n^N,\,\Phi =\mu +O(1)}}1\bigg] ^{1/2}\\
&\hspace{100pt}\times \bigg[ \sum _{n_1,n_3\in \Bo{Z}}\big| P_{N_1}\om ^{(1)}_{n_1}P_{N_3}\om ^{(3)}_{n_3}\big| ^2\sum _{\mat{n,n_2\in \Bo{Z}^2\\ (n_j)_{j=1}^3\in \Gamma _n^N,\,\Phi =\mu +O(1)}}1\bigg] ^{1/2}\\
&\lec \Big( \frac{1}{N_2\min \{ N_1,N_3\}}\Big) ^{s_1-\frac{1-\al}{2}}\prod _{j=0}^3\norm{\om ^{(j)}}{\ell ^2}.}
This implies the desired estimate if $s_1>\frac{1-\al}{2}$.
If $N_2\gg N_{(3)}$, then $N_0\lec N_2$ and we have
\eqq{I&\lec \Big( \frac{1}{N_1N_3}\Big) ^{s_1}\bigg[ \sum _{n,n_1\in \Bo{Z}}\big| P_{N_0}\om ^{(0)}_nP_{N_1}\om ^{(1)}_{n_1}\big| ^2\sum _{\mat{n_2,n_3\in \Bo{Z}^2\\ (n_j)_{j=1}^3\in \Gamma _n^N,\,\Phi =\mu +O(1)}}1\bigg] ^{1/2}\\
&\hspace{100pt}\times \bigg[ \sum _{n_2,n_3\in \Bo{Z}}\big| P_{N_2}\om ^{(2)}_{n_2}P_{N_3}\om ^{(3)}_{n_3}\big| ^2\sum _{\mat{n,n_1\in \Bo{Z}^2\\ (n_j)_{j=1}^3\in \Gamma _n^N,\,\Phi =\mu +O(1)}}1\bigg] ^{1/2}\\
&\lec \Big( \frac{1}{N_1N_3}\Big) ^{s_1-\frac{1-\al}{2}}\prod _{j=0}^3\norm{\om ^{(j)}}{\ell ^2},}
where we have used \eqref{est:number-} twice.
This is again a proper bound if $s_1>\frac{1-\al}{2}$.

Proof of \eqref{est:B2} proceeds in a similar manner.
Let us show, for instance, the estimate
\[ \norm{\sum _{\mat{(n_1,n_2,n_3)\in \Gamma _n^N\\ |\Phi -\mu |<1}}\om ^{(1)}_{n_1}\om ^{(2)}_{n_2}\om ^{(3)}_{n_3}}{\ell ^\I}\lec \norm{\om ^{(1)}}{\ell ^\I}\norm{\om ^{(2)}}{\ell ^2_{s_1}}\norm{\om ^{(3)}}{\ell ^2_{s_1}},\]
which is reduced to showing, for fixed $n,\mu \in \Bo{Z}$, that
\eqq{II&:=\Big( \frac{1}{N_2N_3}\Big) ^{s_1}\bigg| \sum _{\mat{(n_1,n_2,n_3)\in \Gamma _n^N\\ \Phi =\mu +O(1)}}\om ^{(1)}_{n_1}P_{N_2}\om ^{(2)}_{n_2}P_{N_3}\om ^{(3)}_{n_3}\bigg| \\
&\lec (N_2N_3)^{-\de}\norm{\om ^{(1)}}{\ell ^\I}\norm{P_{N_2}\om ^{(2)}}{\ell ^2}\norm{P_{N_3}\om ^{(3)}}{\ell ^2}}
for some $\de >0$.
By the Cauchy-Schwarz, \eqref{est:number+} and \eqref{est:number-}, we have
\eqq{II&\lec \Big( \frac{1}{N_2N_3}\Big) ^{s_1}\norm{\om ^{(1)}}{\ell ^\I}\bigg[ \sum _{n_2\in \Bo{Z}}\big| P_{N_2}\om ^{(2)}_{n_2}\big| ^2\sum _{\mat{n_1,n_3\in \Bo{Z}\\(n_j)_{j=1}^3\in \Gamma _n^N,\,\Phi =\mu +O(1)}}1\bigg] ^{1/2}\\
&\hspace{100pt}\times \bigg[ \sum _{n_3\in \Bo{Z}}\big| P_{N_3}\om ^{(3)}_{n_3}\big| ^2\sum _{\mat{n_1,n_2\in \Bo{Z}\\(n_j)_{j=1}^3\in \Gamma _n^N,\,\Phi =\mu +O(1)}}1\bigg] ^{1/2}\\
&\lec \Big( \frac{1}{N_2N_3}\Big) ^{s_1-\frac{1-\al}{2}}\norm{\om ^{(1)}}{\ell ^\I}\norm{P_{N_2}\om ^{(2)}}{\ell ^2}\norm{P_{N_3}\om ^{(3)}}{\ell ^2},}
which yields the desired estimate if $s_1>\frac{1-\al}{2}$.

Finally, \eqref{est:R} is shown once the following block estimate is verified:
\[ III:=\Big( \frac{N_0}{N_1N_2N_3}\Big) ^s\bigg| \sum _{\mat{n_0,\dots ,n_3\in \Bo{Z}\\ n_1-n_2=n_0-n_3\\|n_1-n_2|\le |n_2|^{1-\al}}}P_{N_0}\om ^{(0)}_{n_0}\cdots P_{N_3}\om ^{(3)}_{n_3}\bigg| \lec \chi _{N_{(1)}\sim N_{(2)}}N_{(3)}^{-\de}\prod _{j=0}^3\norm{P_{N_j}\om ^{(j)}}{\ell ^2}\]
for some $\de >0$.
Note that $|n_1-n_2|\le |n_2|^{1-\al}$ implies $\LR{n_1}\sim \LR{n_2}$.
If $N_0\gg N_3$, namely $|n_0|\gg |n_3|$, then $|n_0|\sim |n_0-n_3|=|n_1-n_2|\le |n_2|^{1-\al}$, which allows us to assume $N_3\ll N_0\lec N_1^{1-\al}\sim N_2^{1-\al}$.
By the Young inequality, it follows that
\eqq{III&\le \Big( \frac{N_0}{N_1N_2N_3}\Big) ^s\norm{P_{N_1}\om ^{(1)}}{\ell ^2}\norm{P_{N_2}\om ^{(2)}}{\ell ^2}\norm{P_{N_3}\om ^{(3)}}{\ell ^1}\norm{P_{N_0}\om ^{(0)}}{\ell ^1}\\
&\lec \frac{N_0^{\frac{1}{2}+s}N_3^{\frac{1}{2}-s}}{N_1^{2s}}\prod _{j=0}^3\norm{P_{N_j}\om ^{(j)}}{\ell ^2}\\
&\lec 
\begin{cases}
N_1^{1-\al -2s}\prod _{j=0}^3\norm{P_{N_j}\om ^{(j)}}{\ell ^2},&\quad \text{if $0<s\le \frac{1}{2}$},\\
N_1^{\frac{1}{2}-s}\prod _{j=0}^3\norm{P_{N_j}\om ^{(j)}}{\ell ^2},&\quad \text{if $s>\frac{1}{2}$},
\end{cases}}
which is proper if $s>\frac{1-\al}{2}$.
Therefore, let us focus on the case $N_0\lec N_3$.
Then, applying the Cauchy-Schwarz inequality,
\eqq{III&\lec \frac{1}{N_1^{2s}}\norm{P_{N_3}\om ^{(3)}}{\ell ^2}\norm{P_{N_0}\om ^{(0)}}{\ell ^2}\sum _{\mat{n_1,n_2\in \Bo{Z}\\ |n_1-n_2|\lec N_1^{1-\al}\sim N_2^{1-\al}}}\big| P_{N_1}\om ^{(1)}_{n_1}P_{N_2}\om ^{(2)}_{n_2}\big| \\
&\lec \frac{1}{N_1^{2s}}\norm{P_{N_3}\om ^{(3)}}{\ell ^2}\norm{P_{N_0}\om ^{(0)}}{\ell ^2}N_1^{\frac{1-\al}{2}}\norm{P_{N_1}\om ^{(1)}}{\ell ^2}N_1^{\frac{1-\al}{2}}\norm{P_{N_2}\om ^{(2)}}{\ell ^2},}
which is again proper if $s>\frac{1-\al}{2}$.
The proof is completed.
\end{proof}

%%%%%%%%%%%%%%%%%%%%%%%%%%%%%%%%%%%%%%%%%%%%%%
% DNLS
%%%%%%%%%%%%%%%%%%%%%%%%%%%%%%%%%%%%%%%%%%%%%%

%\bigskip
\section{Application to the cubic derivative NLS} \label{section:DNLS}

In this section, we consider the one-dimensional cubic derivative NLS (DNLS):
\eq{DNLS}{
\left\{ 
\begin{alignedat}{2}
\p _tu&=i\p _x^2u\pm \p _x(|u|^2u),&\qquad &(t,x)\in [0,T]\times \T ,\\
u(0,x)&=u_0(x), & &x\in \T .
\end{alignedat}\right.
}

The Cauchy problem \eqref{DNLS} was shown to be locally well-posed in $H^{1/2}$ in the non-periodic case by Takaoka \cite{T99} and in the periodic case by Herr \cite{H06}.
Both of them used the Fourier restriction norm method to prove well-posedness for an equivalent Cauchy problem obtained via a gauge transform which converts the derivative nonlinearity $\p _x(|u|^2u)$ into milder one $u^2\p _x\bbar{u}$.
In the non-periodic case it was also shown by Biagioni and Linares \cite{BL01} that the regularity $s\ge 1/2$ is sharp in the sense that the flow map loses the uniform continuity in the $H^s$ topology for $s<1/2$, while the critical Sobolev space with respect to scaling is $L^2$.

UU of solutions to \eqref{DNLS} was proved by Win \cite{S08} in the energy space $H^1$ for the non-periodic case.
Note that any distributional solution of the (gauge-equivalent) DNLS in $C_TH^1$ lies in Bourgain space $X^{1/2,1/2}$, because $u\in C_TH^1\hookrightarrow L^2_TH^1=X^{1,0}_T$ and $u\approx \LR{i\p _t-\p _x^2}^{-1}N(u)\in X^{0,1}_T$ whenever the nonlinear terms $N(u)$ belong to $L^2_TL^2$.
Then, the problem is reduced to showing uniqueness of solutions to the gauge-equivalent DNLS in $X^{1/2,1/2}$, which was proved in \cite{S08} by slightly modifying the multilinear estimates obtained in \cite{T99}.
The same strategy can be applied to the periodic problem, since we already have enough multilinear estimates; a slight modification of Corollary~4.6 in \cite{H06} is sufficient for the fixed point argument in $X^{1/2,1/2}$.

Here, we prove UU of solutions to DNLS on $\mathbb{T}$ in weaker spaces than $H^1$ via the abstract theory.
Our result reads as follows:
\begin{thm}\label{thm:DNLS}
For the Cauchy problem \eqref{DNLS}, unconditional uniqueness of solutions holds in $C_TH^s$ for $s>1/2$.
\end{thm}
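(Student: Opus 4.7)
The strategy will be to eliminate the worst derivative-loss resonances by a gauge transform, pass to the Fourier side, and then invoke Theorem~\ref{thm:abstract}~$[A]$. I start with the standard periodic gauge $v := G[u] = e^{-i\mathcal{I}[u]}u$, where $\mathcal{I}[u](t,x)$ is an antiderivative in $x$ of $\mp(|u(t,\cdot)|^2 - P[u](t))$ with $P[u](t):=\frac{1}{2\pi}\int_{\T}|u(t,y)|^2\,dy$. Since $s>\tfrac{1}{2}$, $|u|^2\in C_TH^s$ and $G$ is a locally Lipschitz bijection of $C_TH^s(\T)$ preserving the initial data, so UU for \eqref{DNLS} is equivalent to UU for the gauge-equivalent equation $\partial_t v = i\partial_x^2 v \mp i v^2\partial_x\bar v \pm \tfrac{i}{2}|v|^4 v + i\psi(t)v$, in which the cubic derivative nonlinearity has been replaced by $v^2\partial_x\bar v$. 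This reshuffling is crucial: the derivative now sits on the conjugated factor, and the completely-resonant contributions can be explicitly computed and absorbed into lower-order mass-type terms.

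Next, I pass to the interaction picture by setting $w_n(t):=e^{itn^2}\hat v_n(t)$, obtaining
\[
\partial_t w_n \;=\; c\sum_{n=n_1+n_2-n_3}(in_3)e^{it\phi}w_{n_1}w_{n_2}\overline{w_{n_3}} \;+\; \Sc{R}[w]_n,\qquad \phi = 2(n_3-n_1)(n_3-n_2),
\]
where $\Sc{R}[w]$ gathers the quintic contribution coming from $|v|^4v$, the linear term $i\psi(t)v$, and the resonant part of the cubic nonlinearity (the set $\phi=0$ reduces to $n_3=n_1$ or $n_3=n_2$, in which case the derivative $n_3$ is paired with $|w_{n_j}|^2$, producing a $P[v](t)$-weighted copy of $w_n$). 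Since $s>\tfrac{1}{2}$ makes $\ell^2_s$ an algebra and the quintic and linear terms are easily tamed there, $\Sc{R}[w]$ satisfies the closed Lipschitz estimate $(R)$ in $\ell^2_s$.

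Finally, I will apply Theorem~\ref{thm:abstract}~$[A]$ with a Banach space $X$ of Fourier-Lebesgue type carrying a weight slightly below $s$, so that the one-derivative loss from the multiplier $|n_3|$ is absorbed by the lower regularity of $X$ while the absolute-value property \eqref{absolutenorm} is preserved. The estimate $(A1)$,
\[
\norm{\sum_{n=n_1+n_2-n_3}\frac{|n_3|}{\LR{\phi}^{1/2}}w^{(1)}_{n_1}w^{(2)}_{n_2}w^{(3)}_{n_3}}{\ell^2_s}\lec \prod_{j=1}^3\norm{w^{(j)}}{\ell^2_s},
\]
is essentially Herr's trilinear estimate \cite{H06} for the gauge-transformed DNLS in $X^{s,\pm 1/2}$ read off at the level of weights, and is valid for $s\ge\tfrac{1}{2}$ in the spirit of Remark~\ref{rem:condAB}~(iii); $(A3)$ follows from $|n_3|\lec\LR{n}+\LR{n_1}+\LR{n_2}$, Young's inequality, and the embedding $\ell^2_s\hookrightarrow\ell^1$ valid for $s>\tfrac{1}{2}$. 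The main obstacle will be $(A2)$: the surplus smoothing factor $\LR{\phi}^{-(1/2-\delta)}$ beyond what is consumed in $(A1)$ must be exploited, via the factorization $|\phi|\sim|n_3-n_1|\,|n_3-n_2|$, to extract a small gain on the smallest frequency, thereby allowing one factor to be measured in the weaker $X$-norm while the other two remain in $\ell^2_s$. Making $(A2)$ compatible with the other estimates uniformly down to $s$ arbitrarily close to $\tfrac{1}{2}$ is the tightest constraint and ultimately dictates the precise choice of $X$. Once $(R)$, $(A1)$, $(A2)$, $(A3)$ are verified, Theorem~\ref{thm:abstract}~$[A]$ applies and yields UU for $w$ in $\ell^2_s$, equivalently for $u$ in $C_TH^s$.
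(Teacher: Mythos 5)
Your overall route — gauge transform, pass to Fourier side, invoke Theorem~\ref{thm:abstract}~$[A]$ with $X$ a negative-power-weighted $\ell^2$ space, verify $(R)$ and $(A1)$--$(A3)$ — coincides with the paper's. However, there are two genuine gaps.

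First, your claimed gauge-transformed equation is missing a term. With the periodic mean-zero gauge $v=e^{-iJ(u)}u$, $J(u)=\p_x^{-1}P_{\neq c}(|u|^2)$, the resulting equation (see \eqref{GDNLS'}) contains a transport term $2\mu(t)\p_x v$ with $\mu(t)=P_c(|v(t)|^2)$, in addition to the quintic, cubic-mass and multiplicative-phase corrections. This term is \emph{not} of the form $i\psi(t)v$, and it cannot be absorbed into $\Sc{R}$: on the Fourier side it contributes $-2i\mu(t)\,n\,\om_n$, whose $\ell^2_s$ norm is controlled by $\tnorm{\om}{\ell^2_{s+1}}$ and therefore does not close in $\ell^2_s$ as $(R)$ requires. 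The paper removes this term by the further change of variable $w=\tau_{2\mu}v$ (a time-dependent spatial translation, equivalently an $n$-dependent phase on the Fourier side), and only then is the first-generation equation of the required form \eqref{Abs} with the derivative sitting purely in the trilinear multiplier.

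Second, the assertion that UU for \eqref{DNLS} is ``equivalent'' to UU for the gauge-transformed equation because the gauge map is a homeomorphism of $C_TH^s$ skips the key analytic step: you must show that if $u\in C_TH^s$ solves \eqref{DNLS} merely in $\Sc{D}'$, then $w=\tau_{2\mu}(e^{-iJ(u)}u)$ solves \eqref{GDNLS} in $\Sc{D}'$. You cannot approximate $u$ by smooth \emph{solutions}, since that would presuppose what is being proved. The paper's Lemma~\ref{lem:3} handles this by approximating with the frequency truncations $u_N=P_{\le N}u$ (which are smooth but are \emph{not} solutions), deriving the equation for $v_N=e^{-iJ(u_N)}u_N$ in the classical sense, and carefully passing $N\to\infty$ using product estimates such as $\tnorm{fg}{H^{s-1}}\lec\tnorm{f}{H^s}\tnorm{g}{H^{s-1}}$, valid only for $s>1/2$. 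That limit argument is substantive and is part of why the threshold $s>1/2$ is the natural one; your proposal needs it before Theorem~\ref{thm:abstract} can even be invoked. Two minor further points: the paper takes $X=\ell^2_{s-1}$ (a full derivative below $s$, matching the one-derivative multiplier loss), not a weight ``slightly below $s$''; and the paper proves $(A1)$ directly via Cauchy--Schwarz on the weight, rather than by citing Herr's Bourgain-space estimate — though, as Remark~\ref{rem:condAB}(iii) notes, the two are morally equivalent.
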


Our result is optimal in the sense that the derivative term $u^2\p _x\bbar{u}$ in the gauge-equivalent DNLS does not make sense for $u\in H^s$ in the framework of distribution if $s\le 1/2$.
However, the original DNLS makes sense if $u\in L^3\hookrightarrow H^{1/6}$.
Unconditional uniqueness for $1/6\le s\le 1/2$, as well as (conditional) well-posedness for $0\le s<1/2$, is a challenging open problem.%
\footnote{Recently, the non-periodic problem was shown to be globally well-posed in $H^{1/6}(\mathbb{R})$ using the integrable structure of the equation. 
For the recent progress in the well-posedness theory of \eqref{DNLS} based on its complete integrability, we refer to \cite{HGKVp} and references therein.}

We apply the abstract framework after transforming the equation into an equivalent but milder one by a  gauge transform, as in the previous results mentioned above.
Recently, Mosincat and Yoon \cite{MYp} followed our ideas to establish UU for \eqref{DNLS} in the same regularity range in the non-periodic setting.
A similar approach can be used to show UU for the modified Benjamin-Ono equation; see \cite{K-mBOp}.

\subsection{Gauge transform}

In this section, we (continue to) use the following definition of the Fourier coefficients of a function on $\T =\R /2\pi \Bo{Z}$:
\[ (\F f)_n:=\frac{1}{2\pi} \int _{\T}f(x)e^{-inx}\,dx,\qquad n\in \Bo{Z} ,\]
and use in addition the following operators:
\eqs{P_{\leq N}f:=\F ^{-1}\chf{|n|\leq N}\F f,\quad P_{>N}f:=f-P_{\leq N}f\qquad (\text{for $N>0$}),\\
P_cf:=(\F f)_0=\frac{1}{2\pi}\int _{\T}f(x)\,dx,\quad (P_{\neq c}f)(x):=f(x)-P_cf =\sum _{n\in \Bo{Z}\setminus \{ 0\}}(\F f)_ne^{inx},\\
(\p _x^{-1}f)(x):=\F ^{-1}\Big[\frac{(\F f)_n}{in}\Big] (x)=\frac{1}{2\pi}\int _{\T} \int _\th ^xf(y)\,dy\,d\th \qquad (\text{for $f$ s.t. $P_cf=0$}).
}
Note that $\p _x^{-1}f$ is also $2\pi$-periodic.

We focus on the case of the $+$ sign in \eqref{DNLS}, while the sign is not important in our argument.
Let $u\in C_TH^s$ ($s\ge 0$) be a solution of DNLS \eqref{DNLS} in the sense of distribution (the nonlinear term $\p _x(|u|^2u)$ makes sense if $s\ge 1/6$).
For $N>0$, the function $P_{\le N}u(t,x)$ belongs to $C^1_TH^\I$;
In fact, we have
\[ \p _tP_{\le N}u=iP_{\le N}\p _x^2u+P_{\le N}\p _x(|u|^2u)~\in C_TH^\I ,\]
and this equality is interpreted in the classical sense.

The gauge transform for the periodic DNLS was introduced in \cite{H06}: 
\[ u(t)\quad \mapsto \quad v(t):=e^{-iJ(u(t))}u(t),\qquad J(f):=\p _x^{-1}P_{\neq c}(|f|^2).\]
If $u$ has sufficient smoothness, a formal calculation shows that $v$ satisfies the equation
\eq{GDNLS'}{\p _tv=i\p _x^2v-\Big( v^2\p _x\bbar{v}-2P_c\big( v\p _x\bbar{v}\big) v\Big) +2\mu \p _xv+\frac{i}{2}|v|^4v-i\mu |v|^2v+i\Big( \mu ^2-\frac{1}{2}P_c(|v|^4)\Big) v,}
where $\mu (t):=P_c(|v(t)|^2)=P_c(|u(t)|^2)$.
Set
\[ w(t,x):=(\tau _{2\mu}v)(t,x):=v\Big( t,\,x-2\int _0^t\mu (t')\,dt'\Big) \]
to eliminate the linear term $2\mu \p _xv$, then $w$ satisfies
\eq{GDNLS}{\p _tw=i\p _x^2w-\Big( w^2\p _x\bbar{w}-2P_c\big( w\p _x\bbar{w}\big) w\Big) +\frac{i}{2}|w|^4w-i\mu |w|^2w+i\Big( \mu ^2-\frac{1}{2}P_c(|w|^4)\Big) w.}
Note that $\mu (t)=P_c(|w(t)|^2)$, and hence we have a closed equation for $w$ (in contrast to the case of the modified Benjamin-Ono equation~\cite{K-mBOp}, where one has a system of equations for the original unknown function and its gauge transform).

The equation \eqref{GDNLS} is the one for which in the next subsection we will apply the abstract theory and prove UU in $H^s$, $s>1/2$.
Let us see here that this actually implies UU for \eqref{DNLS}, which is ensured by the next two lemmas:
\begin{lem}[{\cite[Lemma~2.3]{H06}}]
For $s\ge 0$ the map $u\mapsto \tau _{2\mu}(e^{-iJ(u)}u)$ is a homeomorphism on $C_TH^s$.
\end{lem}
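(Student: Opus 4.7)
The plan is to exhibit an explicit two-sided inverse for $\Phi: u\mapsto w:=\tau_{2\mu}(e^{-iJ(u)}u)$ and verify that both $\Phi$ and $\Phi^{-1}$ are continuous on $C_TH^s$. The construction rests on a pair of invariance properties. Because $|u|^2$ is real-valued with zero mean after the projection $P_{\neq c}$, and $\p_x^{-1}$ sends real zero-mean periodic functions to real ones, $J(u)$ is real-valued, so $|e^{-iJ(u)}|=1$ pointwise. Setting $v:=e^{-iJ(u)}u$ therefore gives $|v|^2=|u|^2$, from which $J(v)=J(u)$ and $\mu(v)=\mu(u)$; and since spatial translations commute with $P_c$, also $\mu(w)(t)=P_c(|w(t)|^2)=\mu(u)(t)$. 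Consequently the candidate inverse is
\eqq{
\Psi: w\mapsto e^{iJ(\ti v)}\,\ti v, \qquad \ti v:=\tau_{-2\mu(w)}w,
}
and the identities $\Psi\circ\Phi=\Phi\circ\Psi=\mathrm{id}$ follow directly from these invariants.

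To verify continuity of $\Phi$ on $C_TH^s$, I would decompose it into elementary steps: (i) $u\mapsto|u|^2$, locally Lipschitz by Sobolev product estimates in $H^s$; (ii) the linear smoothing operator $f\mapsto \p_x^{-1}P_{\neq c}f$, which removes the zero mode and gains one derivative; (iii) the pointwise map $\phi\mapsto e^{-i\phi}$, controlled by $|e^{-i\phi}-e^{-i\psi}|\le|\phi-\psi|$ and, for derivatives, by the chain-rule identity $\p_xe^{-iJ(u)}=-iP_{\neq c}(|u|^2)\,e^{-iJ(u)}$; (iv) the pointwise multiplication $e^{-iJ(u)}\cdot u$, handled by Sobolev product/composition estimates; and (v) the time-dependent spatial shift $\tau_{2\mu(u)}$, which is continuous on $C_TH^s$ because $\mu(u)\in C_T\R$ depends Lipschitz-continuously on $u$ via $P_c$ applied to step (i), and spatial translations are jointly continuous in the shift parameter and the $H^s$-norm. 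Continuity of $\Psi$ follows from the same analysis, and one does \emph{not} need a fixed-point argument because $\mu(w)=\mu(u)$ is an explicit invariant.

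The main obstacle is step (iv): multiplication by $e^{-iJ(u)}$ must preserve $H^s$ and depend Lipschitz-continuously on $u$. For $s>1/2$, the range relevant to Theorem~\ref{thm:DNLS}, this is routine: $H^s(\T)$ is an algebra, $|u|^2\in H^s$, hence $J(u)\in H^{s+1}$, and $e^{-iJ(u)}\in H^{s+1}$ follows from a Moser-type composition estimate; the product $e^{-iJ(u)}u$ is then in $H^s$ and Lipschitz in $u$. For $0\le s\le 1/2$ this fails and one must rely instead on the uniform pointwise bound $\|J(u)\|_{L^\infty}\lec\|u\|_{L^2}^2$ (from the explicit integral formula for $\p_x^{-1}$) together with Kato--Ponce type commutator estimates to handle multiplication of fractional-order Sobolev functions by the bounded unit-modulus factor $e^{-iJ(u)}$. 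The corresponding difference estimate reduces to step (ii) via the integral identity
\eqq{
e^{-iJ(u)}-e^{-iJ(\ti u)} \;=\; -i\int _0^1 e^{-i\theta J(u)-i(1-\theta)J(\ti u)}\,d\theta\,\cdot\,\bigl(J(u)-J(\ti u)\bigr),
}
so that the Lipschitz bounds on $J$ coming from (i)+(ii) transfer through (iii)+(iv) to the whole composition.
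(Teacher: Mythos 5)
The lemma is cited from [H06, Lemma~2.3] and the paper does not reprove it; its only original contribution is the remark that Herr's assumption of $L^2$-conservation (under which $\mu$ is constant in time) can be dropped, because $u_k\to u$ in $C_TH^s$ already forces $\int_0^t P_c(|u_k|^2)\to\int_0^t P_c(|u|^2)$ uniformly. Your proposal is consistent with this and, on the structural side, is clean and correct: the chain of invariants $J(v)=J(u)$, $\mu(v)=\mu(u)$, $\mu(w)=\mu(u)$ (the last using that $P_c$ is translation-invariant) does make the inverse explicit, so no fixed-point argument is needed; your verification that $\Psi\circ\Phi=\Phi\circ\Psi=\mathrm{id}$ is sound, and the $L^\infty$ bound $\|J(u)\|_{L^\infty}\lec\|u\|_{L^2}^2$ does follow from the double-integral formula for $\p_x^{-1}$ given in the paper. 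Your handling of the time-dependent shift $\tau_{2\mu}$ via Lipschitz dependence of $\mu$ on $u$ is exactly the point of the paper's remark.

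Where your write-up leaves a genuine gap is the step you flag yourself: boundedness and Lipschitz-continuity on $H^s(\T)$, $0\le s\le 1/2$, of multiplication by the unit-modulus factor $e^{-iJ(u)}$. This is precisely Herr's estimate~(A.1), quoted verbatim in the paper's proof of Lemma~\ref{lem:3}, and it is the entire analytic content of the lemma at low regularity. Your sketch — expand via the integral identity for $e^{-iJ(u)}-e^{-iJ(\ti u)}$, use $J(u)\in L^\infty\cap H^{s+1}$, and invoke Kato--Ponce — is the right circle of ideas, but it is not a proof: one has to control $\|e^{-iJ(u)}h\|_{H^s}$ when $e^{-iJ(u)}$ is only $L^\infty\cap H^{s+1}$ and $h$ is only $H^s$ with $s<1/2$, and the standard fractional Leibniz rule does not directly close in this regime (the usual statement asks for one factor in $H^{s}\cap L^\infty$ \emph{and} the other in $L^\infty$, or a regularity split $s=s_1+s_2$ with one index above $1/2$). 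Herr's actual argument exploits the specific structure (the exponent is a fixed $H^{s+1}$ function with uniformly bounded $L^\infty$ norm, and one expands the exponential as a power series with $L^\infty$-controlled coefficients) to push through. A complete proof would either reproduce that estimate or cite it explicitly, rather than invoking Kato--Ponce as if it were a black box that applies.
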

\begin{rem}
This was proved in \cite{H06} assuming the $L^2$ conservation (i.e., $P_c(|u(t)|^2)=P_c(|u(0)|^2)$ for any $t$), but the same argument works; we only need the fact that $u_k\to u$ in $C_TH^s$ ($s\geq 0$) implies $\int _0^tP_c(|u_k(t')|^2)\,dt'\to \int _0^tP_c(|u(t')|^2)\,dt'$ uniformly in $t\in [0,T]$.
Conservation of the $L^2$ norm for general solutions (in the sense of distribution) to \eqref{DNLS} is a non-trivial problem, and we can show that at least for solutions in $C_TH^s$ with $s>1/2$; see the argument in \cite[Lemma~2.5]{KT18}, for instance.
However, we will not rely on the $L^2$ conservation law in the proof of Theorem~\ref{thm:DNLS}.
\end{rem}

\begin{lem}\label{lem:3}
Let $s>1/2$, and let $u\in C_TH^s(\T )$ satisfy the equation \eqref{DNLS} in the sense of distribution.
Then, $w=\tau _{2\mu}(e^{-iJ(u)}u)$ satisfies the equation \eqref{GDNLS} in the sense of distribution.
\end{lem}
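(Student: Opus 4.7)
The derivation of \eqref{GDNLS'} (hence \eqref{GDNLS}) from \eqref{DNLS} outlined in the text is only formal: it uses the chain and product rules for $e^{-iJ(u)}u$ together with a change of variables in $x$, neither of which is a priori legitimate for a distributional solution $u\in C_TH^s$. The plan is to carry out the gauge transform classically on a Fourier-truncated approximation and then pass to the limit, exploiting that for $s>1/2$ the space $H^s$ is a Banach algebra so that every nonlinear term in \eqref{GDNLS} is continuous in $w$.

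Concretely, set $u_N:=P_{\leq N}u$. Since $P_{\leq N}$ commutes with the equation, $u_N\in C^1_TH^\infty$ and
\[\p_tu_N=i\p_x^2u_N+\p_x(|u_N|^2u_N)+\eta_N,\qquad \eta_N:=P_{\leq N}\p_x(|u|^2u)-\p_x(|u_N|^2u_N),\]
with $\eta_N\to0$ in $C_TH^{s-1}$ by the $H^s$-algebra property and $u_N\to u$ in $C_TH^s$. Define $v_N:=e^{-iJ(u_N)}u_N$, $\mu_N(t):=P_c(|u_N(t)|^2)$, and $w_N:=\tau_{2\mu_N}v_N$; each of these lies in $C^1_TH^\infty$, so the entire formal derivation of \eqref{GDNLS'}/\eqref{GDNLS} becomes a chain of classical identities when applied to $u_N$ and produces
\[\p_tw_N=\bigl[\text{RHS of \eqref{GDNLS} with $(w,\mu)$ replaced by $(w_N,\mu_N)$}\bigr]+E_N,\]
where the error $E_N$ is a finite sum of multilinear expressions in $u_N,\bar u_N,\eta_N,\bar\eta_N$ (possibly composed with $\p_x^{-1}P_{\neq c}$) generated by tracking $\eta_N$ through the calculation.

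The limit $N\to\infty$ in $\Sc{D}'((0,T)\times\T)$ is then routine. The $H^s$-algebra property together with the composition estimate $\|e^{-if}-1\|_{H^{s+1}}\lec_{\|f\|_{H^{s+1}}}\|f\|_{H^{s+1}}$ gives $J(u_N)\to J(u)$ and $e^{-iJ(u_N)}\to e^{-iJ(u)}$ in $C_TH^{s+1}$, hence $v_N\to v$ in $C_TH^s$; also $\mu_N\to\mu$ uniformly on $[0,T]$, so $\tau_{2\mu_N}\to\tau_{2\mu}$ on $C_TH^s$ by continuity of translation, and thus $w_N\to w$ in $C_TH^s$. For $s>1/2$, each nonlinear term on the right of \eqref{GDNLS} depends continuously on $w$ as a map $C_TH^s\to C_TH^{s-1}$, so the $w_N$-terms converge to the corresponding $w$-terms; and $E_N\to0$ in $C_TH^{s-1}$ since every summand contains one factor of $\eta_N$ or $\bar\eta_N$ tending to zero in $C_TH^{s-1}$ while the remaining factors remain bounded in $C_TH^s$.

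The main obstacle is the derivative-loss term $w^2\p_x\bar w$ (and, to a lesser extent, $P_c(w\p_x\bar w)w$ and the quintic pieces $|w|^4w$): one must check that $s>1/2$ is enough both to give meaning to $w^2\p_x\bar w\in C_TH^{s-1}$ via the algebra/paraproduct structure and to pass to the limit $w_N^2\p_x\bar w_N\to w^2\p_x\bar w$ in $C_TH^{s-1}$. This is precisely the regularity at which all products and compositions appearing in \eqref{GDNLS} close, which is why the hypothesis $s>1/2$ in the lemma is used in an essential way.
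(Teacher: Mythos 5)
Your proof takes essentially the same approach as the paper: truncate to $u_N=P_{\leq N}u$, carry out the gauge calculation classically on the regularized $u_N$ to obtain the equation for $w_N$ up to error terms each containing one factor of the truncation error $\eta_N$, and then pass to the limit in $C_TH^{s-1}$ via the algebra and product estimates valid for $s>1/2$ (the paper uses the same two ingredients, namely the gauge/composition estimate of Herr and $\|fg\|_{H^{s-1}}\lesssim\|f\|_{H^s}\|g\|_{H^{s-1}}$). The only cosmetic difference is that the paper first reduces to the pre-translation equation \eqref{GDNLS'} for $v=e^{-iJ(u)}u$, observing that $\p_t(\tau_{2\mu}v)=\tau_{2\mu}(\p_tv)-2\mu\tau_{2\mu}(\p_xv)$ holds distributionally, while you pass to the limit directly in $w_N=\tau_{2\mu_N}v_N$; both routes are valid.
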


\begin{proof}
Since we have
\[ \p _t(\tau _{2\mu}v)=\tau _{2\mu}(\p _tv)-2\mu \tau _{2\mu}(\p _xv)\qquad \text{in $\Sc{D}'((0,T)\times \T )$}\]
for $v\in C_TH^s$, it suffices to show that $v=e^{-iJ(u)}u$ satisfies the equation \eqref{GDNLS'} in the sense of distribution.

We first set $u_N:=P_{\le N}u$, $v_N:=e^{-iJ(u_N)}u_N$ for $N>0$ and derive an equation for $v_N$.
For the gauge part we have
\[ \p _xJ(u_N)=P_{\neq c}(|u_N|^2)=P_{\neq c}(|v_N|^2),\qquad \p _x^2J(u_N)=\p _x(|v_N|^2),\]
and thus
\eqq{\p _xv_N&=e^{-iJ(u_N)}\p _xu_N-iP_{\neq c}(|v_N|^2)v_N,\\
\p _x^2v_N&=e^{-iJ(u_N)}\p _x^2u_N-iP_{\neq c}(|v_N|^2)\big[ e^{-iJ(v_N)}\p _xu_N+\p _xv_N\big] -i\p _x(|v_N|^2)v_N\\
&=e^{-iJ(u_N)}\p _x^2u_N-2iP_{\neq c}(|v_N|^2)\p _xv_N+\big( P_{\neq c}(|v_N|^2)\big) ^2v_N-i\p _x(|v_N|^2)v_N\\
&=e^{-iJ(u_N)}\p _x^2u_N-3i|v_N|^2\p _xv_N-iv_N^2\p _x\bbar{v_N}+2i\mu _N\p _xv_N+\big( P_{\neq c}(|v_N|^2)\big) ^2v_N,\\
\p _x(|v_N|^2v_N)&=e^{-iJ(u_N)}\p _x(|u_N|^2u_N)-iP_{\neq c}(|v_N|^2)|v_N|^2v_N,
}
where $\mu _N(t):=P_c(|v_N(t)|^2)=P_c(|u_N(t)|^2)$.
Similarly, we see that
\eqq{
\p _tJ(u_N)&=\p _x^{-1}P_{\neq c}2\Re \big[ \bbar{u_N}\big( i\p _x^2u_N+P_{\leq N}\p _x(|u|^2u)\big) \big] \\
&=2P_{\neq c}\Re \big[ i\bbar{u_N}\p _xu_N\big] +2\p _x^{-1}P_{\neq c}\Re \big[ \bbar{u_N}P_{\leq N}\p _x(|u|^2u)\big] .
}
Now, since $2P_c\Im [i\bbar{v_N}\p _xv_N]=P_c\p _x(|v_N|^2)=0$, we have
\eqq{2P_{\neq c}\Re \big[ i\bbar{u_N}\p _xu_N\big] &=2P_{\neq c}\Big\{ \Re \big[ i\bbar{v_N}\p _xv_N\big] -P_{\neq c}(|v_N|^2)|v_N|^2\Big\} \\
&=i\bbar{v_N}\p _xv_N-iv_N\p _x\bbar{v_N}+2iP_c(v_N\p _x\bbar{v_N})-2P_{\neq c}\big( P_{\neq c}(|v_N|^2)|v_N|^2\big) ,}
and also
\eqq{
&2\p _x^{-1}P_{\neq c}\Re \big[ \bbar{u_N}P_{\leq N}\p _x(|u|^2u)\big] \\
&=2\p _x^{-1}P_{\neq c}\Re \big[ \bbar{u_N}\big\{ P_{\leq N}\p _x(|u|^2u)-\p _x(|u_N|^2u_N)\big\} \big] +\frac{3}{2}P_{\neq c}(|v_N|^4). 
}
With these identities, we have
\begin{align}
\p _tv_N-i\p _x^2v_N
&=e^{-iJ(u_N)}\big( \p _tu_N-i\p _x^2u_N\big) \notag \\
&\quad -iv_N\Big\{ i\bbar{v_N}\p _xv_N-iv_N\p _x\bbar{v_N}+2iP_c(v_N\p _x\bbar{v_N})-2P_{\neq c}\big( P_{\neq c}(|v_N|^2)|v_N|^2\big) \notag \\
&\qquad +2\p _x^{-1}P_{\neq c}\Re \big[ \bbar{u_N}\big\{ P_{\leq N}\p _x(|u|^2u)-\p _x(|u_N|^2u_N)\big\} \big] +\frac{3}{2}P_{\neq c}(|v_N|^4)\Big\} \notag \\
&\quad -i\Big\{ -3i|v_N|^2\p _xv_N-iv_N^2\p _x\bbar{v_N}+2i\mu _N\p _xv_N+\big( P_{\neq c}(|v_N|^2)\big) ^2v_N\Big\} \notag \\
&=e^{-iJ(u_N)}\big( P_{\leq N}\p _x (|u|^2u)-\p _x(|u_N|^2u_N)\big) \label{eq:N1} \\
&\quad -2iv_N\p _x^{-1}P_{\neq c}\Re \big[ \bbar{u_N}\big\{ P_{\leq N}\p _x(|u|^2u)-\p _x(|u_N|^2u_N)\big\} \big] \label{eq:N2}\\
&\quad -v_N^2\p _x\bbar{v_N}+2P_c(v_N\p _x\bbar{v_N})v_N+2\mu _N\p _xv_N \label{eq:N3}\\
&\quad +2iP_{\neq c}\big( P_{\neq c}(|v_N|^2)|v_N|^2\big) v_N-\frac{3}{2}iP_{\neq c}(|v_N|^4)v_N+i\mu _NP_{\neq c}(|v_N|^2)v_N. \label{eq:N4}
\end{align}
Note that, since $u_N,v_N\in C^1_TH^\I$ and $P_{\leq N}\p _x(|u|^2u)\in C_TH^\I$, all the equalities above can be verified in the classical sense.

Now, we take the limit $N\to \I$.
Recall the following estimate (\cite[(A.1)]{H06}):
For all $s\ge 0$ there exists $C>0$ such that for $f,g,h\in H^s(\T )$ we have
\[ \norm{\big( e^{-iJ(f)}-e^{-iJ(g)}\big) h}{H^s}\le Ce^{C(\tnorm{f}{H^s}^2+\tnorm{g}{H^s}^2)}\big( \tnorm{f}{H^s}+\tnorm{g}{H^s}\big) \tnorm{f-g}{H^s}\norm{h}{H^s}.\]
In particular, we have (with $f=u_N$, $g=u$, and $h=1$)
\[ \norm{e^{-iJ(u_N)}-e^{-iJ(u)}}{C_TH^s}\le C(\tnorm{u}{C_TH^s})\tnorm{u_N-u}{C_TH^s}\to 0\qquad (N\to \I )\]
if $u\in C_TH^s$.
This implies that
\[ \tnorm{v_N-v}{C_TH^s}\le C(\tnorm{u}{C_TH^s})\tnorm{u_N-u}{C_TH^s}\to 0\qquad (N\to \I )\]
if $s>1/2$ and $u\in C_TH^s$.
From this and the fact that $\mu _N(t)\to \mu (t)$ uniformly in $t$, we have
\[ \p _tv_N-i\p _x^2v_N-2\mu _N\p _xv_N\quad \to \quad \p _tv-i\p _x^2v-2\mu \p _xv\qquad \text{in $\Sc{D}'((0,T)\times \T )$}.\]
The terms \eqref{eq:N4} are easily dealt with by the Sobolev inequality:
\eqq{\eqref{eq:N4}\quad \to \quad &2iP_{\neq c}\big( P_{\neq c}(|v|^2)|v|^2\big) v-\frac{3}{2}iP_{\neq c}(|v|^4)v+i\mu P_{\neq c}(|v|^2)v\\
&=\frac{i}{2}|v|^4v-i\mu |v|^2v+i\Big( \mu ^2-\frac{1}{2}P_c(|v|^4)\Big) v\qquad \text{in $C_TH^s$},}
and similarly,
\[ 2P_c(v_N\p _x\bbar{v_N})v_N\quad \to \quad 2P_c(v\p _x\bbar{v})v\qquad \text{in $C_TH^s$}.\]
For the remaining terms, we will exploit the product estimate
\eq{est:Sob-prod}{
\tnorm{fg}{H^{s-1}}\lec \tnorm{f}{H^{s}}\tnorm{g}{H^{s-1}},
}
which is valid when $s>1/2$.
This verifies
\[ -v_N^2\p _x\bbar{v_N}\quad \to \quad -v^2\p _x\bbar{v}\qquad \text{in $C_TH^{s-1}$}\]
and
\eqq{\norm{\eqref{eq:N1}}{C_TH^{s-1}}
&\lec \norm{e^{-iJ(u_N)}}{C_TH^s}\Big( \norm{P_{>N}\p _x (|u|^2u)}{C_TH^{s-1}}+\norm{\p _x(|u|^2u-|u_N|^2u_N)}{C_TH^{s-1}}\Big) \\
&\leq C(\tnorm{u}{C_TH^s})\tnorm{P_{>N/3}u}{C_TH^s}\quad \to 0\qquad (N\to \I ),}
by which \eqref{eq:N3} and \eqref{eq:N1} can be treated.
Similarly, we have
\eqq{\norm{\eqref{eq:N2}}{C_TH^s}&\lec \tnorm{v_N}{C_TH^s}\norm{\p _x^{-1}P_{\neq c}\Re \big[ \bbar{u_N}\big\{ P_{\leq N}\p _x(|u|^2u)-\p _x(|u_N|^2u_N)\big\} \big]}{C_TH^s}\\
&\lec \tnorm{v_N}{C_TH^s}\tnorm{u_N}{C_TH^s}\norm{P_{\leq N}\p _x(|u|^2u)-\p _x(|u_N|^2u_N)}{C_TH^{s-1}}\\
&\leq C(\tnorm{u}{C_TH^s})\tnorm{P_{>N/3}u}{C_TH^s}\quad \to 0\qquad (N\to \I ).
}
As a result, we obtain the equation \eqref{GDNLS'} for $v$, which is satisfied in the sense of distribution.
\end{proof}

\subsection{Main trilinear estimates and proof}

In what follows, we consider solutions of \eqref{GDNLS} in the sense of distribution.
We restate the equation as
\eqs{\p _tw=i\p _x^2w-\Big( w^2\p _x\bbar{w}-2P_c\big( w\p _x\bbar{w}\big) w \Big) +N[w],\\
N[w]:=\frac{i}{2}|w|^4w-iP_c(|w|^2)|w|^2w+i\Big( \big( P_c(|w|^2)\big) ^2-\frac{1}{2}P_c(|w|^4)\Big) w.}
Let $w\in C_TH^s$ be such a solution and define $\om \in C_T\ell ^2_s$ as 
\[ \om _n(t):=e^{itn^2}(\F w(t))_n.\]
We see that $\om$ satisfies the equation of the form \eqref{Abs}:
\begin{gather}
\p _t\om _n(t)=\sum _{\mat{n=n_1-n_2+n_3\\ n_2\neq n_1,n_3}}e^{it\phi (n,n_1,n_2,n_3)}in_2\om _{n_1}(t)\bbar{\om _{n_2}(t)}\om _{n_3}(t)+\Sc{R}[\om (t)]_n,\label{eq_om}\\
\phi (n,n_1,n_2,n_3):=n^2-n_1^2+n_2^2-n_3^2=2(n_2-n_1)(n_2-n_3)\in \Bo{Z}\setminus \shugo{0}\quad \text{in the summation},\notag \\
\Sc{R}[\om (t)]_n:=-in|\om _{n}(t)|^2\om _{n}(t)+e^{itn^2}(\F N[e^{it\p _x^2}\F ^{-1}\om (t)])_n.\notag
\end{gather}
More precisely, we consider the system of equations for $\om _n(t)$ and $\bbar{\om _{n}(t)}$, but again there is no substantial difference (see the argument in Section~\ref{section:cNLS}).
Here, we set the multiplier $m(n,n_1,n_2,n_3)=in_2\chi _{n_2\neq n_1,n_3}(n_1,n_2,n_3)$.

We estimate harmless terms $\Sc{R}[\om ]$ as the first step:
\begin{lem}\label{lem:R}
Let $s>1/2$.
Then, we have
\eqq{\norm{\Sc{R}[\om ]}{\ell ^2_s}&\lec \Big( 1+\tnorm{\om}{\ell ^2_s}^4\Big) \tnorm{\om}{\ell ^2_s},\\
\norm{\Sc{R}[\om ]-\Sc{R}[\ti{\om}]}{\ell ^2_s}&\lec \Big( 1+\tnorm{\om}{\ell ^2_s}^4+\tnorm{\ti{\om}}{\ell ^2_s}^4\Big) \tnorm{\om -\ti{\om}}{\ell ^2_s}}
for any $\om ,\ti{\om}\in \ell ^2_s$.
\end{lem}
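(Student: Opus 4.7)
\medskip
\noindent\textbf{Proof plan.} I would start by splitting
\[ \Sc{R}[\om ]_n=-in|\om _n|^2\om _n+\Big( e^{itn^2}\F N[e^{it\p_x^2}\F ^{-1}\om ]\Big) _n=:\Sc{R}_1[\om ]_n+\Sc{R}_2[\om ]_n.\]
The second piece is the ``gauge-generated'' part (quintic in $\om$, no derivatives, plus lower-order contributions involving $P_c$); the first piece is the doubly-diagonal residue of the cubic derivative nonlinearity, and it is the only place where the constraint $s>1/2$ is genuinely used.

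For $\Sc{R}_2$, the plan is to transfer the estimate to the physical side. Since $\F$ and $e^{it\p _x^2}$ are isometries from $H^s(\T )$ to $\ell ^2_s$, bounding $\tnorm{\Sc{R}_2[\om ]}{\ell^2_s}$ is equivalent to bounding $\tnorm{N[w]}{H^s}$ with $w=e^{it\p_x^2}\F^{-1}\om$ (so $\tnorm{w}{H^s}=\tnorm{\om}{\ell^2_s}$). For $s>1/2$ the space $H^s(\T )$ is a Banach algebra, so $\tnorm{|w|^4w}{H^s}\lec \tnorm{w}{H^s}^5$; and the scalar factors satisfy $|P_c(|w|^2)|\lec \tnorm{w}{H^s}^2$, $|P_c(|w|^4)|\lec \tnorm{w}{H^s}^4$ by Cauchy-Schwarz and Sobolev. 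Combining these gives the desired bound on $\Sc{R}_2$, and the Lipschitz estimate follows from the usual $a^5-b^5=\dots$ telescoping identity together with the same algebra property.

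For $\Sc{R}_1$, the key observation is the pointwise bound
\[ |\om _n|\le \LR{n}^{-s}\sup _m \LR{m}^s|\om _m|\le \LR{n}^{-s}\tnorm{\om}{\ell^2_s},\qquad n\in \Bo{Z},\]
which allows me to absorb the derivative loss: using this on four of the six factors $|\om _n|$ in $\LR{n}^{2s}n^2|\om_n|^6$, I obtain
\[ \tnorm{\Sc{R}_1[\om ]}{\ell^2_s}^2=\sum _n \LR{n}^{2s}n^2|\om _n|^6\le \tnorm{\om}{\ell^2_s}^4\sum _n\LR{n}^{2-2s}|\om _n|^2,\]
and since $s>1/2$ implies $2-2s<2s$, the remaining sum is dominated by $\tnorm{\om}{\ell^2_s}^2$. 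This yields $\tnorm{\Sc{R}_1[\om ]}{\ell^2_s}\lec \tnorm{\om}{\ell^2_s}^3$. The Lipschitz version is handled by writing $|\om _n|^2\om _n-|\ti{\om}_n|^2\ti{\om}_n$ as a sum of terms each linear in $\om _n-\ti{\om}_n$ and quadratic in $\om _n,\ti{\om}_n$, then applying the same pointwise bound to the quadratic factors.

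The only real obstacle is the derivative-loss term $\Sc{R}_1$: it is precisely the contribution of $\LR{n}\cdot |\om _n|$ that forces the threshold $s>1/2$ through the inequality $2-2s\le 2s$; all the other nonlinear pieces are harmless thanks to Sobolev algebra. Once this observation is in place, both estimates follow by combining the two cases.
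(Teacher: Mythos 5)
Your proof is correct and follows essentially the same route as the paper: isolate the diagonal cubic term $-in|\om_n|^2\om_n$, which is the only place the hypothesis $s>1/2$ is genuinely used to absorb the derivative factor $n$, and treat the remaining (quintic and $P_c$-weighted) pieces of $N[w]$ by the algebra property of $H^s(\T )$ after transferring back to the physical side. The only variation is in bounding the diagonal term: the paper distributes the weight $\LR{n}^{1+s}$ evenly over the three factors and applies H\"older together with $\ell^2\hookrightarrow\ell^6$, obtaining $\tnorm{n|\om|^2\om}{\ell^2_s}\le\tnorm{\om}{\ell^2_{(1+s)/3}}^3$ and the threshold $(1+s)/3\le s$, whereas you pull four of the six factors out pointwise via $\ell^2\hookrightarrow\ell^\infty$, obtaining the threshold $2-2s\le 2s$ — the two conditions are identical ($s\ge 1/2$), so the methods are interchangeable.
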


\begin{proof}
We only consider the first estimate, because the second one can be shown by a similar argument.
By the embedding $\ell ^2\hookrightarrow \ell ^6$ we see that
\[ \norm{n|\om |^2\om}{\ell ^2_s}\le \norm{\LR{n}^{(1+s)/3}\om}{\ell ^6}^3\le \tnorm{\om}{\ell ^2_{(1+s)/3}}^3,\]
which evaluates the first term in $\Sc{R}[\om ]$ since $(1+s)/3\le s$ if $s\ge 1/2$.
The other terms can be easily handled with the Sobolev inequality.
\end{proof}

In view of Theorem~\ref{thm:abstract} [A] (with $X=\ell ^2_{s-1}$), UU for \eqref{eq_om} in $\ell ^2_s$, $s>1/2$ (and thus Theorem~\ref{thm:DNLS}, by the argument in the preceding subsection) follows once we establish the following:
\begin{lem}
Let $1/2<s<1$.
There exists $\de >0$ such that we have
\begin{align}
\norm{\sum _{\mat{n=n_1-n_2+n_3\\ n_2\neq n_1,n_3}}\frac{n_2}{\LR{\phi}^{1/2}}\om _{n_1}^{(1)}\om _{n_2}^{(2)}\om _{n_3}^{(3)}}{\ell ^2_s}&\lec \norm{\om ^{(1)}}{\ell ^2_s}\norm{\om ^{(2)}}{\ell ^2_s}\norm{\om ^{(3)}}{\ell ^2_s},\label{lem:trilinear}\\
\norm{\sum _{\mat{n=n_1-n_2+n_3\\ n_2\neq n_1,n_3}}\frac{n_2}{\LR{\phi}^{1-\de}}\om _{n_1}^{(1)}\om _{n_2}^{(2)}\om _{n_3}^{(3)}}{\ell ^2_{s-1}}&\lec \norm{\om ^{(k)}}{\ell ^2_{s-1}}\prod _{l\in \shugo{1,2,3}\setminus \shugo{k}}\norm{\om ^{(l)}}{\ell ^2_s}\label{lem:trilinear2}
\end{align}
for any $k=1,2,3$.
\end{lem}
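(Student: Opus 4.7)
\medskip
\noindent\textbf{Proof plan.}
The plan is to reduce both estimates to Littlewood--Paley localized trilinear sums and then exploit the resonance identity $\phi=2(n_2-n_1)(n_2-n_3)$, combined with the divisor bound $d(k)\lec k^{\e}$, in the spirit of the trilinear $X^{s,b}$ estimates for the periodic derivative NLS in \cite{H06} and as anticipated in Remark~\ref{rem:condAB}~(iii). The derivative loss embodied in the multiplier $n_2$ is to be absorbed either by the modulation factor $\LR{\phi}^{1/2}$ (respectively $\LR{\phi}^{1-\de}$) or by a comparable high-frequency input, and the main issue is to detect which mechanism works in each frequency configuration.

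Concretely, I would first dualize the $\ell^2_s$ norm against an auxiliary sequence $\om^{(0)}\in\ell^2$, insert dyadic localizations $|n_j|\sim N_j$ and $|n|\sim N_0$, and sort these scales into $N_{(1)}\ge N_{(2)}\ge N_{(3)}\ge N_{(4)}$. The constraint $n=n_1-n_2+n_3$ forces $N_{(1)}\sim N_{(2)}$, and after redistributing the weight $\LR{n}^s/(\LR{n_1}\LR{n_2}\LR{n_3})^s$ among the dyadic pieces, it is enough to prove a uniform-in-scales block estimate with a small polynomial decay in $N_{(3)}$ or $N_{(1)}$ that can be summed by Cauchy--Schwarz.

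The heart of the argument is a case split on the location of $N_2$. If $N_2\ll N_{(1)}$, then $|n_2|\lec N_{(1)}$ can be absorbed into the $\ell^2_s$ norm of the input sitting at the largest frequency, and the estimate reduces to a derivative-free trilinear bound, elementary under $s>1/2$ using the embedding $\ell^2_s\hookrightarrow\ell^1$. If instead $N_2\sim N_{(1)}$, then on each dyadic piece at least one of $L_1:=|n_2-n_1|$ and $L_3:=|n_2-n_3|$ is $\gec N_2$, so $|n_2|/\LR{\phi}^{1/2}\lec N_2^{1/2}/L^{1/2}$ for the remaining factor $L\ge 1$; the residual sum is handled by fixing the large variable, applying Cauchy--Schwarz, and using that, for fixed $n$ and $n_2$, the number of pairs $(n_1,n_3)$ satisfying the linear relation together with $(n_2-n_1)(n_2-n_3)=\mu/2$ is $\lec d(|\mu|)\lec |\mu|^{\e}$. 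The $\e$-loss from the divisor bound is absorbed thanks to the strict inequality $s>1/2$.

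For the second estimate the same case analysis applies, but the stronger denominator $\LR{\phi}^{1-\de}$ supplies an extra factor $\LR{\phi}^{1/2-\de}$ of smoothing, which more than compensates the one-derivative downgrade of the target norm from $\ell^2_s$ to $\ell^2_{s-1}$; the minimum over $k\in\{1,2,3\}$ is realized by taking the $\ell^2_{s-1}$-factor to be the one against which the outer Cauchy--Schwarz in $\ell^2$ is performed, the other two being estimated in $\ell^2_s$. The main obstacle I anticipate is careful endpoint bookkeeping near $s=\tfrac{1}{2}$: the divisor bound yields only an $\e$-loss, so the margin $s-\tfrac12$ must be exploited judiciously to cover the boundary regions, in particular when $N_{(3)}\sim 1$ while the modulation $|\phi|\sim N_2^2$ is as large as possible.
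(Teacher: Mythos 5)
Your proposal and the paper take genuinely different routes. The paper does not dualize, dyadically decompose, or invoke any lattice-point counting; instead, for \eqref{lem:trilinear} it applies Cauchy--Schwarz to reduce the matter to a Schur-type bound
\[
A_n:=\sum_{\substack{n=n_1-n_2+n_3\\ n_2\neq n_1,n_3}}\frac{|n_2|^2\LR{n}^{2s}}{|n_2-n_1||n_2-n_3|\LR{n_1}^{2s}\LR{n_2}^{2s}\LR{n_3}^{2s}}\lec 1\quad\text{uniformly in }n,
\]
which it verifies by an elementary seven-case analysis on the relative sizes of $|n_1|,|n_2|,|n_3|$, exploiting the dual identities $|n_2-n_1|=|n-n_3|$, $|n_2-n_3|=|n-n_1|$. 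No divisor bound or $\e$-loss enters. The second estimate \eqref{lem:trilinear2} is handled by the same template, with one delicate case ($|n_1|\sim|n_2|\gg|n_3|$, $|n|\ll|n_1|$) requiring an additional localization onto sub-intervals of size $\sim\max\{N,N_3\}$ together with almost orthogonality. The divisor-bound/counting strategy you propose is the one that would be natural for the $(B1)$-type estimates in Theorem~\ref{thm:abstract}, where the modulation is frozen in a unit interval; for the present $(A1)$-type estimate the full weight $\LR{\phi}^{-1/2}\sim(|n_2-n_1||n_2-n_3|)^{-1/2}$ already supplies integrable decay in the free summation variable, and the counting machinery is unnecessary.

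Beyond the difference of method, there is a concrete gap in your case analysis. You claim that if $N_2\sim N_{(1)}$ then on each dyadic piece at least one of $L_1=|n_2-n_1|$, $L_3=|n_2-n_3|$ is $\gec N_2$. This fails in the diagonal regime: if all of $n,n_1,n_2,n_3$ are comparable to a large $N$ (which is compatible with $N_2\sim N_{(1)}$), both $L_1$ and $L_3$ can be of size $O(1)$. This is exactly the paper's Case~2, and there the required smallness comes not from a large $L$ but from the weight itself being of size $N^{1-2s}/(L_1L_3)^{1/2}$ and summable over $n_1,n_3$ via the identities $L_1=|n-n_3|$, $L_3=|n-n_1|$ and the margin $s>\tfrac12$. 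Without a further dyadic decomposition in $L_1,L_3$ (or an argument along the lines of the paper's), your counting step does not go through as written. For \eqref{lem:trilinear2}, your heuristic that $\LR{\phi}^{1/2-\de}$ absorbs the one-derivative loss of the target norm is the right intuition, but it masks the hardest configuration $|n_1|\sim|n_2|\gg|n_3|,|n|$, for which the paper needs the extra localization/almost-orthogonality step; your sketch does not indicate how that case is closed.
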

Note that the assumption $(A3)$ in Theorem~\ref{thm:abstract} is easily deduced from the estimate \eqref{est:Sob-prod}.

\begin{proof}[Proof of \eqref{lem:trilinear}]
By the Cauchy-Schwarz inequality it suffices to prove
\[ A_n:=\sum _{\mat{n=n_1-n_2+n_3\\ n_2\neq n_1,n_3}}\frac{|n_2|^2\LR{n}^{2s}}{|n_2-n_1||n_2-n_3|\LR{n_1}^{2s}\LR{n_2}^{2s}\LR{n_3}^{2s}}\lec 1\]
uniformly for $n\in \Bo{Z}$.
By symmetry we may assume that $|n_1|\ge |n_3|$.
We consider several cases separately.

Case 1: $|n_2|\gg |n_1|$.
We have $|n_2-n_1|\sim |n_2-n_3|\sim |n_2|\gec |n|$, so
\[ A_n\lec \sum _{n_1,n_3}\frac{1}{\LR{n_1}^{2s}\LR{n_3}^{2s}}\lec 1.\]

Case 2: $|n_2|\sim |n_1|\sim |n_3|$.
In this case we have
\[ A_n\lec \sum _{n_1,n_3}\frac{1}{|n-n_3||n-n_1|\LR{n_1}^{2s-1}\LR{n_3}^{2s-1}}\lec 1.\]

Case 3: $|n_2|\sim |n_1|\gec |n|\gg |n_3|$.
In this case we have $|n_2-n_1|=|n-n_3|\sim |n|$ and $|n_2-n_3|\sim |n_2|$, which imply $|n_2|^2\LR{n}^{2s}\lec |n_2-n_1||n_2-n_3|\LR{n_2}^{2s}$ and
\[ A_n\lec \sum _{n_1,n_3}\frac{1}{\LR{n_1}^{2s}\LR{n_3}^{2s}}\lec 1.\]

Case 4: $|n_2|\sim |n_1|\gg |n_3|\gec |n|$.
We have
\[ A_n\lec \sum _{n_1}\frac{1}{\LR{n_1}^{2s}}\sum _{n_2}\frac{1}{|n_2-n_1|\LR{n_2}^{2s-1}}\lec 1.\]

Case 5: $|n_2|\ll |n_1|\sim |n_3|$.
It holds that $|n_2-n_1|\sim |n_2-n_3|\sim |n_1|$, which implies $|n_2|^2\LR{n}^{2s}\lec |n_2-n_1||n_2-n_3|\LR{n_2}^{2s}$ and
\[ A_n\lec \sum _{n_1,n_3}\frac{1}{\LR{n_1}^{2s}\LR{n_3}^{2s}}\lec 1.\]

Case 6: $|n_2|\lec |n_3|\ll |n_1|$.
In this case $|n_2|^2\lec \LR{n_2}\LR{n_3}$ implies
\[ A_n\lec \sum _{n_3}\frac{1}{|n-n_3|\LR{n_3}^{2s-1}}\sum _{n_2}\frac{1}{|n_2-n_3|\LR{n_2}^{2s-1}}\lec 1.\]

Case 7: $|n_3|\ll |n_2|\ll |n_1|$.
It holds $|n_2|^2\lec |n_2-n_1||n_2-n_3|$ in this case.
Therefore, we have
\[ A_n\lec \sum _{n_2,n_3}\frac{1}{\LR{n_2}^{2s}\LR{n_3}^{2s}}\lec 1.\]
Now, we have seen all the cases.
\end{proof}

\begin{proof}[Proof of \eqref{lem:trilinear2}]
In the proof, $\e$ denotes various positive small constants.
Similarly to the proof of \eqref{lem:trilinear} it suffices to show that
\[ B_n:=\sum _{\mat{n=n_1-n_2+n_3\\ n_2\neq n_1,n_3}}\frac{|n_2|^2\LR{n}^{2s}}{|n_2-n_1|^{2-\e}|n_2-n_3|^{2-\e}\LR{n_1}^{2s}\LR{n_2}^{2s}\LR{n_3}^{2s}}\frac{\LR{n_{\max}}^2}{\LR{n}^2}\lec 1\]
uniformly for $n\in \Bo{Z}$, where $|n_{\max}|:=\max \shugo{|n_1|,|n_2|,|n_3|}$.
Alternatively, we take $\e >0$ such that $s':=s-\e>1/2$, then we have
\eqq{B_n &\lec \sum _{\mat{n=n_1-n_2+n_3\\ n_2\neq n_1,n_3}}\frac{|\mu |^{\e}}{\LR{n_1}^{2\e}\LR{n_2}^{2\e}\LR{n_3}^{2\e}}\frac{|n_2|^2\LR{n}^{2s'}}{|n_2-n_1|^2|n_2-n_3|^2\LR{n_1}^{2s'}\LR{n_2}^{2s'}\LR{n_3}^{2s'}}\frac{\LR{n_{\max}}^2}{\LR{n}^{2-2\e}}\\
&\lec \sum _{\mat{n=n_1-n_2+n_3\\ n_2\neq n_1,n_3}}\frac{|n_2|^2\LR{n}^{2s'}}{|n_2-n_1|^2|n_2-n_3|^2\LR{n_1}^{2s'}\LR{n_2}^{2s'}\LR{n_3}^{2s'}}\frac{\LR{n_{\max}}^2}{\LR{n}^{2-2\e}},}
since it holds $|\mu|\lec |n_{\max}|^2$.
Therefore, it also suffices to show
\[ C_n:=\sum _{\mat{n=n_1-n_2+n_3\\ n_2\neq n_1,n_3}}\frac{|n_2|^2\LR{n}^{2s}}{|n_2-n_1|^2|n_2-n_3|^2\LR{n_1}^{2s}\LR{n_2}^{2s}\LR{n_3}^{2s}}\frac{\LR{n_{\max}}^2}{\LR{n}^{2-2\e}}\lec 1\]
uniformly for $n\in \Bo{Z}$.
By symmetry we may assume that $|n_1|\ge |n_3|$.
We will take a similar decomposition of analysis.

Case 0a: $|n_{\max}|\sim |n|$.
In this case $B_n\lec A_n$ and the proof is reduced to \eqref{lem:trilinear}.

Case 0b: $|\mu|\sim |n_{\max}|^2$.
Since $|n_2-n_1||n_2-n_3|\sim |n_{\max}|^2$ we have $C_n\lec A_n$, from which this is also reduced to \eqref{lem:trilinear}.

Case 1: $|n_2|\gg |n_1|$.
This case is reduced to Case 0a, since $|n|\sim |n_2|=|n_{\max}|$.

Case 2: $|n_2|\sim |n_1|\sim |n_3|$.
If $|\mu |\sim |n-n_1||n-n_3|\sim |n_{\max}|^2$, the proof is reduced to Case 0b.
Otherwise, we have either $|n|\sim |n_1|$ or $|n|\sim |n_3|$, and the proof is reduced to Case 0a.

Case 3\&4: $|n_2|\sim |n_1|\gg |n_3|$.
In this case both $B_n$ and $C_n$ may be unbounded and we have to modify the proof.
We may assume $|n|\ll |n_1|$, otherwise the proof is reduced to Case 0a.
We restrict $n$ and $n_l$ ($l=1,2,3$) into dyadic intervals $R:=\shugo{N\le \LR{n}<2N}$ and $R_l:=\shugo{N_l\le \LR{n_l}<2N_l}$, and furthermore restrict $n_1,n_2$ onto intervals $Q_1,Q_2$ of length $\sim N_*:=\max \shugo{N,N_3}$.
Since $|n_2-n_3|\sim N_2$, we have
\eqq{C_n(R,Q_1,Q_2,R_3):=&\;\sum _{\mat{n=n_1-n_2+n_3;\,n_2\neq n_1,n_3\\ n\in R,n_1\in Q_1,n_2\in Q_2,n_3\in R_3}}\frac{|n_2|^2\LR{n}^{2s}}{|n_2-n_1|^2|n_2-n_3|^2\LR{n_1}^{2s}\LR{n_2}^{2s}\LR{n_3}^{2s}}\frac{\LR{n_{\max}}^2}{\LR{n}^{2-2\e}}\\
\lec &\;\frac{N_1^4N^{2s}}{N_1^{2+4s}N_3^{2s}N^{2-2\e}}\sum _{n_1\in Q_1,\,n_3}\frac{1}{|n-n_3|^2}\lec \frac{N_*}{N_1^{4s-2}N_3^{2s}N^{2-2s-2\e}}\lec N_1^{-2\e}}
if $s-2\e \ge 1/2$, which implies
\eqq{&\norm{\sum _{\mat{n=n_1-n_2+n_3\\ n_2\neq n_1,n_3;\, |\mu |>M}}\frac{n_2}{\mu}[\om _1\chi _{Q_1}](n_1)[\om _2\chi _{Q_2}](n_2)[\om _3\chi _{R_3}](n_3)}{\ell ^2_{s-1}(R)}\\
&\hx \lec M^{-\e /2}N_1^{-\e}\norm{\om _1}{\ell ^2_{s-1}(Q_1)}\norm{\om _2}{\ell ^2_s(Q_2)}\norm{\om _3}{\ell ^2_s(R_3)}.}
We note that $Q_2$ is determined almost uniquely for given $Q_1$ if the contribution is non-zero.
Therefore, after summing up over $Q_1,Q_2$ with the Cauchy-Schwarz inequality, we obtain
\eqq{&\norm{\sum _{\mat{n=n_1-n_2+n_3\\ n_2\neq n_1,n_3;\, |\mu |>M}}\frac{n_2}{\mu}[\om _1\chi _{R_1}](n_1)[\om _2\chi _{R_2}](n_2)[\om _3\chi _{R_3}](n_3)}{\ell ^2_{s-1}(R)}\\
&\hx \lec M^{-\e /2}N_1^{-\e}\norm{\om _1}{\ell ^2_{s-1}(R_1)}\norm{\om _2}{\ell ^2_s(R_2)}\norm{\om _3}{\ell ^2_s(R_3)}.}
The factor $N_1^{-\e}$ allows summation over $N,N_1,N_2,N_3$ such that $N_1\sim N_2\gg N,N_3$, which implies the desired estimate.

Case 5: $|n_2|\ll |n_1|\sim |n_3|$.
This is reduced to Case 0b, because $|n_2-n_1|\sim |n_2-n_3|\sim |n_{\max}|$.

Case 6\&7: $|n_2|,|n_3|\ll |n_1|$.
This is reduced to Case 0a.
\end{proof}

%%%%%%%%%%%%%%%%%%%%%%%%%%%%%%%%%%%%%%%%%%%%%%
% Zakharov
%%%%%%%%%%%%%%%%%%%%%%%%%%%%%%%%%%%%%%%%%%%%%%

\bigskip
\section{Application to the Zakharov system} \label{section:Z}

As an application of the abstract theory to a system, we consider the Zakharov system on $\T$:
\begin{equation}\label{Z}
\left\{ \begin{aligned}
&i\p _tu+\p _x^2u=Nu,\quad \p _t^2N-\p_x^2N=\p _x^2 (|u|^2),\qquad (u,N)(t,x):[0,T]\times \T \to \Bo{C}\times \R , \\
&(u,N,\p _tN) \big| _{t=0}=(u_0,N_0,N_1)\quad \in \Sc{H}^{s,l}(\T ):=H^s(\T ;\Bo{C})\times H^l(\T ;\R )\times H^{l-1}(\T ;\R ).
\end{aligned}\right.
\end{equation}
For \eqref{Z}, we will prove uniqueness of solution $(u,N)$ in the class
\[ C_T\Sc{H}^{s,l}(\T ) :=C([0,T];H^s(\T ;\Bo{C}))\times \big[ C([0,T];H^l(\T ;\R ))\cap C^1([0,T];H^{l-1}(\T ;\R ))\big] . \]
Concerning the local well-posedness in the periodic setting, Takaoka \cite{T99-2} obtained a sharp result in the case of $d=1$, and the author treated higher dimensional cases in \cite{K13}.
For UU, Masmoudi and Nakanishi \cite{MN09} obtained the results in the energy space for $\R ^d$ with $d=1,2,3$, while there is no result in the periodic case.

We shall prove the following:
\begin{thm}\label{thm1d}
The solution (in the sense of distribution) to the Cauchy problem \eqref{Z} is unique in $C_T\Sc{H}^{1/2,0}(\T )$.
\end{thm}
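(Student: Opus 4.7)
The plan is to recast \eqref{Z} as a first-order system in Fourier interaction variables and apply the system version of Theorem~\ref{thm:abstract}[A] developed in Section~\ref{subsec:multiple}. First I would diagonalize the wave part: setting $V:=N+i|\p_x|^{-1}P_{\neq c}(\p_tN)$ on the non-zero Fourier modes, the hypothesis $(N,\p_tN)\in C_T(L^2\times H^{-1})$ becomes $V\in C_TL^2$, and $V$ solves $i\p_tV-|\p_x|V=-|\p_x|(|u|^2)$ for $n\neq 0$ with $N=\Re V$; the zero modes of $(N,\p_tN)$ satisfy a decoupled linear ODE and only feed into a harmless remainder. Passing to the interaction variables $\om_n(t):=e^{itn^2}\widehat u_n(t)$ and $\Om_n(t):=e^{it|n|}\widehat V_n(t)$ (for $n\neq 0$) rewrites the coupled system in the bilinear form
\eqq{
\p_t\om_n &= -\tfrac{i}{2}\sum_{n=n_1+n_2}\bigl[e^{it\phi^{(1)}_+}\Om_{n_1}\om_{n_2}+e^{it\phi^{(1)}_-}\bbar{\Om_{-n_1}}\om_{n_2}\bigr]+\Sc{R}_1[\om,\Om]_n,\\
\p_t\Om_n &= i|n|\sum_{n=n_1+n_2}e^{it\phi^{(2)}}\om_{n_1}\bbar{\om_{-n_2}}+\Sc{R}_2[\om,\Om]_n,
}
with phases $\phi^{(1)}_\pm=n_1(2n_2+n_1)\mp|n_1|$ and $\phi^{(2)}=|n|+n(n_2-n_1)$, and the target regularity $\om\in\ell^2_{1/2}$, $\Om\in\ell^2_0$.

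Next I would take $X_1=X_2=\ell^\I$, which trivially satisfies \eqref{absolutenorm}, and verify the system analogues of $(R)$ and $(A1)$--$(A3)$ on each component equation. The condition $(R)$ reduces to elementary bounds on the zero-mode remainders. The substantive content is the two half-modulation inequalities
\eqq{
(A1)_{\om}:~~&\norm{\sum_{n=n_1+n_2}\frac{1}{\LR{\phi^{(1)}}^{1/2}}\Om_{n_1}\om_{n_2}}{\ell^2_{1/2}}\lec \tnorm{\Om}{\ell^2_0}\tnorm{\om}{\ell^2_{1/2}},\\
(A1)_{\Om}:~~&\norm{\sum_{n=n_1+n_2}\frac{|n|}{\LR{\phi^{(2)}}^{1/2}}\om_{n_1}\om_{n_2}}{\ell^2_0}\lec \tnorm{\om}{\ell^2_{1/2}}^2.
}
As noted in Remark~\ref{rem:condAB}(iii), these are, up to a Cauchy--Schwarz step, exactly the weight-counting inequalities that underlie the Bourgain-space bilinear estimates employed by Takaoka~\cite{T99-2} to prove well-posedness of the periodic Zakharov system at $(s,l)=(0,-1/2)$, which sits strictly below $(1/2,0)$; a standard dyadic decomposition together with the periodic divisor bound therefore delivers $(A1)$. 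The strengthened estimate $(A2)$ (exponent $1-\delta$ and output in $\ell^\I$) comes from the same counting, keeping a small surplus $\LR{\phi}^{-\delta}$ and placing one factor in $\ell^\I$; $(A3)$ follows from the Sobolev-type embedding $\ell^2_{1/2}\cdot\ell^2_{1/2}\hookrightarrow \ell^\I$ on $\Bo{Z}$ after absorbing the derivative $|n|$ in $(A3)_{\Om}$ into one of the factors.

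The hard part will be $(A1)_{\Om}$, which carries the genuine derivative loss $|n|$ and must be closed in $\ell^2_0$. Its near-resonant regime $|\phi^{(2)}|\lec 1$ forces $n_2-n_1\approx -\mathrm{sgn}(n)$ together with $n=n_1+n_2$, so that $n_1,n_2\sim n/2$; then the two factors of $\LR{n_j}^{1/2}$ coming from the $\ell^2_{1/2}$ weights absorb the residual $|n|^{1/2}$ that survives the $\LR{\phi^{(2)}}^{-1/2}$ gain, while the non-resonant regime is closed by Cauchy--Schwarz plus the divisor bound. Once $(R)$ and $(A1)$--$(A3)$ are in hand for both component equations (and the symmetric estimates on $(\bbar\om,\bbar\Om)$ obtained by complex conjugation), the system extension of Theorem~\ref{thm:abstract}[A] immediately yields uniqueness of $(\om,\Om)$ in $C_T(\ell^2_{1/2}\times\ell^2_0)$, and undoing the diagonalization produces the stated uniqueness in $C_T\Sc{H}^{1/2,0}(\T)$.
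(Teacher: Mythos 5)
Your strategy tracks the paper's: diagonalize the wave equation into a first-order scalar equation, pass to interaction variables, and invoke the system version of Theorem~\ref{thm:abstract}[A] with $\Sc{R}$ collecting the benign linear terms. You opt for the homogeneous diagonalization $|\p_x|^{-1}P_{\neq c}$, which forces you to carry the zero mode of $N$ along as a $t$-dependent coefficient, whereas the paper uses the Bessel potential $\LR{\p_x}^{-1}$, defined at $n=0$, which folds everything into one first-order system at the cost of a uniformly bounded linear term $(w+\bar w)/(2\LR{\p_x})$ placed into $\Sc{R}$. Both routes are viable and lead to phases of the same algebraic shape ($\LR{n_0}$ versus $|n_0|$ is immaterial for the counting).

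The real problem is the choice $X_1=X_2=\ell^\I$. Estimate $(A3)$ for the $\Om$-equation requires
\[
\Big\|\,|n|\sum_{n=n_1+n_2}\om_{n_1}\om_{n_2}\Big\|_{X_2}\lesssim\|\om\|_{\ell^2_{1/2}}^2,
\]
and this is \emph{false} for $X_2=\ell^\I$: take $\om_n=\LR{n}^{-1/2}$ supported on $\{0,N\}$, so that $\|\om\|_{\ell^2_{1/2}}\sim 1$, yet the left side evaluated at $n=N$ is $\gtrsim N\cdot\LR{N}^{-1/2}\sim N^{1/2}\to\I$. The worst configuration $n_1\approx 0$, $n_2\approx n$ cannot be handled by ``absorbing $|n|$ into the $\ell^2_{1/2}$ weights,'' as you suggest, since that would require $\ell^2_1$ on one factor. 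The exact-resonance piece of $(A2)_\Om$ (where $\LR{\phi^{(2)}}\sim 1$ gives no gain) breaks in the same way: placing one factor in $\ell^\I$ still leaves $|n|\cdot\LR{n_2}^{-1/2}\sim|n|^{1/2}$. You must build the derivative loss into $X_2$ before taking the supremum. The paper does exactly this by choosing $X=(\ell^2_{s-1})^2\times(\ell^2_{l-1})^2=(\ell^2_{-1/2})^2\times(\ell^2_{-1})^2$, so that the $|n_0|$ multiplier is cancelled by $\LR{n_0}^{l-1}$, and the $(A2)$, $(A3)$ estimates reduce to standard convolution bounds (the weights $W_3,W_4$ in Proposition~\ref{prop:trilinear}). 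Once you make that replacement, the remainder of your outline --- the factorization of $\phi^{(2)}$ and the one-term treatment of the exact resonance $n_2-n_1=-\mathrm{sgn}(n)$ --- aligns with the factorization \eqref{factorization} that drives the paper's proof.
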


\begin{rem}
A similar argument with nonlinear estimates established in \cite{K13} yields some UU results in two and higher dimensions; see \cite{K-Zp}.
We remark that the energy space $(s,l)=(1,0)$ is included in one and two dimensional cases with arbitrary spatial period and aspect ratio.
\end{rem}

\subsection{Reduction to a first order system}

Let $(u,N)\in C_T\Sc{H}^{s,l}$ be a solution (in the sense of distribution) to \eqref{Z}.
We introduce a new complex-valued function 
\[ w(t):=N(t)+i\LR{\p_x}^{-1}\p _tN(t) \in C_TH^l(\T ;\Bo{C}).\]
It is easily checked that $(u,w)\in C_T\big( H^s(\T ;\Bo{C})\times H^l(\T ;\Bo{C})\big) $ is a solution (in the sense of distribution) to 
\begin{equation}\label{Z'}
\left\{ \begin{aligned}
&i\p _tu+\p_x^2 u=\frac{1}{2}(w+\bbar{w})u,\quad i\p _tw-\LR{\p_x}w=-\frac{\p_x^2}{\LR{\p_x}}(|u|^2)-\frac{w+\bbar{w}}{2\LR{\p_x}},\quad (t,x)\in [0,T]\times \T ,\\
&(u,w)\big|_{t=0}=(u_0,w_0)\quad \in H^s(\T ;\Bo{C})\times H^l(\T ;\Bo{C}),
\end{aligned}\right.
\end{equation}
where $w_0:=N_0+i\LR{\p_x}^{-1}N_1$.

Hereafter, we assume $s,l\ge 0$.
Under this assumption, all the nonlinear terms in \eqref{Z'} can be considered as continuous functions in $t$ with values in some Sobolev spaces with respect to $x$.
In particular, 
$\hat{u}_n:=(\F u)_n$ and $\hat{w}_n:=(\F w)_n$ belong to $C^1([0,T])$ for each $n\in \Bo{Z}$ and they solve the following system in the classical sense:
\begin{equation}\label{Z2}
\left\{
\begin{alignedat}{2}
\p _t\hat{u}_n&=-in^2\hat{u}_n-\frac{i}{2}\sum _{n'\in \Bo{Z}}\big( \hat{w}_{n'}\hat{u}_{n-n'}+\bbar{\hat{w}_{-n'}}\hat{u}_{n-n'}\big) ,& &(t,n)\in [0,T]\times \Bo{Z},\\
\p _t\hat{w}_n&=-i\LR{n}\hat{w}_n-\frac{i n^2}{\LR{n}}\sum _{n'\in \Bo{Z}}\hat{u}_{n'}\bbar{\hat{u}_{n'-n}}+\frac{i}{2\LR{n}}\big( \hat{w}_n+\bbar{\hat{w}_{-n}}\big) ,&\qquad &(t,n)\in [0,T]\times \Bo{Z},\\
(\hat{u}_n,~&\hat{w}_n) \big| _{t=0}=\big( (\F u_0)_n,~(\F w_0)_n\big) ,& &n\in \mathbb{Z}.
\end{alignedat}
\right.
\end{equation}
Note that the sum on the right-hand side of each equation converges absolutely.
Set new functions
\eqs{\psi ^+_n(t):=e^{in^2t}\hat{u}_n(t),\qquad \psi ^-_n(t):=e^{-in^2t}\bbar{\hat{u}_{-n}(t)},\\[-5pt]
\om ^+_n(t):=e^{i\LR{n}t}\hat{w}_n(t),\qquad \om ^-_n(t):=e^{-i\LR{n}t}\bbar{\hat{w}_{-n}(t)}.}
Then, these function lie in the space $\psi ^{\pm}\in C_T\ell ^2_s(\Bo{Z})$, $\om ^\pm \in C_T\ell ^2_l(\Bo{Z})$, continuously differentiable on $[0,T]$ for each $n$, and satisfy the system
\begin{equation}\label{Z3}
\left\{
\begin{alignedat}{2}
\p _t\psi ^\pm _{n_1}&=\mp \frac{i}{2}\sum _{\mat{n_0,n_2\in \Bo{Z}\\ n_1=n_0+n_2}}\big( e^{\pm i\Phi_-t}\om ^\pm _{n_0}\psi ^\pm _{n_2}+e^{\pm i\Phi_+t}\om ^\mp _{n_0}\psi ^\pm _{n_2}\big) ,& &(t,n_1)\in [0,T]\times \Bo{Z},\\
\p _t\om ^\pm _{n_0}&=\mp \frac{i n_0^2}{\LR{n_0}}\!\!\!\sum _{\mat{n_1,n_2\in \Bo{Z}\\ n_0=n_1+n_2}}\!\!\!e^{\pm i\Phi_+t}\psi ^\mp _{n_1}\psi ^\pm _{n_2}~\pm \frac{i(\om ^\pm _{n_0}+e^{\pm 2i\LR{n_0}t}\om ^\mp _{n_0})}{2\LR{n_0}},&\quad &(t,n_0)\in [0,T]\times \Bo{Z},\\
( \psi ^+_n,~&\psi ^-_n, \om ^+_n, \om ^-_n) \big| _{t=0}=\big( (\F u_0)_n,(\F \bbar{u_0})_n,(\F w_0)_n,(\F \bbar{w_0})_n\big) ,& &n\in \Bo{Z},
\end{alignedat}
\right.
\end{equation}
where $\Phi_\pm :=n_1^2-n_2^2\pm \LR{n_0}$.

\subsection{Main bilinear estimates and proof}

To prove unconditional uniqueness for the system \eqref{Z3} in $(\ell ^2_s)^2\times (\ell ^2_l)^2$, we apply Theorem~\ref{thm:abstract} [A] with $X=(\ell ^2_{s-1})^2\times (\ell ^2_{l-1})^2$ and 
\[ \Sc{R} [(\psi ^+,\psi ^-,\om ^+ ,\om ^-)]_n=\Big( 0,\,0,\,\frac{i(\om ^+_{n}+e^{2i\LR{n}t}\om ^-_{n})}{2\LR{n}},\,-\frac{i(\om ^-_{n}+e^{-2i\LR{n}t}\om ^+_{n})}{2\LR{n}}\Big) .\]
The assumption $(R)$ trivially holds, and $(A1)$--$(A3)$ can be interpreted into the following estimates with some $\e >0$:
\begin{gather*}
\norm{\sum _{n_1=n_0+n_2}\frac{f_{n_0}h_{n_2}}{\LR{\Phi_\pm}^{1/2}}}{\ell ^2_s(\Bo{Z}_{n_1})}\lec \tnorm{f}{\ell ^2_l}\tnorm{h}{\ell ^2_s},\\
\norm{|n_0|\sum _{n_0=n_1-n_2}\frac{g_{n_1}h_{n_2}}{\LR{\Phi _\pm}^{1/2}}}{\ell ^2_{l}(\Bo{Z}_{n_0})}\lec \tnorm{g}{\ell ^2_s}\tnorm{h}{\ell ^2_s},\\
\norm{\sum _{n_1=n_0+n_2}\frac{\LR{n_0}+\LR{n_2}}{\LR{n_1}}\frac{f_{n_0}h_{n_2}}{\LR{\Phi_\pm}^{1-\e}}}{\ell ^2_s(\Bo{Z}_{n_1})}\lec \tnorm{f}{\ell ^2_l}\tnorm{h}{\ell ^2_s},\\
\norm{|n_0|\sum _{n_0=n_1-n_2}\frac{\LR{n_1}+\LR{n_2}}{\LR{n_0}}\frac{g_{n_1}h_{n_2}}{\LR{\Phi _\pm}^{1-\e}}}{\ell ^2_{l}(\Bo{Z}_{n_0})}\lec \tnorm{g}{\ell ^2_s}\tnorm{h}{\ell ^2_s},\\
\tnorm{f*h}{\ell ^2_{s-1}}\lec \tnorm{f}{\ell ^2_l}\tnorm{h}{\ell ^2_s},\qquad \tnorm{g*h}{\ell ^2_l}\lec \tnorm{g}{\ell ^2_s}\tnorm{h}{\ell ^2_s}
\end{gather*}
for any real-valued non-negative sequences $f\in \ell ^2_l(\Bo{Z})$, $g,h\in \ell ^2_s(\Bo{Z})$, where $*$ denotes the convolution (and in the second and the fourth inequalities we have replaced $n_2$ with $-n_2$ for convenience).

The Sobolev inequality shows the last two estimates if $(s,l)=(\frac{1}{2},0)$.
The first four estimates are equivalent by duality to the trilinear estimates:
\eq{est:trilinear}{\sum _{\mat{n_0,n_1,n_2\in \Bo{Z}\\ n_0=n_1-n_2}}W_j(n_0,n_1,n_2)f_{n_0}g_{n_1}h_{n_2}\lec \tnorm{f}{\ell ^2}\tnorm{g}{\ell ^2}\tnorm{h}{\ell ^2},\qquad j=1,\dots ,4}
for real-valued non-negative sequences $f,g,h\in \ell ^2(\Bo{Z})$, where
\eqs{W_1=\frac{\LR{n_1}^s}{\LR{\Phi _\pm}^{1/2}\LR{n_0}^{l}\LR{n_2}^s},\qquad W_2=\frac{\LR{n_0}^{l}|n_0|}{\LR{\Phi _\pm}^{1/2}\LR{n_1}^s\LR{n_2}^s},\\
W_3=\frac{\LR{n_1}^{s-1}\big( \LR{n_0}+\LR{n_2}\big)}{\LR{\Phi _\pm}^{1-\e}\LR{n_0}^{l}\LR{n_2}^s},\qquad W_4=\frac{\LR{n_0}^{l-1}|n_0|\big( \LR{n_1}+\LR{n_2}\big)}{\LR{\Phi _\pm}^{1-\e}\LR{n_1}^s\LR{n_2}^s}.}

For our purpose, it suffices to show the following:
\begin{prop}\label{prop:trilinear}
The estimate \eqref{est:trilinear} holds if $(s,l)=(\frac{1}{2},0)$ and $\e =\frac{1}{2}$.
\end{prop}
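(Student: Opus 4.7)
\textbf{Plan of proof of Proposition~\ref{prop:trilinear}.} By duality each of the four inequalities in \eqref{est:trilinear} is equivalent to a bilinear estimate of the form
\[ \Big\| \sum_{n_0 + n_2 = n_1} W_j(n_0,n_1,n_2)\, a_{n_0}\, b_{n_2} \Big\|_{\ell^2_{n_{\ast}}} \lesssim \|a\|_{\ell^2}\|b\|_{\ell^2}, \]
where $n_{\ast}$ is $n_1$ or $n_0$ according to whichever variable we pair $g$, respectively $f$, with. Applying Cauchy--Schwarz inside the convolution sum reduces this, via the Schur test, to the pointwise weight bound
\[ \sup_{n_{\ast}} \sum_{n_0+n_2=n_1} W_j(n_0,n_1,n_2)^2 \;<\;\infty , \]
so the entire proposition reduces to four elementary counting/summation estimates. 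The underlying resonance identity is
\[ \Phi_\pm \;=\; n_1^2-n_2^2\pm\langle n_0\rangle \;=\; n_0(n_1+n_2)\pm\langle n_0\rangle, \]
which one uses to bound $\langle\Phi_\pm\rangle$ from below in terms of $|n_0|$ and $|n_1+n_2|$.

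First I would dispatch $W_1$ and $W_2$. With $(s,l)=(1/2,0)$ and $\varepsilon=1/2$ one finds
\[ W_1^2=\frac{\langle n_1\rangle}{\langle\Phi_\pm\rangle\langle n_2\rangle},\qquad W_2^2=\frac{n_0^2}{\langle\Phi_\pm\rangle\langle n_1\rangle\langle n_2\rangle}. \]
Fixing the output variable and performing a dyadic decomposition by the sizes of $|n_0|,|n_1|,|n_2|$, in each regime either (i)~$\langle\Phi_\pm\rangle\gtrsim |n_0||n_1+n_2|$ compensates the numerator directly, or (ii)~$n_1$ and $n_2$ are comparable (so $n_1+n_2$ may be small), in which case one fixes the output and sums in the remaining variable, exploiting that for fixed output the level set $\{\Phi_\pm=\mu\}$ contains at most $O(1)$ integers (quadratic in the free variable). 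Summation of $\langle\Phi_\pm\rangle^{-1}$ over these level sets then produces at worst a logarithm, which is absorbed by the remaining $\langle n\rangle^{-1}$ factors.

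The estimates for $W_3$ and $W_4$ are structurally the same. Since $n_1=n_0+n_2$ gives $\langle n_0\rangle+\langle n_2\rangle\gtrsim\langle n_1\rangle$ and $\langle n_1\rangle+\langle n_2\rangle\gtrsim\langle n_0\rangle$, the extra factors in $W_3$ and $W_4$ behave, after case analysis, no worse than the corresponding factors in $W_1$ and $W_2$ in every dyadic region, so they reduce to the same kind of weighted counting.

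The hardest regime is the high-high-to-low one, $|n_1|\sim|n_2|\gg|n_0|$, appearing in $W_2$ and $W_4$: here the numerator $|n_0|$ does not cancel obviously, and $\Phi_\pm\sim n_0(n_1+n_2)\pm\langle n_0\rangle$ can be as small as $\langle n_0\rangle$ on an isolated resonance $n_0=0$, so the full power $\langle\Phi_\pm\rangle^{-1}$ (not just a fractional power) is essential. The core step is to verify, for fixed $n_1$, that
\[ \sum_{n_0\in\mathbb{Z}}\frac{n_0^2}{\langle n_0(n_1+n_2)\pm\langle n_0\rangle\rangle\,\langle n_1\rangle\langle n_2\rangle}\lesssim 1, \]
which follows by separating $n_0=0$ (where $\Phi_\pm=\pm 1$ but the numerator vanishes) from $|n_0|\ge 1$ (where $|\Phi_\pm|\gtrsim |n_0||n_1|$, yielding a convergent $\sum |n_0|^{-1}\langle n_1\rangle^{-1}$ after summing over the at most $O(1)$ solutions $n_2$ for each $n_0$). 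This is where I expect essentially all of the work to sit; the remaining cases of $W_1$--$W_4$ are lower-stress variants of the same argument.
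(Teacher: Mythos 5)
Your overall strategy — dualize to a bilinear form, Cauchy--Schwarz, and reduce to a weight bound — is a natural first try, but it runs into two concrete problems that the paper's own proof is designed to avoid.

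First, a bookkeeping error that actually matters: for $W_2$ the Schur test must fix $n_0$, not $n_1$. Your stated ``core step'' $\sup_{n_1}\sum_{n_0}W_2^2<\infty$ is false. Indeed with $(s,l,\e)=(\tfrac12,0,\tfrac12)$ we have $W_2^2=\frac{n_0^2}{\LR{\Phi_\pm}\LR{n_1}\LR{n_2}}$, and in the tail $|n_0|\gg|n_1|$ one has $|n_2|\sim|n_1+n_2|\sim|n_0|$, so $\LR{\Phi_\pm}\sim n_0^2$ and $W_2^2\sim\tfrac{1}{\LR{n_1}|n_0|}$, which sums over $n_0$ to a logarithmic divergence. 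The regime $|n_1|\sim|n_2|\gg|n_0|$ you single out is not where the sum blows up. With the correct choice $\sup_{n_0}\sum_{n_1}W_2^2<\infty$, the test does pass, and similarly $\sup_{n_1}\sum_{n_0}W_1^2<\infty$ works; so $W_1$ and $W_2$ can be handled as you intend once the fixed variable is chosen correctly.

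Second — and this is the real gap — the Schur test fails outright for $W_3$ and $W_4$, no matter which variable you fix. Plugging in $(s,l,\e)=(\tfrac12,0,\tfrac12)$ and using the factorization $\LR{\Phi_\pm}\sim\LR{n_0}\LR{n_1+n_2}$ (valid away from the degenerate set), $W_3+W_4$ is comparable to $\frac{\LR{n_0}+\LR{n_1}+\LR{n_2}}{\LR{n_0}^{1/2}\LR{n_1+n_2}^{1/2}\LR{n_1}^{1/2}\LR{n_2}^{1/2}}$. In the balanced case $|n_1|\sim|n_2|$ this is $\sim\frac{1}{\LR{n_0}^{1/2}\LR{n_1+n_2}^{1/2}}$, whose square has the shape $\LR{n_0}^{-1}\LR{n_0+2n_2}^{-1}$; the one--variable sums $\sup_{n_*}\sum(\cdot)$ are all only $O(\log N/N)$ on the diagonal but the full sum over the $|n_0|\gg|n_1|$ tail again diverges logarithmically — so a plain Schur test bound $\sup_{n_*}\sum W_j^2<\infty$ is simply not available for $j=3,4$. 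The paper sidesteps this by \emph{not} squaring the weight: after observing $W_1+W_2\lec W_3+W_4$ it splits the balanced weight as $\frac{1}{\LR{n_0}^{1/2}\LR{n_1+n_2}^{1/2}}\le\frac{1}{\LR{n_0}}+\frac{1}{\LR{n_1+n_2}}$ and then applies Cauchy--Schwarz to each piece separately, summing $\LR{\cdot}^{-2}$ rather than $\LR{\cdot}^{-1}$. This is precisely the extra structure that the Schur test throws away. You would need some analogue of this move (or a bilinear dyadic decomposition that exploits the summability of the $O(\log N/N)$ tails) to repair your plan for $W_3,W_4$.

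Finally, you do not treat the degenerate set $n_1+n_2\pm\mathrm{sgn}(n_0)=0$, where the factorization $\LR{\Phi_\pm}\sim\LR{n_0}\LR{n_1+n_2}$ fails (for instance $\Phi_+=\LR{n_0}-|n_0|\to0$). On this set $W_j\lesssim 1$ is still true but there is no decay, so no Schur sum can be taken over it; the correct observation, made in the paper, is that for each fixed $n_1$ it contains at most two pairs $(n_0,n_2)$, which makes the contribution trivially bounded.
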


\begin{proof}
Since $W_j\lec 1$ if $n_0=0$, \eqref{est:trilinear} holds when the left-hand side is restricted to $\{ n_0=0\}$.
Assuming $n_0\neq 0$, we observe that $\LR{\Phi_\pm}\sim \LR{n_1^2-n_2^2\pm |n_0|}=\LR{n_0(n_1+n_2\pm \mathrm{sgn}(n_0))}$ under the relation $n_0=n_1-n_2$.
Although it is still possible that $\Phi_\pm=0$, we have
\eq{factorization}{&\LR{\Phi _\pm}\sim \LR{n_0}\LR{n_1+n_2\pm \mathrm{sgn}(n_0)}\sim \LR{n_0}\LR{n_1+n_2}\\
&\quad \text{whenever}~ n_0\neq 0~\text{and}~n_1+n_2\pm \mathrm{sgn}(n_0)\neq 0.}

In the case where the factorization \eqref{factorization} is valid, a simple case-by-case analysis yields that
\eqq{
W_1+W_2\lec W_3+W_4&\sim \frac{\LR{n_0}+\LR{n_1}+\LR{n_2}}{\LR{n_0}^{1/2}\LR{n_1+n_2}^{1/2}\LR{n_1}^{1/2}\LR{n_2}^{1/2}}\\
&\lec \frac{\chf{|n_1|\gg |n_2|}}{\LR{n_0}^{1/2}\LR{n_2}^{1/2}}+\frac{\chf{|n_1|\sim |n_2|}}{\LR{n_0}^{1/2}\LR{n_1+n_2}^{1/2}}+\frac{\chf{|n_1|\ll |n_2|}}{\LR{n_0}^{1/2}\LR{n_1}^{1/2}}.
}
under the assumption $(s,l)=(\frac{1}{2},0)$, $\e =\frac{1}{2}$.
This estimate and the H\"older inequality easily show \eqref{est:trilinear}; for instance, when the middle term is dominant, we can argue as
\eqq{
&\sum _{n_0,n_2\in \Bo{Z}}\frac{f_{n_0}g_{n_0+n_2}h_{n_2}}{\LR{n_0}^{1/2}\LR{n_0+2n_2}^{1/2}}\leq \sum _{n_2}h_{n_2}\sum _{n_0}\frac{f_{n_0}g_{n_0+n_2}}{\LR{n_0}}+\sum _{n_0}f_{n_0}\sum _{n_2}\frac{g_{n_0+n_2}h_{n_2}}{\LR{n_0+2n_2}}\\
&\quad \lec \sum _{n_2}h_{n_2}\Big( \sum _{n_0}f_{n_0}^2g_{n_0+n_2}^2\Big) ^{1/2}+\sum _{n_0}f_{n_0}\Big( \sum _{n_2}g_{n_0+n_2}^2h_{n_2}^2\Big) ^{1/2} \lec \tnorm{f}{\ell ^2}\tnorm{g}{\ell ^2}\tnorm{h}{\ell ^2}.
}

It remains to consider the case $n_1+n_2\pm \mathrm{sgn}(n_0)=0$.
Since there are at most two possible pairs of $(n_0,n_2)$ for each $n_1$, it suffices to show $W_j\lec 1$ under the assumption $(s,l)=(\frac{1}{2},0)$.
When $|n_1|\lec 1$, it holds that $|n_0|, |n_2|\lec 1$, and trivially $W_j\lec 1$.
When $|n_1|\gg 1$, we see that $\LR{n_0}\sim \LR{n_1}\sim \LR{n_2}$, and hence $W_1\sim W_2\sim W_3\sim W_4\sim 1$.
\end{proof}

%%%%%%%%%%%%%%%%%%%%%%%%%%%%%%%%%%%
%%%%%%%%%%%%%%%%%%%%%%%%%%%%%%%%%%%

%\bigskip
\section{Existence of weak solutions}\label{section:W}

As seen in the previous sections, the infinite NFR machinery is useful to establish a priori estimates for a \emph{rough} solution and difference of two rough solutions.
This method has also been used to obtain a priori (difference) estimates for \emph{regular} solutions, which yield existence of certain weak solutions at low regularity such that the nonlinearity may not be well-defined in the distributional framework; see \cite{GKO13} and subsequent works.
In this section, we discuss such a use of NFR for the abstract equation \eqref{Abs}.

\subsection{Statement of the result}

We rewrite the equation as
\begin{gather}
\label{Abs'} \p _t\om _n=\Sc{N}[\om ]_n+\Sc{R}[\om ]_n,\qquad (t,n)\in (0,T)\times \Bo{Z}^d,\\
\notag \Sc{N}[\om ]_n=\sum _{n=n_1+\dots +n_p}e^{it\phi}m\om _{n_1}(t)\om _{n_2}(t)\cdots \om _{n_p}(t),
\end{gather}
where the notations are the same as before.

Let us first recall the definition of weak solutions in the extended sense due to Christ~\cite{C05p,C07}.
Consider the following nonlinear dispersive equation:
\eq{Abs'-2}{\p _tu=i\psi (D_x)u+N[u]+R[u],\qquad (t,x)\in (0,T)\times \T ^d,}
where $\psi (D_x):=\F ^{-1}_n\psi (n)\F_x$ is a spatial Fourier multiplier with a real-valued symbol $\psi$, and $N[u]$ is the principal part of the nonlinearity which may not be well-defined at the level of regularity under consideration, whereas $R[u]$ denotes the other nonlinear part which is basically assumed to be well-defined.

\begin{defn}\label{defn:weak'}
(i) We define a \emph{sequence of Fourier cutoff operators} as a sequence of Fourier multipliers $\{ T_k=\F ^{-1}m_k\F \} _{k\in \Bo{N}}$ with symbols $m_k:\Bo{Z}^d\to \Bo{C}$ such that
\eq{cond:fco}{
\left\{ \begin{aligned}
~&\text{$m_k$ is compactly supported for each $k\in \Bo{N}$,}\\
&\text{$\sup _{k\in \Bo{N}}\tnorm{m_k}{\ell ^\I (\Bo{Z}^d)}<\I$,\hx $\lim _{k\to \I}m_k(n)=1$ for each $n\in \Bo{Z}^d$.}
\end{aligned}
\right.
}

(ii)
Let $u\in C_TH^{-\I}(\T ^d)$, and suppose there exists a distribution $w\in \Sc{D}'((0,T)\times \T ^d)$ such that for any sequence of Fourier cutoff operators $\{ T_k\}_{k\in \Bo{N}}$,  $N[T_ku]$ is a well-defined distribution for each $k$ and the sequence $\{ N[T_ku]\} _{k\in \Bo{N}}$ converges to $w$ in $\Sc{D}'((0,T)\times \T ^d)$.
Then, we say \emph{$N[u]$ exists and is equal to $w$}.

(iii)
Let $u\in C_TH^{-\infty}(\T^d)$, and suppose that $R[u]$ is a well-defined distribution on $(0,T)\times \T^d$.
We say $u$ is a \emph{weak solution of \eqref{Abs'-2} in the extended sense} if the nonlinearity $N[u]$ exists in the sense of (ii) and \eqref{Abs'-2} holds in $\Sc{D}'((0,T)\times \T ^d)$.
\end{defn}

Based on the above definition, we define the weak solutions of \eqref{Abs'} as follows.
Let $\Sc{D}(\Bo{Z}^d):=\F C^\I (\T^d)=\ell ^2_\I (\Bo{Z}^d)$, and define $\Sc{D}'((0,T)\times \Bo{Z}^d)$ as the dual space of $C^\I _c((0,T);\Sc{D}(\Bo{Z}^d))$.

\begin{defn}\label{defn:weak}
(i)
For $\om \in C_T\ell^2_{-\I}(\Bo{Z}^d)$, we say $\Sc{N}[\om ]$ exists and is equal to $\zeta\in \Sc{D}'((0,T)\times \Bo{Z}^d)$ if for any sequence $\{ m_k\} _{k\in \Bo{N}}$ of functions on $\Bo{Z}^d$ satisfying \eqref{cond:fco}, $\Sc{N}[m_k\om ]$ is well-defined and the sequence $\{ \Sc{N}[m_k\om ]\} _{k\in \Bo{N}}$ converges to $\zeta$ in $\Sc{D}'((0,T)\times \Bo{Z}^d)$.

(ii)
Let $\om \in C_T\ell^2_{-\infty}(\Bo{Z}^d)$, and suppose that $\Sc{R}[\om ]$ is a well-defined distribution on $(0,T)\times \Bo{Z}^d$.
We say $\om$ is a weak solution of \eqref{Abs'} if the nonlinearity $\Sc{N}[\om ]$ exists in the sense of (i) and \eqref{Abs'} holds in $\Sc{D}'((0,T)\times \Bo{Z}^d)$.
\end{defn} 

The goal of this section is to show the following:
\begin{thm}\label{thm:weak}
Let $s\in \R$.
Assume that for any $s'\ge s$ and $T>0$, $\Sc{R}[\om]\in C_T\ell^2_{s'}$ if $\om \in C_T\ell^2_{s'}$ and
\[
(R)'\qquad \left\{
\begin{aligned}
\norm{\Sc{R}[\om ]}{C_T\ell ^2_{s'}}&\le C\big( s',\tnorm{\om}{C_T\ell ^2_s}\big) \tnorm{\om}{C_T\ell ^2_{s'}},\\
~\norm{\Sc{R}[\om ]-\Sc{R}[\ti{\om}]}{C_T\ell ^2_{s'}}&\le C\big( s',\tnorm{\om}{C_T\ell ^2_{s'}},\,\tnorm{\ti{\om}}{C_T\ell ^2_{s'}}\big) \tnorm{\om-\ti{\om}}{C_T\ell ^2_{s'}}.
\end{aligned}
\right. 
\]
Moreover, assume one of the following $[A]'$, $[B]'$:

$[A]'$ There exists $s_2>s$ such that
\eqq{
&(A1)\qquad \norm{\sum _{n=n_1+\dots +n_p}\frac{|m|}{\LR{\phi}^{1/2}}\om ^{(1)}_{n_1}\cdots \om^{(p)}_{n_p}}{\ell^2_s}\le C\prod _{j=1}^p\norm{\om ^{(j)}}{\ell^2_s},\\
&(A1)'\qquad \norm{\sum _{n=n_1+\dots +n_p}|m|\om ^{(1)}_{n_1}\cdots \om^{(p)}_{n_p}}{\ell^2_s}\le C\prod _{j=1}^p\norm{\om ^{(j)}}{\ell^2_{s_2}}.
}

$[B]'$ There exist $s_1<s$ and $s_2>s$ such that
\eqq{
&(B1)\qquad \sup _{\mu \in \Bo{Z}}\norm{\sum _{\mat{n=n_1+\dots +n_p\\ \mu \le \phi <\mu +1}}|m|\om ^{(1)}_{n_1}\cdots \om^{(p)}_{n_p}}{\ell^2_{s_1}}\le C\prod _{j=1}^p\norm{\om ^{(j)}}{\ell^2_{s_1}},\\
&(B1)'\qquad \norm{\sum _{n=n_1+\dots +n_p}|m|\om ^{(1)}_{n_1}\cdots \om^{(p)}_{n_p}}{\ell^2_{s_2}}\le C\prod _{j=1}^p\norm{\om ^{(j)}}{\ell^2_{s_2}}.
}

Then, for any $r>0$ there exists $T=T(r)>0$ such that the following holds.
For any $\om _0\in \ell^2_s$ with $\tnorm{\om _0}{\ell^2_s}\le r$, there exists a weak solution $\om \in C_T\ell ^2_s$ of \eqref{Abs'} on $(0,T)\times \T ^d$ in the sense of Definition~\ref{defn:weak} with initial condition $\om (0)=\om _0$, satisfying the following properties:
\begin{enumerate}
\item If $\om _0\in \ell ^2_{s'}$ for some $s'>s$, then the solution $\om$ belongs to $C_T\ell ^2_{s'}$.
\item If $\om _0\in \ell ^2_{s_2}$, then $\om$, which is in $C_T\ell ^2_{s_2}$ and satisfies the equation in the sense of distribution,%
\footnote{More precisely, $\om$ satisfies the equation in $C_T\ell ^2_s$ for Case $[A]'$ and in $C_T\ell ^2_{s_2}$ for Case $[B]'$, by the estimates $(R)'$ and $(A1)'$ or $(B1)'$.}
is the unique solution in $C_T\ell ^2_{s_2}$; i.e., unconditional uniqueness holds in $\ell ^2_{s_2}$.
\item Let $T'>0$, and let $\zeta \in C_{T'}\ell ^2_s$ be any \emph{function} which is the $C_{T'}\ell^2_s$-limit of some sequence of regular \emph{solutions} of \eqref{Abs'} in $C_{T'}\ell ^2_{s_2}$ and satisfies $\tnorm{\zeta (0)}{\ell ^2_s}\le r$.
Then, $\zeta$ coincides with the weak solution $\om \in C_{T(r)}\ell ^2_s$ constructed above with initial condition $\om (0)=\zeta (0)$ on the time interval $[0,\min \{ T',T(r)\} ]$.%
\footnote{In \cite{OWp}, this kind of uniqueness assertion was referred to as enhanced uniqueness.
Note that this does not claim uniqueness in the class of all weak solutions in the sense of Definition~\ref{defn:weak}.}
\item The solution map $\om _0\mapsto \om$ from $\{ \om _0\in \ell^2_s:\tnorm{\om _0}{\ell ^2_s}\le r\}$ into $C_T\ell ^2_s$ is Lipschitz continuous, with the bounds
\[ \tnorm{\om}{C_T\ell ^2_s}\le 2\tnorm{\om _0}{\ell ^2_s},\qquad \tnorm{\om -\ti{\om}}{C_T\ell ^2_s}\le 2\tnorm{\om _0-\ti{\om}_0}{\ell ^2_s}.\]
\end{enumerate}
\end{thm}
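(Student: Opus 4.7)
The strategy is to follow the four-step program (d)$\Rightarrow$(c)$\Rightarrow$(b)$\Rightarrow$(a) outlined just before the theorem: construct regular solutions $\om_N \in C_{T_N}\ell^2_{s_2}$ for regular initial data, extend them to a uniform time interval $[0,T(r)]$ depending only on $r \ge \tnorm{\om_{0,N}}{\ell^2_s}$, and then take the limit as the regularization disappears. The key tool throughout is the infinite NFR machinery of Section~\ref{section:proof}, adapted so that the leftover term $\Sc{N}^{(J)}[\om]$ in the equation of the $J$-th generation is controlled directly in $\ell^2_s$ using $(A1)'$ or $(B1)'$ instead of the weak $X$-norm, at the (temporary) cost of introducing the $\ell^2_{s_2}$-norm of the regular solution.

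First I construct local regular solutions. In Case $[B]'$ this is immediate: $(B1)'$ together with $(R)'$ yields a closed contraction on a ball of $C_{T_N}\ell^2_{s_2}$ for $T_N$ small depending on $\tnorm{\om_{0,N}}{\ell^2_{s_2}}$. In Case $[A]'$ there is no direct closed estimate in $\ell^2_{s_2}$; I instead work at a higher regularity $s'\ge s_2$ large enough that the polynomial growth of the multiplier $m$ is absorbed and $(A1)'$ extends to a closed estimate $\ell^2_{s'}\to \ell^2_{s'}$, starting from smoothed data, and then rely on the persistence bound of the next step to get back down to $\ell^2_{s_2}$.

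Next I derive the a priori estimate. Applied to a regular solution, all manipulations in Section~\ref{section:proof} are classical and the hierarchy \eqref{eq:Jth} holds. Rerunning the proof of Proposition~\ref{prop:abstract} but using $(A1)'$ or $(B1)'$ in place of the $X$-estimates on the tail term yields, schematically,
\eqq{
\norm{\Sc{N}^{(J)}_R[\om]}{\ell^2_s}&\le CM\bigl[CM^{-\de}\tnorm{\om}{\ell^2_s}^{p-1}\bigr]^J,\\
\norm{\Sc{N}^{(J)}_0[\om]}{\ell^2_s}+\norm{\Sc{R}^{(J-1)}[\om]}{\ell^2_s}&\le C\bigl[CM^{-\de}\tnorm{\om}{\ell^2_s}^{p-1}\bigr]^J\tnorm{\om}{\ell^2_s},\\
\norm{\Sc{N}^{(J)}[\om]}{\ell^2_s}&\le \bigl[CM^{-\de}\tnorm{\om}{\ell^2_s}^{p-1}\bigr]^{J-1}\tnorm{\om}{\ell^2_{s_2}}^p.
}
With $M$ chosen so that $\eta:=CM^{-\de}(2r)^{p-1}<\tfrac12$, the geometric sums converge and the tail bound decays in $J$ for a regular (fixed-$\ell^2_{s_2}$) solution, so letting $J\to\infty$ in the integrated form of \eqref{eq:Jth} yields the limit equation \eqref{eq:limit} in $\ell^2_s$. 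A continuity argument on the quantity $\tnorm{\om}{C_t\ell^2_s}$ then produces $T(r)>0$ independent of $\tnorm{\om_0}{\ell^2_{s_2}}$ with $\tnorm{\om}{C_{T(r)}\ell^2_s}\le 2\tnorm{\om_0}{\ell^2_s}$. Repeating the same scheme with one factor placed in the higher norm $\ell^2_{s'}$ and the remaining $p-1$ factors in $\ell^2_s$ (permissible via \eqref{absolutenorm} by assigning the weight to the maximum-frequency factor), and combining with $(R)'$, delivers a Gronwall bound $\tnorm{\om(t)}{\ell^2_{s'}}\le e^{C(r,s')t}\tnorm{\om_0}{\ell^2_{s'}}$ on $[0,T(r)]$; this gives persistence (property (i)) and, in particular, extends the local regular solution from Step~1 to the full interval $[0,T(r)]$. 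The same estimates applied to $\om-\ti{\om}$ produce the Lipschitz bound (iv).

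Finally, for $\om_0\in \ell^2_s$, approximate by $\om_{0,N}\in \ell^2_{s_2}$ converging in $\ell^2_s$; the Lipschitz estimate shows $\{\om_N\}$ is Cauchy in $C_{T(r)}\ell^2_s$ with limit $\om$, which satisfies \eqref{eq:limit} in $\ell^2_s$ by passing to the limit term by term in the absolutely convergent hierarchy. To identify $\om$ as a weak solution in the sense of Definition~\ref{defn:weak}, given any Fourier cutoff sequence $\{T_k\}$ with symbols $m_k$ satisfying \eqref{cond:fco}, I verify that $\Sc{N}[m_k\om]$ is well-defined for each $k$ (because $m_k$ has compact support) and converges in $\Sc{D}'((0,T)\times \Bo{Z}^d)$ to $\p_t\om-\Sc{R}[\om]$; applying the NFR scheme to $m_k\om$ gives the same limit equation modulo a commutator between $\{m_k\}$ and the multilinear summation, which vanishes by dominated convergence since every NFR generation is absolutely summable. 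Property (ii) is a direct application of Theorem~\ref{thm:abstract} at regularity $s_2$, where the fundamental multilinear estimates close, and enhanced uniqueness (iii) follows by applying the Lipschitz estimate to the approximating sequences of $\zeta$ and $\om$. The main obstacle I anticipate is Step~2: showing that the $\ell^2_{s_2}$-dependent tail $\Sc{N}^{(J)}[\om]$ can be absorbed in the limit without causing $T(r)$ to degenerate as $\tnorm{\om_0}{\ell^2_{s_2}}\to\infty$ — precisely the subtlety that distinguishes this NFR-based construction from a naive iteration at $\ell^2_{s_2}$-regularity.
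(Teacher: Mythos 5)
Your overall architecture (construct regular solutions, derive NFR-based a priori estimates uniform in the regularization, take the limit, and verify the weak-solution and uniqueness properties) does mirror the paper's strategy, and your Steps~2 and~3 are essentially the same as the paper's Lemma~\ref{lem:apriori} and the main body of its proof of Theorem~\ref{thm:weak}. The gap is in your Step~1 for Case~$[A]'$.

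You claim you can pick $s'\ge s_2$ large enough so that ``$(A1)'$ extends to a closed estimate $\ell^2_{s'}\to\ell^2_{s'}$''. This does not follow from the hypotheses. The assumption $(A1)'$ only gives a map $\ell^2_{s_2}\to\ell^2_s$, i.e.\ a loss of $s_2-s>0$ derivatives. Using the elementary weight inequality \eqref{est:weight} one can shift the output index up to $\ell^2_{s'}$, but only at the cost of raising the input index to $\ell^2_{s_2+(s'-s)}$ — the loss $s_2-s$ persists, and there is no mechanism in the abstract hypotheses that makes it shrink as $s'$ grows. In fact the paper explicitly anticipates and dismisses your route: the footnote to the theorem statement remarks that in Case~$[A]'$ one does \emph{not} assume that any closed multilinear estimate (e.g.\ in Bourgain spaces) is available, precisely because the theorem is meant to cover situations where local well-posedness at $\ell^2_{s_2}$ cannot be shown by iteration. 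Since you never establish local existence of solutions in $C_{T_N}\ell^2_{s_2}$, the rest of your argument has nothing to iterate.

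The paper closes this gap by a genuinely different device (Lemma~\ref{lem:regular} with Lemma~\ref{lem:apriori-p}): it regularizes the equation by adding a viscous term $-\e\LR{n}^{2\al}\om_n$ with $\al=s_2-s$, and solves the regularized equation in $\ell^2_{s_2}$ by a fixed point, using the parabolic smoothing $\sup_n e^{-\e\LR{n}^{2\al}(t-t')}\LR{n}^{s_2-s}\lesssim\e^{-1/2}(t-t')^{-1/2}$ to beat the $s_2-s$ derivative loss in $(A1)'$. This introduces an $\e$-dependent existence time, so one then must show uniform-in-$\e$ a priori bounds; this is done by combining the NFR hierarchy with an \emph{energy} (inner-product) estimate, because the substitution step of NFR now produces the extra term $\Sc{P}^{(j)}[\om]$ carrying a weight $\LR{n}^{2\al}$, which must be absorbed into the negative viscous term $-\e\LR{n}^{2\al}\om_n$ rather than estimated directly. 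Finally letting $\e\to0$ produces the desired regular solution. None of this appears in your proposal, and without it (or some replacement) the construction in Case~$[A]'$ fails at the very first step.

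Two smaller points. First, you attribute property~(ii) to an application of Theorem~\ref{thm:abstract} at regularity $s_2$; but in Case~$[A]'$ the full assumption $[A]$ is not available at level $s_2$ (one only has $(A1)'$, which is not closed), so the paper instead derives (ii) directly from the difference bound \eqref{apriori-s} of Lemma~\ref{lem:apriori}. Second, in deriving the $\ell^2_{s'}$ persistence bound you cannot simply invoke Gronwall: the constants depending on $\tnorm{\om}{C_T\ell^2_{s_2}}$ in the tail term $\Sc{N}^{(J)}[\om]$ have to disappear in the limit $J\to\infty$, which is exactly what the NFR scheme achieves, and the persistence time $T_{s'}(R)$ must then be chosen depending on $s'$ but only on the $\ell^2_s$-size $R$, as the paper does.
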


\begin{rem}
(i) Consider the situation that the equation \eqref{Abs'} comes from \eqref{Abs'-2} by the transform $u\mapsto \om (t):=\F e^{-it\psi (D_x)}u(t)$, with
\[ \Sc{N}[\om] =e^{-it\psi (n)}\F N[\F ^{-1}e^{it\psi (n)}\om ],\qquad \Sc{R}[\om ]=e^{-it\psi (n)}\F R[\F ^{-1}e^{it\psi (n)}\om ]. \]
Then, existence of weak solutions for the equation \eqref{Abs'-2} can be recovered from that for \eqref{Abs'}; i.e., the weak solution $\om \in C_T\ell ^2_s$ of \eqref{Abs'} constructed in Theorem~\ref{thm:weak} gives a weak solution $v\in C_TH^s$ of \eqref{Abs'-2} in the extended sense by $v(t):=\F ^{-1}e^{it\psi (n)}\om (t)$.
In fact, for any sequence $\{ T_k=\F ^{-1}m_k\F\} _{k\in \Bo{Z}}$ of Fourier cutoff operators, we have the convergence $e^{it\psi (n)}\Sc{N}[m_k\om ]\to e^{it\psi (n)}\Sc{N}[\om]$ in $\Sc{D}'((0,T)\times \Bo{Z}^d)$, where $\Sc{N}[\om]$ is the unique distributional limit as in Definition~\ref{defn:weak}, which exists since $\om$ is a weak solution of \eqref{Abs'} in the sense of Definition~\ref{defn:weak}.
This implies that $N[T_kv]\to \F^{-1}e^{it\psi (n)}\Sc{N}[\om ]$ in $\Sc{D}'((0,T)\times \T ^d)$, and hence $N[v]$ exists and is equal to $\F^{-1}e^{it\psi (n)}\Sc{N}[\om ]$.
On the other hand, we have $\Sc{N}[\om ]=\p _t\om -\Sc{R}[\om ]$ by the equation and
\eqq{&\LR{\F^{-1}e^{it\psi (n)}(\p _t\om -\Sc{R}[\om ]),\phi}
=\LR{\p _t\om ,e^{it\psi (n)}\F ^{-1}\phi}-\LR{\F^{-1}e^{it\psi (n)}\Sc{R}[\om ],\phi}\\
&=-\LR{e^{it\psi (n)}\om ,\F ^{-1}\p _t\phi}-\LR{i\psi (n)e^{it\psi (n)}\om ,\F ^{-1}\phi}-\LR{R[v],\phi}\\
&=\LR{\p _tv-i\psi (D_x)v-R[v],\phi},\qquad \phi \in \Sc{D}((0,T)\times \T ^d).}
Hence, $N[v]=\p _tv-i\psi (D_x)v-R[v]$ in $\Sc{D}'((0,T)\times \T ^d)$, and $v$ is a weak solution of \eqref{Abs'-2} in the extended sense.

(ii) One may consider changing the space of initial data $\ell ^2_s$ similarly to Theorem~\ref{thm:abstract} (see Remark~\ref{rem:thm2} (v)).
When the condition $[B]'$ holds, one can replace $\ell^2_s$ with any Banach space $Y$ satisfying the property \eqref{absolutenorm} and embedded into $\ell ^2_{-\infty}(\mathbb{Z}^d)$ (so that the definition of the weak solution makes sense for $\om \in C_TY$).
Then, the spaces $\ell ^2_{s_1}$, $\ell ^2_{s_2}$ and $\ell^2_{s'}$ appearing in the statement may be replaced with $\LR{\cdot}^{-(s_1-s)}Y$, $\LR{\cdot}^{-(s_2-s)}Y$ and $\LR{\cdot}^{-(s'-s)}Y$, respectively.
Regarding the case $[A]'$, the same is true if existence of local-in-time regular solutions (Lemma~\ref{lem:regular} below) still holds in the new space $\LR{\cdot}^{-(s_2-s)}Y$.
We note, however, that our proof of Lemma~\ref{lem:regular} to be given in Section~\ref{subsec:regular} relies on energy estimates based on the Hilbert space structure of $\ell ^2_s$. 
\end{rem}

\subsection{A priori estimates on regular solutions}

A basic tool to prove Theorem~\ref{thm:weak} is the following a priori estimates for regular solutions.
\begin{lem}\label{lem:apriori}
Let $s\in \R$, and assume the hypotheses in Theorem~\ref{thm:weak}.
Then, for any $R>0$ there exists $T_s=T_s(R)>0$ such that any solution $\om \in C_T\ell ^2_{s_2}$ of \eqref{Abs'} on $(0,T)\times \Bo{Z}^d$ with $T\in (0,T_s]$ satisfying $\tnorm{\om}{C_T\ell ^2_s}\le R$ solves the limit equation \eqref{eq:limit} in $C_T\ell ^2_s$.
Moreover, the following estimates hold:
\eq{apriori-s}{\tnorm{\om}{C_T\ell ^2_s}\le 2\tnorm{\om (0)}{\ell ^2_s},\qquad \tnorm{\om-\ti{\om}}{C_T\ell ^2_{s}}\le 2\tnorm{\om (0)-\ti{\om}(0)}{\ell ^2_{s}},}
where $\ti{\om}\in C_T\ell ^2_{s_2}$ is another solution of \eqref{Abs'} with $\tnorm{\ti{\om}}{C_T\ell ^2_s}\le R$.
Furthermore, for any $s'>s$ there exists $T_{s'}=T_{s'}(R)\in (0,T_s(R)]$ (which is decreasing in $s'$) such that if in addition $T\in (0,T_{s'}]$ and $\om \in C_T\ell ^2_{s'}$, then the limit equation \eqref{eq:limit} is satisfied in $C_T\ell ^2_{s'}$ and 
\eq{apriori-s'}{\tnorm{\om}{C_T\ell ^2_{s'}}\le 2\tnorm{\om (0)}{\ell ^2_{s'}}.}
\end{lem}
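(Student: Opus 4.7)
The plan is to apply the infinite normal-form reduction of Section~\ref{section:proof} directly to the regular solution $\om\in C_T\ell^2_{s_2}$. Since $\om$ is $s_2$-regular and the iterated $(A1)'$ or $(B1)'$ supplies ample absolute convergence for every series in $\mathbf{n}$, each formal step (Fubini, integration by parts, product rule) in deriving the equation of the $J$-th generation \eqref{eq:Jth} is valid in the classical sense; the content of the proof is thus to recast the multilinear estimates of Proposition~\ref{prop:abstract} under the weaker hypotheses $(A1)+(A1)'$ or $(B1)+(B1)'$ of Theorem~\ref{thm:weak}.

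For the tame terms $\Sc{N}^{(J)}_0,\Sc{N}^{(J)}_R,\Sc{R}^{(J)}$, I would reproduce the bounds of Proposition~\ref{prop:abstract} verbatim, with $R:=\tnorm{\om}{C_T\ell^2_s}$. In Case $[A]'$, iterate $(A1)$ exactly as in the proof of Proposition~\ref{prop:abstract} Case $[A]$, using the super-exponential gain $\prod_{j=1}^J|\ti{\phi}^j|\gec 2^{J(J+1)}M^{J/2}\prod_{j=1}^J\LR{\phi^j}^{1/2}$ from the nested modulation condition to absorb the factorial growth $\#\mathfrak{T}(J)\le(p-1)^JJ!$. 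In Case $[B]'$, the same $\ell^2_s$-bounds follow by complex interpolation (as in Remark~\ref{rem:Bgeneral}) between iterated $(B1)$-bounds in $\ell^2_{s_1}$ and iterated $(B1)'$-bounds in $\ell^2_{s_2}$. Either way the outcome is $\lec[CM^{-\de}R^{p-1}]^JR$, summable in $J$ once $M=M(R)$ is fixed large.

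The delicate new point is the bad term $\Sc{N}^{(J)}[\om]$, which contains only $J-1$ modulation factors and cannot be controlled by iterating $(A1)$ or $(B1)$ alone. In Case $[B]'$, I would bound it directly in $\ell^2_{s_2}$: pull the uniform estimate $\prod_{k=1}^{J-1}|\ti{\phi}^k|^{-1}\le 2^{-J(J-1)/2}M^{-(J-1)}$ from the $\Phi_{N\!R}^J$-condition outside the sum, then iterate $(B1)'$ up the tree to obtain a bound of order $2^{-J(J-1)/2}M^{-(J-1)}C^J\#\mathfrak{T}(J)\tnorm{\om}{C_T\ell^2_{s_2}}^{(p-1)J+1}$, which tends to $0$ as $J\to\infty$ thanks to the super-exponential factor; the embedding $\ell^2_{s_2}\hookrightarrow\ell^2_s$ then gives $\Sc{N}^{(J)}[\om]\to 0$ in $\ell^2_s$. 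In Case $[A]'$ the closed-in-$\ell^2_{s_2}$ estimate is unavailable, so I would instead iterate $(A1)$ for the upper $J-1$ tree nodes using the gain $\prod_{j=1}^{J-1}|\ti{\phi}^j|\gec 2^{(J-1)J}M^{(J-1)/2}\prod_{j=1}^{J-1}\LR{\phi^j}^{1/2}$, and at the $J$-th (bottom) node apply $(A1)'$ to convert the last $p$ copies of $\om$ from $\ell^2_{s_2}$ into an $\ell^2_s$-contribution; the same super-exponential decay follows. Passing $J\to\infty$ in \eqref{eq:Jth} then yields the limit equation \eqref{eq:limit} in $C_T\ell^2_s$.

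The a priori bound $\tnorm{\om}{C_T\ell^2_s}\le 2\tnorm{\om(0)}{\ell^2_s}$ follows by a standard bootstrap: set $r:=\tnorm{\om(0)}{\ell^2_s}$, $R:=2r$, and choose $M=M(R)$ large and $T_s=T_s(R)$ small so that every contribution to the right-hand side of the limit equation is bounded by $r/2$ in $\ell^2_s$; continuity of $t\mapsto\tnorm{\om(t)}{\ell^2_s}$, which holds for the regular $\om$, then closes the bootstrap. The Lipschitz bound follows identically from the difference versions of all the above. For persistence at $\ell^2_{s'}$ with $s'>s$, I would rerun the scheme at regularity $s'$: every $\ell^2_s$-estimate is upgraded to one of the form $\tnorm{\om^{(k)}}{\ell^2_{s'}}\prod_{l\neq k}\tnorm{\om^{(l)}}{\ell^2_s}$ by placing the weight $\LR{\cdot}^{s'-s}$ on a single leaf via the elementary bound $\LR{n}^{s'-s}\lec\sum_{j=1}^p\LR{n_j}^{s'-s}$; this yields $\tnorm{\om}{C_T\ell^2_{s'}}\le 2\tnorm{\om(0)}{\ell^2_{s'}}$ on a possibly shorter interval $T_{s'}(R)\le T_s(R)$ determined by the $s'$-dependent constants in $(R)'$. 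The main obstacle is the third paragraph: controlling $\Sc{N}^{(J)}[\om]$ in $\ell^2_s$ under the weaker hypotheses of Theorem~\ref{thm:weak} requires threading the iteration so that the bottom tree node invokes $(A1)'$ or $(B1)'$, and extracting from the nested modulation condition enough super-exponential smallness to beat both $\#\mathfrak{T}(J)\sim J!$ and the (potentially large) factor involving $\tnorm{\om}{C_T\ell^2_{s_2}}$.
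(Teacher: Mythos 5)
Your proposal is substantially correct and follows the paper's overall strategy: apply the NFR hierarchy to the regular solution $\om\in C_T\ell^2_{s_2}$ (where all formal manipulations are classically justified, with $(A1)'$ or $(B1)'$ playing the role that $(A3)$/$(B3)$ played in Theorem~\ref{thm:abstract}), bound the tame terms $\Sc{N}^{(j)}_0,\Sc{N}^{(j)}_R,\Sc{R}^{(j)}$ closed in $\ell^2_s$ with a $[CM^{-\de}R^{p-1}]^j$ rate, show that the bad term tends to zero in $\ell^2_s$, then read off the a priori and difference bounds from the limit equation by absorption, with $M=T^{-1}$ and $T_s(R)$ depending only on the $\ell^2_s$ size. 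The persistence claim via distributing the extra weight $\LR{n}^{s'-s}$ onto a single leaf is exactly the paper's use of \eqref{est:weight}.

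The one place where your route genuinely differs is the bad term in Case $[B]'$. You bound $\Sc{N}^{(J)}[\om]$ entirely in $\ell^2_{s_2}$ by iterating $(B1)'$ at every node and harvesting only the raw modulation lower bound $\prod_{k<J}|\ti\phi^k|\ge 2^{J(J-1)/2}M^{J-1}$, producing an estimate of order $2^{-J(J-1)/2}C^J\#\mathfrak{T}(J)\tnorm{\om}{\ell^2_{s_2}}^{(p-1)J+1}$. The paper instead proves $\norm{\Sc{N}^{(J+1)}[\om]}{\ell^2_s}\le C[CM^{-\de}\tnorm{\om}{\ell^2_s}^{p-1}]^J\tnorm{\om}{\ell^2_{s_2}}^p$, keeping all leaves but the bottom $p$ at the $\ell^2_s$ level by the interpolation argument of Remark~\ref{rem:Bgeneral}; the rate is then already geometric with the same $\eta=CM^{-\de}R^{p-1}$ appearing in the tame terms. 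Your variant is simpler and still works — the quadratic-in-$J$ exponent $2^{-J(J-1)/2}$ beats $J!$ and any exponential in $J$, including $\tnorm{\om}{\ell^2_{s_2}}^{(p-1)J}$, so the term vanishes for any fixed $M$ and in particular for $M=M(R)$ — which is all that is needed, since the vanishing is used only pointwise for each fixed regular $\om$ in passing $J\to\infty$, not uniformly in the approximating sequence. The paper's cleaner estimate buys a tidy uniform rate, but nothing essential. Your Case $[A]'$ treatment (iterate $(A1)$ with the $\LR{\phi^j}^{1/2}$ gains over the upper $J-1$ nodes and close the bottom node with $(A1)'$) coincides with the paper's. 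A small cosmetic point: for the a priori bound you describe a bootstrap on $\tnorm{\om(t)}{\ell^2_s}$, whereas the paper simply absorbs the $\e\tnorm{\om}{C_T\ell^2_s}$ term from the limit-equation estimate; both are fine since $\tnorm{\om}{C_T\ell^2_s}\le R$ is a standing hypothesis of the lemma (the continuity argument enters later, in the proof of Theorem~\ref{thm:weak}).
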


\begin{proof}
To verify \eqref{eq:limit}, we follow the argument in Section~\ref{section:proof}.
In fact, with the assumptions $(R)'$ and $(A1)+(A1)'$ or $(B1)+(B1)'$, the same argument as in the proof of Proposition~\ref{prop:abstract} derives the multilinear estimates
\eqq{\norm{\Sc{N}^{(J)}_R[\om ]-\Sc{N}^{(J)}_R[\ti{\om}]}{\ell ^2_s}&\le CM\Big[ CM^{-\de}\big( \tnorm{\om}{\ell ^2_s}+\tnorm{\ti{\om}}{\ell ^2_s}\big) ^{p-1}\Big] ^J\tnorm{\om -\ti{\om}}{\ell ^2_s},\\
\norm{\Sc{N}^{(J)}_0[\om ]-\Sc{N}^{(J)}_0[\ti{\om}]}{\ell ^2_s}&\le C\Big[ CM^{-\de}\big( \tnorm{\om}{\ell ^2_s}+\tnorm{\ti{\om}}{\ell ^2_s}\big) ^{p-1}\Big] ^J\tnorm{\om -\ti{\om}}{\ell ^2_s},\\
\norm{\Sc{R}^{(J)}[\om ]-\Sc{R}^{(J)}[\ti{\om}]}{\ell ^2_s}&\le C\Big[ CM^{-\de}\big( \tnorm{\om}{\ell ^2_s}+\tnorm{\ti{\om}}{\ell ^2_s}\big) ^{p-1}\Big] ^JC\big( \tnorm{\om}{\ell ^2_s},\tnorm{\ti{\om}}{\ell ^2_s}\big) \tnorm{\om -\ti{\om}}{\ell ^2_s}}
for any $\om ,\ti{\om}\in \ell ^2_s$ and
\[ \norm{\Sc{N}^{(J+1)}[\om ]}{\ell ^2_s}\le C\Big[ CM^{-\de}\tnorm{\om}{\ell ^2_s}^{p-1}\Big] ^{J} \tnorm{\om}{\ell ^2_{s_2}}^p\]
for any $\om \in \ell ^2_{s_2}$, where $\de =\frac12$ in Case $[A]'$ and $\de =\frac{s-s_1}{s_2-s_1}$ in Case $[B]'$.
These estimates (with $\ti{\om}=0$) suffice to justify, for any \emph{regular} solution $\om \in C_T\ell ^2_{s_2}$ of \eqref{Abs'}, the derivation of the equations \eqref{eq:Jth} for any $J\ge 1$%
\footnote{By a similar argument as for Theorem~\ref{thm:abstract}, we can show $\om _n\in C^1([0,T])$ for each $n$ and a solution $\om \in C_T\ell ^2_{s_2}$ of \eqref{Abs'}.
Here, the estimate $(A1)'$ or $(B1)'$ plays the same role as $(A3)$ (or $(B3)$) in the proof of Theorem~\ref{thm:abstract}.
} 
as well as convergence of the sums over $j$ and the limit $\Sc{N}^{(J)}[\om ]\to 0$ in $C_T\ell^2_s$ as $J\to \I$, whenever $\tnorm{\om}{C_T\ell ^2_s}\le R$ and $\eta :=CT^{\de}R^{p-1}\ll 1$ (after taking $M=T^{-1}$).
Letting $J\to \I$ we have \eqref{eq:limit}.

From \eqref{eq:limit} and the above estimates on $\Sc{N}_R^{(J)}$, $\Sc{N}_0^{(J)}$ and $\Sc{R}^{(J)}$, we obtain
\eqs{\tnorm{\om}{C_T\ell ^2_s}\le \tnorm{\om (0)}{\ell ^2_s}+C\Big( \sum _{j=1}^\I \eta ^j+C(R)T\sum _{j=0}^\I \eta ^j\Big) \tnorm{\om}{C_T\ell ^2_s},\\
\tnorm{\om -\ti{\om}}{C_T\ell ^2_s}\le \tnorm{\om (0)-\ti{\om}(0)}{\ell ^2_s}+C\Big( \sum _{j=1}^\I \eta ^j+C(R)T\sum _{j=0}^\I \eta ^j\Big) \tnorm{\om -\ti{\om}}{C_T\ell ^2_s}.
}
By possibly choosing smaller $T$ so that $C(R)T\ll 1$ (still depending only on the $\ell ^2_s$-norm of the solution), we have the desired a priori estimates \eqref{apriori-s}.

Using the above estimates on $\Sc{N}_R^{(J)}$, $\Sc{N}_0^{(J)}$, $\Sc{R}^{(J)}$ and the inequality
\eq{est:weight}{\LR{n_1+n_2+\dots +n_N}^{\al}\le N^{\al}\max _{1\le j\le N}\LR{n_j}^{\al}\qquad (N\ge 1,\hx \al >0),}
we see that for any $s'>s$
\eqq{\norm{\Sc{N}^{(J)}_R[\om ]}{\ell ^2_{s'}}&\le CM\Big[ CM^{-\de}\tnorm{\om}{\ell ^2_s}^{p-1}\Big] ^J\tnorm{\om}{\ell ^2_{s'}},\\
\norm{\Sc{N}^{(J)}_0[\om ]}{\ell ^2_{s'}}&\le C\Big[ CM^{-\de}\tnorm{\om}{\ell ^2_s}^{p-1}\Big] ^J\tnorm{\om}{\ell ^2_{s'}},\\
\norm{\Sc{R}^{(J)}[\om ]}{\ell ^2_{s'}}&\le C\Big[ CM^{-\de}\tnorm{\om}{\ell ^2_s}^{p-1}\Big] ^JC\big( \tnorm{\om}{\ell ^2_s}\big) \tnorm{\om}{\ell ^2_{s'}},
}
where we have applied the first inequality in $(R)'$ in the estimate on $\Sc{R}^{(J)}$ and the constants $C$ are now depending also on $s'$.
Then, the same argument as above shows that the right-hand side of \eqref{eq:limit} converges in $C_T\ell ^2_{s'}$ and the estimate \eqref{apriori-s'} holds, if we take possibly smaller $T$ depending also on $s'$ but still independent of the $\ell ^2_{s'}$-norm of the solution.
\end{proof}

To construct weak solutions at the $\ell ^2_s$ level, we need regular solutions with approximating initial data.
The next lemma, which we will prove at the end of this section, ensures existence of regular solutions.
\begin{lem}\label{lem:regular}
Let $s\in \R$, and assume the hypotheses in Theorem~\ref{thm:weak}.
Then, for any $\om _0\in \ell ^2_{s_2}$ there exist $T=T(\tnorm{\om _0}{\ell ^2_{s_2}})>0$ and a solution $\om \in C_T\ell ^2_{s_2}$ of \eqref{Abs'} on $(0,T)\times \Bo{Z}^d$ satisfying $\om (0)=\om _0$.
\end{lem}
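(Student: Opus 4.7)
The plan is to treat the two cases separately. \textbf{Case $[B]'$} is straightforward: the estimate $(B1)'$ provides a closed $p$-linear bound on $\Sc{N}[\om]$ in $\ell^2_{s_2}$, which, combined with the first inequality of $(R)'$ applied with $s'=s_2$, yields a standard contraction mapping argument. Setting $\Phi[\om](t):=\om_0+\int_0^t(\Sc{N}[\om]+\Sc{R}[\om])\,d\tau$ on the closed ball of radius $2\tnorm{\om_0}{\ell^2_{s_2}}$ in $C_T\ell^2_{s_2}$, I would choose $T=T(\tnorm{\om_0}{\ell^2_{s_2}})$ small enough to make $\Phi$ a contraction; the required Lipschitz bound follows from the second inequality of $(R)'$ together with $p$-linear polarization of $(B1)'$, and the fixed point provides the desired regular solution.

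\textbf{Case $[A]'$} is the main obstacle, since $(A1)'$ only controls $\Sc{N}[\om]$ in the weaker space $\ell^2_s$, so no direct fixed-point iteration in $C_T\ell^2_{s_2}$ is available. To bypass this, I would employ a frequency-truncation scheme. For each $N\in \Bo{N}$, let $P_{\le N}$ denote the projection onto frequencies $|n|\le N$ and consider the truncated Cauchy problem
\[
\p_t\om_N=P_{\le N}\Sc{N}[P_{\le N}\om_N]+P_{\le N}\Sc{R}[P_{\le N}\om_N],\qquad \om_N(0)=P_{\le N}\om_0.
\]
This reduces to an ODE with locally Lipschitz right-hand side on the finite-dimensional subspace $P_{\le N}\ell^2_{s_2}$, so the Picard--Lindel\"of theorem gives a unique solution $\om_N\in C([0,T_N^*);\ell^2_{s_2})$ on a maximal interval, with the usual blow-up alternative.

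The crucial observation is that, since the truncated multiplier $\chi_{\{|n|,|n_j|\le N\}}m$ is pointwise dominated by $m$, the estimates $(A1)$, $(A1)'$, and $(R)'$ remain valid for the truncated equation with the same constants. Hence Lemma~\ref{lem:apriori} applies to each regular solution $\om_N$ uniformly in $N$: taking $R:=2\tnorm{\om_0}{\ell^2_s}$ and $T:=T_{s_2}(R)$ as in that lemma, I would obtain
\[
\tnorm{\om_N}{C_t\ell^2_{s_2}}\le 2\tnorm{\om_0}{\ell^2_{s_2}}\qquad \text{for every}\quad t\in [0,T]\cap [0,T_N^*),
\]
which, combined with the blow-up alternative, forces $T_N^*>T$ for every $N$. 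The sequence $\{\om_N\}$ is therefore defined on the common interval $[0,T]$ and uniformly bounded there in $\ell^2_{s_2}$.

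Finally, I would extract a limit and verify the equation. For each fixed $n\in \Bo{Z}^d$ the scalar sequence $\{(\om_N)_n\}_N$ is uniformly bounded in $C([0,T])$ and, by the equation together with the uniform $\ell^2_{s_2}$ bound, has uniformly bounded time derivative; Arzel\`a--Ascoli combined with a diagonal extraction produces a subsequence (still denoted $\om_N$) converging to some $\om$ pointwise in $n$ uniformly in $t$. Fatou's lemma upgrades the uniform bound to $\om\in L^\I_T\ell^2_{s_2}$, and then dominated convergence yields $\om_N\to \om$ in $C_T\ell^2_{s'}$ for every $s'<s_2$. Applying Lemma~\ref{lem:apriori} to each $\om_N$ shows that $\om_N$ satisfies the limit equation \eqref{eq:limit} (with truncated multipliers) in $C_T\ell^2_s$, and the multilinear and remainder estimates assembled in the proof of that lemma ensure that every term on its right-hand side depends continuously on $\om$ in the $\ell^2_s$-topology, uniformly both in the truncation parameter and in the summation over $j\in \Bo{N}$. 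Passing to the limit term by term, $\om$ itself satisfies \eqref{eq:limit}; the right-hand side is moreover a continuous $\ell^2_{s_2}$-valued function of $t$ (again by those multilinear bounds), so $\om\in C_T\ell^2_{s_2}$. A direct reversal of the NFR derivation then converts \eqref{eq:limit} back into the original equation \eqref{Abs'} in the distributional sense, completing the proof.
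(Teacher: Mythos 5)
Your treatment of Case $[B]'$ is a standard fixed-point argument and matches the paper. For Case $[A]'$, however, the paper does \emph{not} use a frequency truncation; it uses a parabolic regularization, adding $-\e\LR{n}^{2\al}\om_n$ to \eqref{Abs'}, constructing $\om^\e\in C_T\ell^2_{s_2}$ for each $\e$ via the heat smoothing, proving uniform-in-$\e$ a~priori and \emph{difference} estimates by an NFR-based energy argument (Lemma~\ref{lem:apriori-p}), and then concluding that $\{\om^\e\}$ is Cauchy in $C_T\ell^2_{s_2}$. Cauchy-ness in the top space $\ell^2_{s_2}$ is precisely what makes it possible to pass to the limit directly in the original equation \eqref{Abs'} and to guarantee $\om\in C_T\ell^2_{s_2}$.

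Your frequency-truncation scheme does not achieve this, and that is where the gap lies. Applying the truncated version of Lemma~\ref{lem:apriori} gives you uniform bounds $\tnorm{\om_N}{C_T\ell^2_{s_2}}\le 2\tnorm{\om_0}{\ell^2_{s_2}}$, and Arzel\`a--Ascoli plus Fatou gives a limit $\om\in L^\I_T\ell^2_{s_2}$ with $\om_N\to\om$ in $C_T\ell^2_{s'}$ only for $s'<s_2$. Nothing in your argument upgrades $L^\I_T\ell^2_{s_2}$ to $C_T\ell^2_{s_2}$: the a~priori difference bound from Lemma~\ref{lem:apriori} is stated in $\ell^2_s$, and in any case $\om_N$ and $\om_{N'}$ solve \emph{different} truncated equations, so Lemma~\ref{lem:apriori} does not give you a Cauchy sequence in $\ell^2_{s_2}$. (For comparison, in the paper's argument the $\e$-dependence of the equation is explicitly accounted for in the difference estimate \eqref{apriori-p-s2}, with the error absorbed by the $\e$-decay of the $\ell^2_{s_2+2\al}$-bound.)

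A second, independent problem is your final step: ``a direct reversal of the NFR derivation then converts \eqref{eq:limit} back into the original equation''. The NFR derivation \eqref{Abs'}$\Rightarrow$\eqref{eq:limit} crucially \emph{substitutes the equation} for $\p_t\om_{n_a}$ at every step; there is no a~priori reason the implication reverses, and the paper never claims it does (indeed, at the rough $\ell^2_s$ level the paper carefully formulates weak solutions via Christ's extended sense precisely because \eqref{eq:limit} does not automatically imply \eqref{Abs'}). The cleaner way to finish the frequency-truncation route would be to pass to the limit directly in the truncated original equation, which does work in $\Sc{D}'$ using $(A1)'$, $(R)'$ and \eqref{est:weight} (trading a bit of output weight for a weaker input norm), but that still only delivers $\om\in L^\I_T\ell^2_{s_2}$ and a distributional solution; it does not yield continuity of $\om$ in $\ell^2_{s_2}$, which the Lemma asserts. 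So as written the proposal does not close Case $[A]'$.
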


Assuming Lemma~\ref{lem:regular} for the moment, we can prove Theorem~\ref{thm:weak} by almost the same argument as in \cite[Section~4]{GKO13}.
We give a proof for completeness.
\begin{proof}[Proof of Theorem~\ref{thm:weak}] 
Let $\om _0\in \ell ^2_s$, and set $R:=2\tnorm{\om _0}{\ell ^2_s}+1$.
Take a sequence $\{ \om _{N,0}\}_{N\in \Bo{N}}\subset \ell ^2_{s_2}$ such that $\om _{N,0}\to \om _0$ in $\ell ^2_s$ as $N\to \I$.
(For instance, we can take $\om _{N,0}=\chf{\{ |n|\le N\}}\om _0$.)
We may assume that $\tnorm{\om _{N,0}}{\ell ^2_s}\le \tnorm{\om _0}{\ell ^2_s}+1/3$ for any $N$.

First of all, we need to show existence of solutions for approximating regular initial data $\{ \om _{N,0}\}$ with a uniform existence time.
Set $T:=T_{s_2}(R)~(\le T_s(R))$, with $T_{s'}(R)$ given in Lemma~\ref{lem:apriori}.
For $N\ge 1$, let $\om _N\in C_{T'}\ell ^2_{s_2}$ be the solution of \eqref{Abs'} with $\om _N(0)=\om _{N,0}$ given in Lemma~\ref{lem:regular} with the existence time $T'$ corresponding to the size $2\tnorm{\om _{N,0}}{\ell ^2_{s_2}}$, and suppose $T'<T$.
Let $T'':=\sup \{ t\in [0,T']:\tnorm{\om _N}{C_t\ell ^2_s}\le R\}$.
Since $\tnorm{\om _{N,0}}{\ell ^2_s}\le \tnorm{\om _0}{\ell ^2_s}+1/3\le R/2$ and the map $t\mapsto \tnorm{\om _N}{C_t\ell ^2_s}$ is continuous, we see $T''>0$.
For any $t\in (0,T'']\cap (0,T')$, the a priori estimate \eqref{apriori-s} shows $\tnorm{\om _N}{C_t\ell ^2_s}\le 2\tnorm{\om _{N,0}}{\ell ^2_s}\le 2\tnorm{\om _0}{\ell ^2_s}+2/3<R$.
This implies $t<T''$, and hence $T''=T'$.
Therefore, we see that $\tnorm{\om _N}{C_{T'}\ell ^2_s}\le R$, and by \eqref{apriori-s'}, that $\tnorm{\om _N(T')}{\ell ^2_{s_2}}\le 2\tnorm{\om _{N,0}}{\ell ^2_{s_2}}$.
Applying Lemma~\ref{lem:regular} with initial datum $\om _N(T')$, we can extend the solution up to time $2T'$.%
\footnote{%
More precisely, we apply Lemma~\ref{lem:regular} with initial datum $\om _N(T'/2)$ and obtain a solution $\ti{\om}\in C_{T'}\ell ^2_{s_2}$ with $\ti{\om}(0)=\om _N(T'/2)$.
Then, it follows from the difference estimate \eqref{apriori-s} and a continuity argument that $\ti{\om}(t)=\om _{N}(t+T'/2)$ for $t\in [0,T'/2]$. 
Therefore, we have a solution on $[0,3T'/2]$ with initial condition $\om _{N,0}$ at $t=0$.} 
With a uniform time increment $T'$, this procedure can be iterated until the existence time exceeds $T$, giving a solution $\om _N\in C_T\ell ^2_{s_2}$ on the time interval $[0,T]$ which is uniform in $N$.

From the above argument we observe that $\tnorm{\om _N}{C_T\ell ^2_s}\le 2\tnorm{\om _{0,N}}{\ell ^2_s}\le R$ for any $N$.
The difference estimate \eqref{apriori-s} then shows that $\{ \om _{N} \} _{N\in \Bo{N}}$ is a Cauchy sequence in $C_T\ell ^2_s$.
Let $\om$ be the limit of $\{ \om _N \} _N$ in $C_T\ell ^2_s$. 
Note that 
\eq{limit-N}{\tnorm{\om}{C_T\ell ^2_s}\le 2\tnorm{\om}{\ell ^2_s},\qquad \tnorm{\om -\om _N}{C_T\ell ^2_s}\le 2\tnorm{\om _0-\om _{N,0}}{\ell ^2_s}.}

We next prove that $\om$ is a weak solution of \eqref{Abs'} in the sense of Definition~\ref{defn:weak}.
Since $\Sc{R}[\om _N]\to \Sc{R}[\om ]$ in $C_T\ell ^2_s$ by $(R)'$ and \eqref{limit-N}, we see from the equation that the sequence $\{ \Sc{N}[\om _N]\} _N=\{ \p _t\om _N-\Sc{R}[\om _N]\} _N$ has the limit $\zeta \in \Sc{D}'((0,T)\times \Bo{Z}^d)$ which satisfies $\p _t\om =\zeta +\Sc{R}[\om ]$ in $\Sc{D}'((0,T)\times \Bo{Z}^d)$.
It then suffices to show that, for any sequence $\{ m_k \}_{k\in \Bo{N}}$ of functions on $\Bo{Z}^d$ satisfying the conditions \eqref{cond:fco}, the limit $\lim\limits _{k\to \I} \Sc{N}[m_k\om ]$ exists in $\Sc{D}'((0,T)\times \Bo{Z}^d)$ and coincides with $\zeta$.
To this end, fix a test function $\phi \in C_c^\I ((0,T);\ell ^2_\I (\Bo{Z}^d))$, and evaluate
\eqq{&\big| \BLR{\zeta -\Sc{N}[m_k\om ]}{\phi}\big| \\
&\quad \le \big| \BLR{\zeta -\Sc{N}[\om _N]}{\phi}\big| +\big| \BLR{\Sc{N}[\om _N]-\Sc{N}[m_k\om _N]}{\phi}\big| +\big| \BLR{\Sc{N}[m_k\om _N]-\Sc{N}[m_k\om ]}{\phi}\big| \\
&\quad =:I(N)+II(N,k)+III(N,k).
}
By definition, we have $I(N)\to 0$ as $N\to \I$.
Next, the assumption $(A1)'$ or $(B1)'$ implies
\[ II(N,k)\le C\big( \tnorm{\om _N}{C_T\ell ^2_{s_2}}+\tnorm{m_k\om _N}{C_T\ell ^2_{s_2}}\big) ^{p-1}\tnorm{(1-m_k)\om _N}{C_T\ell ^2_{s_2}}\tnorm{\phi}{L^1_T\ell ^2_{-s}},\]
which tends to $0$ as $k\to \I$ for any fixed $N\in \Bo{N}$ by the assumption \eqref{cond:fco} and the dominated convergence theorem.%
\footnote{This argument has to be modified if we consider an $\ell^\infty$-type space $Y$ instead of $\ell^2_s$ (and then $\tilde{Y}:=\LR{\cdot}^{-(s_2-s)}Y$ instead of $\ell ^2_{s_2}$).
Noticing that the estimate $(A1)'$ or $(B1)'$ still holds if $\tilde{Y}$ is changed to a stronger norm $\LR{\cdot}^{-\alpha}\tilde{Y}$ with any $\alpha >0$ (in view of the property \eqref{absolutenorm} of the $Y$ norm and the inequality \eqref{est:weight}), we may initially choose $\tilde{Y}$ so that the estimate $(A1)'$ or $(B1)'$ is valid in a slightly weaker norm $\LR{\cdot}^{\e}\tilde{Y}$.
Then, we have
\[ \| \Sc{N}[\om _N]-\Sc{N}[m_k\om _N]\|_{Y}\lesssim \big( \| \om _N\|_{\tilde{Y}}+\| m_k\om _N \|_{\tilde{Y}}\big) ^{p-1}\tnorm{(1-m_k)\LR{n}^{-\e}\om _N}{\tilde{Y}}\lesssim \| (1-m_k)\LR{n}^{-\e}\|_{\ell ^\infty (\mathbb{Z}^d)}\| \om _N\|_{\tilde{Y}}^p.\]
Since $\| (1-m_k)\LR{n}^{-\e}\|_{\ell ^\infty}\to 0$ ($k\to \infty$) by the assumption \eqref{cond:fco}, it follows that $\Sc{N}[\om _N]-\Sc{N}[m_k\om _N]\to 0$ in $L^\infty _TY$, and that $II(N,k)\to 0$, as $k\to \infty$ (for each fixed $N$).}
Therefore, we have only to show that $III(N,k)\to 0$ as $N\to \I$ uniformly in $k$.

Now, we apply the NFR procedure as in Section~\ref{section:proof} to the nonlinear term $\Sc{N}[m_k\om _N]$ for each $k,N\in \Bo{N}$.
(Recall that $\om _N\in C_T\ell ^2_{s_2}$, $\tnorm{\om _N}{C_T\ell ^2_s}\le R$ and $T\le T_s(R)$ which is defined in Lemma~\ref{lem:apriori}.)
We obtain the equations
\[ \int _0^t\Sc{N}[m_k\om _N]_n=\sum _{j=1}^{J-1}\Sc{N}^{(j),k}_{0}[\om _N]_n\Big| _0^t+\int _0^t\,\Big( \sum _{j=1}^{J-1}\Sc{N}^{(j),k}_R[\om _N]_n+\sum _{j=1}^{J-1}\Sc{R}^{(j),k}[\om _N]_n+\Sc{N}^{(J),k}[\om _N]_n\Big) \]
for $J\ge 1$, where $\Sc{N}^{(j),k}_R$ stands for the $[(j-1)p+1]$-linear form $\Sc{N}^{(j)}_R$ with $m^1(n,n_1,\dots ,n_p)$ replaced by $m_k(n_1)\cdots m_k(n_p)m^1(n,n_1,\dots ,n_p)$, and similarly for $\Sc{N}^{(j),k}_0$, $\Sc{R}^{(j),k}$, and $\Sc{N}^{(J),k}$.
Since $m_k$'s are uniformly bounded, these multilinear forms obey the same bounds as those without $m_k$'s, and the same argument as in the proof of Lemma~\ref{lem:apriori} verifies the limit equation
\[ \int _0^t\Sc{N}[m_k\om _N]_n=\sum _{j=1}^{\I}\Sc{N}^{(j),k}_{0}[\om _N]_n\Big| _0^t+\int _0^t\sum _{j=1}^{\I}\Big( \Sc{N}^{(j),k}_R[\om _N]_n+\Sc{R}^{(j),k}[\om _N]_n\Big) \]
as well as the difference estimate
\[ \bigg\| \int _0^t\Big( \Sc{N}[m_k\om _N] -\Sc{N}[m_k\om _{N'}]\Big) \bigg\| _{L^\infty_T\ell ^2_s}\le C\Big( \sup _{k}\tnorm{m_k}{\ell ^\I}\Big) \tnorm{\om _N-\om _{N'}}{C_T\ell ^2_s}.\]
This in particular implies
\eqq{\big| \BLR{\Sc{N}[m_k\om _N] -\Sc{N}[m_k\om _{N'}]}{\phi}\big| &=\left| \BLR{\int _0^t\Big( \Sc{N}[m_k\om _N] -\Sc{N}[m_k\om _{N'}]\Big)}{\p _t\phi}\right| \\
&\le C\Big( \sup _{k}\tnorm{m_k}{\ell ^\I}\Big) \tnorm{\om _N-\om _{N'}}{C_T\ell ^2_s}\tnorm{\p _t\phi}{L^1_T\ell ^2_{-s}},}
and hence the sequence $\{ \BLR{\Sc{N}[m_k\om _N]}{\phi}\} _{N}$ converges uniformly in $k$.
On the other hand, $(A1)'$ or $(B1)'$ and the assumptions \eqref{cond:fco} on $m_k$'s imply
\eqq{III(N,k)&\le C\big( \tnorm{m_k\om _N}{C_T\ell ^2_{s_2}}+\tnorm{m_k\om}{C_T\ell ^2_{s_2}}\big) ^{p-1}\tnorm{m_k(\om _N-\om )}{C_T\ell ^2_{s_2}}\tnorm{\phi}{L^1_T\ell ^2_{-s}}\\
&\le C\Big( \tnorm{m_k}{\ell ^\I},\,|\mathrm{supp}(m_k)|\Big) \big( \tnorm{\om _N}{C_T\ell ^2_{s}}+\tnorm{\om}{C_T\ell ^2_{s}}\big) ^{p-1}\tnorm{\om _N-\om}{C_T\ell ^2_{s}}\tnorm{\phi}{L^1_T\ell ^2_{-s}},}
which shows that $\{ \BLR{\Sc{N}[m_k\om _N]}{\phi}\} _N$ converges to $\BLR{\Sc{N}[m_k\om ]}{\phi}$ (for each fixed $k$).
We therefore see that $III(N,k)\to 0$ as $N\to \I$ uniformly in $k$.

It remains to show the properties (i)--(iv).
In view of Lemma~\ref{lem:apriori}, the bounds in (iv) are already verified in the case of regular data $\om _0\in \ell ^2_{s_2}$, and (ii) is a consequence of the difference bound.
It also shows that the map $\om _0\mapsto \om$ is well-defined in $\om _0\in \ell ^2_s$ with $\tnorm{\om _0}{\ell ^2_s}\le r$; i.e., the limit $\om$ is independent of the approximating sequence of the initial datum $\om _0$.
This fact is in turn combined with uniqueness of regular solutions to imply (iii).
Approximation by regular solutions also verifies the bounds in (iv) for general $\om _0\in \ell ^2_s$ with $\tnorm{\om _0}{\ell ^2_s}\le r$.
To show (i), we first consider the case $s'\ge s_2$.
Since the estimates $(A1)'$, $(B1)'$ still hold if $\ell ^2_{s_2}$ is replaced by $\ell ^2_{s'}$, we deduce from Lemma~\ref{lem:regular} and uniqueness of regular solutions that $\om \in C_{T'}\ell ^2_{s'}$ for possibly smaller $T'$, and then $\om \in C_T\ell^2_{s'}$ by a continuity argument.
When $s'\in (s,s_2)$, we approximate the initial datum $\om _0\in \ell ^2_{s'}$ (not only in $\ell ^2_s$, but) in $\ell ^2_{s'}$.
Then, $\tnorm{\om _{N,0}}{\ell ^2_{s'}}$ is bounded, and so \eqref{apriori-s'} shows that $\tnorm{\om _N}{C_{T'}\ell ^2_{s'}}$ is bounded for possibly smaller $T'$.
Now, we can take the difference of the limit equation \eqref{eq:limit} and estimate in $C_{T'}\ell ^2_{s'}$ as in the proof of \eqref{apriori-s}, where the smallness of $T'$ is determined (not by $\tnorm{\om _N}{C_{T'}\ell ^2_s}$, but) by $\tnorm{\om _N}{C_{T'}\ell ^2_{s'}}$.
This implies the convergence of the approximating solution in $C_{T'}\ell ^2_{s'}$, and therefore, that $\om \in C_{T'}\ell ^2_{s'}$.
Again, a continuity argument shows $\om \in C_T\ell ^2_{s'}$.
\end{proof}

\subsection{Construction of local-in-time regular solutions}
\label{subsec:regular}

Finally, we prove Lemma~\ref{lem:regular}.
\begin{proof}[Proof of Lemma~\ref{lem:regular}]
In Case $[B]'$ the claim follows from a standard fixed point argument with the estimates $(B1)'$ and $(R)'$, so we concentrate on Case $[A]'$.
We shall construct the solution by regularizing the equation and the initial data and then taking the limit on the basis of a priori energy estimates.%
\footnote{Past use of the method of infinite NFR for the purpose of deriving energy estimates can be found in \cite{OST18,OW18}, where the differential equality of an energy quantity (i.e., an inner product of two copies of a solution) was transformed and the notion of ordered bi-trees was introduced to represent the transformed equation.
We do not have to argue with bi-trees in our proof, however.}
First of all, the assumptions $(A1)$, $(A1)'$ and the inequality \eqref{est:weight} imply
\eqq{
&(A1)_+\qquad \norm{\sum _{n=n_1+\dots +n_p}\frac{|m|}{\LR{\phi}^{1/2}}\om ^{(1)}_{n_1}\cdots \om^{(p)}_{n_p}}{\ell^2_{s_2}}\le C\prod _{j=1}^p\norm{\om ^{(j)}}{\ell^2_{s_2}},\\[5pt]
&(A1)'_+\qquad \norm{\sum _{n=n_1+\dots +n_p}|m|\om ^{(1)}_{n_1}\cdots \om^{(p)}_{n_p}}{\ell^2_{s_2}}\le C\prod _{j=1}^p\norm{\om ^{(j)}}{\ell^2_{s_2+\al}},
}
where $\al :=s_2-s>0$.

For $\e \in (0,1)$, consider the equation
\eq{Abs'-p}{\p _t\om _n=-\e \LR{n}^{2\al}\om _n+\Sc{N}[\om ]_n+\Sc{R}[\om ]_n,\qquad (t,n)\in (0,T)\times \Bo{Z}^d.}
We can solve the Cauchy problem associated with \eqref{Abs'-p} in $\ell ^2_{s_2}$ by applying a fixed point argument to the integral equation 
\[ \om (t)=e^{-\e\LR{n}^{2\al}t}\om _0+\int _0^t e^{-\e \LR{n}^{2\al}(t-t')}\Big( \Sc{N}[\om ]+\Sc{R}[\om ]\Big) (t')\,dt',\]
with an existence time $\tau =\tau (\e ,\tnorm{\om _0}{\ell ^2_{s_2}})>0$.
In fact, noticing that
\[ \sup _{n}e^{-\e \LR{n}^{2\al}(t-t')}\LR{n}^{s_2-s}\lec \e ^{-1/2}(t-t')^{-1/2},\qquad t>t'>0,\]
we deduce from $(A1)'$ and $(R)'$ that
\[ \norm{\int _0^t e^{-\e \LR{n}^{2\al}(t-t')}\Big( \Sc{N}[\om ]+\Sc{R}[\om ]\Big) (t')\,dt'}{L^\infty_\tau \ell ^2_{s_2}}\lec \e ^{-1/2}\tau ^{1/2}\tnorm{\om}{C_\tau \ell ^2_{s_2}}^p+\tau C(\tnorm{\om}{C_\tau \ell ^2_{s_2}}),\]
as well as the corresponding difference estimate.
Note that by a standard argument we also have uniqueness in $C_\tau \ell ^2_{s_2}$ and persistence of regularity (i.e., $\om _0\in \ell ^2_{s'}$ for some $s'>s_2$ implies $\om \in C_\tau \ell ^2_{s'}$).

The main ingredient of the proof is the following:
\begin{lem}\label{lem:apriori-p}
Let $s\in \R$, and assume the hypotheses $(R)'$, $[A]'$ in Theorem~\ref{thm:weak}.
Then, for any $R>0$ there exists $\tau _{s_2}=\tau _{s_2}(R)>0$ independent of $\e$ such that any solution $\om ^\e \in C_\tau \ell ^2_{s_2+2\al}$ of \eqref{Abs'-p} on $(0,\tau )\times \Bo{Z}^d$ with $\e \in (0,1)$, $\tau \in (0,\tau _{s_2}]$ satisfying $\tnorm{\om ^\e}{C_\tau \ell ^2_{s_2}}\le R$ solves the limit equation 
\eq{eq:limit-p}{\p _t\bigg( \om _n^\e -\sum _{j=1}^{\I}\Sc{N}^{(j)}_{0}[\om ^\e ]_n\bigg) &=-\e \LR{n}^{2\al}\om _n^\e +\sum _{j=1}^{\I}\Big( \Sc{N}^{(j)}_R[\om ^\e ]_n-\e \Sc{P}^{(j)}[\om ^\e ]_n\Big) +\sum _{j=0}^{\I}\Sc{R}^{(j)}[\om ^\e ]_n}
in $C_\tau \ell ^2_{s_2}$, where $\Sc{P}^{(j)}[\om ]$ is the $[(p-1)j+1]$-linear form obtained by replacing one of $\om _n$ in $\Sc{N}^{(j)}_0[\om ]$ by $\LR{n}^{2\al}\om _n$;
\[ \Sc{P}^{(j)}[\om ]_n:=(-1)^j\sum _{\Sc{T}\in \mathfrak{T}(j)}\sum _{\mat{\mathbf{n}\in \mathfrak{N}_n(\Sc{T})\\ (\phi ^k)_{k=1}^{j}\in \Phi _{N\!R}^{(j)}}}\frac{e^{it\ti{\phi}^j}}{\prod\limits _{k=1}^{j}i\ti{\phi}^k}\Big[ \prod _{k=1}^jm^k\Big] \Big( \sum _{a\in \Sc{T}_\I}\LR{n_a}^{2\al}\Big) \prod _{a\in \Sc{T}_\I}\om _{n_a}.\]
Moreover, the following estimates hold:
\eq{apriori-p-s}{\tnorm{\om ^\e}{C_\tau \ell ^2_{s_2}}\le 6\tnorm{\om ^\e (0)}{\ell ^2_{s_2}},}
and
\eq{apriori-p-s2}{\tnorm{\om ^{\e _1}-\om ^{\e _2}}{C_\tau \ell ^2_{s_2}}\le 6\tnorm{\om ^{\e _1}(0)-\om ^{\e _2}(0)}{\ell ^2_{s_2}}+\e _1\tnorm{\om ^{\e _1}}{C_\tau \ell ^2_{s_2+2\al}}+\e _2\tnorm{\om ^{\e _2}}{C_\tau \ell ^2_{s_2+2\al}}}
for any $0<\e _1,\e _2<1$ and any solutions $\om ^{\e _j}\in C_\tau \ell ^2_{s_2+2\al}$ of \eqref{Abs'-p} with $\e =\e _j$ ($j=1,2$) satisfying $\tnorm{\om ^{\e _j}}{C_\tau \ell ^2_{s_2}}\le R$.
Furthermore, for any $s'>s_2$ there exists $\tau _{s'}=\tau _{s'}(R)\in (0,\tau _{s_2}(R)]$ (which is decreasing in $s'$) such that if in addition $\tau \in (0,\tau _{s'}]$ and $\om ^\e \in C_\tau \ell ^2_{s'+2\al}$, then the limit equation \eqref{eq:limit-p} is satisfied in $C_\tau \ell ^2_{s'}$ and 
\eq{apriori-p-s'}{\tnorm{\om ^\e}{C_\tau \ell ^2_{s'}}\le 6\tnorm{\om ^\e (0)}{\ell ^2_{s'}}.}
\end{lem}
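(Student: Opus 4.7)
The plan is to follow the NFR scheme of Section~\ref{section:proof} and the proof of Lemma~\ref{lem:apriori}, while carefully tracking the extra linear contribution $-\e\LR{n}^{2\al}\om^\e_n$ that enters each time $\p_t\om_{n_a}$ is substituted during integration by parts in time. After $J-1$ iterations, in addition to the familiar families $\Sc{N}_R^{(j)}$, $\Sc{N}_0^{(j)}$, $\Sc{R}^{(j)}$, $\Sc{N}^{(J)}$, we obtain the new family $-\e\Sc{P}^{(j)}[\om^\e]$; here $\Sc{P}^{(j)}[\om]$ arises from $\Sc{N}_0^{(j)}[\om]$ by summing over leaves $a\in\Sc{T}_\I$ the replacements $\om_{n_a}\mapsto\LR{n_a}^{2\al}\om_{n_a}$, so the weight $\sum_{a\in\Sc{T}_\I}\LR{n_a}^{2\al}$ appears as in the statement. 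Letting $J\to\I$ then yields \eqref{eq:limit-p}, and the formal manipulations are justified for $\om^\e\in C_\tau\ell^2_{s_2+2\al}$ exactly as in Section~\ref{section:proof}.

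For the multilinear bounds, I would mirror the proof of Proposition~\ref{prop:abstract} Case~$[A]$, but with $(A1)$, $(A1)'$ replaced by their lifted versions $(A1)_+$, $(A1)'_+$ established at the beginning of the proof of Lemma~\ref{lem:regular} (these follow from $(A1)$, $(A1)'$ via \eqref{est:weight}). Iterating $(A1)_+$ gives the $\ell^2_{s_2}$ bounds for $\Sc{N}_R^{(J)}$, $\Sc{N}_0^{(J)}$, $\Sc{R}^{(J)}$, while iterating $(A1)'_+$ with one factor at regularity $s_2+\al$ yields the bound on $\Sc{N}^{(J)}[\om^\e]$ needed to pass to $J\to\I$. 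For $\Sc{P}^{(j)}$, distributing the weight onto a single leaf and iterating $(A1)_+$ with that leaf in $\ell^2_{s_2+2\al}$ gives
\[
\|\Sc{P}^{(j)}[\om^\e]\|_{\ell^2_{s_2}}\le Cj\bigl[CM^{-1/2}\|\om^\e\|_{\ell^2_{s_2}}^{p-1}\bigr]^j\|\om^\e\|_{\ell^2_{s_2+2\al}}.
\]
With the choice $M=1/\tau$ and $\tau_{s_2}=\tau_{s_2}(R)$ small enough so that $\eta:=C\tau^{1/2}R^{p-1}\ll 1$, every geometric series in $j$ converges.

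The crux is to close \eqref{apriori-p-s} in $\ell^2_{s_2}$ \emph{without} picking up an $\e\|\om^\e\|_{C_\tau\ell^2_{s_2+2\al}}$ loss, despite the presence of the linear term $-\e\LR{n}^{2\al}\om^\e$ on the right-hand side of \eqref{eq:limit-p}. My plan is to recast \eqref{eq:limit-p} in Duhamel form with respect to the semigroup $e^{-\e\LR{n}^{2\al}t}$: setting $f^\e:=\om^\e-\sum_j\Sc{N}_0^{(j)}[\om^\e]$ and splitting $-\e\LR{n}^{2\al}\om^\e=-\e\LR{n}^{2\al}f^\e-\e\LR{n}^{2\al}\sum_j\Sc{N}_0^{(j)}[\om^\e]$, one obtains a Duhamel representation for $f^\e$ in which the semigroup is a contraction on $\ell^2_{s_2}$ (since $|e^{-\e\LR{n}^{2\al}t}|\le 1$). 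The terms $\Sc{N}_R^{(j)}$, $\Sc{R}^{(j)}$, $\e\Sc{P}^{(j)}$ contribute $O(\eta^j)$ as in Lemma~\ref{lem:apriori}, so the only delicate piece is $\int_0^te^{-\e\LR{n}^{2\al}(t-s)}\e\LR{n}^{2\al}\Sc{N}_0^{(j)}[\om^\e](s)\,ds$. I would integrate this by parts in $s$, using $\p_s e^{-\e\LR{n}^{2\al}(t-s)}=\e\LR{n}^{2\al}e^{-\e\LR{n}^{2\al}(t-s)}$, which expresses it as $\Sc{N}_0^{(j)}[\om^\e](t)-e^{-\e\LR{n}^{2\al}t}\Sc{N}_0^{(j)}[\om^\e](0)-\int_0^t e^{-\e\LR{n}^{2\al}(t-s)}\p_s\Sc{N}_0^{(j)}[\om^\e](s)\,ds$. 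The boundary terms are bounded in $\ell^2_{s_2}$ by $C\eta^j\|\om^\e\|_{C_\tau\ell^2_{s_2}}$ directly from the multilinear estimates above; the time derivative $\p_s\Sc{N}_0^{(j)}[\om^\e]$, computed via \eqref{Abs'-p}, unfolds into multilinear terms of the same structure as $\Sc{N}^{(j+1)}$, $\Sc{R}^{(j+1)}$, $\Sc{N}_R^{(j+1)}$ and a new $\e\LR{n_a}^{2\al}\om^\e_{n_a}$ contribution, all of which admit the same $\ell^2_{s_2}$ bounds (the new $\e$-piece being reabsorbed inductively by the same Duhamel IBP). Summing the geometric series in $j$ and passing from $f^\e$ back to $\om^\e$ yields \eqref{apriori-p-s}.

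The estimates \eqref{apriori-p-s2} and \eqref{apriori-p-s'} are obtained by variants of this scheme: for the difference, the linear mismatch $-(\e_1-\e_2)\LR{n}^{2\al}\om^{\e_2}$ between the two semigroups cannot be telescoped by the above integration by parts and must simply be estimated by $\e_j\|\om^{\e_j}\|_{C_\tau\ell^2_{s_2+2\al}}$, producing the remainder in \eqref{apriori-p-s2}; the higher-regularity bound \eqref{apriori-p-s'} is shown by rerunning the same Duhamel-plus-NFR argument at regularity $s'$ (which needs only the obvious $s'$-analogues of $(A1)_+$, $(A1)'_+$, derived again from \eqref{est:weight}) and by choosing $\tau_{s'}(R)\le\tau_{s_2}(R)$ small enough to absorb the $s'$-dependent constants. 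The main obstacle is the Duhamel/IBP step for $\e\LR{n}^{2\al}\sum_j\Sc{N}_0^{(j)}[\om^\e]$, which must be iterated carefully so that the residual $\p_s$ derivatives always land on factors still controlled by the multilinear estimates in $\ell^2_{s_2}$; once this is organized, the remainder of the proof reduces to combining the multilinear estimates of paragraph two in the manner of Lemma~\ref{lem:apriori}.
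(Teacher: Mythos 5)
Your first two paragraphs (the NFR derivation of \eqref{eq:limit-p}, the appearance of $\Sc{P}^{(j)}$, and the multilinear estimates via $(A1)_+$, $(A1)'_+$) match the paper's argument. The divergence begins at the third paragraph, where you propose to close \eqref{apriori-p-s} via a Duhamel representation for $f^\e=\om^\e-\sum_j\Sc{N}_0^{(j)}[\om^\e]$ against the heat semigroup $e^{-\e\LR{n}^{2\al}t}$, followed by integration by parts in time on the residual term $\int_0^t e^{-\e\LR{n}^{2\al}(t-s)}\e\LR{n}^{2\al}\Sc{N}_0^{(j)}[\om^\e](s)\,ds$. This route does not work, for two reasons. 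First, your identification of $\p_s\Sc{N}_0^{(j)}[\om^\e]$ is incomplete: besides the Leibniz terms $\e\Sc{P}^{(j)}-\Sc{R}^{(j)}-\Sc{N}^{(j+1)}$, differentiating the oscillatory factor $e^{is\ti{\phi}^j}$ produces $i\ti{\phi}^j\,\Sc{N}_0^{(j)}=\Sc{N}^{(j)}_{N\!R}$, which carries \emph{no} $\ti{\phi}^j$ denominator and therefore has no closed $\ell^2_{s_2}$ estimate; and any further IBP on $\int_0^t e^{-\e\LR{n}^{2\al}(t-s)}\Sc{N}^{(j)}_{N\!R}\,ds$ using the $e^{is\ti{\phi}^j}$ oscillation simply reproduces $-\int_0^t\e\LR{n}^{2\al}e^{-\e\LR{n}^{2\al}(t-s)}\Sc{N}_0^{(j)}\,ds$ plus the same boundary and Leibniz pieces, so the scheme is an exact algebraic circle that undoes itself. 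Second, the assertion that $\e\Sc{P}^{(j)}$ ``contributes $O(\eta^j)$ as in Lemma~\ref{lem:apriori}'' is false: $\Sc{P}^{(j)}$ carries a $\LR{n_a}^{2\al}$ weight on a leaf, so the multilinear bound gives $\|\Sc{P}^{(j)}[\om^\e]\|_{\ell^2_{s_2}}\lesssim\eta^j\|\om^\e\|_{\ell^2_{s_2+2\al}}$, and the resulting contribution $\e\eta^j\|\om^\e\|_{\ell^2_{s_2+2\al}}$ is not uniform in $\e$ once the $\e$-dependent regularization $\|\om^\e(0)\|_{\ell^2_{s_2+2\al}}\sim\e^{-1/2}$ is in play. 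In a linear (Duhamel-type) estimate there is simply no mechanism to recover these $\e$-losses.

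The paper's proof overcomes precisely this obstacle by abandoning Duhamel in favor of an energy identity: it pairs \eqref{eq:limit-p} with $\zeta=f^\e$ in $\ell^2_{s'}$ and writes $\BLR{-\e\LR{n}^{2\al}\om}{\zeta}_{\ell^2_{s'}}=-\e\|\om\|_{\ell^2_{s'+\al}}^2+\e\sum_j\BLR{\LR{n}^\al\om}{\LR{n}^\al\Sc{N}_0^{(j)}[\om]}_{\ell^2_{s'}}$, and then splits the $\LR{n}^{2\al}$ weight in both the cross term and $\BLR{\Sc{P}^{(j)}[\om]}{\zeta}_{\ell^2_{s'}}$ into two copies of $\LR{n}^\al$, distributing one onto each factor of the pairing (together with the interpolation $\tnorm{\om}{\ell^2_{s_2+\al}}\tnorm{\om}{\ell^2_{s'}}\le\tnorm{\om}{\ell^2_{s_2}}\tnorm{\om}{\ell^2_{s'+\al}}$). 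Every $\e$-laden term then ends up of the form $\e\eta^j\|\om\|_{\ell^2_{s'+\al}}^2$ and is absorbed by the good negative square $-2\e\|\om\|_{\ell^2_{s'+\al}}^2$. As the paper itself remarks (``which forces us to consider an energy quantity instead of the solution itself''), this weight-splitting is intrinsically quadratic: it requires a pairing against a comparable quantity $\zeta\sim\om$ and cannot be reproduced term-by-term in a first-order Duhamel estimate. You should replace the Duhamel/IBP step by this energy argument; the rest of your outline (and, in particular, the treatment of \eqref{apriori-p-s2} and \eqref{apriori-p-s'}) then goes through, though in \eqref{apriori-p-s2} the $\e_j$-terms enter not through a semigroup mismatch but through directly estimating $\e_j\|\LR{n}^{2\al}\om^{\e_j}+\sum_k\Sc{P}^{(k)}[\om^{\e_j}]\|_{\ell^2_{s_2}}$ in the integrated difference of the two copies of \eqref{eq:limit-p}.
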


\begin{proof}
Proof is similar to that of Lemma~\ref{lem:apriori}.
It is easy to formally derive the equations in the hierarchy;
\eqq{\p _t\om _n&=-\e \LR{n}^{2\al}\om _n+\p _t\bigg( \sum _{j=1}^{J-1}\Sc{N}^{(j)}_{0}[\om ]_n\bigg)\\
&\quad +\sum _{j=1}^{J-1}\Big( \Sc{N}^{(j)}_R[\om ]_n-\e \Sc{P}^{(j)}[\om ]_n\Big) +\sum _{j=0}^{J-1}\Sc{R}^{(j)}[\om ]_n+\Sc{N}^{(J)}[\om ]_n,\qquad J\ge 1,}
and we need the multilinear estimates for the terms in the above equations.  
The fundamental estimates $(R)'$, $(A1)_+$, $(A1)'_+$ and the argument in Section~\ref{section:proof} or in the proof of Lemma~\ref{lem:apriori} yield:
\eqq{\norm{\Sc{N}^{(J)}_R[\om ]}{\ell ^2_{s'}}&\le C_{s'}M\Big[ C_{s'}M^{-1/2}\tnorm{\om}{\ell ^2_{s_2}}^{p-1}\Big] ^J\tnorm{\om}{\ell ^2_{s'}},\\
\norm{\Sc{N}^{(J)}_0[\om ]}{\ell ^2_{s'}}&\le C_{s'}\Big[ C_{s'}M^{-1/2}\tnorm{\om}{\ell ^2_{s_2}}^{p-1}\Big] ^J\tnorm{\om}{\ell ^2_{s'}},\\
\norm{\Sc{R}^{(J)}[\om ]}{\ell ^2_{s'}}&\le C_{s'}\Big[ C_{s'}M^{-1/2}\tnorm{\om}{\ell ^2_{s_2}}^{p-1}\Big] ^JC\big( s',\tnorm{\om}{\ell ^2_{s_2}}\big) \tnorm{\om}{\ell ^2_{s'}},\\
\norm{\Sc{P}^{(J)}[\om ]}{\ell ^2_{s'}}&\le C_{s'}\Big[ C_{s'}M^{-1/2}\tnorm{\om}{\ell ^2_{s_2}}^{p-1}\Big] ^J\tnorm{\om}{\ell ^2_{s'+2\al}},\\
\norm{\Sc{N}^{(J+1)}[\om ]}{\ell ^2_{s'}}&\le C_{s'}\Big[ C_{s'}M^{-1/2}\tnorm{\om}{\ell ^2_{s_2}}^{p-1}\Big] ^{J} \tnorm{\om}{\ell ^2_{s_2+\al}}^{p-1}\tnorm{\om}{\ell ^2_{s'+\al}}}
for any $s'\ge s_2$ and $\om \in \ell ^2_{s'+2\al}$, and 
\eqq{\norm{\Sc{N}^{(J)}_R[\om ]-\Sc{N}^{(J)}_R[\ti{\om}]}{\ell ^2_{s_2}}&\le CM\Big[ CM^{-1/2}\big( \tnorm{\om}{\ell ^2_{s_2}}+\tnorm{\ti{\om}}{\ell ^2_{s_2}}\big) ^{p-1}\Big] ^J\tnorm{\om -\ti{\om}}{\ell ^2_{s_2}},\\
\norm{\Sc{N}^{(J)}_0[\om ]-\Sc{N}^{(J)}_0[\ti{\om}]}{\ell ^2_{s_2}}&\le C\Big[ CM^{-1/2}\big( \tnorm{\om}{\ell ^2_{s_2}}+\tnorm{\ti{\om}}{\ell ^2_{s_2}}\big) ^{p-1}\Big] ^J\tnorm{\om -\ti{\om}}{\ell ^2_{s_2}},\\
\norm{\Sc{R}^{(J)}[\om ]-\Sc{R}^{(J)}[\ti{\om}]}{\ell ^2_{s_2}}&\le C\Big[ CM^{-1/2}\big( \tnorm{\om}{\ell ^2_{s_2}}+\tnorm{\ti{\om}}{\ell ^2_{s_2}}\big) ^{p-1}\Big] ^JC\big( \tnorm{\om}{\ell ^2_{s_2}},\tnorm{\ti{\om}}{\ell ^2_{s_2}}\big) \tnorm{\om -\ti{\om}}{\ell ^2_{s_2}},
}
for any $\om ,\ti{\om}\in \ell ^2_{s_2}$.
In particular, the sums over $j$ in \eqref{eq:limit-p} converge in $C_\tau \ell ^2_{s'}$ (and all formal computations are justified) if $\om \in C_\tau \ell ^2_{s'+2\al}$, $\tnorm{\om}{C_\tau \ell ^2_{s_2}}\le R$ and $C_{s'}\tau ^{1/2}R^{p-1}\ll 1$ (after setting $M=\tau ^{-1}$).

Note that the sum of $\Sc{P}^{(j)}$ requires additional decay of $\om$.
To prove \eqref{apriori-p-s} and \eqref{apriori-p-s'}, we need to absorb it into the ``negative'' part $-\e \LR{n}^{2\al}\om _n$, which forces us to consider an energy quantity instead of the solution itself. 
For fixed $\e \in (0,1)$ and $s'\ge s_2$, let $\om \in C_\tau \ell ^2_{s'+2\al}$ be a solution of \eqref{Abs'-p} satisfying $\tnorm{\om}{C_\tau \ell ^2_{s_2}}\le R$ with $C_{s'}\tau ^{1/2}R^{p-1}\ll 1$.
Set $\ze :=\om -\sum _{j=1}^\I \Sc{N}^{(j)}_0[\om ]$, and observe
\eq{om-zeta}{\frac{1}{2}\tnorm{\om (t)}{\ell^2_{s'}}\le \tnorm{\zeta (t)}{\ell ^2_{s'}}\le \frac{3}{2}\tnorm{\om (t)}{\ell ^2_{s'}},\qquad t\in [0,\tau ].}
Taking the $\ell ^2_{s'}$-inner product of \eqref{eq:limit-p} with $\zeta$, 
\eqq{\frac{1}{2}\p _t\tnorm{\zeta (t)}{\ell ^2_{s'}}^2&=-\e \tnorm{\om (t)}{\ell ^2_{s'+\al}}^2+\e \sum _{j=1}^\I \BLR{\LR{n}^{\al}\om (t)}{\LR{n}^{\al} \Sc{N}^{(j)}_0[\om (t)]}_{\ell ^2_{s'}} -\e \sum _{j=1}^{\I}\BLR{\Sc{P}^{(j)}[\om (t)]}{\zeta (t)}_{\ell ^2_{s'}}\\
&\quad +\sum _{j=1}^{\I}\BLR{\Sc{N}^{(j)}_R[\om (t)]}{\zeta (t)}_{\ell ^2_{s'}} +\sum _{j=0}^{\I}\BLR{\Sc{R}^{(j)}[\om (t)]}{\zeta (t)}_{\ell ^2_{s'}}
,}
where $\BLR{\phi}{\psi}_{\ell ^2_{s'}}:=\Re \sum _n\LR{n}^{2s'}\phi _n\bbar{\psi _n}$.
Using the multilinear estimates obtained above and \eqref{om-zeta}, we see that
\eqq{\p _t\tnorm{\zeta (t)}{\ell ^2_{s'}}^2&\le -2\e \tnorm{\om (t)}{\ell ^2_{s'+\al}}^2+C_{s',\al}\e \bigg[ \sum _{j=1}^\I \Big( C_{s',\al}\tau ^{1/2}\tnorm{\om}{C_\tau \ell ^2_{s_2}}^{p-1}\Big) ^j \bigg] \tnorm{\om (t)}{\ell ^2_{s'+\al}}^2\\
&\quad +C_{s'}M\bigg[ \sum _{j=1}^\I \Big( C_{s'}\tau ^{1/2}\tnorm{\om}{C_\tau \ell ^2_{s_2}}^{p-1}\Big) ^j \bigg] \tnorm{\om (t)}{\ell ^2_{s'}}^2\\
&\quad +C\big( s',\tnorm{\om}{C_\tau \ell ^2_{s_2}}\big) \bigg[ \sum _{j=0}^\I \Big( C_{s'}\tau ^{1/2}\tnorm{\om}{C_\tau \ell ^2_{s_2}}^{p-1}\Big) ^j \bigg] \tnorm{\om (t)}{\ell ^2_{s'}}^2.}
Here, we have estimated $\BLR{\Sc{P}^{(j)}[\om ]}{\zeta}_{\ell ^2_{s'}}$ by moving a half of $\LR{n}^{2\al}$ on one of $\om$'s in $\Sc{P}^{(j)}$ onto either $\zeta$ or another $\om$, as
\[ \Big| \BLR{\Sc{P}^{(j)}[\om ]}{\zeta}_{\ell ^2_{s'}}\Big| \lec \Big( C_{s'}\tau ^{1/2}\tnorm{\om}{\ell ^2_{s_2}}^{p-1}\Big) ^j\tnorm{\om}{\ell ^2_{s'+\al}}\Big( \tnorm{\zeta}{\ell ^2_{s'+\al}}+\frac{\tnorm{\om}{\ell ^2_{s_2+\al}}}{\tnorm{\om}{\ell ^2_{s_2}}}\tnorm{\zeta}{\ell ^2_{s'}}\Big) ,\]
together with an interpolation inequality $\tnorm{\om}{\ell ^2_{s_2+\al}}\tnorm{\om}{\ell ^2_{s'}}\le \tnorm{\om}{\ell ^2_{s_2}}\tnorm{\om}{\ell ^2_{s'+\al}}$.
By integrating on $(0,\tau)$ with possibly smaller $\tau$ depending only on $s',\al$ and $R$, we obtain
\[ \tnorm{\zeta (t)}{\ell ^2_{s'}}^2\le \tnorm{\zeta (0)}{\ell ^2_{s'}}^2 +\frac{1}{4}\tnorm{\om}{C_\tau \ell ^2_{s'}}^2,\qquad t\in [0,\tau ],\]
and by \eqref{om-zeta},
\[ \tnorm{\om}{C_\tau \ell ^2_{s'}}^2\le 2\tnorm{\zeta}{L^\infty_\tau \ell ^2_{s'}}^2\le 2\tnorm{\zeta (0)}{\ell ^2_{s'}}^2+\frac{1}{2}\tnorm{\om}{C_\tau \ell ^2_{s'}}^2\le 3\tnorm{\om (0)}{\ell ^2_{s'}}^2+\frac{1}{2}\tnorm{\om}{C_\tau \ell ^2_{s'}}^2,\]
which yields \eqref{apriori-p-s} and \eqref{apriori-p-s'}.

We next set $s'=s_2$ and prove the difference estimate \eqref{apriori-p-s2}.
For $\ze ^{\e _j}:=\om ^{\e _j}-\sum _{j=1}^\I \Sc{N}^{(j)}_0[\om ^{\e _j}]$, $j=1,2$, the difference estimate for $\Sc{N}^{(j)}_0$ mentioned above shows
\eq{om-zeta2}{\frac{1}{2}\tnorm{\om ^{\e _1}(t)-\om ^{\e_2}(t)}{\ell^2_{s_2}}\le \tnorm{\zeta ^{\e _1}(t)-\zeta ^{\e_2}(t)}{\ell ^2_{s_2}}\le \frac{3}{2}\tnorm{\om ^{\e _1}(t)-\om ^{\e_2}(t)}{\ell ^2_{s_2}},\quad t\in [0,\tau ],}
if $\tnorm{\om ^{\e_j}}{C_\tau \ell ^2_{s_2}}\le R$ and $C_{s_2}\tau ^{1/2}R^{p-1}\ll 1$.
From \eqref{eq:limit-p}, we have
\eqq{\p _t( \zeta _n^{\e _1}-\zeta _n^{\e _2}) &= -\e _1\Big( \LR{n}^{2\al}\om _n^{\e _1}+\sum _{j=1}^\I \Sc{P}^{(j)}[\om ^{\e _1}]_n\Big) +\e _2\Big( \LR{n}^{2\al}\om _n^{\e _2}+\sum _{j=1}^\I \Sc{P}^{(j)}[\om ^{\e _2}]_n\Big)\\
&\quad +\sum _{j=1}^\I \Big( \Sc{N}_R^{(j)}[\om ^{\e_1}]_n-\Sc{N}_R^{(j)}[\om ^{\e_2}]_n\Big) +\sum _{j=0}^\I \Big( \Sc{R}^{(j)}[\om ^{\e_1}]_n-\Sc{R}^{(j)}[\om ^{\e_2}]_n\Big) .}
We integrate it in $t$, apply the estimates on multilinear forms, and take the $C_\tau \ell ^2_{s_2}$-norm, possibly choosing smaller $\tau$, to obtain
\eqq{&\tnorm{\zeta ^{\e_1}(t)-\zeta ^{\e_2}(t)}{\ell ^2_{s_2}}\\
&\quad \le \tnorm{\zeta ^{\e _1}(0)-\zeta ^{\e_2}(0)}{\ell ^2_{s_2}}+C\tau \sum _{k=1,2}\e _k\Big( 1+\sum _{j=1}^\I \big( C\tau ^{1/2}R^{p-1}\big) ^j\Big) \tnorm{\om ^{\e _k}}{C_\tau \ell ^2_{s_2+2\al}}\\
&\qquad +\bigg\{ C\tau M\sum _{j=1}^\I \big( C\tau ^{1/2}R^{p-1}\big) ^j +C(R)\tau \sum _{j=0}^\I \big( C\tau ^{1/2}R^{p-1}\big) ^j\bigg\} \tnorm{\om ^{\e _1}-\om ^{\e_2}}{C_\tau \ell ^2_{s_2}}\\
&\quad \le \tnorm{\zeta ^{\e _1}(0)-\zeta ^{\e_2}(0)}{\ell ^2_{s_2}}+\frac{1}{4}\sum _{k=1,2}\e _k\tnorm{\om ^{\e _k}}{C_\tau \ell ^2_{s_2+2\al}}+\frac{1}{4}\tnorm{\om ^{\e _1}-\om ^{\e_2}}{C_\tau \ell ^2_{s_2}},\qquad t\in [0,\tau ].
}
Finally, we use \eqref{om-zeta2} so that
\eqq{\tnorm{\om ^{\e_1}-\om ^{\e_2}}{C_\tau \ell ^2_{s_2}}&\le 2\tnorm{\zeta ^{\e_1}-\zeta ^{\e_2}}{L^\infty_\tau \ell ^2_{s_2}}\\
&\le 2\tnorm{\zeta ^{\e _1}(0)-\zeta ^{\e_2}(0)}{\ell ^2_{s_2}}+\frac{1}{2}\sum _{k=1,2}\e _k\tnorm{\om ^{\e _k}}{C_\tau \ell ^2_{s_2+2\al}}+\frac{1}{2}\tnorm{\om ^{\e _1}-\om ^{\e_2}}{C_\tau \ell ^2_{s_2}}\\
&\le 3\tnorm{\om ^{\e _1}(0)-\om ^{\e_2}(0)}{\ell ^2_{s_2}}+\frac{1}{2}\sum _{k=1,2}\e _k\tnorm{\om ^{\e _k}}{C_\tau \ell ^2_{s_2+2\al}}+\frac{1}{2}\tnorm{\om ^{\e _1}-\om ^{\e_2}}{C_\tau \ell ^2_{s_2}},}
which implies \eqref{apriori-p-s2}.
\end{proof}

\noindent \emph{Proof of Lemma~\ref{lem:regular} (continued)}.
Let $\om _0\in \ell ^2_{s_2}$, and set $R:=6\tnorm{\om _0}{\ell ^2_{s_2}}+1$.
We shall construct a solution $\om \in C_T\ell ^2_{s_2}$ of \eqref{Abs'} 
with existence time $T=\tau _{s_2+2\al}(R)$, where $\tau _{s'}(\cdot )$ is given in Lemma~\ref{lem:apriori-p}.

We define an approximating family of initial data $\{ \om ^\e _0\} _{\e \in (0,1)}$ by
\[ \om ^\e_0:=\chf{\{ \LR{n}\le N_\e \}}\om _0,\qquad N_\e :=\e^{-\frac{1}{4\al}},\]
so that $\om ^\e _0\in \ell ^2_\I$, $\tnorm{\om ^\e _0}{\ell ^2_{s_2}}\le \tnorm{\om _0}{\ell ^2_{s_2}}$, $\tnorm{\om ^\e _0}{\ell^2_{s_2+2\al}}\le \e ^{-1/2}\tnorm{\om _0}{\ell ^2_{s_2}}$, and $\om ^\e _0\to \om _0$ in $\ell ^2_{s_2}$ as $\e \to 0$.
Let $\om ^\e$ be the local-in-time solution of the perturbed equation \eqref{Abs'-p} with $\om ^\e (0)=\om ^\e _0$, which belongs to $C_t\ell ^2_{s_2+2\al}$ and is unique.
Then, similarly as in the proof of Theorem~\ref{thm:weak} above, a continuity argument with the help of a priori estimate \eqref{apriori-p-s} shows that we can extend the solution $\om^\e$ up to (uniform-in-$\e$) time $T=\tau _{s_2+2\al}(R)~(\le \tau _{s_2}(R))$ keeping its $\ell ^2_{s_2}$-norm less than $R$.
Moreover, \eqref{apriori-p-s'} implies that
\[ \tnorm{\om ^\e}{C_T\ell ^2_{s_2+2\al}}\le 6\tnorm{\om ^\e _0}{\ell^2_{s_2+2\al}}\le 6\e ^{-1/2}\tnorm{\om _0}{\ell ^2_{s_2}},\qquad \e \in (0,1).\]
Hence, \eqref{apriori-p-s2} shows that
\[ \tnorm{\om ^{\e _1}-\om ^{\e _2}}{C_T\ell ^2_{s_2}}\le 6\tnorm{\om ^{\e_1}_0-\om ^{\e_2}_0}{\ell ^2_{s_2}}+6(\e _1^{1/2}+\e _2^{1/2})\tnorm{\om _0}{\ell^2_{s_2}}\]
for any $\e _1,\e _2\in (0,1)$, and therefore $\{ \om ^\e \}_{\e \in (0,1)}$ is Cauchy in $C_T\ell ^2_{s_2}$ as $\e \to 0$.
It is then easy to see from the assumptions $(R)'$, $(A1)'$ that the limit $\om :=\lim\limits _{\e \to 0}\om ^\e \in C_T\ell ^2_{s_2}$ is a solution of \eqref{Abs'} with initial condition $\om (0)=\om _0$.
\end{proof}

%%%%%%%%%%%%%%%%%%%%%%%%%%%%%%%%%
%%%%%%%%%%%%%%%%%%%%%%%%%%%%%%%%%

\bigskip
\appendix

\section{Normal form reduction in the non-periodic case}\label{appendix:R}

In this section, we see how to extend Theorem~\ref{thm:abstract} to the non-periodic case.

We consider the following abstract equation:
\eq{Abs-R}{\p _t\om (t,\xi )=\int _{\Sc{M}^p_\xi}e^{it\phi}m\om (t,\xi _1)\om (t,\xi _2)\cdots \om (t,\xi _p)+\Sc{R}[\om ](t,\xi ),\quad (t,\xi )\in (0,T)\times \Sc{M},}
where $\Sc{M}$ is an arbitrary product of Euclidean space and integer lattice equipped with Lebesgue and counting measures, and $\int _{\mathcal{M}^p_\xi}$ means the integration over $\mathcal{M}^p$ with respect to the measure $\de (\xi_1+\cdots +\xi _p-\xi )d\xi _1\cdots d\xi _p$. 
Here, the phase part $\phi =\phi (\xi ,\xi _1,\dots ,\xi _p)\in \R$ and the multiplier part $m=m(\xi ,\xi _1,\dots ,\xi _p)\in \Bo{C}$ are measurable functions on $\Sc{M}^{p+1}$ with finite values almost everywhere.
We use weighted $L^p$ spaces: $L^p_s(\Sc{M}):=\LR{\xi }^{-s}L^p(\Sc{M})$ for $s\in \R$ and $1\leq p\leq \infty$. 

We say $\om \in L^1_{\text{loc}}((0,T)\times \mathcal{M})$ is a solution of \eqref{Abs-R} if the right-hand side of \eqref{Abs-R} is well-defined as a function in $L^1_{\text{loc}}((0,T)\times \mathcal{M})$ and $\om$ satisfies \eqref{Abs-R} in the sense of distribution.
In the following, we mainly consider the situation where both $\om$ and the right-hand side of \eqref{Abs-R} belong to $C([0,T];\ti{X})$ for some Banach space $\ti{X}$ which is continuously embedded into $L^1_{\text{loc}}(\mathcal{M})$.
In this case, a solution $\om$ is in $C^1([0,T];\ti{X})$ and satisfies the equation \eqref{Abs-R} in $\ti{X}$ for each $t\in [0,T]$.

Let us focus on the case [A] in Theorem~\ref{thm:abstract} for simplicity.
We shall prove the following result:

\begin{thm}\label{thm:abstract-R}
Let $s\in \R$, $T>0$.
Assume that $\Sc{R}[\om]\in C_TL^2_s:=C([0,T];L^2_s(\mathcal{M}))$ if $\om \in C_TL^2_s$, and that the following estimates hold; 
\begin{align*}
&(\ti{R})\hspace{25pt} \left\{
\begin{aligned}
\norm{\Sc{R}[\om ]}{C_TL^2_s}&\le C\big( \tnorm{\om}{C_TL^2_s}\big) ,\\
~\norm{\Sc{R}[\om ]-\Sc{R}[\ti{\om}]}{C_TL^2_s}&\le C\big( \tnorm{\om}{C_TL^2_s},\,\tnorm{\ti{\om}}{C_TL^2_s}\big) \tnorm{\om-\ti{\om}}{C_TL^2_s},
\end{aligned} \right. \\
&(\ti{A}1)\qquad \norm{\int _{\Sc{M}^p_\xi}\frac{|m|}{\LR{\phi}^{1/2}}\om ^{(1)}(\xi _1)\cdots \om^{(p)}(\xi _p)}{L^2_s}\le C\prod _{k=1}^p\norm{\om ^{(k)}}{L^2_s},\\[5pt]
&(\ti{A}2)\qquad \norm{\int _{\Sc{M}^p_\xi}\frac{|m|}{\LR{\phi}^{1-\de}}\om ^{(1)}(\xi _1)\cdots \om ^{(p)}(\xi _p)}{X}\le C\min _{1\le k\le p}\Big[ \norm{\om ^{(k)}}{X}\prod _{\mat{l=1\\ l\neq k}}^p\norm{\om ^{(l)}}{L^2_s}\Big] ,\\[5pt]
&(\ti{A}3)\qquad \norm{\int _{\Sc{M}^p_\xi}|m|\om ^{(1)}(\xi _1)\cdots \om ^{(p)}(\xi _p)}{X}\le C\prod _{k=1}^p\norm{\om ^{(k)}}{L^2_s}
\end{align*}
for some $\de \in (0,\frac{1}{2})$ and some Banach space $X$ which is continuously embedded into $L^1_{\mathrm{loc}}(\mathcal{M})$ and satisfies 
\[ |\om (\xi )|\le |\ti{\om} (\xi )|\quad (\mathrm{a.e.}~~\xi \in \Sc{M})\qquad \Longrightarrow \qquad \tnorm{\om}{X}\leq C\tnorm{\ti{\om}}{X}.\]
In addition, assume one of the following conditions:
\begin{enumerate}
\item If $\{ \mathcal{M}_n\} _{n\in \mathbb{N}}$ is an increasing sequence of measurable subsets of $\mathcal{M}$ such that $\bigcup _{n\in \mathbb{N}}\mathcal{M}_n$ is of full measure, then for $\om \in X$ it holds that $\| \chi _{\mathcal{M}\setminus \mathcal{M}_n}\om \|_{X}\to 0$.%
\footnote{This condition is fulfilled if $X$ is a weighted $L^p$ space with $p<\I$, for instance.
We see that the condition (i) implies the following property of $X$ (and the converse is trivial), which we will use in the proof: If a sequence $\{ \om _n\} _{n\in \mathbb{N}}\subset X$ satisfies $|\om _n|\leq \ti{\om}$ and $\om _n\to 0$ almost everywhere for some non-negative $\ti{\om}\in X$, then $\| \om _n\|_{X}\to 0$.
To show this, for given $\e >0$ we set $\mathcal{M}_n=\{ \xi : \sup_{k\geq n}|\om _k(\xi )|\leq \e \ti{\om}(\xi )\}$.
The condition implies $\| \chi _{\mathcal{M}\setminus \mathcal{M}_n} \om _n\| _X\leq C\| \chi _{\mathcal{M}\setminus \mathcal{M}_n}\ti{\om}\|_X\to 0$, so that $\limsup \| \om _n\|_X\leq \sup \| \chi _{\mathcal{M}_n}\om _n\|_{X}\leq C\e \| \ti{\om}\|_{X}$.
Taking $\e \to 0$, we have $\lim \| \om _n\|_X=0$.}
\item $\phi (\sum _{k=1}^p\xi _k,\xi _1,\dots ,\xi _p)$ is locally bounded on $\Sc{M}^{p}$.
\item $\phi =\psi (\xi )-\sum _{k=1}^p\psi (\xi _k)$ for some function $\psi$ on $\Sc{M}$ taking finite values almost everywhere.
\end{enumerate}

Then, there is at most one solution to the Cauchy problem associated with \eqref{Abs-R} in $C_TL^2_s$.
\end{thm}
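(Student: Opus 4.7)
The proof follows the template of Theorem~\ref{thm:abstract}, case $[A]$. The NFR procedure itself is purely algebraic and transcribes verbatim to the non-periodic setting: sums over $\mathbf{n}\in\FR{N}(\Sc{T})$ are replaced by integrals over $\Sc{M}^{pJ+1}$ subject to $n_a=n_{a^1}+\cdots +n_{a^p}$ at every node $a\in\Sc{T}_0$, and the multilinear estimates of Proposition~\ref{prop:abstract} in case $[A]$ depend only on the fundamental bounds $(\ti{A}1)$--$(\ti{A}3)$ and on the elementary inequalities for $\Phi^J_R$, $\Phi^J_{N\!R}$; they therefore remain valid without change. Granting these estimates, the final contraction argument in $C_{T'}L^2_s$ showing that two solutions sharing the same initial data must coincide on a short interval is identical to the one in the proof of Theorem~\ref{thm:abstract}.

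The only essential new difficulty is to justify the formal manipulations of NFR for a general solution $\om\in C_TL^2_s$. In the periodic case this was done pointwise in $n$ because each $\om_n(\cdot)$ turned out to be $C^1$, but on a continuous $\Sc{M}$ one cannot in general hope for $\om(\cdot,\xi)\in C^1([0,T])$ for a.e.~$\xi$, and one must also interchange $\p_t$ with integrals over the continuous variables $(\xi_1,\dots,\xi_p)$. I plan to handle these issues separately under each of the three additional hypotheses.

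Under (i), the absolute continuity property of the $X$ norm combined with $(\ti{A}3)$ yields $\Sc{N}^{(1)}[\om]\in C_TX\hookrightarrow C_TL^1_{\mathrm{loc}}(\Sc{M})$ by the same uniform-convergence argument used in the periodic case (where ``$\|\chf{|n|>L}\om\|_{\ell^2_s}\to 0$'' is replaced by the corresponding tail property of $X$); hence $\p_t\om\in C_T(X+L^2_s)$, and Fubini gives $\om(\cdot,\xi)\in C^1([0,T])$ for a.e.~$\xi\in\Sc{M}$, which is enough to perform the pointwise integration by parts in $t$. The subsequent commutation of $\p_t$ with the $\xi$-integrals is then handled by dominated convergence, using $(\ti{A}2)$ and $(\ti{A}3)$ to produce the required absolute majorants. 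Under (ii), local boundedness of $\phi$ allows the integration by parts to be carried out on an arbitrary compact subset of $\Sc{M}^p$, where $e^{it\phi}/\phi$ is bounded and all operations are classical; one then passes to the global limit using $(\ti{A}2)$ and $(\ti{A}3)$ to handle the non-resonant tails uniformly. Under (iii), the substitution $u(t,\xi):=e^{-it\psi(\xi)}\om(t,\xi)$ (well-defined since $\psi$ is a.e.~finite) absorbs all oscillation in the phase and turns \eqref{Abs-R} into a semilinear equation for $u$ whose nonlinearity is a genuine polynomial in $u$ with no explicit oscillation in $t$; the NFR manipulations then reduce to termwise differentiation of a polynomial, which is justified directly once the fundamental estimates are rewritten in the $u$ variable.

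The main obstacle will be case (i), which is the setting most relevant to applications on $\R^d$. The delicate point is to transfer statements obtained at the level of the $X$-norm to pointwise-a.e. assertions in $\xi$ strong enough to trigger a classical product rule in $t$, while simultaneously producing the dominated-convergence majorants needed to interchange $\p_t$ with the convolution-type integrals --- the tail property assumed in (i) is precisely what makes this transition possible. Once the interchange is secured, the remainder --- the inductive derivation of the equation of the $J$-th generation, absolute convergence of the sums $\sum_j\Sc{N}^{(j)}_0[\om]$, $\sum_j\Sc{N}^{(j)}_R[\om]$, $\sum_j\Sc{R}^{(j)}[\om]$ in $C_{T'}L^2_s$, vanishing of $\int_0^t\Sc{N}^{(J+1)}[\om]\,d\tau$ in $C_{T'}X$ as $J\to\infty$, and the concluding small-$\eta$ argument --- is a transcription of the proof of Theorem~\ref{thm:abstract}.
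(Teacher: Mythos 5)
Your identification of the skeleton (the multilinear estimates of Proposition~\ref{prop:abstract} carry over with sums replaced by integrals; the contraction step is unchanged; the genuinely new content is justifying the NFR manipulations using (i)--(iii)) is correct, but the justification itself diverges from the paper's approach and has gaps. The paper never establishes, and never uses, pointwise differentiability $\om(\cdot,\xi)\in C^1([0,T])$ for a.e.\ $\xi$. Instead it proves the integration-by-parts identity \eqref{eq:just2} directly as an identity in $\ti{X}:=L^2_s+X$ by writing a discrete summation-by-parts identity for the difference quotient $(f(t+h)-f(t))/h$ and passing to the $h\to 0$ limit term by term: dominated convergence for the piece containing $(e^{-ih\phi}-1)/(ih\phi)$, $L^2_s$-continuity of the map $t\mapsto\int_{\Sc{M}^p_\xi}\tfrac{e^{it\phi}}{i\phi}f(t)$ for the boundary terms, and --- crucially --- $\om\in C^1_T\ti{X}$ (deduced from $\Sc{N}^{(1)}[\om]\in C_TX$ under (i) or (ii)) combined with \eqref{est:A12} for the differentiated piece. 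Your claim under (i) that ``Fubini gives $\om(\cdot,\xi)\in C^1$ for a.e.\ $\xi$'' needs more: from $\om\in C^1_T\ti{X}$ one gets $\om(t_2,\xi)-\om(t_1,\xi)=\int_{t_1}^{t_2}F[\om](t',\xi)\,dt'$ for a.e.\ $\xi$ with a null set depending on $(t_1,t_2)$, and upgrading this to a $C^1$-in-$t$ representative for a.e.\ fixed $\xi$ requires a nontrivial modification argument (countable dense times plus $L^2_s$-continuity) that you do not supply; the subsequent interchange of $\p_t$ with $\int_{\Sc{M}^p_\xi}$ likewise needs more than bare dominated convergence, and what makes it work are precisely the estimates \eqref{est:A12}.

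Your treatment of case (iii) is the more serious problem. The substitution $u(t,\xi):=e^{-it\psi(\xi)}\om(t,\xi)$ does not resolve the obstruction: $e^{\mp it\psi(\xi)}$ is smooth in $t$ for a.e.\ $\xi$, so $u(\cdot,\xi)$ is $C^1$ in $t$ exactly when $\om(\cdot,\xi)$ is, and the interchange of $\p_t$ with the integrals over $\Sc{M}^p_\xi$ is as delicate for $u$ as for $\om$. Moreover the NFR machinery --- the resonant/non-resonant decomposition, the $1/\ti{\phi}^j$ factors, the integration by parts against $e^{it\ti{\phi}^j}$ --- is intrinsically carried out in the $\om$ representation; rewriting the equation in $u$ hides the phase in the linear flow but does not reduce the NFR manipulations to ``termwise differentiation of a polynomial.'' The paper's (iii) argument is structural and has no substitute here: it truncates the output frequency to $\{|\psi(\xi)|\leq L'\}$, uses the algebra $\phi=\psi(\xi)-\sum_k\psi(\xi_k)$ to deduce that $|\phi|>L$ forces $|\psi(\xi_k)|\gtrsim L$ for some input index $k$, and combines this with $\|\chi_{|\psi|>L}\om\|_{L^2_s}\to 0$ to control the tails uniformly in $h$. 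Your substitution supplies no analogue of this mechanism.
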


\begin{rem}
The strategy is basically the same as the periodic case, and the only essential difference appears in justification of formal calculations including (a) exchange of time differentiation and integration in $(\xi _j)$, (b) application of the product rule in time differentiation.
The additional assumption (i)--(iii) in the theorem will be used for this purpose. 

In the existing results for specific equations in the non-periodic setting (see, for instance, \cite[Section~4]{KOY20}), justification of (b) relied on the fact that the nonlinear part of the equation belongs to $C_TL^1_x$ for solutions in $C_TH^s_x$, which ensures that the function $t\mapsto \om (t,\xi )$ is differentiable in the classical sense for each fixed $\xi$.
In our setting \eqref{Abs-R}, this situation corresponds to the case where one can take the space $X$ so that it is embedded in $C(\mathcal{M})$.

We will justify these operations (a), (b) in an abstract setting, assuming \emph{one of} the conditions (i)--(iii).
This assumption is quite general and it may be verified in many problems that have not been considered in the literature.
For instance, (ii) or (iii) is a condition on $\phi$ only and it allows us to take $X$ which may not be embedded in $C(\mathcal{M})$.
The condition (iii) seems naturally satisfied in applications to unconditional uniqueness problems for semilinear dispersive equations, and it also admits singular dispersion relations such as the KP-type equations.
\end{rem}

\begin{proof}[Proof of Theorem~\ref{thm:abstract-R}]
Following the argument in Section~\ref{section:proof} for the periodic case, define various multilinear terms as follows:
\[\Sc{N}^{(1)}[\om ](t,\xi ):=\int _{\Sc{M}^p_\xi}e^{it\phi}m\prod _{k=1}^p\om (t,\xi _k),\qquad
\Sc{R}^{(0)}[\om ]:=\Sc{R}[\om ],\]
and for $J\ge 1$,
\eqq{
\Sc{N}^{(J)}_R[\om ](t,\xi )&:=(-1)^{J-1}\sum _{\Sc{T}\in \mathfrak{T}(J)}\int _{\mat{\boldsymbol{\xi} \in \mathfrak{X}_\xi (\Sc{T})\\ (\phi ^j)_{j=1}^{J}\in \Phi _R^{(J)}}}\frac{e^{it\ti{\phi}^J}}{\prod\limits _{j=1}^{J-1}i\ti{\phi}^j}\Big[ \prod _{j=1}^Jm^j\Big] \prod _{a\in \Sc{T}_\I}\om (t, \xi _a),\\
\Sc{N}^{(J)}_0[\om ](t,\xi )&:=(-1)^{J-1}\sum _{\Sc{T}\in \mathfrak{T}(J)}\int _{\mat{\boldsymbol{\xi} \in \mathfrak{X}_\xi (\Sc{T})\\ (\phi ^j)_{j=1}^{J}\in \Phi _{N\!R}^{(J)}}}\frac{e^{it\ti{\phi}^J}}{\prod\limits _{j=1}^{J}i\ti{\phi}^j}\Big[ \prod _{j=1}^Jm^j\Big] \prod _{a\in \Sc{T}_\I}\om (t, \xi _a),\\
\Sc{R}^{(J)}[\om ](t, \xi )&:=(-1)^J\sum _{\Sc{T}\in \mathfrak{T}(J)}\int _{\mat{\boldsymbol{\xi} \in \mathfrak{X}_\xi (\Sc{T})\\ (\phi ^j)_{j=1}^{J}\in \Phi _{N\!R}^{(J)}}}\frac{e^{it\ti{\phi}^J}}{\prod\limits _{j=1}^{J}i\ti{\phi}^j}\Big[ \prod _{j=1}^Jm^j\Big] \sum _{a\in \Sc{T}_\I}\Big[ \prod _{\mat{b\in \Sc{T}_\I \\b\neq a}}\om (t, \xi _b)\Big] \Sc{R}[\om ](t, \xi _a),\\
\Sc{N}^{(J+1)}[\om ](t, \xi )&:=(-1)^J\sum _{\Sc{T}\in \mathfrak{T}(J+1)}\int _{\mat{\boldsymbol{\xi} \in \mathfrak{X}_\xi (\Sc{T})\\ (\phi ^j)_{j=1}^{J+1}\in \Phi _R^{(J+1)}\cup \Phi _{N\!R}^{(J+1)}}}\frac{e^{it\ti{\phi}^{J+1}}}{\prod\limits _{j=1}^{J}i\ti{\phi}^j}\Big[ \prod _{j=1}^{J+1}m^j\Big] \prod _{a\in \Sc{T}_\I}\om (t, \xi _a),
}
where the integration over $\FR{X}_\xi (\Sc{T}):=\{ \boldsymbol{\xi}:\Sc{T}\ni a\mapsto \xi _a\in \Sc{M}:\xi_{\text{root}}=\xi \}$ is made with respect to the measure $\prod _{a\in \Sc{T}_0}\de (\xi _a-\sum _{k=1}^p\xi _{a^k})d\xi _{a^1}\dots d\xi _{a^p}$ ($a^1,\dots ,a^p$ denote the children of $a\in \Sc{T}_0$).
We use the same notation as before: $\phi ^j:=\phi (\xi _{a_j},\xi _{a_j^1},\dots ,\xi _{a_j^p})$ with the $j$-th node $a_j$ of $\Sc{T}$ and its children $a_j^1,\dots ,a_j^p$, $\ti{\phi}^j:=\phi ^1+\phi ^2+\cdots +\phi ^j$, and for $M\geq 1$, 
\eqq{
\Phi _R^{1}&:=\Shugo{\phi ^1}{|\phi ^1|\le 16M},\qquad \Phi _{N\!R}^{1}:=\Shugo{\phi ^1}{|\phi ^1|>16M},\\
\Phi _R^{J}&:=\Shugo{(\phi ^j)_{j=1}^J}{|\phi ^1|>16M,\,|\ti{\phi}^{j}|>16|\ti{\phi}^{j-1}|~(2\le j\le J-1),\, |\ti{\phi}^{J}|\le 16|\ti{\phi}^{J-1}|},\\
\Phi _{N\!R}^{J}&:=\Shugo{(\phi ^j)_{j=1}^J}{|\phi ^1|>16M,\,|\ti{\phi}^{j}|>16|\ti{\phi}^{j-1}|~(2\le j\le J)}\qquad (J\ge 2).
}
By the same argument as in the periodic case, we \emph{formally} derive the equation of the $J$-th generation after the $(J-1)$-th NFR:
\eq{eq:Jth-R}{\om (\xi)\Big| _0^t=\sum _{j=1}^{J-1}\Sc{N}^{(j)}_{0}[\om ](\xi )\Big| _0^t+\int _0^t\,\Big( \sum _{j=1}^{J-1}\Sc{N}^{(j)}_R[\om ](\xi )+\sum _{j=0}^{J-1}\Sc{R}^{(j)}[\om ](\xi )+\Sc{N}^{(J)}[\om ](\xi )\Big),}
which is satisfied by any solution $\om \in C_TL^2_s$ of \eqref{Abs}.
Verification of the following multilinear estimates using the hypotheses ($\ti{R}$)--($\ti{A}$3) is also the same:
\begin{prop}\label{prop:abstract-R}
For any $J\in \Bo{N}$ and $\om \in L^2_s$, the integrals in $\boldsymbol{\xi}$ appearing in $\Sc{N}^{(J)}_R[\om ](\xi )$, $\Sc{N}^{(J)}_0[\om ](\xi )$, $\Sc{R}^{(J)}[\om ](\xi )$, $\Sc{N}^{(J)}[\om ](\xi )$ converge absolutely for almost every $\xi \in \Sc{M}$.
Moreover, for $M\ge 1$, (with $\Phi _R^{J}$ and $\Phi _{N\!R}^{J}$ defined according to $M$) we have
\eqq{
\norm{\Sc{N}^{(J)}_R[\om ]-\Sc{N}^{(J)}_R[\ti{\om}]}{L^2_s}&\le CM\Big[ CM^{-\de}\big( \tnorm{\om}{L^2_s}+\tnorm{\ti{\om}}{L^2_s}\big) ^{p-1}\Big] ^J\norm{\om -\ti{\om}}{L^2_s},\\
\norm{\Sc{N}^{(J)}_0[\om ]-\Sc{N}^{(J)}_0[\ti{\om}]}{L^2_s}&\le C\Big[ CM^{-\de}\big( \tnorm{\om}{L^2_s}+\tnorm{\ti{\om}}{L^2_s}\big) ^{p-1}\Big] ^J\norm{\om -\ti{\om}}{L^2_s},\\
\norm{\Sc{R}^{(J)}[\om ]-\Sc{R}^{(J)}[\ti{\om}]}{L^2_s}&\le C\Big[ CM^{-\de}\big( \tnorm{\om}{L^2_s}+\tnorm{\ti{\om}}{L^2_s}\big) ^{p-1}\Big] ^JC'\big( \tnorm{\om}{L^2_s},\tnorm{\ti{\om}}{L^2_s}\big) \norm{\om -\ti{\om}}{L^2_s},\\
\norm{\Sc{N}^{(J)}[\om ]-\Sc{N}^{(J)}[\ti{\om}]}{X}&\le C\Big[ CM^{-\de}\big( \tnorm{\om}{L^2_s}+\tnorm{\ti{\om}}{L^2_s}\big) ^{p-1}\Big] ^{J-1}\big( \tnorm{\om}{L^2_s}+\tnorm{\ti{\om}}{L^2_s}\big) ^{p-1}\norm{\om -\ti{\om}}{L^2_s}
}
for any $\om ,\ti{\om}\in L^2_s$, where $C,C'(\cdot ,\cdot )>0$ are independent of $J$ and $M$.
\end{prop}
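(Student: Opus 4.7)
My plan is to mirror the proof of Proposition~\ref{prop:abstract} in Case $[A]$ essentially verbatim, with lattice sums $\sum_{\mathbf{n}\in\mathfrak{N}_n(\Sc{T})}$ replaced by integrals $\int_{\mathfrak{X}_\xi(\Sc{T})}$ and with all manipulations performed on non-negative integrands so as to legalize every application of Tonelli. First I would keep the very same choice of $\Phi^J_R, \Phi^J_{N\!R}\subset\R^J$ (in terms of a parameter $M\ge 1$), and the same elementary arithmetic implications
\eqq{
|\ti{\phi}^j|>16|\ti{\phi}^{j-1}|\;\Longrightarrow\;|\ti{\phi}^j|\sim|\phi^j|,\qquad |\ti{\phi}^j|\le 16|\ti{\phi}^{j-1}|\;\Longrightarrow\;|\phi^j|\lec|\ti{\phi}^{j-1}|,
}
which yield the pointwise-in-$\xi$ majorant
\eqq{
|\Sc{N}^{(J)}_R[\om](\xi)|\lec 2^{-(J-1)(J-2)}M^{1-J/2}\sum_{\Sc{T}\in\mathfrak{T}(J)}\int_{\mathfrak{X}_\xi(\Sc{T})}\frac{\prod_{j=1}^J|m^j|}{\prod_{j=1}^J\LR{\phi^j}^{1/2}}\prod_{a\in\Sc{T}_\I}|\om(\xi_a)|,
}
and analogous bounds for $\Sc{N}^{(J)}_0$, $\Sc{R}^{(J)}$, $\Sc{N}^{(J+1)}$, with only the individual phases $\phi^j$ appearing in the denominators.

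Next I would iterate the fundamental estimates on the tree. Since each $\Sc{T}\in\mathfrak{T}(J)$ is obtained from a unique $\Sc{T}'\in\mathfrak{T}(J-1)$ by developing a leaf $a_*\in\Sc{T}'_\I$ into the $J$-th node with children $a_*^1,\dots,a_*^p$, Tonelli's theorem gives the factorization
\eqq{
&\int_{\mathfrak{X}_\xi(\Sc{T})}\frac{\prod_{j=1}^J|m^j|}{\prod_{j=1}^J\LR{\phi^j}^{\al}}\prod_{a\in\Sc{T}_\I}|\om^{(a)}(\xi_a)|\\
&\quad=\int_{\mathfrak{X}_\xi(\Sc{T}')}\frac{\prod_{j=1}^{J-1}|m^j|}{\prod_{j=1}^{J-1}\LR{\phi^j}^{\al}}\prod_{\mat{a\in\Sc{T}'_\I\\ a\ne a_*}}|\om^{(a)}(\xi_a)|\cdot F_{a_*}(\xi_{a_*}),
}
where $F_{a_*}(\eta):=\int_{\Sc{M}^p_\eta}\frac{|m(\eta,\eta_1,\dots,\eta_p)|}{\LR{\phi(\eta,\eta_1,\dots,\eta_p)}^{\al}}\prod_{l=1}^p|\om^{(a_*^l)}(\eta_l)|$. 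Taking $\al=\tfrac{1}{2}$ in $(\ti{A}1)$ bounds $\|F_{a_*}\|_{L^2_s}$ by $\prod_l\|\om^{(a_*^l)}\|_{L^2_s}$, and a straightforward induction on $J$ produces the tree-level bounds
\eqq{
\norm{\int_{\mathfrak{X}_\xi(\Sc{T})}\frac{\prod_{j=1}^J|m^j|}{\prod_{j=1}^J\LR{\phi^j}^{1/2}}\prod_{a\in\Sc{T}_\I}|\om^{(a)}(\xi_a)|}{L^2_s}&\le C^J\prod_{a\in\Sc{T}_\I}\tnorm{\om^{(a)}}{L^2_s},\\
\norm{\int_{\mathfrak{X}_\xi(\Sc{T})}\frac{\prod_{j=1}^J|m^j|}{\prod_{j=1}^J\LR{\phi^j}^{1-\de}}\prod_{a\in\Sc{T}_\I}|\om^{(a)}(\xi_a)|}{X}&\le C^J\min_{a\in\Sc{T}_\I}\Big[\tnorm{\om^{(a)}}{X}\prod_{\mat{b\in\Sc{T}_\I\\ b\ne a}}\tnorm{\om^{(b)}}{L^2_s}\Big],
}
the second one using $(\ti{A}2)$ at the outermost step. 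Combining these with the cardinality bound $\#\mathfrak{T}(J)\le(p-1)^JJ!$, the explicit prefactors $2^{-(J-1)(J-2)}$, $2^{-J(J+1)}$, $4^{-\de J(J+1)}$, the hypothesis $(\ti{R})$ (for the $\Sc{R}^{(J)}$-estimate), the estimate $(\ti{A}3)$ (for one factor of $\om$ in the $X$-bound on $\Sc{N}^{(J+1)}$), and the standard telescoping to obtain the difference estimates, yields the four claimed bounds exactly as in the periodic proof.

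Finally, the a.e.\ absolute convergence of the integrals in $\boldsymbol{\xi}$ is then automatic: each non-negative pointwise majorant has finite $L^2_s$-norm (equivalently, finite weighted $L^2$-norm), so it is finite for a.e.\ $\xi\in\Sc{M}$; similarly the $X$-bound together with $X\hookrightarrow L^1_{\mathrm{loc}}(\Sc{M})$ covers $\Sc{N}^{(J)}$. The only real obstacle I anticipate is combinatorial-measure-theoretic bookkeeping, namely verifying that at each stage $F_{a_*}$ is a measurable function of $\xi_{a_*}$ so that Tonelli applies cleanly at the next level and the induction closes; this is handled uniformly by working only with non-negative integrands — exactly as in the proofs of \eqref{est:red1}, \eqref{est:red2} in the periodic case — which also spares us from any oscillatory cancellation arguments. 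The additional hypotheses (i)--(iii) in Theorem~\ref{thm:abstract-R} play no role in Proposition~\ref{prop:abstract-R} itself; they enter only when one subsequently justifies the formal NFR derivation of \eqref{eq:Jth-R} for a general solution $\om\in C_TL^2_s$, which is a separate matter.
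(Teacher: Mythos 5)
Your proposal is correct and follows essentially the same route the paper takes: the paper simply asserts that the verification of Proposition~\ref{prop:abstract-R} from $(\ti{R})$--$(\ti{A}3)$ ``is also the same'' as the periodic Case~$[A]$ proof of Proposition~\ref{prop:abstract}, and your write-up is a faithful transcription of that argument with sums over $\mathfrak{N}_n(\mathcal{T})$ replaced by integrals over $\mathfrak{X}_\xi(\mathcal{T})$, Tonelli applied to non-negative integrands at each node, and a.e.\ finiteness of the majorants deduced from the $L^2_s$- and $X$-bounds. Your closing remark that hypotheses (i)--(iii) play no role here and enter only in the justification of \eqref{eq:Jth-R} is also consistent with the paper.
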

Using these estimates and choosing $M$ appropriately, we see that $\Sc{N}^{(J)}[\om ]$ vanishes in $L^\infty _TX$ as $J\to \I$ and the limit equation is satisfied in $L^\infty _TL^2_s$.
Estimating the difference of two solutions in $L^\infty_{T'}L^2_s$ with $T'=\min \{ M^{-1},T\}$ (with larger $M$ if necessary), we conclude the uniqueness on $[0,T']$, as in the periodic case.

Now, we have only to justify formal calculations in the derivation of \eqref{eq:Jth-R}.
Let $\om \in C_TL^2_s$ be a solution of \eqref{Abs-R}; we notice that the right-hand side of \eqref{Abs-R}, $F[\om ]:=\mathcal{N}^{(1)}[\om ]+\mathcal{R}[\om ]$, is locally integrable in $(0,T)\times \mathcal{M}$ for any $\om \in C_TL^2_s$, by ($\ti{A}$3).
Let us first focus on the simplest case to see the idea:
Consider justification of the following equality for each $t\in [0,T]$ fixed,
\eq{eq:just2}{
&\int _0^tdt' \int _{\mat{\Sc{M}^p_\xi \\ |\phi |>16M}}e^{it'\phi}m\prod _{k=1}^p\om (t',\xi _k)\\
&\quad =\int _{\mat{\Sc{M}^p_\xi \\ |\phi |>16M}}\frac{e^{it\phi}}{i\phi}m\prod _{k=1}^p\om (t,\xi _k)~-~\int _{\mat{\Sc{M}^p_\xi \\ |\phi |>16M}}\frac{1}{i\phi}m\prod _{k=1}^p\om (0,\xi _k)\\
&\qquad -\sum _{k=1}^p\int _0^tdt' \int _{\mat{\Sc{M}^p_\xi \\ |\phi |>16M}}\frac{e^{it'\phi}}{i\phi}mF[\om ](t',\xi _k)\prod _{\mat{l=1\\ l\neq k}}^p\om (t',\xi _l)\qquad\qquad \text{($\mathrm{a.e.}~\xi \in \mathcal{M}$),}
}
which is required in the derivation of the equation \eqref{eq:Jth-R} with $J=2$.
Note that the last line of \eqref{eq:just2} is equal to $\mathcal{N}^{(2)}[\om]+\mathcal{R}^{(1)}[\om]$ by Fubini's theorem.
Since the assumptions ($\ti{A}$1)--($\ti{A}$3) show that each term in \eqref{eq:just2} is in $\ti{X}:=L^2_s+X$, it turns out that the equality actually holds as functions in $\ti{X}$.

We set
\eqq{f(t,\bfxi )&:=\chi_{|\phi (\bfxi )|>16M}m(\bfxi) \prod _{k=1}^p\om (t,\xi _k),\\
g(t,\bfxi) &:=\chi_{|\phi (\bfxi )|>16M}m(\bfxi )\sum _{k=1}^pF[\om ](t,\xi_k)\prod_{\mat{l=1\\ l\neq k}}^p\om (t,\xi _l),}
then \eqref{eq:just2} is rewritten as
\eq{eq:just3}{
\int _{t_1}^{t_2} dt \int _{\mathcal{M}^p_\xi}e^{it\phi}f(t) = \bigg[ \int _{\mathcal{M}^p_\xi} \frac{e^{it\phi}}{i\phi}f(t)\bigg] _{t=t_1}^{t=t_2} - \int _{t_1}^{t_2} dt \int _{\mathcal{M}^p_\xi}\frac{e^{it\phi}}{i\phi}g(t)\qquad \text{($\mathrm{a.e.}~\xi \in \mathcal{M}$),}
}
and we shall prove it for any $0\leq t_1<t_2\leq T$.
We first assume $0<t_1<t_2<T$, and prove \eqref{eq:just3} by taking the limit $h\to 0$ of the following identity:
\begin{align*}
&\int _{t_1}^{t_2}dt \int _{\mathcal{M}^p_\xi} \frac{e^{it\phi}}{i\phi}\frac{f(t+h)-f(t)}{h}\\
&=\int _{t_1+h}^{t_2}dt \int _{\mathcal{M}^p_\xi}\frac{e^{i(t-h)\phi}-e^{it\phi}}{ih\phi}f(t) +\frac{1}{h}\int _{t_2}^{t_2+h}dt\int _{\mathcal{M}^p_\xi}\frac{e^{i(t-h)\phi}}{i\phi}f(t) -\frac{1}{h}\int _{t_1}^{t_1+h}dt\int _{\mathcal{M}^p_\xi}\frac{e^{it\phi}}{i\phi}f(t).
\end{align*}

First, by ($\ti{A}$3) we have 
\[ \norm{\int _0^Tdt\int _{\mathcal{M}^p_\xi}|f(t,\bfxi )|}{X}\lesssim T\| \om \|_{C_TL^2_s}^p<\infty, \]
which implies that $f\in L^1((0,T)\times \mathcal{M}^p_\xi )$ for almost every $\xi \in \mathcal{M}$.
Hence,
\[ \int _{t_1+h}^{t_2}dt \int _{\mathcal{M}^p_\xi}\frac{e^{i(t-h)\phi}-e^{it\phi}}{ih\phi}f(t) ~\to ~ -\int _{t_1}^{t_2} dt \int _{\mathcal{M}^p_\xi}e^{it\phi}f(t) \qquad (h\to 0)\]
for $\mathrm{a.e.}~\xi$ by the dominated convergence theorem.

Next, we prove
\[ \frac{1}{h}\int _{t_2}^{t_2+h}dt\int _{\mathcal{M}^p_\xi}\frac{e^{i(t-h)\phi}}{i\phi}f(t) -\frac{1}{h}\int _{t_1}^{t_1+h}dt\int _{\mathcal{M}^p_\xi}\frac{e^{it\phi}}{i\phi}f(t) ~\to ~ \bigg[ \int _{\mathcal{M}^p_\xi}\frac{e^{it\phi}}{i\phi}f(t) \bigg] _{t=t_1}^{t=t_2}\qquad \text{in $L^2_s$},\]
which implies almost everywhere convergence along some sequence $h=h_n\to 0$ ($n\to \infty$). 
In fact, the $L^2_s$ convergence follows once we prove that the map
\eq{map1}{t\quad \mapsto \quad \int _{\mathcal{M}^p_\xi}\frac{e^{it\phi}}{i\phi}f(t)~\in L^2_s}
is continuous on $[0,T]$.
It then suffices to show
\[
\left. 
\begin{aligned}
\norm{\int _{\mathcal{M}^p_\xi}\frac{e^{it'\phi}}{i\phi}f(t')-\int _{\mathcal{M}^p_\xi}\frac{e^{it'\phi}}{i\phi}f(t)}{L^2_s}~\to ~0\\
\norm{\int _{\mathcal{M}^p_\xi}\frac{e^{it'\phi}-e^{it\phi}}{i\phi}f(t)}{L^2_s}~\to ~0
\end{aligned}\quad 
\right\} \qquad (t'\to t).
\]
The first limit is verified from ($\ti{A}$1) and the continuity of $t\mapsto \om (t)$ in $L^2_s$.
The second one also follows from ($\ti{A}$1) and the estimate
\[ \Big| \frac{e^{it'\phi}-e^{it\phi}}{i\phi}\Big| \lesssim \frac{\min \{ 1,\,|t'-t||\phi|\}}{\LR{\phi}}\leq \frac{|t-t'|^{1/2}}{\LR{\phi}^{1/2}}.\]

Finally, we shall prove, along some subsequence $h=h_{n'}\to 0$ of $\{ h_n \}$, that
\eq{limit1}{
\int _{t_1}^{t_2}dt \int _{\mathcal{M}^p_\xi} \frac{e^{it\phi}}{i\phi}\frac{f(t+h)-f(t)}{h} ~\to ~ \int _{t_1}^{t_2} dt \int _{\mathcal{M}^p_\xi}\frac{e^{it\phi}}{i\phi}g(t)\qquad (\mathrm{a.e.}~\xi ).}
It is only this part that we need the additional assumption (i)--(iii).
We begin by showing that $\mathcal{N}^{(1)}[\om]\in C_TX$ (and hence $F[\om ]\in C_T\ti{X}
$) for $\om \in C_TL^2_s$, under the assumption (i) or (ii).
As we have just done, it is enough to verify that
\[
\left. 
\begin{aligned}
\norm{\int _{\mathcal{M}^p_\xi}e^{it'\phi}m\Big( \prod _{k=1}^p\om (t',\xi _k) -\prod _{k=1}^p\om (t,\xi _k)\Big)}{X}~\to ~0\\
\norm{\int _{\mathcal{M}^p_\xi}\big( e^{it'\phi}-e^{it\phi}\big) m\prod _{k=1}^p\om (t,\xi_k)}{X}~\to ~0
\end{aligned}\quad
\right\} \qquad (t'\to t).
\]
The first one follows from ($\ti{A}$3) and continuity of $t\mapsto \om (t)$.
To verify the second limit in the case (i), we see from ($\ti{A}$3) and the dominated convergence theorem that the integral is bounded by $2\int _{\mathcal{M}^p_\xi}|m|\prod _k|\om (t,\xi _k)|\in X$ and converges to $0$ for $\mathrm{a.e.}~\xi$. 
This is enough to obtain the result; see the footnote to the condition (i) in Theorem~\ref{thm:abstract-R}.
For (ii), we divide the integral over $\mathcal{M}^p_\xi$ into two regions $\{ |\phi |\leq L\}$ and $\{ |\phi |>L\}$.
On one hand, the integral over $\{ |\phi |\leq L\}$ converges for any fixed $L>0$, by using $|e^{it'\phi}-e^{it\phi}|\lesssim |t-t'|L$.
On the other hand, we can show
\eq{limit2'}{\lim_{L\to \I}\norm{\int _{\Sc{M}^p_\xi}\chi _{|\phi |>L}|m|\prod _{k=1}^p|\om (t, \xi _k)|}{X}=0.}
In fact, the relation $\xi =\sum _{k=1}^p\xi _k$ and local boundedness of $\phi$ imply that $\chi _{|\phi |>L}\leq \sum _{k=1}^p\chi _{|\xi _k|>C(L)}$ for some increasing function $C(\cdot )$ with $C(L)\to \I$ as $L\to \I$.
Then, \eqref{limit2'} follows from the estimate ($\ti{A}$3) and the fact that  $\norm{\chi _{|\xi |\ge C}\om}{L^2_s}\to 0$ as $C\to \I$ whenever $\om \in L^2_s$.
Therefore, we obtain the continuity of the map $t\mapsto \mathcal{N}^{(1)}[\om ](t)$ in $X$.

As a consequence, under the assumption (i) or (ii), the solution $\om \in C_TL^2_s$ of \eqref{Abs-R} belongs to $C^1_T\ti{X}$, and in particular,
\[ \norm{\om (t+h)-\om (t)}{L^2_s}\to 0,\qquad \norm{\frac{\om (t+h)-\om (t)}{h}-F[\om ](t)}{\ti{X}}\to 0\qquad (h\to 0).\]
Note that both of the above convergence are uniform in $t\in [0,T]$.
On the other hand, we have
\eq{est:A12}{\norm{\int _{\Sc{M}^p_\xi}\frac{|m|}{\LR{\phi}^{1-\de}}\om ^{(1)}(\xi _1)\cdots \om ^{(p)}(\xi _p)}{\ti{X}}\lesssim \min _{1\le k\le p}\Big[ \norm{\om ^{(k)}}{\ti{X}}\prod _{\mat{l=1\\ l\neq k}}^p\norm{\om ^{(l)}}{L^2_s}\Big]}
from ($\ti{A}$1) and ($\ti{A}$2).
Using these estimates, we can show the convergence \eqref{limit1} in $\ti{X}$.
Since $\ti{X}$ is continuously embedded into $L^1_{\mathrm{loc}}$, this implies almost everywhere convergence along some subsequence $\{ h_{n'}\} \subset \{ h_n\}$.

It remains to consider the case (iii).
Since $\psi$ is finite almost everywhere, we have $\| \chi _{|\psi|>L}\om \|_{L^2_s}\to 0$ as $L\to \infty$ whenever $\om \in L^2_s$.
Also, when $\phi (\bfxi )=\psi (\xi)-\sum _{k=1}^p\psi (\xi_k)$ and $L>2L'$, it holds that $\chi _{|\psi (\xi )|\leq L'}\chi _{|\phi (\bfxi )|>L}\leq \sum _{k=1}^p\chi _{|\psi (\xi_k)|>L/(2p)}$.
Then, a similar argument as for the case (ii) above shows that, for each $L'>0$ and $\om \in C_TL^2_s$, the map $t\mapsto \chi _{|\psi (\cdot )|\leq L'}F[\om ](t,\cdot )$ is continuous from $[0,T]$ to $\ti{X}$.
Therefore, we have
\eq{limitex}{\norm{\chi _{|\psi|\leq L'}\Big[ \frac{\om (t+h)-\om (t)}{h}-F[\om ](t)\Big]}{\ti{X}}\to 0\qquad (h\to 0)}
uniformly in $t$, for each fixed $L'>0$.
This will in turn imply
\eq{limit1'}{
\norm{\chi _{|\psi |\leq L''}\Big[ \int _{t_1}^{t_2}dt \int _{\mathcal{M}^p_\xi} \frac{e^{it\phi}}{i\phi}\frac{f(t+h)-f(t)}{h} - \int _{t_1}^{t_2} dt \int _{\mathcal{M}^p_\xi}\frac{e^{it\phi}}{i\phi}g(t)\Big]}{\ti{X}}\to 0\quad (h\to 0)}
for each fixed $L''>0$, which is sufficient for proving almost everywhere convergence \eqref{limit1} along a subsequence.
To show \eqref{limit1'}, it suffices to verify the limits of
\[ \norm{\chi _{|\psi |\leq L''}\!\!\int _{t_1}^{t_2}\!\!dt \int _{\hspace{-10pt}\mat{\mathcal{M}^p_\xi \\ |\phi |>16M}}\hspace{-10pt} \frac{e^{it\phi}}{i\phi}m\Big[ \prod _{l=1}^{k-1}\om (t+h,\xi _l)\Big] \Big[ \frac{\om (t+h,\xi _k)-\om (t,\xi _k)}{h} -F[\om] (t,\xi _k)\Big] \Big[ \!\!\prod _{l'=k+1}^p\!\!\om (t,\xi_{l'})\Big] }{\ti{X}}\]
for $k=1,2,\dots ,p$ and
\[ \norm{\int _{t_1}^{t_2}dt \int _{\mat{\mathcal{M}^p_\xi \\ |\phi |>16M}} \frac{e^{it\phi}}{i\phi}m\Big[ \prod _{l=1}^{k-1}\om (t+h,\xi _l)-\prod _{l=1}^{k-1}\om (t,\xi _l)\Big] F[\om] (t,\xi _k) \Big[ \prod _{l'=k+1}^p\om (t,\xi_{l'})\Big] }{\ti{X}}\]
for $k=2,\dots ,p$.
The latter norm can be treated by the estimate \eqref{est:A12} and (uniform) continuity of $t\mapsto \om (t)$.
To estimate the former, let $L>L''$ and divide the integral into three regions: (a) $|\psi (\xi_k)|\leq (p+1)L$, (b) $|\phi (\bfxi )|>L$, (c) $|\psi (\xi _k)|>(p+1)L$ and $|\phi (\bfxi )|\leq L$.
In (a) we can use \eqref{est:A12} and the convergence \eqref{limitex} for each fixed $L$, while in (b) we exploit the power $\delta$ in the estimate \eqref{est:A12} to evaluate the norm as $O(L^{-\delta})$ uniformly in $h$.
In the region (c) it follows from the restriction $|\psi (\xi )|\leq L''$ that $|\psi (\xi _l)|>L$ for at least one $l\neq k$.
Then, after applying \eqref{est:A12} we have the term $\| \chi_{|\psi|>L}\om\|_{C_TL^2_s}$, which is of $o(1)$ as $L\to \infty$ by uniform continuity of $t\mapsto \om (t)$.
Therefore, we obtain \eqref{limit1'} by taking $L>L''$ large to make the contribution from (b), (c) small and then letting $h\to 0$.

So far, we have shown the equality \eqref{eq:just3} for $0<t_1<t_2<T$.
It is now easy to extend the equality to the case $t_1=0$ or $t_2=T$ by using continuity of the map \eqref{map1} and the dominated convergence theorem.
This establishes the desired equality \eqref{eq:just2} for each $t\in [0,T]$.

At the end, we see that formal calculations in the $J$-th generation can be justified in the same manner.
We need to establish, for each $\mathcal{T}\in \mathfrak{T}(J)$, the equality
\eq{eq:justJ}{\int_0^tdt' \int_{\mathfrak{X}_\xi (\mathcal{T})}e^{it'\ti{\phi}^J}f(t',\bfxi )=\Big[ \int_{\mathfrak{X}_\xi (\mathcal{T})}\frac{e^{it'\ti{\phi}^J}}{i\ti{\phi}^J}f(t',\bfxi ) \Big] _{t'=0}^{t'=t} -\int_0^tdt' \int_{\mathfrak{X}_\xi (\mathcal{T})}\frac{e^{it'\ti{\phi}^J}}{i\ti{\phi}^J}g(t',\bfxi ),}
where
\begin{align*}
&f(t,\bfxi ):=\chi _{(\phi ^j)_{j=1}^J\in \Phi_{N\!R}^{(J)}}\frac{m^1\cdots m^J}{(i\ti{\phi}^1)\cdots (i\ti{\phi}^{J-1})}\prod_{a\in \mathcal{T}_\infty}\om (t, \xi _a),\\
&g(t,\bfxi ):=\chi _{(\phi ^j)_{j=1}^J\in \Phi_{N\!R}^{(J)}}\frac{m^1\cdots m^J}{(i\ti{\phi}^1)\cdots (i\ti{\phi}^{J-1})}\sum _{a\in \mathcal{T}_\infty}F[\om ](t,\xi _a)\prod _{b\in \mathcal{T}_\infty \setminus \{ a\}}\om (t, \xi _b).
\end{align*}
Similarly to Proposition~\ref{prop:abstract-R}, we can deduce from ($\ti{A}$1)--($\ti{A}$3) that
\eqq{
\norm{\int_{\mat{\bfxi \in \mathfrak{X}_\xi (\mathcal{T}) \\ (\phi ^j)_{j=1}^J\in \Phi_{N\!R}^{(J)}}}\frac{|m^1|\cdots |m^J|}{|\ti{\phi}^1|\cdots |\ti{\phi}^{J-1}|}\prod_{a\in \mathcal{T}_\infty}|\om ^{(a)}(t, \xi _a)|}{X}&\lesssim \prod _{a\in \mathcal{T}_\infty}\| \om ^{(a)}\|_{L^2_s},\\
\norm{\int_{\mat{\bfxi \in \mathfrak{X}_\xi (\mathcal{T}) \\ (\phi ^j)_{j=1}^J\in \Phi_{N\!R}^{(J)}}}\frac{1}{|\ti{\phi}^J|^{1/2}}\frac{|m^1|\cdots |m^J|}{|\ti{\phi}^1|\cdots |\ti{\phi}^{J-1}|}\prod_{a\in \mathcal{T}_\infty}|\om ^{(a)}(t, \xi _a)|}{L^2_s}&\lesssim \prod _{a\in \mathcal{T}_\infty}\| \om ^{(a)}\|_{L^2_s},\\
\norm{\int_{\mat{\bfxi \in \mathfrak{X}_\xi (\mathcal{T}) \\ (\phi ^j)_{j=1}^J\in \Phi_{N\!R}^{(J)}}}\frac{1}{|\ti{\phi}^J|^{1-\delta}}\frac{|m^1|\cdots |m^J|}{|\ti{\phi}^1|\cdots |\ti{\phi}^{J-1}|}\prod_{a\in \mathcal{T}_\infty}|\om ^{(a)}(t, \xi _a)|}{\ti{X}}&\lesssim \min _{a\in \mathcal{T}_\infty}\Big[ \| \om ^{(a)}\|_{\ti{X}}\prod _{b\in \mathcal{T}_\infty \setminus \{ a\}}\| \om ^{(b)}\|_{L^2_s}\Big] .
}
These estimates substitute for ($\ti{A}$3), ($\ti{A}$1) and \eqref{est:A12}, respectively.
The condition (ii) implies that $\chi_{|\ti{\phi}^J(\bfxi )|>L}\leq \sum _{a\in \mathcal{T}_\infty}\chi _{|\xi _a|>C(L)}$ for some increasing function $C(\cdot )$ satisfying $C(L)\to \infty$ ($L\to \infty$), since $\bfxi$ is determined from its values on $\mathcal{T}_\infty$ by the relation $\xi _{a}=\sum _{k=1}^p\xi _{a^k}$ ($a\in \mathcal{T}_0$) imposed in the integral over $\mathfrak{X}_\xi (\mathcal{T})$.
In the case (iii), we have $\ti{\phi}^J(\bfxi )=\sum _{j=1}^J\phi (\xi _{a_j},\xi _{a_j^1},\dots ,\xi _{a_j^p})=\psi (\xi )-\sum _{a\in \mathcal{T}_\infty}\psi (\xi _a)$ for $\bfxi \in \mathfrak{X}_\xi (\mathcal{T})$.
Noticing these facts, we can justify the equality \eqref{eq:justJ} analogously to \eqref{eq:just2}.
\end{proof}

\begin{rem}
We can also restate Theorem~\ref{thm:weak} on existence of weak solutions in the non-periodic setting.
A note at technical level is that we consider the equation \eqref{Abs'} and Definition~\ref{defn:weak} in $[C_c^\I ((0,T);\Sc{S}(\Sc{M}))]'$ rather than in $[C_c^\I ((0,T);L^2_\infty (\Sc{M}))]'$ or $[C_c^\I ((0,T)\times \Sc{M})]'$, where $\mathcal{S}(\mathcal{M})$ is the Schwartz class on $\Sc{M}$ (only imposing the decay in the discrete directions), so that we can recover existence of weak solutions in the extended sense for \eqref{Abs'-2} by the inverse Fourier transform.
Since we do not use the space $X$ for this purpose, the proof is a straightforward adaptation of the argument for the periodic case and we omit it.
\end{rem}

\section*{Acknowledgments}

The author would like to express his sincere gratitude to Tadahiro Oh for valuable comments and constant encouragement.
He is also grateful to Soonsik Kwon and Zihua Guo for helpful discussions.
The author has been partially supported by Japan Society for the Promotion of Science (JSPS), Grant-in-Aid for Young Researchers (B) \#%
JP24740086 and \#%
JP16K17626.

%%%%%%%%%%%%%%%%%%%%%%%%%%%%%%%%%%%
%%%%%%%%%%%%%%%%%%%%%%%%%%%%%%%%%%%
%%%%%%%%%%%%%%%%%%%%%%%%%%%%%%%%%%%

\bibliographystyle{abbrv}
\bibliography{mybibfile}

\end{document}